\title{Rokhlin dimension for flows}
\author[I.~Hirshberg]{Ilan Hirshberg}
\address{Department of Mathematics, Ben Gurion University of the Negev, \phantom{----------------}\linebreak\text{}\hspace{3.5mm}
P.O.B. 653, Be'er Sheva 84105, Israel}
\email{ilan@math.bgu.ac.il}
\author[G.~Szab\'o]{G{\'a}bor Szab{\'o}}
\author[W.~Winter]{Wilhelm Winter}
\author[J.~Wu]{Jianchao Wu}
\address{Westf{\"a}lische Wilhelms-Universit{\"a}t, Fachbereich Mathematik, \phantom{---------------------------}\linebreak \text{}\hspace{3.5mm} Einsteinstra{\ss}e 62, 48149 M{\"u}nster, Germany}
\email{gabor.szabo@uni-muenster.de, wwinter@uni-muenster.de, \phantom{---------------------------------------}\\
 \text{}\hspace{25.5mm} jianchao.wu@uni-muenster.de}
\thanks{This research was supported by GIF grant 1137/2011, SFB 878 {\em Groups, Geometry and Actions} and ERC grant no.\ 267079. Part of the research was conducted at the Fields institute during the 2014 thematic program on abstract harmonic analysis, Banach and operator algebras, and at the Mittag-Leffler institute during the 2016 program on Classification of Operator Algebras: Complexity, Rigidity, and Dynamics.}
\theoremstyle{plain}
\newtheorem{Thm}{Theorem}[section]
\newtheorem{Cor}[Thm]{Corollary}
\newtheorem{Lemma}[Thm]{Lemma}
\newtheorem{Claim}[Thm]{Claim}
\newtheorem{Prop}[Thm]{Proposition}
\theoremstyle{definition}
\newtheorem{Def}[Thm]{Definition}
\newtheorem{Notation}[Thm]{Notation}
\newtheorem{Exl}[Thm]{Example}
\newtheorem{Rmk}[Thm]{Remark}
\newtheorem{Rmks}[Thm]{Remarks}
\theoremstyle{plain}
\newtheorem{thm}[Thm]{Theorem}
\newtheorem{lem}[Thm]{Lemma}
\newtheorem{cor}[Thm]{Corollary}
\newtheorem{prop}[Thm]{Proposition}
\newtheorem{clm}[Thm]{Claim}
\theoremstyle{definition}
\newtheorem{defn}[Thm]{Definition}
\newtheorem{eg}[Thm]{Example}
\newtheorem{rmk}[Thm]{Remark}
\newcounter{proofenumi}[Thm]
\theoremstyle{plain}
\newcounter{theoremintro}
\newtheorem{thmintro}[theoremintro]{Theorem}
\theoremstyle{definition}
\newtheorem{defnintro}[theoremintro]{Definition}
\newcommand{\A}{A}
\newcommand{\T}{{\mathbb T}}
\newcommand{\R}{{\mathbb R}}
\newcommand{\N}{{\mathbb N}}
\newcommand{\Z}{{\mathbb Z}}
\newcommand{\C}{{\mathbb C}}
\newcommand{\aut}{\mathrm{Aut}}
\newcommand{\eps}{\varepsilon}
\numberwithin{equation}{section}
\newcommand{\dimrok}{\dim_{\mathrm{Rok}}}
\newcommand{\dimrokc}{\dim^{\mathrm{c}}_{\mathrm{Rok}}}
\newcommand{\dr}{\mathrm{dr}}
\newcommand{\id}{\mathrm{id}}
\newcommand{\Spa}{\mathrm{Sp}_{\alpha}}
\newcommand{\Sp}{\mathrm{Sp}}
\newcommand{\dimnuc}{\mathrm{dim}_{\mathrm{nuc}}}
\newcommand{\dimnucone}[0]{\dimnuc^{\!+1}}
\newcommand{\drone}[0]{\dr^{\!+1}}
\newcommand{\dimrokone}[0]{\dimrok^{\!+1}}
\newcommand{\halpha}{\widehat{\alpha}}
\newcommand{\Ainfa}{A_{\infty}^{(\alpha)}}
\begin{document}
\begin{abstract}
We introduce a notion of Rokhlin dimension for one para\-meter automorphism groups of $C^*$-algebras. This generalizes Kishimoto's Rokhlin property for flows, and is analogous to the notion of Rokhlin dimension for actions of the integers and other discrete groups introduced by the authors and Zacharias in previous papers. We show that finite nuclear dimension and absorption of a strongly self-absorbing $C^*$-algebra are preserved under forming crossed products by flows with finite Rokhlin dimension, and that these crossed products are stable. Furthermore, we show that a flow on a commutative $C^*$-algebra  arising from a free topological flow has finite Rokhlin dimension, whenever the spectrum is a locally compact metrizable space with finite covering dimension. For flows that are both free and minimal, this has strong consequences for the associated crossed product $C^{*}$-algebras:\ Those containing a non-zero projection are classified by the Elliott invariant (for compact manifolds this consists of topological $K$-theory together with the space of invariant probability measures and a natural pairing given by the Ruelle-Sullivan map).
\end{abstract}

\maketitle

\tableofcontents

%%%%%%%%%%%%%%%%

\section*{Introduction}

\renewcommand*{\thetheoremintro}{\Alph{theoremintro}}

\noindent
The intimate connections between operator algebras and dynamical systems go all the way back to the classical formulation of quantum mechanics; they are a pivotal element in Connes' noncommutative geometry and have spurred beautiful results in Popa's rigidity theory. Dynamical systems are an inexhaustible source of inspiring examples, and the links to operator algebras are strong enough to carry back and forth fundamental concepts and ideas as well as very concrete statements and technology. 

On the ergodic side, striking examples are the classical Rokhlin lemma, the version of Ornstein and Weiss for probability measure preserving actions, and its implications for the Connes-Feldman-Weiss theorem; see \cite{KerrLi:Book} for an overview. On the topological side, the Rokhlin lemma works best for actions on the Cantor set and has been the point of departure for important developments in $C^{*}$-dynamics, notably Giordano-Putnam-Skau's rigidity up to strong orbit equivalence and Kishimoto's Rokhlin properties; see \cite{GPS:orbit,Ks1,Kishimoto-flows}. Even though the latter carry over  the Rokhlin lemma to a $C^{*}$-algebra context in a very elegant manner, their scope is limited by (sometimes obvious, sometimes hidden) topological obstructions. In \cite{HWZ} this problem was circumvented by the notion of Rokhlin dimension, which may be regarded as a higher dimensional version of the Rokhlin lemma and is much more prevalent than the Rokhlin property. In fact, it is often generic or equivalent to outerness; cf.\ \cite{HWZ, BEMSW}. The concept makes direct contact with the striking recent developments in the structure and classification theory of nuclear $C^{*}$-algebras. First, it allows one to derive in many cases that transformation group $C^{*}$-algebras are classifiable in the sense of Elliott; the reason is that the dynamical notion of Rokhlin dimension interacts nicely with  $C^{*}$-algebraic concepts like finite nuclear dimension and $\mathcal{Z}$-stability. Moreover, it provides a way to observe the latter phenomena %highly relevant for $C^{*}$-algebraic classification
at the level of the underlying dynamical systems themselves. This aspect is particularly fruitful, since it makes it possible to link $C^{*}$-algebraic regularity with well-studied dynamical notions such as the mean dimension of Lindenstrauss and Weiss \cite{LinWei:meandimension} or Gutman's marker property \cite{Gutman-marker, Gut:ETDS}; cf.\ \cite{Szabo}. %and \cite{PhiToms???}

In \cite{HWZ} Rokhlin dimension was defined for actions of the integers and of finite groups; in \cite{Szabo} the notion was generalized to $\mathbb{Z}^{d}$-actions, and in \cite{SWZ} to residually finite groups. All of these work nicely, and yield very convincing results. They are, however, restricted to discrete groups and therefore miss out on many applications from geometry or physics, which are typically related to one parameter actions, or time evolutions. The present paper addresses this issue by introducing Rokhlin dimension for flows (i.e.\ continuous $\mathbb{R}$-actions) on $C^{*}$-algebras. Just as for $\mathbb{Z}$-actions, our definition is a higher dimensional version of the corresponding Rokhlin property introduced by Kishimoto in \cite{Kishimoto-flows}.

The basic idea behind the aforementioned Rokhlin properties is to find an exhaustive set of finite (or compact) approximate representations of the group $G$ in the $C^{*}$-algebra $A$, which are at the same time approximately central. For discrete $G$, this can be elegantly expressed using the central sequence algebra $A_{\infty} \cap A'$ (i.e., bounded sequences commuting with constant sequences, identified up to null sequences). One could then define a $\mathbb{Z}$-action $\alpha$ on $A$ to have the Rokhlin property, if for any natural number $m$ one finds a unitary $u \in A_{\infty} \cap A'$ of order $m$ such that for each $k \in \mathbb{Z}$ one has $\alpha_{k}(u) = e^{k/2\pi i m} u$ --- or equivalently, if there are pairwise orthogonal projections (of which we think as a Rokhlin tower) $p_{0},\ldots,p_{m-1} \in A_{\infty} \cap A'$ such that $\alpha_{k}(e_{i}) = e_{i+k}$ cyclically and $\sum e_{i} = 1$ (let us not worry about the nonunital case for the moment). For $\mathbb{Z}$-actions this definition works, but has limited scope since it runs straight into $K$-theoretic obstructions. The way out is to allow two Rokhlin towers instead of just one, of lengths $m$ and $m+1$, respectively, such that the action is the shift within each tower individually, and maps the sum of the top levels to the sum of the bottom levels (possibly mixing the two towers at this point). This variant is much more common, since its $K$-theoretic obstructions are minimized; of course it still requires the existence of nontrivial projections. The higher dimensional, or colored, version of \cite{HWZ} bypasses even this latter restriction, and indeed it was shown in \cite{HWZ} and in \cite{Szabo} that it occurs in stunning generality. The idea is to replace projections by positive elements (and thus, intuitively speaking, generalize from a clopen partition to an open cover), and group them into finitely many sets of pairwise orthogonal towers, with the action still shifting within each one. Again the definition leaves some amount of flexibility, as to whether the action shifts cyclically or not within each tower, or whether the elements of different towers commute.

For flows Kishimoto has given an analogous definition, but the situation is more subtle. First, since one is interested in point-norm continuous actions, the sequence algebra needs to be restricted to the subalgebra $A^{(\alpha)}_{\infty}$ consisting of those elements for which the induced action remains continuous. Kishimoto then defines $\alpha$ to have the Rokhlin property if for any (typically small) $p \in \mathbb{R}$ there is a unitary $v \in A^{(\alpha)}_{\infty} \cap A'$ such that for any $t \in \mathbb{R}$ we have $\alpha_{\infty,t}(v) = e^{ipt} v$. The obstructions to having the Rokhlin property for flows are more refined than for integer actions (see Proposition~\ref{prop:obstruction} below) --- but again it does appear in important examples; see \cite{Kishimoto-flows, Kishimoto-shift, Kishimoto-flows-O2, BratelliKishimotoRobinson}. Just as for $\mathbb{Z}$-actions one can define a higher dimensional version:\ a positive contraction can generally be interpreted as a cone over a projection, and in the same way a cone over a unitary corresponds to a normal contraction; Rokhlin dimension can then be modeled using finite sums of such normal contractions, with each one witnessing the $\mathbb{R}$-action periodically (again there is some flexibility, e.g.\ whether the normal elements are required to commute or not). In the unital case our definition reads as follows (cf.\ Definitions~\ref{Def:dimrok} and \ref{Def:dimrok-comm} below):

\begin{defnintro}
 Let $A$ be a separable unital $C^*$-algebra with a flow $\alpha: \R\to\aut(A)$, i.e., a point-norm continuous action by automorphisms. We say $\alpha$ has Rokhlin dimension $d$, if $d$ is the smallest natural number such that the following holds:\ for every $p\in \R$, there exist normal contractions $x^{(0)},x^{(1)},\ldots,x^{(d)}\in A_\infty^{(\alpha)} \cap A'$ with $x^{(0)*}x^{(0)}+\dots+x^{(d)*}x^{(d)}=1$ and $\alpha_{\infty,t}(x^{(j)})=e^{ipt}x^{(j)}$ for all $t\in\R$ and for $j=0,\dots,d$. 
 
We say the Rokhlin dimension with commuting towers is $d$, if for any $p$ there are pairwise commuting $x^{(j)}$ as above.
\end{defnintro}

It is clear that the definition can be generalized to other groups without too much effort. However, this would require a certain amount of choices, and since at this point we do not have a sufficiently large stock of guiding examples, we stick to $\R$-actions for most of our paper. The main exception is Section~\ref{Section:reduction}, in which we consider closed cocompact subgroups of locally compact groups and define their relative Rokhlin dimension. In this setting there is not much room for choice and it seems reasonable to state the definitions and results in a general context. 

%In this paper we focus on $\R$-actions in order to study 
    
%The definition has nice permanence properties, in particular with respect to some standard $C^{*}$-algebraic constructions. 

In view of the advances in the structure and classification theory of nuclear $C^{*}$-algebras---cf.\ \cite{GLN}, \cite{EGLN:arXiv}, \cite{Win:AJM} and \cite{TWW}---it is particularly relevant how Rokhlin dimension behaves in connection with nuclear dimension in the sense of \cite{winter-zacharias}, and with $D$-absorption, where $D$ is a strongly self-absorbing $C^{*}$-algebra; cf.\ \cite{TomsWinter07}. These two concepts are very closely related via the Toms-Winter conjecture; cf.\ \cite{ElliottToms08, Winter:dimnuc-Z-stable, SWW:Invent}. Finite nuclear dimension is preserved under forming crossed products by flows with finite Rokhlin dimension (and one can give concrete upper bounds; see Theorem~\ref{Thm:dimnuc-bound}).

\begin{thmintro}
Let $A$ be a separable $C^*$-algebra and let $\alpha: \R \to \aut(A)$ be a flow. If the nuclear dimension of $A$ and the Rokhlin dimension of $\alpha$ are finite, then so is the nuclear dimension of the crossed product $A \rtimes_{\alpha}\R$.
\end{thmintro}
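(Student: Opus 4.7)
The plan is to produce, for every finite $F \subset A\rtimes_\alpha\R$ and every $\eps > 0$, a c.p.c.\ approximation of $\id_F$ that factors through a finite-dimensional $C^*$-algebra decomposed into $(d+1)(n+1)$ order-zero pieces, where $d = \dimrok(\alpha)$ and $n = \dimnuc(A)$. This would give a concrete upper bound of order $(d+1)(n+1) - 1$ (or a similarly explicit expression) for $\dimnuc(A\rtimes_\alpha\R)$. The strategy parallels the $\Z$-action argument in \cite{HWZ}, with normal contractions from the Rokhlin tower playing the role formerly played by Rokhlin projections.

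Elements of the form $\pi(a) u_f := \int_\R f(t)\, a u_t\, dt$ with $a \in A$ and $f \in C_c(\R)$ of compact support span a dense $*$-subalgebra, so I would reduce to approximating such elements with $\supp f \subseteq [-R, R]$. Given $\eps > 0$, fix a parameter $p > 0$ so that $|e^{ipt} - 1| < \eps$ on $[-R, R]$, and invoke $\dimrok(\alpha) = d$ at parameter $p$ to obtain normal contractions $x^{(0)}, \ldots, x^{(d)} \in A_\infty^{(\alpha)} \cap A'$ with $\sum_j (x^{(j)})^* x^{(j)} = 1$ and $\alpha_{\infty,t}(x^{(j)}) = e^{ipt} x^{(j)}$. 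In the crossed-product sequence algebra this reads $u_t x^{(j)} u_t^* = e^{ipt} x^{(j)}$, so that
\[
x^{(j)}\, u_t \;=\; e^{-ipt}\, u_t\, x^{(j)} \;\approx\; u_t\, x^{(j)} \quad (|t| \le R).
\]
Hence each $x^{(j)}$ nearly commutes with $\pi(a) u_f$, exactly with $\pi(a)$ and with $u_f$ up to an error of order $\eps\|f\|_1$.

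Functional calculus sends $C_0(\sigma(x^{(j)}) \setminus \{0\})$ into $A_\infty^{(\alpha)} \cap A'$ as a $*$-homomorphism, which is in particular a c.p.c.\ order-zero map. Combining this with $(n+1)$-colored c.p.c.\ order-zero approximations of $A$ afforded by $\dimnuc(A) = n$, one constructs, for each pair $(j,k) \in \{0,\ldots,d\} \times \{0,\ldots,n\}$, a c.p.c.\ order-zero map $\psi_{j,k} \colon F_{j,k} \to A\rtimes_\alpha\R$ from a finite-dimensional $C^*$-algebra together with a c.p.c.\ map $\varphi_{j,k} \colon A\rtimes_\alpha\R \to F_{j,k}$, such that $\sum_{j,k} \psi_{j,k} \circ \varphi_{j,k}$ agrees with $\id$ on the chosen finite set up to $O(\eps)$. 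The essence of this step is to ``twist'' the $\dimnuc(A)$-factorisation by the normal contractions $x^{(j)}$, exploiting the near-commutation above to keep error terms and color counts under control.

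The principal obstacle is that the Rokhlin contractions live in $A_\infty^{(\alpha)} \cap A'$, not in the sequence algebra of the crossed product, so one must transfer approximate centrality, equivariance, and order-zero relations through the crossed product construction without degrading the colour count. A secondary issue is that $p$ depends on the support radius $R$, forcing separate choices of Rokhlin tower for each finite set, after which a standard reindexing argument is needed to extract honest finite-dimensional approximations of $A\rtimes_\alpha\R$ from the sequence-algebra approximations constructed above.
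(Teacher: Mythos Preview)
Your outline has a genuine gap at the step you yourself flag as ``the essence'': you never explain how the $(n+1)$-coloured approximation of $A$ becomes an approximation of $A\rtimes_\alpha\R$. A nuclear-dimension factorisation of $A$ produces maps $A\to F_k\to A$; composing with the inclusion $A\hookrightarrow A\rtimes_\alpha\R$ lands you back in the copy of $A$ and never sees the unitaries $u_t$. Multiplying by the Rokhlin contractions $x^{(j)}$ does not help, because $x^{(j)}\in A_\infty^{(\alpha)}\cap A'$ also lives in (the sequence algebra of) $A$, not in the crossed product. Conversely, there is no obvious c.p.c.\ map $A\rtimes_\alpha\R\to F_{j,k}$ to serve as your $\varphi_{j,k}$: for the $\Z$-case in \cite{HWZ} one uses that a Rokhlin tower of height $m$ renders the crossed product approximately a corner of $M_m(A)$, but you have not produced the continuous analogue of this.

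The paper's proof supplies exactly this missing intermediate algebra. One does not factor through $A$ directly but through a hereditary subalgebra $B_\sigma$ of $(C_0(\R)\otimes A)\rtimes_{\sigma\otimes\alpha}\R\cong A\otimes\mathcal{K}$ (Takai duality), which therefore has $\dimnuc(B_\sigma)\le\dimnuc(A)$. The compression $A\rtimes_\alpha\R\to B_\sigma$ is $a\mapsto h^{1/2}ah^{1/2}$ for a tent function $h$ supported in $(-M/2,M/2)$; the reverse direction identifies $B_\sigma\cong B_\lambda\subset(C(\R/M\Z)\otimes A)\rtimes_{\lambda\otimes\alpha}\R$, and the Rokhlin order-zero maps $\mu^{(j)}:C(\R/M\Z)\to F_\infty^{(\alpha)}(A)$ then induce c.p.c.\ order-zero maps from this crossed product to $(A\rtimes_\alpha\R)_\infty$. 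Two shifted copies of $h$ are needed to cover the circle, whence the bound $\dimnucone(A\rtimes_\alpha\R)\le 2\,\dimrokone(\alpha)\,\dimnucone(A)$, with an extra factor of $2$ that your sketch does not anticipate. Your choice ``$p$ small so that $e^{ipt}\approx 1$'' goes in the right direction (large period $M=2\pi/p$), but the work is in building the $B_\sigma\cong B_\lambda$ bridge, not in the near-commutation estimate.
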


%We show this in two different ways in Sections~\ref{Section:Nuclear dimension} and \ref{Section:reduction}; the main goal of the latter section is it to relate dimension estimates of . 

The situation for $D$-absorption ($D$ strongly self-absorbing) is similar, just as in \cite{HWZ}:\ $D$-absorption is preserved under forming crossed products by flows with finite Rokhlin dimension with commuting towers; see Theorem~\ref{Thm:Z-absorption}, which generalizes \cite[Theorem 5.2]{HW} to the case of finite Rokhlin dimension.

\begin{thmintro}
Let $D$ be a strongly self-absorbing $C^*$-algebra. Let $A$ be a separable, $D$-absorbing $C^*$-algebra and let $\alpha \colon \R \to \aut(A)$ be a flow with finite Rokhlin dimension with commuting towers. Then $A \rtimes_{\alpha} \R$ is $D$-absorbing.
\end{thmintro}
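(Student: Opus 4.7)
The approach is to verify $D$-absorption of $A \rtimes_\alpha \R$ by constructing a unital $*$-homomorphism $\Phi \colon D \to (A \rtimes_\alpha \R)_\infty \cap (A \rtimes_\alpha \R)'$; since $A \rtimes_\alpha \R$ is separable and $D$ is strongly self-absorbing, this suffices by the standard McDuff-type characterization of $D$-stability. The key observation is that any unital $*$-homomorphism $\Phi \colon D \to \Ainfa \cap A'$ whose image consists of $\alpha_\infty$-fixed elements automatically factors through the desired central sequence algebra of the crossed product: the image commutes with $A$ by construction, and with each canonical unitary $\lambda_t \in M(A \rtimes_\alpha \R)$ because $\lambda_t \Phi(d) \lambda_t^* = \alpha_{\infty,t}(\Phi(d)) = \Phi(d)$.

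The construction of such $\Phi$ proceeds in two stages. First, $D$-absorption of $A$ provides a unital $*$-homomorphism $D \to A_\infty \cap A'$, and a standard smoothing argument---applying $b \mapsto \int f_n(t)\alpha_t(b)\,dt$ for a symmetric $L^1(\R)$-approximate identity $(f_n)$ and then taking a diagonal subsequence using separability of $D$---refines this to a unital $*$-homomorphism $\Phi_1 \colon D \to \Ainfa \cap A'$. Second, the commuting Rokhlin towers are used to further refine $\Phi_1$ so that its image lies in the $\alpha_\infty$-fixed subalgebra of $\Ainfa \cap A'$.

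The second stage is the heart of the argument. For each $n$, I would pick a commuting Rokhlin tower $x_n^{(0)},\ldots,x_n^{(d)} \in \Ainfa \cap A'$ at a frequency $p_n$, with $p_n \to 0$; each $x_n^{(j)}$ is a normal contraction with $\alpha_{\infty,t}(x_n^{(j)}) = e^{ip_n t} x_n^{(j)}$, the $x_n^{(j)}$ commute pairwise, and $\sum_j (x_n^{(j)})^* x_n^{(j)} = 1$. A reindexing argument arranges that the tower elements also commute with $\Phi_1(D)$. Combining $\Phi_1$ with the tower data through the tensor decomposition $D \cong D \otimes D$ of the strongly self-absorbing algebra yields approximate $*$-homomorphisms whose $\alpha$-spectrum is contained in a neighborhood of zero that shrinks with $n$; passing to a diagonal subsequence produces $\Phi$ with image in the $\alpha_\infty$-fixed subalgebra, and the first paragraph then finishes the proof.

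The principal obstacle is the combination step in the second stage: one must use the commutativity of the tower elements together with the strong self-absorption of $D$ to ensure that the refinement remains a genuine $*$-homomorphism rather than merely a completely positive map. This is precisely why the hypothesis of \emph{commuting} towers is essential (and why an analogous statement presumably fails, or is at least much more delicate, for general finite Rokhlin dimension without commuting towers). Once this step is in place, everything else is bookkeeping.
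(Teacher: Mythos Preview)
Your overall strategy is correct: by the McDuff-type criterion (stated in the paper as Theorem~\ref{Thm:equ-D-absorption} and Lemma~\ref{Lemma:D-absorption-condition-3}), it suffices to produce a unital $*$-homomorphism from $D$ into the $\tilde{\alpha}_\infty$-fixed part of $F_\infty^{(\alpha)}(A)$, and this does embed into the central sequence algebra of the crossed product for exactly the reason you give. Your first stage (smoothing a given $D \to F_\infty(A)$ to land in the $\alpha$-continuous part) is fine, though in the paper's argument it is absorbed into the main construction rather than done separately.

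The gap is your second stage. You write that ``combining $\Phi_1$ with the tower data through the tensor decomposition $D \cong D \otimes D$ \ldots\ yields approximate $*$-homomorphisms whose $\alpha$-spectrum is contained in a neighborhood of zero that shrinks with $n$,'' but you do not say what this combination is, and there is no obvious candidate. The Rokhlin elements $x^{(j)}$ are eigenvectors, not fixed points, and any naive averaging $d \mapsto \sum_j x^{(j)*}\Phi_1(d)x^{(j)}$ or spectral truncation destroys multiplicativity. The isomorphism $D \cong D \otimes D$ by itself does not explain how to trade the $(d+1)$ commuting normal contractions for a single $*$-homomorphism into fixed points; you have correctly identified this as the principal obstacle, but you have not resolved it.

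The paper does something quite different here. Rather than attempting to build a $*$-homomorphism directly, it constructs $2(d+1)$ c.p.c.\ \emph{order zero} maps $\psi^{(i,l)}\colon D \to F_\infty^{(\alpha)}(A)^{\tilde{\alpha}_\infty}$ with pairwise commuting ranges and $\sum_{i,l}\psi^{(i,l)}(1)=1$ (Lemma~\ref{technical dimrokc}). The mechanism is: lift $D\to F_\infty(A)$ to approximate c.p.c.\ maps $\kappa^{(i,l)}\colon D\to A$, compose with explicit order zero maps $\varphi^{(i)}\colon A\to C(\R/M\Z)\otimes A$ built from tent functions $h_0,h_1$ (these are approximately $(\lambda\otimes\alpha)$-equivariant because $h_i$ is slowly varying), and then compose with the Rokhlin order zero maps $\mu^{(l)}\colon C(\R/M\Z)\to F_\infty^{(\alpha)}(A)$ and the canonical map $F_\infty^{(\alpha)}(A)\otimes_{\max}A\to A_\infty^{(\alpha)}$. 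The commuting-towers hypothesis is used, via Lemma~\ref{lemma:commutators}, to force the images of the $\psi^{(i,l)}$ to commute. Finally, a separate structural lemma (Lemma~\ref{Lemma:universal-n-cones-D}, based on the $C(\Delta)$-algebra picture and \cite{HRW}) converts this family of commuting order zero maps summing to $1$ into a single unital $*$-homomorphism $D\to F_\infty^{(\alpha)}(A)^{\tilde{\alpha}_\infty}$; this last step is where the strong self-absorption of $D$ enters decisively.

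So the missing idea in your sketch is precisely this two-step detour: first accept that one only gets order zero maps (not $*$-homomorphisms) out of the Rokhlin data, and then invoke a bundle-theoretic argument specific to strongly self-absorbing $D$ to upgrade the family to a genuine $*$-homomorphism.
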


For a non-unital $C^*$-algebra, it is also meaningful to ask whether it tensorially absorbs the algebra of compact operators, i.e., when it is stable. We show that, for crossed products by flows, this is always true under the assumption of finite Rokhlin dimension (see Corollary~\ref{cor:stability}).

\begin{thmintro}
Let $A$ be a separable $C^*$-algebra with a flow $\alpha \colon \R \to \aut(A)$ of finite Rokhlin dimension. Then $A \rtimes_\alpha \R$ is stable. 
\end{thmintro}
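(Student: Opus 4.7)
The approach is via the Hjelmborg--Rørdam stability criterion for separable (hence $\sigma$-unital) $C^*$-algebras: it suffices to show that for every $a\in B_+$, with $B := A\rtimes_\alpha\R$, and every $\varepsilon>0$, there exists $r\in B$ with $\|r^*r-a\|<\varepsilon$ and $\|a\,rr^*\|<\varepsilon$. By density of $C_c(\R, A)$ in $B$, I would first reduce to $a = b_0^*b_0$ for $b_0 = \pi(a_0)\lambda(f)$ with $a_0\in A$ and $f\in C_c(\R)$ supported in some interval $[-T, T]$; in particular, the $\hat\alpha$-Arveson spectrum of $a$ lies in a compact set.

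For a parameter $p\in\R$ to be chosen later, the finite Rokhlin dimension hypothesis supplies normal contractions $x^{(0)},\dots,x^{(d)}\in A_\infty^{(\alpha)}\cap A'$ with $\sum_j x^{(j)*}x^{(j)} = 1$ and $\alpha_{\infty,t}(x^{(j)}) = e^{ipt}x^{(j)}$. Viewed inside the sequence algebra $B_\infty$, they commute with $\pi(A)$ and satisfy the twisted commutation $\lambda_t x^{(j)} = e^{ipt}x^{(j)}\lambda_t$; two basic consequences are that the column $X := (x^{(0)},\dots,x^{(d)})^T$ obeys $X^*X = 1$, and that the Cuntz sum satisfies $\sum_j x^{(j)*} y\,x^{(j)} = \hat\alpha_p(y)$ for every $y\in B$, where $\hat\alpha$ denotes the dual flow. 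In this sense, the Rokhlin elements give a Cuntz-type implementation of the dual shift $\hat\alpha_p$ at the level of the sequence algebra.

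The plan is then to form elements $r_j := x^{(j)}\lambda_s a^{1/2}\in B_\infty$ for a suitably large $s$; the partition-of-unity identity (together with the fact that $x^{(j)*}x^{(j)}$ is $\alpha$-invariant, and hence commutes with $\lambda_s$) yields $\sum_j r_j^*r_j = a$ exactly, while $\sum_j r_j r_j^* = \hat\alpha_{-p}(\lambda_s a\lambda_s^*)$. A compatible choice of $p$ and $s$ is arranged so that the combination of the spatial translation $\lambda_s$ and the dual shift $\hat\alpha_{-p}$ pushes the Arveson $\hat\alpha$-spectrum of $\sum_j r_j r_j^*$ away from that of $a$ and makes $\|a\,\sum_j r_j r_j^*\|$ small. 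A standard reindexing (``$\varepsilon$-test'') argument descends the construction from $B_\infty$ to $B$, possibly after passing to the harmless matrix amplification $M_{d+1}(B)$, which has the same stability type as $B$, thereby verifying the Hjelmborg--Rørdam criterion.

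The central obstacle is that the Rokhlin elements are only normal contractions, not isometries, so no single $x^{(j)}$ implements an automorphism of $B$ and one is forced to work with the full Cuntz-type sum $\sum_j r_j^*r_j = a$ rather than a single partial isometry. Moreover, disjointness of Arveson $\hat\alpha$-spectra does not in itself imply norm-orthogonality in $B$; the delicate point is to interweave the dual shift (from the $x^{(j)}$'s) with the spatial translation $\lambda_s$, matching the parameters $p$ and $s$ compatibly, so that the spectral disjointness is converted into the genuine orthogonality demanded by the stability criterion.
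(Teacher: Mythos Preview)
Your overall plan---Hjelmborg--R{\o}rdam plus the observation that the Rokhlin elements implement the dual shift via $\sum_j x^{(j)*}\,y\,x^{(j)}=\hat\alpha_p(y)$---is the right starting point, but the orthogonality step has a real gap. Conjugation by $\lambda_s$ is the inner automorphism $\mathrm{Ad}(\lambda_s)$ of $B$; it acts by $\alpha_s$ on $A$ and \emph{trivially} on the copy of $C^*(\R)$, so it does not move the $\hat\alpha$-Arveson spectrum at all. Thus $\sum_j r_j r_j^*$ is, up to an automorphism, just $\hat\alpha_{-p}(a)$, and you are back to needing $\|a\cdot\hat\alpha_{-p}(a)\|$ small from spectral disjointness alone---which you yourself correctly flag as insufficient. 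The phrase ``matching $p$ and $s$ compatibly'' is never cashed out as an estimate. A secondary issue: taking $f\in C_c(\R)$ does \emph{not} make the $\hat\alpha$-spectrum of $\pi(a_0)\lambda(f)$ compact; that requires the \emph{Fourier transform} $\hat f$ to be compactly supported, which is incompatible with $f\in C_c(\R)$.

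The paper supplies exactly the missing orthogonality mechanism, and it lives in the commutative subalgebra $C^*(\R)\cong C_0(\hat\R)$, not in the Rokhlin elements. Two ingredients are combined. First (Lemma~\ref{lem:alternativedefinitionofRokhlindimensionforflows-variant}), the Rokhlin elements are modified to carry \emph{distinct} eigenvalues $e^{ip(l+1)t}$, so that the approximate commutation relation reads $a\,g\,x^{(l)}\approx a\,x^{(l)}\,\mu_{(l+1)p}(g)$, where $\mu_s$ is modulation on $L^1(\R)$ (translation on the Fourier side). Second, one fixes $g\in L^1(\R)$ with $\hat g\in C_c(\hat\R)$ supported in $(-p/2,p/2)$ and with $g$ an approximate unit for the given element $b$. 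Then one sets
\[
y=\sqrt{b}\,a\sum_{l=0}^d x^{(l)}\,\mu_{(l+1)p}(g).
\]
The modulated copies $\{\mu_{(l+1)p}(g)\}_{l=0}^{d}$ have pairwise disjoint Fourier supports, hence are \emph{exactly} orthogonal in $C^*(\R)$; this kills the cross terms in $yy^*$ and, after transferring each $\mu_{(l+1)p}(g)$ back across $x^{(l)}$ via the eigenvalue relation, gives $yy^*\approx\sqrt{b}\,a\,g\bigl(\sum_l x^{(l)}x^{(l)*}\bigr)g\,a\sqrt{b}\approx b$. For $\|y^2\|$, one reinserts the approximate unit $g$ between the two copies of $y$ and uses the exact identity $\mu_{(l+1)p}(g)\cdot g=0$ (again disjoint Fourier supports) to make the product vanish. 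In short, the hard orthogonality is obtained from $C_0(\hat\R)$, not from spectral-subspace arguments inside $B$; the translation $\lambda_s$ plays no role.
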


We already pointed out that the concept of finite Rokhlin dimension generalizes Kishimoto's Rokhlin property, which is particularly interesting and relevant for noncommutative $C^{*}$-dynamical systems. However, it turns out that finite Rokhlin dimension also plays an important role in the classical setting of flows on locally compact spaces. It is easy to see that for such flows finite Rokhlin dimension implies freeness (i.e., every point is only left fixed by the neutral element). What is arguably our main result says that for finite dimensional spaces the converse holds (cf.\ Corollary~\ref{cor:top-flow-Rokhlin-estimate} and \ref{cor:top-flow-dimnuc-estimate}):  %Any free flow on a finite dimensional space has finite Rokhlin dimension! 

\begin{thmintro}
Let $Y$ be a locally compact and metrizable space with finite covering dimension and let $\Phi$ be a free flow on $Y$. Then the induced flow on ${C}_{0}(Y)$ has finite Rokhlin dimension. As a consequence, the crossed product $C^{*}$-algebra $ {C}_{0}(Y) \rtimes {\mathbb{R}^{}} $ has finite nuclear dimension. 
\end{thmintro}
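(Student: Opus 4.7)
My plan is to verify the Rokhlin dimension condition directly for $\alpha$ using the geometric structure of free flows on finite-dimensional spaces, and then to invoke Theorem B above for the conclusion on the crossed product. Since $C_0(Y)$ is commutative, centrality in $C_0(Y)'$ is automatic, so for each $p \in \R$ one only needs to produce normal contractions $x^{(0)},\dots,x^{(d)}$ in $C_0(Y)^{(\alpha)}_\infty$ with $\sum_j (x^{(j)})^*x^{(j)} = 1$ and $\alpha_{\infty,\tau}(x^{(j)}) = e^{ip\tau}x^{(j)}$ for all $\tau \in \R$.

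The building blocks would be tube functions. Freeness guarantees that for every $y_0 \in Y$ and every prescribed $L > 0$ a standard slice/tube argument produces an open transversal $S \ni y_0$ and an open tube $V \subset Y$ such that $(s,t)\mapsto \Phi_t(s)$ is a homeomorphism $S \times (-L,L) \to V$ intertwining $\Phi$ with translation in the $t$-coordinate. In such coordinates, a function $f(s,t) = e^{-ipt}\chi(s)\psi(t)$ with $\chi \in C_c(S)$ and $\psi \in C_c((-L,L))$ is a normal contraction satisfying $\alpha_\tau(f)(s,t) = e^{ip\tau}e^{-ipt}\chi(s)\psi(t-\tau)$, so $\alpha_\tau(f) \approx e^{ip\tau}f$ uniformly for $\tau$ in a compact set $K$ whenever $\psi$ varies slowly on the scale of $K$; the error is controlled by $|K|/L$. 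To assemble these into a Rokhlin tower, I would cover $Y$ by such tubes and, invoking $\dim Y < \infty$, pass to a locally finite refinement $\{W_\beta\}$ that partitions into $d+1$ subfamilies $\mathcal{C}_0,\dots,\mathcal{C}_d$ of pairwise disjoint sets, with $d$ depending only on $\dim Y$. Subordinate to a partition of unity $\{f_\beta\}$ for this refinement, the color-$j$ element would be defined on each tube $W_\beta \in \mathcal{C}_j$ by $x^{(j)}|_{W_\beta}(s,t) = e^{-ipt}\chi_\beta(s)\psi_\beta(t)$ with $|\chi_\beta\psi_\beta|^2 = f_\beta$, so that disjointness within each color gives $(x^{(j)})^*x^{(j)} = \sum_{W_\beta \in \mathcal{C}_j}f_\beta$ and hence $\sum_j (x^{(j)})^*x^{(j)} = 1$. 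Indexing by an approximation parameter in which the cutoffs $\psi_\beta$ flatten out while staying compactly supported in longer and longer tubes, one obtains the required elements of $C_0(Y)^{(\alpha)}_\infty$.

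Once $\dimrok(\alpha) < \infty$ is established, the nuclear dimension conclusion is immediate: $\dimnuc C_0(Y) = \dim Y < \infty$ by \cite{winter-zacharias}, and Theorem B above applied with $A = C_0(Y)$ then yields $\dimnuc(C_0(Y)\rtimes_\alpha \R) < \infty$.

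The main obstacle is the quantitative control of the tube cover. To achieve equivariance on larger compact sets $K$ of times or for smaller values of $p$, the tube length $L$ must be taken arbitrarily large, so the cover becomes ``long and thin''; nevertheless, the number $d+1$ of color classes must remain bounded independently of $L$ and $p$, depending only on $\dim Y$. Establishing such a dimension-uniform refinement lemma for free flows on finite-dimensional locally compact metrizable spaces is the technical heart of the argument, and plausibly requires either a marker-set construction in the spirit of Gutman's work or a careful partition-of-unity argument that exploits the product structure of the tubes together with the finite covering dimension of $Y$.
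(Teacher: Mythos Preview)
Your outline is essentially the paper's approach: the paper packages your ``long thin tube cover'' requirement into a numerical invariant called the \emph{tube dimension} of the flow, proves (Theorem~\ref{thmaboutrelationbetweenboxdimensionandRokhlindimension}) that tube dimension bounds Rokhlin dimension via exactly the construction you describe (the Rokhlin element on each tube is $e^{-ipt}$ times the square root of a flow-Lipschitz partition-of-unity function), and then reduces everything to bounding tube dimension by covering dimension. The only substantive point where you diverge is your guess about how to obtain the dimension-uniform long-thin cover: the paper does \emph{not} use a Gutman-style marker argument (that is what Szab\'o uses for $\Z^d$-actions) but instead imports the flow-space construction of Bartels--L\"uck--Reich, in the refined form of Kasprowski--R\"uping, which was developed for the Farrell--Jones conjecture and directly yields covers of multiplicity at most $5(\dim Y+1)$ by open boxes with arbitrarily large flow-wise Lebesgue number. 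You correctly identified this lemma as the technical heart; you simply did not have the right reference for it.
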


The proof is intricate, and is based on high-powered technology (called the flow space construction) developed by Bartels, L\"uck and Reich \cite{BarLRei081465306017991882} in the context of their work on the Farrel-Jones conjecture (we actually use the version given by Kasprowski and R\"uping in \cite{Kasprowski-Rueping}). The statement of the result (free flows on finite dimensional spaces have finite Rokhlin dimension) is completely analogous to Szab{\'o}'s \cite{Szabo}. However, the proofs are quite different, and we find it most remarkable that both arguments import high-end machinery that was developed for entirely different purposes---the Farrell-Jones conjecture for hyperbolic groups 
%\cite{BLR???}
in our case, and Gutman's marker property from \cite{Gutman-marker, Gut:ETDS}, which is based on \cite{Lind:IHES}, %together with Lindenstrauss' small boundary property \cite{Lind:IHES} 
in Szab{\'o}'s case. 

Upon returning to the associated $C^{*}$-algebras, we then combine our results with recent progress in Elliott's classification program for nuclear $C^{*}$-algebras. The outcome is a far-reaching classification theorem for crossed product $C^{*}$-algebras associated to free and minimal flows on finite dimensional spaces, analogous to that of \cite{TomsWinter:minhom} (see also \cite{TomsWinter:PNAS}):\ whenever the crossed products are Morita equivalent to unital $C^{*}$-algebras, they are classified by their Elliott invariant (cf.\ Corollary~\ref{cor:classification}). 

\begin{thmintro}
\label{intro:classification}
 Let $Y$ be a locally compact and metrizable space and $\Phi$ a flow on $Y$. Suppose that $Y$ has finite covering dimension and $\Phi$ is free and minimal. Then the crossed product $ {C}_{0}(Y) \rtimes {\mathbb{R}^{}} $ is classifiable in the sense of Elliott, provided that it contains a nonzero projection. 
 \end{thmintro}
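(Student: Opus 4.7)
The plan is to verify that $B := C_0(Y) \rtimes_\Phi \R$ meets the hypotheses of the modern classification theorem for simple, separable, nuclear, $\mathcal{Z}$-stable $C^*$-algebras satisfying the UCT, and then descend to the (possibly non-unital) algebra $B$ itself via a full corner.

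First I would collect the soft structural properties of $B$. Since $Y$ is locally compact, metrizable and of finite covering dimension, it is second countable, so $C_0(Y)$ is separable and hence so is $B$. Because $\R$ is amenable and $C_0(Y)$ is nuclear, $B$ is nuclear. Freeness together with minimality of $\Phi$ yields topological freeness and minimality of the induced action on $C_0(Y)$, hence simplicity of the crossed product. The UCT holds via Connes' Thom isomorphism, which makes $B$ $KK$-equivalent to a suspension of the bootstrap-class algebra $C_0(Y)$.

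The content of the present paper enters via the free-flow theorem: by Corollary~\ref{cor:top-flow-Rokhlin-estimate}, the induced flow on $C_0(Y)$ has finite Rokhlin dimension. Combined with $\dimnuc(C_0(Y)) < \infty$ (which follows from finite covering dimension of $Y$) and Theorem~\ref{Thm:dimnuc-bound}, this yields $\dimnuc(B) < \infty$. Simplicity plus finite nuclear dimension then gives $\mathcal{Z}$-stability by Winter's theorem, and Corollary~\ref{cor:stability} upgrades $B$ to a stable algebra, $B \cong B \otimes \mathcal{K}$.

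Let $p \in B$ be the nonzero projection guaranteed by hypothesis, which is automatically full by simplicity of $B$. Then $pBp$ is Morita equivalent to $B$, and stability of $B$ together with Brown's theorem gives $B \cong pBp \otimes \mathcal{K}$. The corner $pBp$ inherits from $B$ the properties of being unital, simple, separable, nuclear, $\mathcal{Z}$-stable, of finite nuclear dimension, and in the UCT class. Hence $pBp$ is classifiable by its Elliott invariant via the Gong--Lin--Niu, Elliott--Gong--Lin--Niu and Tikuisis--White--Winter theorem cited in the introduction, and this propagates back to classifiability of $B$. The only substantive step, and the only one drawing on the core technology of the paper, is the finite nuclear dimension bound; everything else is either standard or already packaged in the earlier theorems, so I do not anticipate a genuine obstacle beyond selecting the cleanest formulation of the classification theorem to invoke.
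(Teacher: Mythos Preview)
Your proposal is correct and follows essentially the same route as the paper's proof of Corollary~\ref{cor:classification}: simplicity from freeness and minimality, UCT via Connes' Thom isomorphism, finite nuclear dimension from Corollary~\ref{cor:top-flow-Rokhlin-estimate} combined with Theorem~\ref{Thm:dimnuc-bound}, stability from Corollary~\ref{cor:stability}, and then passage to a unital full corner to invoke the classification theorem of \cite{EGLN:arXiv} together with \cite{TWW}. The paper's proof is terser---it leaves the finite nuclear dimension implicit (via the immediately preceding Corollary~\ref{cor:top-flow-dimnuc-estimate}) and phrases the corner argument simply as ``if the crossed product is the stabilization of a unital $C^*$-algebra''---but your explicit use of Brown's theorem to obtain $B \cong pBp \otimes \mathcal{K}$ is exactly what underlies that sentence. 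Your detour through $\mathcal{Z}$-stability via Winter's theorem is harmless but redundant, since finite nuclear dimension is what the classification theorem consumes directly.
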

 
When the space is a compact manifold, the invariant boils down to topological $K$-theory together with the Ruelle-Sullivan map, a natural pairing with the space of invariant probability measures. Thanks to \cite{connes}, the invariant is very computable, and known in many situations. 
 
 The existence of some nonzero projection is a minimum requirement for the current state of the art of the classification program, which is not yet developed far enough to yield a result of similar strength for stably projectionless $C^{*}$-algebras. The condition is in particular satisfied  for uniquely ergodic smooth flows on compact manifolds, provided the Ruelle-Sullivan current associated to the measure yields a nontrivial class in the first cohomology group; cf.\ \cite{connes, KelPut}. We also provide another condition in terms of compact transversal subsets of the flow, which naturally produces nontrivial projections in the crossed product.

%The condition is in particular satisfied when $Y$ is a compact manifold with $H^{1}(Y;\mathbb{Z}) \neq 0$ and $\Phi$ is smooth and uniquely ergodic. In this case the classifying invariant consists of the topological $K$-groups $(K^{0}(Y),K^{1}(Y))$ with order given by the Ruelle-Sullivan map  $K^{1}(Y) \to \mathbb{R}$ induced by the unique invariant probability measure.

\bigskip

\noindent
{\bf Acknowledgements.} We would like to thank Sel{\c c}uk Barlak, Arthur Bartels, Hiroki Matui, Joav Orovitz, N. Christopher Phillips, Mikael R{\o}rdam, Yasuhiko Sato, Claude Schochet, and Joachim Zacharias for various inspiring conversations. We also thank the participants of the kleines seminar in 2013/14 in M\"unster, which helped us a great deal to better understand the flow space construction of \cite{BarLRei081465306017991882}. Finally, we would like to thank the referee for their impressively quick and careful proofreading, and for some helpful and detailed comments on the first submitted version.

\bigskip

\tableofcontents

\section{Preliminaries}
\label{preliminaries}

\noindent
Let us start by fixing some notations and conventions that we use in this paper, and by recalling some definitions.

\begin{Notation}
Let $A$ be a $C^*$-algebra. We denote by $\ell^{\infty}(\N,A)$ the $C^*$-algebra of all norm-bounded sequences with values in $A$, and by $c_0(\N,A)$ the $C^*$-subalgebra of all null sequences. The sequence algebra of $A$ is defined as the quotient $A_\infty=\ell^\infty(\N,A)/c_0(\N,A)$. One views $A$ as embedded into $A_{\infty}$ as (equivalence classes of) constant sequences in $\ell^{\infty}(\N,A)$. We shall refer to this as the standard embedding of $A$ and sometimes write $\iota_A: A\to A_\infty$. 
Every automorphism $\phi\in\operatorname{Aut}(A)$ naturally induces an automorphism $\ell^\infty(\phi)$ on $\ell^{\infty}(\N,A)$ by componentwise application of $\phi$. This automorphism leaves the ideal $c_0(\N,A)$ invariant, and therefore induces an automorphism $\phi_{\infty}$ of $A_{\infty}$. 

Now let $G$ be a locally compact group. For a point-norm continuous action $\alpha: G\to\aut(A)$ of $G$ on $A$, the map $\ell^\infty(\alpha): G\to\aut(\ell^\infty(\N,A))$ given by $g\mapsto\ell^\infty(\alpha_g)$ again yields an action. Likewise, the map $\alpha_\infty: G\to\aut(A_\infty)$ given by $g\mapsto\alpha_{g,\infty}$ yields an action.
However, given $x \in \ell^{\infty}(\N,A)$, the map $g \mapsto \ell^\infty(\alpha_g)(x)$ need not be continuous in general. We thus consider the $C^*$-subalgebra $\ell^{\infty, (\alpha)}(\N,A)$ of elements $x$ for which this map is continuous. With an elementary $\eps/2$-argument, one can see that $c_0(\N,A)\subset\ell^{\infty,(\alpha)}(\N, A)$. We thus define $\Ainfa = \ell^{\infty, (\alpha)}(\N,A) / c_0(\N,A) \subset A_\infty$. Then the restriction of $\alpha_\infty$ yields a point-norm continuous action of $G$ on $A_\infty^{(\alpha)}$. Clearly the image of the standard embedding of $A$ is in $A_\infty^{(\alpha)}$, so we can view $\iota_A$ also as a map into $A_\infty^{(\alpha)}$.
\end{Notation}

\begin{Rmk} \label{continuous seq}
As an alternative to the above, we might also have defined $\Ainfa\subset A_\infty$ as the $C^*$-algebra consisting of those elements $x\in A_{\infty}$ for which the assignment $g\mapsto\alpha_{\infty,g}(x)$ yields a norm-continuous map from $G$ to $A_{\infty}$. In fact, these two definitions coincide as long as we assume $\alpha$ to be point-norm continuous. 
This follows from \cite[Theorem 2]{LarryBrown98}.
%One can see this as an application of \cite[Lemma 3.4]{BarlakSzabo}, whose proof relies on a Baire category argument borrowed from the proof of \cite[Lemma 1.8]{GuentnerHigsonTrout}.
\end{Rmk}

The following two observations will be useful throughout the paper; the crossed products in question are the full ones.

\begin{Lemma}
\label{Rmk:cont-part-crossed-product-embeds}
Let $A$ be a $C^*$-algebra, let $G$ be a locally compact group and let $\alpha: G\to\operatorname{Aut}(A)$ be a point-norm continuous action. Then there exists a natural $^{*}$-homomorphism $\Phi: A_\infty^{(\alpha)}\rtimes_{\alpha_\infty} G \to (A\rtimes_\alpha G)_\infty$ that is compatible with the standard embeddings in the sense that $\Phi\circ (\iota_A\rtimes G) = \iota_{A\rtimes_\alpha G}$.
\end{Lemma}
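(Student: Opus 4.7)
The plan is to use exactness of the full crossed product functor to realize $\Ainfa \rtimes_{\alpha_\infty} G$ as a quotient, and then to construct $\Phi$ via the product of evaluation maps at single coordinates. Specifically, the short exact sequence of $G$-$C^*$-algebras
\[ 0 \longrightarrow c_0(\N, A) \longrightarrow \ell^{\infty,(\alpha)}(\N, A) \longrightarrow \Ainfa \longrightarrow 0, \]
on which the $G$-action is automatically point-norm continuous since we have restricted to the continuous part, becomes, upon applying the full crossed product functor, the short exact sequence
\[ 0 \longrightarrow c_0(\N, A) \rtimes G \longrightarrow \ell^{\infty,(\alpha)}(\N, A) \rtimes G \longrightarrow \Ainfa \rtimes_{\alpha_\infty} G \longrightarrow 0, \]
identifying $\Ainfa \rtimes_{\alpha_\infty} G$ with a quotient of the middle term.

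For each $k \in \N$, the coordinate projection $\mathrm{ev}_k: \ell^{\infty,(\alpha)}(\N, A) \to A$ is a $G$-equivariant contractive $^{*}$-homomorphism and hence induces $\mathrm{ev}_k \rtimes G: \ell^{\infty,(\alpha)}(\N, A) \rtimes G \to A \rtimes_\alpha G$. Packaging these together produces a contractive $^{*}$-homomorphism
\[ \Psi: \ell^{\infty,(\alpha)}(\N, A) \rtimes G \longrightarrow \ell^\infty(\N, A \rtimes_\alpha G), \qquad \Psi(x)_k := (\mathrm{ev}_k \rtimes G)(x). \]

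The key step is then to verify that $\Psi$ maps $c_0(\N, A) \rtimes G$ into $c_0(\N, A \rtimes_\alpha G)$. By density and continuity it suffices to check this on $f \in C_c(G, c_0(\N, A))$: if $f$ has compact support $K$, then $f(K)$ is compact in $c_0(\N, A)$, and the standard characterization of compactness in $c_0$-type spaces (uniform decay of coordinates, via a Dini-type argument) yields $\sup_{t \in K} \|f(t)_k\|_A \to 0$ as $k \to \infty$. Consequently,
\[ \|\Psi(f)_k\|_{A\rtimes_\alpha G} \;\leq\; \|\mathrm{ev}_k\circ f\|_{L^1(G, A)} \;\leq\; |K|\cdot \sup_{t\in K} \|f(t)_k\|_A \;\longrightarrow\; 0. \]
Passing to quotients then yields the desired $^{*}$-homomorphism $\Phi: \Ainfa \rtimes_{\alpha_\infty} G \to (A \rtimes_\alpha G)_\infty$. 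The compatibility $\Phi\circ(\iota_A \rtimes G) = \iota_{A\rtimes_\alpha G}$ is immediate: lifting $\iota_A(a)\otimes g$ (for $a \in A$, $g \in C_c(G)$) by the constant sequence $(a)_k$ gives, under $\Psi$, the constant sequence $(a\otimes g)_k$, whose class is $\iota_{A\rtimes_\alpha G}(a\otimes g)$.

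The only genuinely non-trivial input is the $c_0$-preservation argument, which rests on the characterization of compact subsets of $c_0(\N, A)$ via uniform decay of coordinates; the rest is essentially formal, with exactness of the full crossed product functor (which holds for any locally compact $G$ without amenability assumptions) serving as a clean black box.
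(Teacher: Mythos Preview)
Your proof is correct and follows essentially the same route as the paper: build $\Psi$ from the coordinate evaluations $\mathrm{ev}_k \rtimes G$, use exactness of the full crossed product on the defining short exact sequence for $\Ainfa$, and descend to the quotient. The only difference is cosmetic: the paper asserts that $\Psi$ restricts to a canonical \emph{isomorphism} $c_0(\N,A)\rtimes G \cong c_0(\N,A\rtimes_\alpha G)$, whereas you only prove (and only need) the inclusion $\Psi(c_0(\N,A)\rtimes G) \subseteq c_0(\N,A\rtimes_\alpha G)$, supplying the Dini-type uniform-decay argument that the paper leaves implicit.
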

\begin{proof}
Despite slight abuse of notation, let us denote the constant sequence embedding of $A$ into either $\ell^{\infty}(\N, A)$ or $\ell^{\infty,(\alpha)}(\N,A)$ by $\iota_A$.

Consider the evaluation maps $\operatorname{ev}_n: \ell^\infty(\N,A)\to A$ given by $\operatorname{ev}_n\big( (a_k)_k \big)=a_n$ for $n\in\N$. These are clearly $\ell^\infty(\alpha) - \alpha$ equivariant. In particular, restricting to the continuous part $\operatorname{ev}_n: \ell^{\infty,(\alpha)}(\N,A)\to A$ yields an equivariant ${}^*$-homomorphism. Consider the induced ${}^*$-homomorphism between the crossed products $\operatorname{ev}_n\rtimes G: \ell^{\infty,(\alpha)}(\N,A)\rtimes_{\ell^\infty(\alpha)} G \to A\rtimes_\alpha G$. Writing these into a sequence, we obtain a ${}^*$-homomorphism 
$$
\Psi = (\operatorname{ev}_n\rtimes G)_n: \ell^{\infty,(\alpha)}(\N,A)\rtimes_{\ell^\infty(\alpha)} G\to\ell^\infty(\N,A\rtimes_\alpha G)
\, ,
$$
 which is clearly compatible with the embeddings $A\subset\ell^{\infty,(\alpha)}(\N,A)$ and $A\rtimes_\alpha G\subset\ell^\infty(\N,A\rtimes_\alpha G)$ in the sense that $\Psi\circ (\iota_A\rtimes G)=\iota_{A\rtimes_\alpha G}$.

Now by definition, we have an equivariant short exact sequence
\[
\xymatrix{
0 \ar[r] & c_0(\N,A) \ar[r] & \ell^{\infty,(\alpha)}(\N,A)\ar[r] & A_\infty^{(\alpha)} \ar[r] & 0.
}
\]
Applying the ${}^*$-homomorphism constructed above and considering that it is a canonical isomorphism between $c_0(\N,A)\rtimes_{\ell^\infty(\alpha)} G$ and $c_0(\N,A\rtimes_\alpha G)$, we get an induced ${}^*$-homomorphism, as illustrated by the following diagram:
\[
\xymatrix@C-2mm{
0 \ar[r] & c_0(\N,A)\rtimes_{\ell^\infty(\alpha)} G \ar[r] \ar[d]_\Psi^\cong & \ell^{\infty,(\alpha)}(\N,A)\rtimes_{\ell^\infty(\alpha)} G\ar[r] \ar[d]_\Psi & A_\infty^{(\alpha)}\rtimes_{\alpha_\infty} G \ar[r]\ar@{-->}[d]_\Phi & 0 \\
0 \ar[r] & c_0(\N,A\rtimes_\alpha G) \ar[r] & \ell^{\infty}(\N,A\rtimes_\alpha G)\ar[r] & (A\rtimes_\alpha G)_\infty \ar[r] & 0.
}
\]
The identity $\Phi\circ (\iota_A\rtimes G) = \iota_{A\rtimes_\alpha G}$ then follows from the analogous one which holds for $\Psi$.
\end{proof}

\begin{Lemma}
\label{Rmk:equivariant-order-zero-maps}
Let $A$ and $B$ be $C^*$-algebras, let $G$ be a locally compact group and let $\alpha: G\to\operatorname{Aut}(A)$ and $\beta: G\to\operatorname{Aut}(B)$ be point-norm continuous actions. Let $\varphi: A\to B$ be an $\alpha - \beta$ equivariant c.p.c.~order zero map. Then there is an induced c.p.c.~order zero map $\varphi\rtimes G: A\rtimes_\alpha G\to B\rtimes_\beta G$. 

In fact, for every $k\in\N$, the full crossed product construction is functorial with respect to sums of $k$ c.p.\ order zero maps via the assignment $\sum_{j=1}^k\varphi^{(j)}\mapsto \sum_{j=1}^k \varphi^{(j)}\rtimes G$.
\end{Lemma}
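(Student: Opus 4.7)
My plan is to reduce to the case of $*$-homomorphisms via the Winter-Zacharias structure theorem for c.p.c.\ order zero maps. That theorem establishes a bijective correspondence between c.p.c.\ order zero maps $\varphi: A\to B$ and $*$-homomorphisms $\rho_\varphi: C_0((0,1])\otimes A\to B$, characterised by $\rho_\varphi(\iota\otimes a)=\varphi(a)$ for $a\in A$, where $\iota\in C_0((0,1])$ denotes the canonical generator $t\mapsto t$. If we equip $C_0((0,1])$ with the trivial $G$-action, then $\alpha$-$\beta$-equivariance of $\varphi$ transfers directly to $(\id\otimes\alpha)$-$\beta$-equivariance of $\rho_\varphi$; this is immediate from the uniqueness of the correspondence together with naturality of the structure theorem.

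Since the full crossed product is a functor on the category of $C^*$-dynamical systems with equivariant $*$-homomorphisms, we obtain a $*$-homomorphism
\[
\rho_\varphi \rtimes G \;:\; (C_0((0,1])\otimes A)\rtimes_{\id\otimes\alpha} G \;\longrightarrow\; B\rtimes_\beta G.
\]
Because $C_0((0,1])$ is commutative (in particular nuclear) and carries the trivial action, comparison of universal properties yields a canonical isomorphism $(C_0((0,1])\otimes A)\rtimes_{\id\otimes\alpha} G \cong C_0((0,1])\otimes (A\rtimes_\alpha G)$. Feeding the resulting $*$-homomorphism $C_0((0,1])\otimes (A\rtimes_\alpha G)\to B\rtimes_\beta G$ back through the Winter-Zacharias correspondence produces the desired c.p.c.\ order zero map $\varphi\rtimes G: A\rtimes_\alpha G\to B\rtimes_\beta G$. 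Unpacking the construction on $C_c(G,A)\subset A\rtimes_\alpha G$, one sees that $\varphi\rtimes G$ is just pointwise application: $(\varphi\rtimes G)(f)(g) = \varphi(f(g))$ for $f\in C_c(G,A)$ and $g\in G$.

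For the functoriality claim regarding sums, this pointwise formula is exactly what is needed. Given a c.p.\ map $\psi = \sum_{j=1}^k \varphi^{(j)}: A\to B$ written as a sum of c.p.\ order zero maps, the map $\sum_{j=1}^k \varphi^{(j)}\rtimes G$ acts on $f\in C_c(G,A)$ as $g\mapsto \sum_{j=1}^k \varphi^{(j)}(f(g)) = \psi(f(g))$. Since this description depends only on $\psi$ and not on the chosen decomposition, the assignment is well defined on finite sums of c.p.\ order zero maps, and is visibly additive and compatible with composition along equivariant $*$-homomorphisms. The main technical obstacle is the tensor-crossed-product identification used in the second paragraph; once that identification is established via the universal property of the full crossed product, the rest of the argument is a routine translation along the Winter-Zacharias correspondence.
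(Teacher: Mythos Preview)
Your proof is correct and follows essentially the same approach as the paper: both arguments pass through the Winter--Zacharias structure theorem to replace $\varphi$ by a $*$-homomorphism out of the cone, transfer equivariance, apply functoriality of the full crossed product, and then invoke the canonical identification $(C_0((0,1])\otimes A)\rtimes_{\id\otimes\alpha} G \cong C_0((0,1])\otimes (A\rtimes_\alpha G)$ to recover a c.p.c.\ order zero map via the inverse correspondence. The paper spells out the equivariance check on the generating set $\id_{[0,1]}\otimes A$ and records the resulting formula on generators $\iota^\alpha(a)\lambda^\alpha(f)\mapsto \iota^\beta(\varphi(a))\lambda^\beta(f)$, which is equivalent to your pointwise description on $C_c(G,A)$.
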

\begin{proof}
If $(A,\alpha,G)$ is any $C^*$-dynamical system, denote by 
\[
\iota^\alpha: A\to\mathcal{M}(A\rtimes_\alpha G) \quad\text{and}\quad \lambda^\alpha: C^*(G)\to \mathcal{M} (A\rtimes_\alpha G)
\] 
the two $^*$-homomorphisms coming from the canonical covariant representation on $A\rtimes_\alpha G$. If $S\subset A$ is some generating set for $A$, then the elements $\iota^\alpha(a)\lambda^\alpha(f)\in A\rtimes_\alpha G$, for $a\in S$ and $f\in C_c(G)$, generate $A\rtimes_\alpha G$ as a $C^*$-algebra.

Now let us prove the assertion. By the structure theorem for order zero maps \cite[Corollary 4.1]{winter-zacharias-order-zero}, there is a (unique) ${}^*$-homomorphism $\psi: C_0( (0,1], A)\to B$ such that $\psi(\id_{[0,1]}\otimes a)=\varphi(a)$ for all $a\in A$. Equipping the cone over $A$ with the action $C\alpha=(\id_{C_0(0,1]}\otimes\alpha): G\to\operatorname{Aut}(C_0( (0,1], A))$, we see that $\psi$ is equivariant, when restricted to the subset $\id_{[0,1]}\otimes A$, because $\varphi$ was assumed to be $\alpha - \beta$ equivariant. But since $\id_{[0,1]}\otimes A$ generates $C_0( (0,1], A)$ as a $C^*$-algebra, it follows that in fact $\psi$ must be $C\alpha - \beta$ equivariant. This induces a ${}^*$-homomorphism 
$$
\psi\rtimes G: C_0( (0,1], A)\rtimes_{C\alpha} G \to B\rtimes_\beta G
$$ 
by functoriality of the full crossed product. Recall that on the aforementioned generators, we have
\[
(\psi\rtimes G)(\iota^{C\alpha}(x)\lambda^{C\alpha}(f))=\iota^\beta(\psi(x))\lambda^\beta(f)\; \text{for all}~x\in C_0( (0,1], A), f\in C_c(G).
\]
Keeping in mind the definition of the action $C\alpha$, we have a natural isomorphism $\mu: C_0\big( (0,1],(A\rtimes_\alpha G) \big)\to C_0( (0,1], A)\rtimes_{C\alpha} G$ via
\[
\mu\bigl( \id_{[0,1]}\otimes(\iota^\alpha(a)\lambda^\alpha(f)) \bigl)=\iota^{C\alpha}(\id_{[0,1]}\otimes a)\lambda^{C\alpha}(f)\quad\text{for all}~a\in A, f\in C_c(G).
\]
Now set $(\varphi\rtimes G)(x)=(\psi\rtimes G)\circ\mu(\id_{[0,1]}\otimes x)$ for all $x\in A\rtimes_\alpha G$. On the generators, it is simply given by
\[
(\varphi\rtimes G)(\iota^\alpha(a)\lambda^\alpha(f)) = \iota^\beta(\varphi(a))\lambda^\beta(f)\quad\text{for all}~ a\in A, f\in C_c(G).
\]
It now follows that the assignment $\sum_{j=1}^k\varphi_j\mapsto \sum_{j=1}^k \varphi_j\rtimes G$ is also well-defined and shows that the full crossed product is functorial with respect to sums of $k$ c.p.\ order zero maps for any $k\in\N$.
\end{proof}

For both technical and conceptual reasons, it will be useful in this paper to make use of Kirchberg's variant of the central sequence algebra \cite{Kirchberg-Abel} instead of the ordinary one. One crucial advantage of using this algebra is that it is unital, even if the underlying $C^*$-algebra is not. Kirchberg's central sequence algebra models the approximate behavior of bounded sequences with respect to the strict topology rather than the norm topology. 

\begin{Def}[following {\cite[Definition 1.1]{Kirchberg-Abel}}] 
Let $A$ be a $C^*$-algebra. Denote 
\[
\operatorname{Ann}(A,A_\infty) = \{x \in A_{\infty} \mid xA = Ax = \{0\} \}.
\] 
Since any $x\in\operatorname{Ann}(A,A_\infty)$ is in the commutant of $A$, this $C^*$-algebra is a closed two sided ideal in 
\[
A_\infty\cap A' = \{x \in A_{\infty} \mid xa = ax~\text{for all}~a\in A \}.
\]
The \emph{(corrected) central sequence algebra} of $A$ is defined as the quotient
\[
F_\infty(A) = (A_\infty\cap A') / \operatorname{Ann}(A,A_\infty).
\]
\end{Def}

\begin{Rmk} \label{Rmk:F(A)-unital}
Note that if $A$ is $\sigma$-unital, then this is a unital $C^*$-algebra, the unit coming from any countable approximate unit of $A$; see \cite[Proposition 1.9(3)]{Kirchberg-Abel}. Moreover, we have $F_\infty(A)=A_\infty\cap A'$, if $A$ is unital.
\end{Rmk}

\begin{Notation}
Let $A$ be a $C^*$-algebra and $\phi\in\aut(A)$ an automorphism. Then  $\phi_\infty(A_\infty\cap A')=A_\infty\cap A'$ and $\phi_\infty(\operatorname{Ann}(A,A_\infty))=\operatorname{Ann}(A,A_\infty)$. This gives rise to an automorphism $\tilde{\phi}_\infty$ on $F_\infty(A)$.

Let $G$ be a locally compact group. Given a point-norm continuous action $\alpha\colon G\to\operatorname{Aut}(A)$, the assignment $g\mapsto\tilde{\alpha}_{g,\infty}$ gives rise to an action $\tilde{\alpha}_\infty$ on $F_\infty(A)$. As before, this action need not be point-norm continuous in general.
We thus define the \emph{continuous central sequence algebra} of $A$ with respect to $\alpha$ as
\[
F_\infty^{(\alpha)}(A) = \{ x\in F_\infty(A) \mid %G\mapsto F_\infty(A),~ 
g\mapsto\tilde{\alpha}_{\infty,g}(x)~\text{is continuous} \}. 
\]
\end{Notation}

\begin{Rmk}
Given a point-norm continuous action $\alpha\colon G\to\aut(A)$ of a locally compact group on a unital $C^*$-algebra, it follows from Remarks~\ref{continuous seq} and \ref{Rmk:F(A)-unital} that $F_\infty^{(\alpha)}(A)=A_\infty^{(\alpha)}\cap A'$ and that $\tilde{\alpha}_{\infty}$ agrees with $\alpha_{\infty}$. 
\end{Rmk}

\begin{Rmk}[cf.~{\cite[Definition 1.1]{Kirchberg-Abel}}] 
\label{F(A)}
Let $A$ be a $C^*$-algebra. One has a canonical $^{*}$-homo\-morphism
 \[
 F_\infty(A)\otimes_{\max} A \to A_\infty\quad\text{via}\quad (x+\operatorname{Ann}(A,A_\infty))\otimes a \mapsto x\cdot a.
 \]
If $A$ is $\sigma$-unital, this map sends $1\otimes a$ to $a$ for all $a\in A$. Now if $\alpha\colon G\to\operatorname{Aut}(A)$ is a point-norm continuous action of a locally compact group, then the above $^{*}$-homomorphism is $(\tilde{\alpha}_\infty\otimes\alpha) - \alpha_\infty$ equivariant. In view of Remark~\ref{continuous seq}, the map restricts to an equivariant  $^{*}$-homomorphism
 \[
 F_\infty^{(\alpha)}(A) \otimes_{\max} A \to A_\infty^{(\alpha)}.
 \]
\end{Rmk}

The following lemma by Kasparov, which asserts that every point-norm continuous action on a $C^*$-algebra admits approximately invariant approximate units, will be useful on several occasions in this paper:

\begin{Lemma}[{\cite[Lemma 1.4]{Kasparov88}}]
\label{Lemma:invariant-approx-unit}
Let $A$ be a $\sigma$-unital $C^*$-algebra, $G$ a $\sigma$-compact, locally compact group and $\alpha\colon G\to\operatorname{Aut}(A)$ a point-norm continuous action. Then there exists an approximate unit $(e_n)_{n \in \N}$ for $A$ such that $\|\alpha_g(e_n)-e_n\|\to 0$ uniformly on compact subsets of $G$.
\end{Lemma}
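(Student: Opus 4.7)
The strategy is to construct the approximate unit $(e_n)$ by averaging a strictly positive element of $A$ against a sequence of compactly supported probability densities on $G$ that become increasingly invariant under left translation by elements of any prescribed compact subset. First, fix a strictly positive contraction $h \in A_+$, which exists by $\sigma$-unitality, and an increasing exhaustion $K_1 \subset K_2 \subset \cdots$ of $G$ by compact subsets, which exists by $\sigma$-compactness. Then produce a sequence $\phi_n \in C_c(G)_+$ with $\int_G \phi_n \, d\mu = 1$ and $\sup_{g \in K_n} \|L_g \phi_n - \phi_n\|_1 \to 0$ as $n \to \infty$, where $L_g$ denotes left translation on $L^1(G, \mu)$ for a fixed left Haar measure $\mu$.

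Next, define, as Bochner integrals in $A$, $v_n := \int_G \phi_n(g)\, \alpha_g(h)\, d\mu(g) \in A_+$, well-defined since $\phi_n$ is compactly supported and $g \mapsto \alpha_g(h)$ is norm continuous. A change of variable using left invariance of $\mu$ yields $\alpha_{g_0}(v_n) - v_n = \int_G (\phi_n(g_0^{-1}g) - \phi_n(g))\, \alpha_g(h)\, d\mu(g)$, and hence $\|\alpha_{g_0}(v_n) - v_n\| \leq \|h\| \cdot \|L_{g_0}\phi_n - \phi_n\|_1$. This gives $\|\alpha_g(v_n) - v_n\| \to 0$ uniformly on every compact subset of $G$. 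Because $h$ is strictly positive and each $\phi_n$ is not identically zero, each $v_n$ is also strictly positive in $A$. The last step is to promote $(v_n)$ to a genuine approximate unit $(e_n)$ via continuous functional calculus: set $e_n := f_n(v_n)$ for a piecewise linear $f_n\colon [0, \|v_n\|] \to [0,1]$ with $f_n(0)=0$ and $f_n \equiv 1$ on $[\delta_n, \|v_n\|]$, with $\delta_n \searrow 0$ chosen sufficiently slowly.

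The hardest step is the construction of the approximately left-invariant densities $\phi_n$, which is where the topology of $G$ enters and requires delicate interplay between $\sigma$-compactness of $G$ and the continuity of $g \mapsto L_g \phi$ from $G$ to $L^1(G, \mu)$. A secondary technical point is coordinating the functional-calculus scale $\delta_n$ in the passage from $v_n$ to $e_n$ with the invariance scale in the construction of $\phi_n$, so that the diagonal sequence $(e_n)$ is simultaneously an approximate unit of $A$ (which uses strict positivity of $v_n$ together with the fact that strict positivity is inherited by $h$) and asymptotically $\alpha$-invariant (which uses the Lipschitz continuity of each fixed $f_n$ combined with the estimate on $v_n$).
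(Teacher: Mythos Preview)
The paper does not prove this lemma; it is quoted from Kasparov's 1988 paper and used as a black box. So there is no ``paper's proof'' to compare against, but your proposal can still be assessed on its own.

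There is a genuine gap. Your Step~3 asks for $\phi_n \in C_c(G)_+$ with $\int_G \phi_n\,d\mu = 1$ and $\sup_{g\in K_n}\|L_g\phi_n - \phi_n\|_1 \to 0$. The existence of such approximately left-invariant probability densities is exactly Reiter's condition~$(P_1)$, which is equivalent to amenability of $G$. Since the lemma is stated for arbitrary $\sigma$-compact locally compact groups (and the paper later applies it with $G=\R$, which happens to be amenable, but the lemma as stated covers all such $G$), your argument breaks down for non-amenable groups. You flagged this step as ``the hardest''; in fact it is impossible in general.

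The argument that works without amenability is of a different nature. Start with any approximate unit $(u_n)$. For each fixed $g\in G$ one has $\alpha_g(u_n)-u_n \to 0$ \emph{weakly} in $A$: for a positive functional $\phi\in A^*_+$, both $\phi(u_n)$ and $(\phi\circ\alpha_g)(u_n)$ converge to $\|\phi\|=\|\phi\circ\alpha_g\|$. Hence for any finite set $g_1,\dots,g_m\in G$ the tuple $(\alpha_{g_i}(u_n)-u_n)_{i=1}^m$ tends to $0$ weakly in $A^m$. By Mazur's theorem the norm and weak closures of a convex set coincide, so suitable convex combinations $v_k$ of the $u_n$ satisfy $\max_i\|\alpha_{g_i}(v_k)-v_k\|\to 0$; convex combinations of approximate-unit elements remain approximate-unit elements for any prescribed finite subset of $A$. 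One then passes from finite subsets of $G$ to compact sets using norm continuity of $g\mapsto\alpha_g$, and runs a diagonal argument over a countable exhaustion of $G$ and a countable dense subset of $A$. This is essentially Kasparov's route; no invariant densities on $G$ are required.
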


We fix some terminology and notation for actions of $\R$.

\begin{Notation}
A flow on a $C^*$-algebra $A$ is a point-norm continuous action $\alpha \colon \R \to \aut(A)$, that is, we have $\alpha_{t+s}=\alpha_t\circ\alpha_s$ for all $s,t\in\R$, and for every $a \in A$, the map $t \mapsto \alpha_t(a)$ is a norm-continuous function from $\R$ to $A$. %Homomorphisms and automorphisms are always taken to mean $^{*}$-homomorphisms and $^{*}$-automorphisms. 
A topological flow on a locally compact Hausdorff space is a group homomorphism $\Phi$ from $\R$ to the homeomorphism group of $X$ that is continuous in the sense that $(t,x) \mapsto \Phi_t(x)$ is a continuous function from $\R \times X$ to $X$. The associated flow $\alpha$ on $C_0(X)$ is given by $\alpha_t (f) = f \circ \Phi_{-t}$, and the continuity requirement on $\Phi$ ensures that $\alpha$ is point-norm continuous. We say that a topological flow $\Phi$ is free if it has no periodic points, that is, the map $t\mapsto\Phi_t(x)$ is injective for every $x\in X$.  
\end{Notation} 

\begin{Notation} \label{Not:flow-conventions}
Let $A$ be a $C^*$-algebra, and let $\alpha:\R \to \aut(A)$ be a flow.
 The twisted convolution algebra is the space $L^1(\R,A)$ of equivalence classes of weakly measurable functions (that is, functions $f \colon \R \to A$ such that $\varphi \circ f$ is Borel for any $\varphi \in A^*$, such that $\|f\|_{L^1} = \int_{\R}\|f(t)\|dt < \infty$, and where we identify functions which agree except for a set of measure zero), with the convolution product given by the weak integral $f * g (t) = \int_{\R}f(s)\alpha_s(g(t-s))ds$ and involution $\widetilde{f}(t) = \alpha_t(f(-t)^*)$. 
The convolution algebra acts on the left regular representation Hilbert $A$-module $L^2(\R,A)$ by twisted convolution, as in the previous formula. The closure of those convolution operators is isomorphic to the crossed product. The compactly supported continuous $A$-valued functions $C_c(\R,A)$ form a dense subalgebra of $L^1(\R,\A)$, and therefore its image is dense in $A \rtimes_{\alpha} \R$. (It is sufficient to consider $C_c(\R,A)$ for the definition and some computations, however in order to discuss spectra of actions, one requires functions which are not compactly supported.)
 
 Let $\halpha$ be the dual action of $\widehat{\R} \cong \R$ on the crossed product $A \rtimes_{\alpha} \R$.  Takai's duality theorem states that $A \rtimes_{\alpha} \R \rtimes_{\widehat{\alpha}} \R \cong A \otimes\mathcal{K}(L^2(\R))$ and we will sometimes write $\widehat{\R}$ for the second copy of $\R$ to make it clear which copy we mean. We denote by $\sigma$ the shift flow on $C_0(\R)$ given by $\sigma_t(f)(s) = f(s-t)$ for all $f\in C_0(\R)$ and  for all $s,t\in\R$. For the induced action $\sigma \otimes \alpha  \colon \R \to \aut(C_0(\R) \otimes A)$, there is a natural isomorphism 
 $$
 (C_0(\R) \otimes A) \rtimes_{\sigma \otimes \alpha} \R \cong A \rtimes_{\alpha} \R \rtimes_{\widehat{\alpha}} \widehat{\R}.
 $$ See \cite[Lemma 7.9.2]{pedersen-book}. 
  \end{Notation}
  
  \begin{Notation}
  To simplify some of the formulas in this paper, we write  $\dimnucone(A) = \dimnuc(A)+1$ and use the analogous notation for the other various notions of dimension that appear in the paper.
  \end{Notation}

The following is a technical characterization of nuclear dimension that we will use later on; the statement is well-known and we include a proof only for convenience.

 \begin{Lemma}
 \label{Lemma:dimnuc-central-sequence}
 Let $A$ be a $C^*$-algebra, and let $d,n>0$. 
 Denote by $\iota:A \to A_{\infty}$ the canonical inclusion as constant sequences. 
 Suppose that for every finite set $\mathcal{F} \subseteq A$ and for any $\eps>0$, there exists a $C^*$-algebra $B = B_{\mathcal{F},\eps}$ with $\dimnuc(B) \leq d$, a c.p.c.~map $\varphi:A \to B$ and a family of c.p.c.~order zero maps 
 $\psi^{(0)},\psi^{(1)},\ldots,\psi^{(n)} : B \to A_{\infty}$  such that 
\[
\Bigg \|\iota(x) - \sum_{j=0}^n \psi^{(j)}(\varphi(x)) \Bigg\| \leq \eps
\]
 for all $x \in \mathcal{F}$. Then 
\[
 \dimnucone(A) \leq (d+1)(n+1).
\]
 The analogous statement with decomposition rank instead of nuclear dimension holds, if we require that furthermore \[ \Biggl\| \sum_{j=0}^n \psi^{(j)} \Biggl\| \leq 1.\] 
 \end{Lemma}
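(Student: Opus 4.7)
The plan is to chain the given approximate factorization of $\iota_A:A\to A_\infty$ through $B$ with the internal $(d+1)$-colored nuclear-dimension decomposition of $B$, and then to transport the resulting factorization from $A_\infty$ back to $A$ by post-composing with a single coordinate evaluation. The total number of colors is $(d+1)(n+1)$, indexed by pairs $(i,j)$, which yields the claimed bound on $\dimnucone(A)$.

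Fix a finite set $\mathcal{F}\subseteq A$ and $\eps>0$. Apply the hypothesis with tolerance $\eps/2$ to obtain $B$ with $\dimnuc(B)\le d$, a c.p.c.~map $\varphi:A\to B$, and c.p.c.~order zero maps $\psi^{(0)},\dots,\psi^{(n)}:B\to A_\infty$. Now invoke $\dimnuc(B)\le d$ for the finite set $\varphi(\mathcal{F})$ with tolerance $\eps/(2(n+1))$: choose a finite-dimensional algebra $F=F_0\oplus\cdots\oplus F_d$, a c.p.c.~map $\rho:B\to F$ with components $\rho_i:B\to F_i$, and c.p.c.~order zero maps $\eta^{(i)}:F_i\to B$ such that $\sum_i\eta^{(i)}\circ\rho_i$ approximates the identity on $\varphi(\mathcal{F})$. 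For each pair $(i,j)$, set $\theta^{(i,j)}:=\psi^{(j)}\circ\eta^{(i)}:F_i\to A_\infty$. Since the composition of two c.p.c.~order zero maps preserves orthogonality of positive elements (if $xy=0$ in $F_i$ then $\eta^{(i)}(x)\eta^{(i)}(y)=0$ in $B$, hence $\psi^{(j)}(\eta^{(i)}(x))\psi^{(j)}(\eta^{(i)}(y))=0$ in $A_\infty$), each $\theta^{(i,j)}$ is again c.p.c.~order zero. Combining the two estimates and using $\|\psi^{(j)}\|\le 1$ yields
\[
\Bigl\|\iota(x)-\sum_{i=0}^{d}\sum_{j=0}^{n}\theta^{(i,j)}(\rho_i(\varphi(x)))\Bigr\|\le\eps
\]
for every $x\in\mathcal{F}$: a $(d+1)(n+1)$-colored order-zero factorization through the finite-dimensional $F$, but with codomain $A_\infty$.

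To land back in $A$, lift each $\theta^{(i,j)}:F_i\to A_\infty$ to a c.p.c.~order zero map $\tilde\theta^{(i,j)}:F_i\to\ell^\infty(\N,A)$ using projectivity of c.p.c.~order zero maps from finite-dimensional domains (via the Winter--Zacharias correspondence with ${}^*$-homomorphisms from the cone $C_0((0,1],F_i)$, which is projective by Loring). Composing with the coordinate evaluations $\operatorname{ev}_k:\ell^\infty(\N,A)\to A$ gives c.p.c.~order zero maps $\theta^{(i,j)}_k:F_i\to A$. Since $\mathcal{F}$ is finite and the $A_\infty$-norm of a class is the limit superior of the coordinatewise norms of any lift, some single index $k$ can be chosen so that
\[
\Bigl\|x-\sum_{i,j}\theta^{(i,j)}_k(\rho_i(\varphi(x)))\Bigr\|\le 2\eps
\]
for all $x\in\mathcal{F}$. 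As $\mathcal{F}$ and $\eps$ were arbitrary, this gives the required $(d+1)(n+1)$-colored c.p.c.~order zero approximate factorization of $\mathrm{id}_A$ and hence $\dimnucone(A)\le(d+1)(n+1)$.

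For the decomposition rank statement, the extra hypothesis $\|\sum_j\psi^{(j)}\|\le 1$, together with $\dr(B)\le d$ (which provides the corresponding bound $\|\sum_i\eta^{(i)}\|\le 1$), yields $\|\sum_{i,j}\theta^{(i,j)}\|\le 1$ as a map $F\to A_\infty$. After lifting, the sum $\sum_{i,j}\theta^{(i,j)}_k$ is a c.p.~map $F\to A$, so its norm is attained at $1_F$; for any $\delta>0$, choosing $k$ large ensures this norm is at most $1+\delta$, and an overall rescaling by $(1+\delta)^{-1}$ restores the contractivity at negligible cost in the approximation. No step poses a serious obstacle: the two nontrivial ingredients---the closure of the class of c.p.c.~order zero maps under composition, and the projectivity of c.p.c.~order zero maps from finite-dimensional $C^*$-algebras---are standard, and the rest is careful bookkeeping of colors and tolerances.
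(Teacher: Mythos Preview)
Your proof is correct and follows essentially the same route as the paper: chain the hypothesized factorization through $B$ with a $(d+1)$-colored finite-dimensional approximation of $B$, lift the resulting c.p.c.\ order zero maps from $A_\infty$ to $\ell^\infty(\N,A)$ using projectivity of cones over finite-dimensional $C^*$-algebras, and evaluate at a suitable coordinate. The only cosmetic differences are your cleaner bookkeeping of tolerances and your rescaling trick in the decomposition-rank case (the paper instead lifts the contractive sum directly to a contractive sum).
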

 \begin{proof}
 Let $\mathcal{F}\subset A$ and $\eps>0$ be given. Choose a $C^*$-algebra $B$ with $\dimnuc(B)\leq d$ and maps $\varphi: A\to B$, $\psi^{(0)},\dots,\psi^{(n)}: B\to A_\infty$ as in the statement with $\|\iota(x) - \sum_{j=0}^n \psi^{(j)}(\varphi(x)) \| \leq \eps$ for all $x\in \mathcal{F}$. Now find a finite dimensional $C^*$-algebra $F$, a c.p.c.~map $\kappa: B\to F$ and c.p.c.~order zero maps $\mu^{(0)},\dots,\mu^{(d)}: F \to B$ with $\|\varphi(x)-\sum_{l=0}^d \mu^{(l)}(\kappa(\varphi(x)))\|\leq\eps$ for all $x\in \mathcal{F}$.
 
This implies that
 \[
 \def\arraystretch{2}
 \begin{array}{ll}
\multicolumn{2}{l}{ \displaystyle \left\|\iota(x)-\sum_{j=0}^n\sum_{l=0}^d \psi^{(j)}\circ\mu^{(l)}\circ\kappa\circ\varphi(x) \right\| } \\
\leq& \displaystyle \left\|\iota(x)- \sum_{j=0}^n \psi^{(j)}(\varphi(x)) \right\| \\
&\displaystyle +\left\| \sum_{j=0}^n \psi^{(j)}\Bigl(\varphi(x)-\sum_{l=0}^d \mu^{(l)}(\kappa(\varphi(x))) \Bigl) \right\| \\
\leq& \eps+(n+1)\eps \\
= &  (n+2)\eps.
 \end{array}
 \]
 Notice that the maps $\psi^{(j)}\circ\kappa^{(l)}: F \to A_\infty$ for $j=0,\dots,n$ and for $l=0,\dots,d$ are c.p.c.~order zero. Recall that by \cite[Remark 2.4]{kirchberg-winter}), sums of $k$ c.p.\ order zero maps from finite dimensional $C^*$-algebras can be lifted to sums of $k$ c.p.\ order zero maps for all $k\in\N$. In particular, we can find an $(n+1)(d+1)$-decomposable, completely positive lift $\Psi=(\Psi_m)_m: F \to \ell^\infty(\N, A)$ for $\sum_{j=0}^n\sum_{l=0}^d \psi_j\circ\kappa_l: F\to A_\infty$. Because it is a lift, we have
 \[
 \limsup_{m\to\infty}\|x-\Psi_m\circ\kappa\circ\varphi(x)\| \leq (n+2)\eps
 \]
for all $x\in \mathcal{F}$. In particular, we can choose some $m\in\N$ with
 \[
 \|x-\Psi_m\circ\kappa\circ\varphi(x)\| \leq (n+3)\eps
 \]
 for all $x\in \mathcal{F}$. As $\Psi_m$ is decomposable into a sum of $(n+1)(d+1)$ c.p.\ order zero maps and $\mathcal{F}$ and $\eps$ were arbitrary, this now shows $\dimnucone(A)\leq (n+1)(d+1)$.
 
For the second statement, assume that the sum $\sum_{j=0}^n \psi^{(j)}$ can always be chosen to be a contraction. If $\mathcal{F}$ and $\eps$ are arbitrary, choose $B$ as above with $\dr(B)\leq d$. Then the sum $\sum_{l=0}^d \mu^{(l)}$ from above can be chosen to be a contraction, and then the map $\sum_{j=0}^n\sum_{l=0}^d \psi^{(j)}\circ\kappa^{(l)}$ is also a contraction. In this case, the lift $\Psi=(\Psi_m)_m$ can also be chosen to consist of contractions, thus leading to $\drone(A)\leq(n+1)(d+1)$ with the same argument.
 \end{proof}

%%%%%%%%%%%%%%%%%%%%%%%%%%%%%%%%%%%%%%

\section{Rokhlin flows and Rokhlin dimension}

\noindent 
We recall the definition of a Rokhlin flow from \cite{Kishimoto-flows}.

\begin{Def}
Let $A$ be a separable, unital $C^*$-algebra, and let $\alpha :\R \to \aut(A)$ be a flow. We say that $\alpha$ has the \emph{Rokhlin property}, or is a \emph{Rokhlin flow}, if for any $p \in \R$, there exists a unitary $v \in \Ainfa \cap A'$ such that $\alpha_{\infty,t}(v) = e^{ipt}v$ for all $t \in \R$.
\end{Def}
 
\begin{Rmk} \label{Rmk:rp-reform}
Fix $M>0$, and let $\lambda_t$ be the $\R$-shift on $C(\R/M\Z)$ given by $\lambda_t(f)(x) = f(x-t)$. Of course, for different $M$, the notation $\lambda$ means something else. We will suppress the $M$ to lighten notation, as most of the arguments will involve a fixed $M$. Note that the Rokhlin property can be phrased as follows. The flow $\alpha$ has the Rokhlin property if and only if for any $M>0$, there exists a unital $\lambda - \alpha_\infty$ equivariant $^{*}$-homomorphism $C(\R/M\Z) \to \Ainfa \cap A'$.
\end{Rmk}

There are interesting and important examples of Rokhlin flows; see \cite{Kishimoto-flows} for flows on noncommutative tori, and \cite{Kishimoto-flows-O2, Kishimoto-shift, BratelliKishimotoRobinson} for flows on Cuntz algebras. At the same time, there are $K$-theoretic obstructions to having Rokhlin flows on $C^*$-algebras, and thus they are less common than single automorphisms with the Rokhlin property. This may not be so surprising because the definition of the Rokhlin property for flows can be thought of as an analogue of the definition of the cyclic Rokhlin property for a single automorphism, in which the Rokhlin projections consist of one cyclic tower of any prescribed height. This restricted form of the Rokhlin property for single automorphisms does come with severe $K$-theoretic obstructions, e.g.~the $K_0$-group has to be divisible in certain cases. For the case of Rokhlin flows, there is a more subtle obstruction. This was observed in a remark on the top of page 600 of \cite{Kishimoto-flows}. We establish an obstruction of this type here. We first require a lemma concerning the existence of an unbounded trace on crossed products. We recall that an unbounded, densely defined trace $\tau$ on $A$ is said to be faithful if $\tau(a)>0$ for any positive nonzero element $a$ in the domain of $\tau$.

\begin{Lemma}
\label{Lemma:unbounded-trace} 
Let $A$ be a unital  $C^*$-algebra that admits a tracial state. Let $\alpha \colon \R \to \aut(A)$ be a flow on $A$. If $A \rtimes_{\alpha} \R$ is simple, then $A\rtimes_{\alpha} \R$ has a faithful, densely defined unbounded trace. 
\end{Lemma}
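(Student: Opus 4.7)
The plan is to first average to produce an $\alpha$-invariant tracial state on $A$, then apply Pedersen's dual weight construction to lift it to a densely defined lower semicontinuous trace on $A\rtimes_\alpha\R$, and finally deduce faithfulness from simplicity.

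First, observe that the tracial state space $T(A)$ is a non-empty, weak-$*$ compact, convex subset of $A^{*}$, and the flow $\alpha$ acts on it continuously and affinely via $(\alpha_{t}\cdot\tau)(a)=\tau(\alpha_{-t}(a))$. Since $\R$ is amenable, the Day-Kakutani-Markov fixed point theorem produces an $\alpha$-invariant tracial state $\tau$ on $A$.

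Second, I would construct the dual trace $\tilde{\tau}$ on $A\rtimes_{\alpha}\R$. The natural candidate, defined on the dense subalgebra $C_{c}(\R,A)$, is given by the Plancherel-type formula
\[
\tilde{\tau}(x^{*}x) \;=\; \int_{\R}\tau\bigl(x(t)^{*}x(t)\bigr)\,dt
\]
for $x\in C_{c}(\R,A)$. A short computation using the convolution formulas in Notation~\ref{Not:flow-conventions} together with $\alpha$-invariance of $\tau$ shows this is well-defined. Pedersen's dual weight machinery (cf.\ \cite[Ch.\ 7]{pedersen-book}) then extends $\tilde{\tau}$ to a densely defined lower semicontinuous weight on $A\rtimes_{\alpha}\R$; the key point is that invariance of $\tau$ makes the associated modular automorphism group trivial, so that $\tilde{\tau}$ is actually a trace. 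This step is the main technical hurdle, but it is essentially bookkeeping relative to Pedersen's construction.

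Third, I would use simplicity to upgrade from a nontrivial densely defined trace to a faithful one. Let $L = \{y\in A\rtimes_{\alpha}\R : \tilde{\tau}(y^{*}y)=0\}$. For $y\in L$ and $a\in A\rtimes_{\alpha}\R$ one has $\tilde{\tau}(y^{*}a^{*}ay)=\tilde{\tau}(ayy^{*}a^{*})\leq\|a\|^{2}\tilde{\tau}(yy^{*})=\|a\|^{2}\tilde{\tau}(y^{*}y)=0$ by the trace property, so $L$ is a left ideal; by traciality $L$ is also $*$-closed, hence a two-sided ideal, and lower semicontinuity shows it is norm-closed. Since $\tilde{\tau}$ takes a finite nonzero value on a suitable positive element of $C_{c}(\R,A)$, one has $L\neq A\rtimes_{\alpha}\R$, and simplicity forces $L=0$. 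Faithfulness now follows: if $a\geq 0$ lies in the domain of $\tilde{\tau}$ with $\tilde{\tau}(a)=0$, then $a^{1/2}\in L=0$, whence $a=0$. This produces the desired faithful, densely defined, unbounded trace on $A\rtimes_{\alpha}\R$.
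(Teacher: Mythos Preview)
Your approach is essentially the paper's: obtain an $\alpha$-invariant tracial state $\tau$, lift it to a densely defined trace on the crossed product, and use simplicity for faithfulness. The paper writes the lift explicitly as $\zeta(f)=\tau(f(0))$ on $C_c(\R,A)$ and checks the trace identity by hand, whereas you invoke Pedersen's dual weight; a short computation of $(x^**x)(0)$ shows these agree, so the difference is only in packaging.

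Two small points. In your faithfulness step the inequality $\tilde\tau(ayy^*a^*)\le\|a\|^2\tilde\tau(yy^*)$ is not backed by an operator inequality; the detour through the trace property is unnecessary, since $y^*a^*ay\le\|a\|^2y^*y$ gives $\tilde\tau(y^*a^*ay)\le\|a\|^2\tilde\tau(y^*y)$ directly. You also never verify that $\tilde\tau$ is unbounded; the paper handles this by noting that on $C_c(\R)\subset C_c(\R,A)$ the map is point evaluation at $0$, which is unbounded for the $C^*$-norm on $C^*(\R)$.
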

\begin{proof}
For any $\tau \in T(A)$, and any $a \in A$, the map $t \mapsto \tau(\alpha_t(a))$ is continuous. For every $M>0$, we can thus make sense of a Riemann integral 
\[
\tau_M(a) = \frac{1}{2M} \int_{-M}^M \tau(\alpha_t(a))~dt. 
\]
Being an average of tracial states, $\tau_M$ is a tracial state as well.
 Starting with any given trace, since $A$ is unital, any weak-$*$ cluster point of the family $(\tau_n)_{n \in \N}$ is an $\alpha$-invariant tracial state.
 
 Now, assume that $\tau$ is an $\alpha$-invariant tracial state on $A$. Consider the twisted convolution algebra $C_c(\R,A)$, with product 
 $$
 f * g (t) = \int_{\R}f(s)\alpha_s(g(t-s))ds .
 $$ 
 By definition, this is a dense subalgebra of the crossed product $A \rtimes_{\alpha} \R$. For $f \in C_c(\R,A)$, we define $\zeta \colon C_c(\R,A) \to \C$ by 
 $$
 \zeta(f) = \tau(f(0))  .
 $$ 
 We check that $\tau$ is an unbounded trace.
 Using the fact that $\tau$ is a trace in the third row, and that $\tau$ is $\alpha$-invariant in the fifth, we see that, for $f,g \in C_c(\R,A)$,
 \begin{align*}
 \zeta(f * g) & = \tau \left ( \int_{\R} f(s) \alpha_s(g(-s)) ds \right )  \\
 		& =  \int_{\R} \tau \left ( f(s) \alpha_s(g(-s)) \right ) ds  \\
 		& =  \int_{\R} \tau \left (  \alpha_s(g(-s)) f(s) \right ) ds  \\
 		& =  \int_{\R} \tau \left (  \alpha_{-x}(g(x)) f(-x) \right ) dx  \\ 
 	%	& =  \int_{\R} \tau \left (  \alpha_{-x}((g(x)) \alpha_x(f(-x)) \right ) dx  \\
 		& =  \int_{\R} \tau \left (  (g(x)) \alpha_x(f(-x)) \right ) dx  = \zeta(g * f).
 \end{align*}
 Furthermore, noting that the adjoint of $f$ is given by $\widetilde{f}(x) = \alpha_{x}(f(-x)^*)$, we have
 \begin{align*}
 \zeta(f * \widetilde{f}) & =\int_{\R} \tau \left ( f(s) \alpha_s(\widetilde{f}(-s)) \right ) ds  \\
 & = \int_{\R} \tau \left ( f(s) f(s)^* \right ) \geq 0
 \end{align*}
 and thus $\tau$ is positive. That $\zeta$ is unbounded follows from the fact that $\|f\|_{L^1} \geq \|f\|_{A \rtimes_{\alpha}\R}$, and the restriction of $\zeta$ to $C_c(\R,\C) \subseteq C_c(\R,A)$ is just a point evaluation, which is well-known to be unbounded in the $L^1$-norm.

Since $A\rtimes_{\alpha} \R$ is simple, this trace is faithful. To see that, pick a nonzero positive element $a \in A \rtimes_{\alpha}\R$ such that $\infty>d_{\tau}(a)>0$, recalling $d_{\tau}(a) = \lim_{n \to \infty} \tau(a^{1/n})$. Then the restriction of $\tau$ to the hereditary subalgebra generated by $a$ is bounded and hence faithful. Any nonzero positive element $b$ in the domain of $\tau$ is dominated by a positive element $a$
in the domain with $\tau(a)>0$. (Pick any positive element $c$ such that $\tau(c)>0$ and set $a = b+c$.) Therefore $\tau(b)>0$ as well.
\end{proof}

\begin{Prop}
\label{prop:obstruction}
Let $A$ be a separable, unital $C^*$-algebra. Suppose that $\alpha$ is a flow on $A$ such that $A \rtimes_{\alpha} \R$ is simple.
If $\alpha$ is a Rokhlin flow, then $A\rtimes_{\alpha} \R$ has a nontrivial projection. If $A$ furthermore admits a tracial state, then $K_1(A)$ is nontrivial.
\end{Prop}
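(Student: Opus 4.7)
The plan is to leverage the Rokhlin unitary to construct an equivariant embedding of a circle algebra into the central sequence algebra, push it through the crossed product construction to a sequence algebra over $A \rtimes_\alpha \R$, and exhibit a nonzero projection therein that lifts to $A \rtimes_\alpha \R$ itself.

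First I would fix $p>0$ and apply the Rokhlin property to obtain a unitary $v \in A_\infty^{(\alpha)} \cap A'$ with $\alpha_{\infty,t}(v)=e^{ipt}v$. Since $\alpha_{\infty,t}$ preserves $\sigma(v)$ while also sending it to $e^{ipt}\sigma(v)$, the spectrum of $v$ is a nonempty rotation-invariant closed subset of $\T$, hence equals $\T$. With $M=2\pi/p$, this yields a unital injective equivariant ${}^*$-homomorphism $\phi\colon (C(\R/M\Z),\sigma) \to (A_\infty^{(\alpha)} \cap A',\alpha_\infty)$, as in Remark~\ref{Rmk:rp-reform}. Because the range of $\phi$ commutes with $A$, pairing $\phi$ with the constant embedding $\iota_A\colon A\to A_\infty^{(\alpha)}$ gives a $(\sigma\otimes\alpha)$-$\alpha_\infty$-equivariant ${}^*$-homomorphism $C(\R/M\Z)\otimes A \to A_\infty^{(\alpha)}$. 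Taking crossed products and composing with $\Phi$ from Lemma~\ref{Rmk:cont-part-crossed-product-embeds} produces
\[
\Xi \colon (C(\R/M\Z)\otimes A)\rtimes_{\sigma\otimes\alpha}\R \longrightarrow (A\rtimes_\alpha\R)_\infty.
\]

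Next I would invoke Green's imprimitivity theorem for the $\R$-action on $\R/M\Z$ with stabilizer $M\Z$: the source of $\Xi$ is stably isomorphic to $A\rtimes_{\alpha_M}M\Z$, which is unital because $A$ is. This yields a nonzero projection $P$ in the source whose corner is isomorphic to $A\rtimes_{\alpha_M}M\Z$. The crux is to show $\Xi(P)\neq 0$. I would lift $v$ to a sequence $(v_n)$ of unitaries in $A$ with $\|\alpha_t(v_n)-e^{ipt}v_n\|\to 0$ uniformly on compact sets of $t$ and $\|[v_n,a]\|\to 0$ for $a\in A$. Each $v_n$ gives an approximately equivariant ${}^*$-homomorphism $\phi_n\otimes \mathrm{id}_A$ from $C(\R/M\Z)\otimes A$ into $A$, and the associated sequence $(\Xi_n(P))_n\subset A\rtimes_\alpha\R$ represents $\Xi(P)$. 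Amenability of $\R$ (so full equals reduced and injectivity persists through crossed products) together with injectivity of $\phi$ should force the exact limit ${}^*$-homomorphism to be injective on the source, so that $\liminf_n\|\Xi_n(P)\|\geq\|P\|>0$, giving $\Xi(P)\neq 0$. A representing sequence of self-adjoint near-projections then lifts by functional calculus to an honest nonzero projection $p\in A\rtimes_\alpha\R$.

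For the second claim, a tracial state on $A$ produces, via Lemma~\ref{Lemma:unbounded-trace}, a faithful densely defined lower semicontinuous trace $\zeta$ on the simple crossed product $A\rtimes_\alpha\R$. The projection $p$ constructed above lies in the Pedersen ideal of $A\rtimes_\alpha\R$, so $\zeta(p)\in(0,\infty)$, giving $[p]\neq 0$ in $K_0(A\rtimes_\alpha\R)$. Connes' Thom isomorphism $K_0(A\rtimes_\alpha\R)\cong K_1(A)$ then yields $K_1(A)\neq 0$. The hard part will be the nonvanishing $\Xi(P)\neq 0$: while amenability of $\R$ handles injectivity of the crossed product of $\phi\otimes\iota_A$, the map $\Phi$ from Lemma~\ref{Rmk:cont-part-crossed-product-embeds} is not injective in general, so the argument must exploit the central-sequence structure of $\phi(C(\R/M\Z))\subset F_\infty^{(\alpha)}(A)$ together with approximate implementability of the Rokhlin relation at finite stages to rule out norm collapse at the sequence level.
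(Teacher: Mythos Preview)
Your strategy is essentially the paper's, but you have made it harder than necessary by tensoring with $A$ before forming the crossed product. The paper works directly with the unital equivariant $^*$-homomorphism $\varphi\colon C(\R/\Z)\to A_\infty^{(\alpha)}$ (no tensor with $A$), takes crossed products, and composes with the map $\Phi$ of Lemma~\ref{Rmk:cont-part-crossed-product-embeds} to get $\pi\colon C(\R/\Z)\rtimes_\lambda\R\to (A\rtimes_\alpha\R)_\infty$. Here Green's imprimitivity gives $C(\R/\Z)\rtimes_\lambda\R\cong C(\T)\otimes\K$, and $\pi$ is nonzero because $\varphi$ is unital: for $f\in C_c(\R)$ one has $\pi(1\cdot\lambda(f))=\iota_{A\rtimes_\alpha\R}(1_A\cdot\lambda(f))\neq 0$. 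Since ideals of $C(\T)\otimes\K$ have the form $C_0(U)\otimes\K$, the projection $1\otimes e_{11}$ survives any nonzero quotient, so $\pi$ carries it to a nonzero projection in $(A\rtimes_\alpha\R)_\infty$, and stability of the projection relation finishes.

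Your route is not wrong, but the ``hard part'' you flag is not actually hard, and your proposed fix (approximate implementation at finite stages, ruling out norm collapse) is both vague and unnecessary. The imprimitivity projection $P$ you chose is \emph{full} in $(C(\R/M\Z)\otimes A)\rtimes\R$ (that is what Morita equivalence to the corner means), so $\Xi(P)=0$ would force $\Xi=0$; but $\Xi$ is nonzero for the same reason $\pi$ is, since your map restricted to $(1\otimes 1_A)\cdot\lambda(f)$ again hits $\iota_{A\rtimes_\alpha\R}(1_A\cdot\lambda(f))$. So the gap closes in one line. That said, the detour through $A\rtimes_{\alpha_M}M\Z$ buys you nothing here; the paper's choice of the commutative coefficient algebra $C(\R/\Z)$ alone is what makes the projection visible with no ideal-structure analysis at all. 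Your treatment of the second claim (faithful lower semicontinuous trace plus Connes' Thom isomorphism) matches the paper.
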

\begin{proof}
Let $\alpha \colon \R \to \aut(A)$ be a flow as given in the statement. Let $v\in\Ainfa \cap A'$ be a unitary with $\alpha_{\infty,t}(v)=e^{2\pi it}v$ for all $t\in\R$.
Then there exists a $\lambda - \alpha_\infty$  equivariant unital $^{*}$-homomorphism $\varphi \colon C(\R/\Z) \to \Ainfa \cap A'$, which identifies $C(\R/\Z)$ with $C^*(v)$. (Note that since $\alpha_{\infty , t}(v) = e^{2\pi i t}v$ for all $t$, and automorphisms preserve spectra, the spectrum of $v$ is necessarily the full unit circle.) By Lemma~\ref{Rmk:cont-part-crossed-product-embeds}, this in turn induces a nonzero $^{*}$-homomorphism $\pi \colon C(\R/\Z) \rtimes_{\lambda} \R \to \Ainfa \rtimes_{\alpha_\infty} \R \to (A \rtimes_{\alpha} \R)_{\infty}$. 

By  Green's Imprimitivity Theorem, we have an isomorphism $C(\R/\Z) \rtimes_{\lambda} \R \cong C(\T) \otimes\mathcal{K}$ (one can compute this directly as well). 

Thus, there exists a nonzero projection in 
$\pi(C(\R/\Z) \rtimes_{\lambda} \R) \subseteq (A \rtimes_{\alpha} \R)_{\infty}$. Since being a projection is a stable relation, it follows that there exists a nonzero projection $p \in A \rtimes_{\alpha} \R$. 

If $A$ is furthermore assumed to admit a tracial state, then by Lemma~\ref{Lemma:unbounded-trace}, $A \rtimes_{\alpha} \R$ admits a densely defined, faithful, unbounded trace. By using the pairing between traces and the $K_0$-group, we therefore get $K_0(A \rtimes_{\alpha}\R) \neq 0$. By Connes' analogue of the Thom isomorphism, we have $K_1(A) \cong K_0(A \rtimes_{\alpha}\R)$, and in particular, we have $K_1(A) \neq 0$.
\end{proof}

\begin{Exl}
\label{Example:two-spheres}
Suppose that $\Phi$ is a smooth, minimal flow on a smooth compact manifold $M$ with $H^1(M ; \Z) = 0$. Let $\alpha$ be the induced flow on $C(M)$ and $\tau$ a densely defined, unbounded trace on $C(M)\rtimes_{\alpha} \R$ induced by an invariant probability measure on $M$. By \cite[Corollary 2]{connes}, the image of the pairing of $\tau$ and $K_0(C(M)\rtimes_{\alpha}\R)$ is trivial. Since there exist invariant probability measures on $M$, and $C(M)\rtimes_{\alpha}\R$ is simple, any such trace $\tau$ is faithful. Therefore, $C(M)\rtimes_{\alpha}\R$ is stably projectionless. In particular, $\alpha$ is not a Rokhlin flow. 
Examples of manifolds $M$ with those properties include products of two odd spheres $M = S^n \times S^m$ where $n,m$ are odd numbers greater than 1, as those admit free actions of $\T^2$; see \cite[Theorem 2]{fathi-herman}. 
\end{Exl}

We now turn to our main definition.

 \begin{Def}
 \label{Def:dimrok}
 Let $A$ be a separable $C^*$-algebra with a flow $\alpha: \R\to\aut(A)$. The \emph{Rokhlin dimension} of $\alpha$ is the smallest natural number $d\in\N$, such that the following holds: for every $p\in \R$, there exist normal contractions $x^{(0)},x^{(1)},\ldots,x^{(d)}\in F_\infty^{(\alpha)}(A)$ with $x^{(0)*}x^{(0)}+\dots+x^{(d)*}x^{(d)}=1$ and $\tilde{\alpha}_{\infty,t}(x^{(j)})=e^{ipt}x^{(j)}$ for all $t\in\R$ and for $j=0,\dots,d$. In this case, we write $\dimrok(\alpha)=d$. If no such number exists, we say that the Rokhlin dimension is infinite.
 \end{Def}

  \begin{Rmk}
  In Definition~\ref{Def:dimrok}, one could just as well only require the given condition for $p>0$. For $p=0$ the condition always holds by taking $x^{(0)} = 1$ and $x^{(j)}=0$ for $j>0$. If $x^{(0)},\ldots,x^{(d)}$ satisfy the conditions for a given $p$, then $x^{(0)*},\ldots,x^{(d)*}$ satisfy the required conditions with $-p$ instead of $p$.
  \end{Rmk}

We list some more easy variations of Definition~\ref{Def:dimrok}.

  \begin{Lemma}
  \label{Lemma:def-dimrok-lift}
  Let $A$ be a separable $C^*$-algebra with a flow $\alpha: \R\to\aut(A)$ and let $d\in\N$. The following are equivalent:
  \begin{enumerate}[leftmargin=*]
  \item The action $\alpha$ has Rokhlin dimension at most $d$. 
  \label{Lemma:def-dimrok-lift-item-1}
  \item 
  \label{Lemma:def-dimrok-lift-item-2}
  For any $p \in \R$, there are contractions $x^{(0)},x^{(1)},\ldots,x^{(d)}\in A_\infty \cap A'$ such that
  	\begin{enumerate}
  	\item $x^{(j)}x^{(j)*}a = x^{(j)*}x^{(j)}a$ for all $j=0,1,\ldots,d$ and for all $a	\in A$.
  	\item $\sum_{j=0}^d x^{(j)}x^{(j)*}a = a$ for all $a \in A$.
  	\item $\alpha_{\infty,t}(x^{(j)})a = e^{ipt}x^{(j)}a$ for all $j$, for all $t \in \R$ and for all $a \in A$.
  	\end{enumerate}
  \item
  \label{Lemma:def-dimrok-lift-item-3}
   For any $p\in\R$ and for any separable $C^*$-subalgebra $E \subset A_\infty$, there are contractions $x^{(0)},\ldots,x^{(d)}\in A_	\infty \cap E'$ such that
 	\begin{enumerate}
 	\item $x^{(j)}x^{(j)*}a = x^{(j)*}x^{(j)}a$ for all $j=0,1,\ldots,d$ and for all $a \in E$.
 	\item $\sum_{j=0}^d x^{(j)}x^{(j)*}a = a$ for all $a \in E$.
 	\item $\alpha_{\infty,t}(x^{(j)})a = e^{ipt}x^{(j)}a$ for all $j$, for all $t \in \R$ and for all $a \in E$.
 	\end{enumerate}
  \item 
  \label{Lemma:def-dimrok-lift-item-4}
  Condition~(\ref{Lemma:def-dimrok-lift-item-2}) holds when we furthermore require that $x^{(0)},\dots,x^{(d)}$ are in the subalgebra $\Ainfa\cap A'$.
  \item
  \label{alternativedefinitionofRokhlindimensionforflows}
  For any $ p, T, \delta > 0 $ and any finite set $\mathcal{F} \subset A$, there are contractions $ x^{(0)}, \dots, x^{(d)} \in A $ satisfying:
   \begin{enumerate}
    \item 
    \label{Lemma:def-dimrok-lift-item-5a} 
    $ \left\| a (\alpha_{t}( x^{(l)} ) - e^{ipt} \cdot x^{(l)}) \right\| \le \delta $ for all $ l = 0, \dots, d $, for all $t \in [ -T, T] $ and for $a \in \mathcal{F}$.
    \item 
    \label{Lemma:def-dimrok-lift-item-5b}
    $ \left\| a-a\cdot\sum_{l= 0 }^{d} x^{(l)} x^{(l)*} \right\| \le \delta  $ for all $ a \in \mathcal{F}$.
    \item 
    \label{Lemma:def-dimrok-lift-item-5c}
    $ \left\| [ x^{(l)} , a ] \right\| \le \delta  $ for all $ l = 0, \dots, d $ and $ a \in \mathcal{F}$.
    \item 
    \label{Lemma:def-dimrok-lift-item-5d}
    $ \left\| a  [ x^{(l)} , x^{(l)*} ] \right\| \le \delta  $ for all $ l = 0, \dots, d $ and $ a \in \mathcal{F}$.
   \end{enumerate}
  In fact, it suffices to consider $ T = 1 $ or any other positive number. Moreover, it is enough to verify this condition for finite sets $ \mathcal{F} $ from a prescribed dense subset of $ A_{\le 1} $. 
  \end{enumerate}
If $A$ is unital, one  can simplify conditions~(\ref{Lemma:def-dimrok-lift-item-2}), (\ref{Lemma:def-dimrok-lift-item-3}) and (\ref{Lemma:def-dimrok-lift-item-5a}), (\ref{Lemma:def-dimrok-lift-item-5b}), (\ref{Lemma:def-dimrok-lift-item-5d}), as it suffices to consider $a=1$.
  \end{Lemma}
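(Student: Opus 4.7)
The plan is to establish a cycle (4) $\Rightarrow$ (2) $\Rightarrow$ (1) $\Rightarrow$ (5) $\Rightarrow$ (4), with (2) $\Leftrightarrow$ (3) handled separately by a reindexing argument. Three of the implications are essentially formal. First, (4) $\Rightarrow$ (2) is just the inclusion $\Ainfa \cap A' \subseteq A_\infty \cap A'$, and (3) $\Rightarrow$ (2) takes $E = A$. For (2) $\Rightarrow$ (1), I apply the quotient map $\pi \colon A_\infty \cap A' \to F_\infty(A)$: conditions (a)-(c) of (2) become exactly the assertions that the $\pi(x^{(j)})$ are normal contractions with $\sum \pi(x^{(j)})^* \pi(x^{(j)}) = 1_{F_\infty(A)}$ and $\tilde{\alpha}_{\infty,t}(\pi(x^{(j)})) = e^{ipt} \pi(x^{(j)})$ as exact equations in $F_\infty(A)$; the latter relation visibly forces each $\pi(x^{(j)})$ to lie in $F_\infty^{(\alpha)}(A)$. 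The converse (2) $\Rightarrow$ (3) proceeds by the standard reindexing (or Kirchberg $\eps$-test) argument: the separability of $E$ reduces commutation with $E$ to countably many conditions on a dense sequence, and applying (2) to an exhausting sequence of finite subsets of $A$ enlarged by such a dense sequence, together with a diagonal selection, yields elements of $A_\infty \cap E'$ with the required relations.

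For (1) $\Rightarrow$ (5), I pick any lift $y^{(j)} \in A_\infty \cap A'$ of $x^{(j)}$ and represent it by a bounded sequence $(y^{(j)}_n)_n$ in $A$. The quotient conditions in $F_\infty(A)$ unwrap directly into the asymptotic statements (5)(b)-(d) on finite subsets $\mathcal{F} \subset A$. For the flow condition, the relation $\tilde{\alpha}_{\infty,t}(x^{(j)}) = e^{ipt} x^{(j)}$ in $F_\infty^{(\alpha)}(A)$ translates into $\alpha_{\infty,t}(y^{(j)}) - e^{ipt} y^{(j)} \in \operatorname{Ann}(A, A_\infty)$ for each fixed $t$, which at the level of sequences yields (5)(a) for that particular $t$. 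A standard $\eps/3$-argument, using compactness of $[-T, T]$ and the norm-continuity of $t \mapsto \alpha_t(a)$ for $a \in A$, then promotes this to the required uniform estimate over $[-T, T]$.

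The heart of the argument is (5) $\Rightarrow$ (4), where the obstacle is upgrading an element of $A_\infty \cap A'$ to one of $\Ainfa \cap A'$: condition (5)(a) as it stands only gives continuity of $t \mapsto \alpha_{\infty,t}(x^{(j)}) a$ for each $a \in A$, but not norm-continuity of $t \mapsto \alpha_{\infty,t}(x^{(j)})$ itself. Fix an exhausting sequence $\mathcal{F}_n$ of finite subsets of a dense subset of $A_{\leq 1}$, parameters $\delta_n \searrow 0$ and $T_n \nearrow \infty$, and apply (5) to obtain contractions $x^{(j)}_n \in A$ realizing the approximate conditions. To force uniform flow-continuity, I replace $x^{(j)}_n$ by $e_{k(n)}^{1/2} x^{(j)}_n e_{k(n)}^{1/2}$, where $(e_k)$ is an approximately $\alpha$-invariant approximate unit furnished by Lemma~\ref{Lemma:invariant-approx-unit}, and $k(n)$ is chosen large enough so that $e_{k(n)}$ acts approximately as a unit on $\mathcal{F}_n \cup \{x^{(j)}_n\}$ and such that $\sup_{|t| \leq T_n} \|\alpha_t(e_{k(n)}) - e_{k(n)}\| \leq \delta_n$. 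A direct estimate then shows that
\[
\bigl\|\alpha_t\bigl(e_{k(n)}^{1/2} x^{(j)}_n e_{k(n)}^{1/2}\bigr) - e^{ipt}\, e_{k(n)}^{1/2} x^{(j)}_n e_{k(n)}^{1/2}\bigr\|
\]
is small uniformly in $t \in [-T_n, T_n]$: the invariance of $e_{k(n)}$ absorbs the outer factors, while (5)(a) applied with $a = e_{k(n)}^{1/2}$ controls the inner one. The sequence $\bigl(e_{k(n)}^{1/2} x^{(j)}_n e_{k(n)}^{1/2}\bigr)_n$ therefore represents an element of $\Ainfa \cap A'$ satisfying the exact eigenvalue relation $\alpha_{\infty,t}(x^{(j)}) = e^{ipt} x^{(j)}$, and the other conditions of (2) are preserved in the limit, giving (4). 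Finally, the simplification in the unital case is immediate: one can take $a = 1$ throughout, since the general $a$-versions of the conditions are recovered by right-multiplication by $a$.
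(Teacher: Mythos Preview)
Your cycle and the paper's are organized a bit differently, but both hinge on the same nontrivial step: producing elements in the \emph{continuous} part $A_\infty^{(\alpha)}\cap A'$ from ones that only satisfy the eigenvalue relation modulo $\operatorname{Ann}(A,A_\infty)$. The paper does this as $(3)\Rightarrow(4)$ at the level of $A_\infty$: it first fixes an $\alpha_\infty$-invariant element $e\in A_\infty$ that acts as a unit on $A$, then applies (3) with the \emph{larger} algebra $E=C^*(A\cup\{e\})$, and finally replaces $x^{(j)}$ by $x^{(j)}e$. The point is that $e$ is chosen \emph{before} the Rokhlin elements, so the approximate eigenvector estimate is available with $e$ in the role of the test element.

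Your $(5)\Rightarrow(4)$ has a genuine gap precisely at this spot. You pick $x^{(j)}_n$ first, then choose $e_{k(n)}$ afterwards to be an approximate unit for $\mathcal{F}_n\cup\{x^{(j)}_n\}$, and then invoke ``(5)(a) applied with $a=e_{k(n)}^{1/2}$''. But $e_{k(n)}^{1/2}\notin\mathcal{F}_n$, so (5)(a) gives you nothing for this $a$. Worse, since $e_{k(n)}$ is by design an approximate unit for $x^{(j)}_n$, the compression $e_{k(n)}^{1/2}x^{(j)}_n e_{k(n)}^{1/2}$ is close to $x^{(j)}_n$ itself, and hence
\[
\bigl\|\alpha_t\bigl(e_{k(n)}^{1/2}x^{(j)}_n e_{k(n)}^{1/2}\bigr)-e^{ipt}e_{k(n)}^{1/2}x^{(j)}_n e_{k(n)}^{1/2}\bigr\|
\approx \bigl\|\alpha_t(x^{(j)}_n)-e^{ipt}x^{(j)}_n\bigr\|,
\]
which you have no control over without a cutoff on the left. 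The fix is to reverse the order of choices: first take an approximately $\alpha$-invariant approximate unit $(e_n)$ as in Lemma~\ref{Lemma:invariant-approx-unit}, \emph{include $e_n$ in $\mathcal{F}_n$}, and only then apply (5) to obtain $x^{(j)}_n$. Now (5)(a) with $a=e_n$ is legitimate, and setting $z^{(j)}_n:=e_n x^{(j)}_n$ one gets
\[
\bigl\|\alpha_t(z^{(j)}_n)-e^{ipt}z^{(j)}_n\bigr\|\le\|\alpha_t(e_n)-e_n\|+\bigl\|e_n(\alpha_t(x^{(j)}_n)-e^{ipt}x^{(j)}_n)\bigr\|\le 2\delta_n
\]
for $|t|\le T_n$, so $z^{(j)}\in A_\infty^{(\alpha)}$. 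Conditions (2)(a)--(c) survive because (5)(c) with $a=e_n$ gives $[x^{(j)}_n,e_n]\to 0$. This is exactly the sequence-level translation of the paper's $(3)\Rightarrow(4)$; the reindexation in (3) is what allows one to throw $e$ into the commutant test set before choosing the Rokhlin elements.

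A smaller remark on your $(1)\Rightarrow(5)$: the passage from pointwise-in-$t$ to uniform-in-$t$ is more delicate than a bare $\eps/3$ argument with continuity of $t\mapsto\alpha_t(a)$; one is ultimately reduced to controlling $\|(\alpha_r-1)(y_n b)\|$ for small $r$ uniformly in large $n$, which amounts to the asymptotic equicontinuity of $(y_n b)_n$. This follows because $yb\in A_\infty^{(\alpha)}$ (as you observe implicitly) together with Remark~\ref{continuous seq} (Brown's theorem), but it is worth making that dependence explicit.
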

\begin{proof}
$(\ref{Lemma:def-dimrok-lift-item-1}) \Longleftrightarrow (\ref{alternativedefinitionofRokhlindimensionforflows})$: This is straightforward, by unraveling the definition in terms of representing bounded sequences of elements in the central sequence algebra.

$(\ref{Lemma:def-dimrok-lift-item-1})\implies (\ref{Lemma:def-dimrok-lift-item-2})$: This follows directly by lifting the elements $x^{(0)},\ldots,x^{(d)}$ that appear in Definition~\ref{Def:dimrok} to elements in $A_\infty$.

$(\ref{Lemma:def-dimrok-lift-item-2})\implies (\ref{Lemma:def-dimrok-lift-item-3})$: This follows from a standard reindexation argument.

$(\ref{Lemma:def-dimrok-lift-item-3})\implies (\ref{Lemma:def-dimrok-lift-item-4})$: Apply Lemma~\ref{Lemma:invariant-approx-unit} to choose a positive contraction $e\in A_\infty$ that is fixed under $\alpha_\infty$ and satisfies $ea=ae=a$ for all $a\in A$. Let $p\in\R$. Apply (\ref{Lemma:def-dimrok-lift-item-3}) to $E=C^*(A\cup\{e\})$ and choose $x^{(0)},\dots,x^{(d)}\in A_\infty\cap E'$ accordingly. For all $j=0,\dots,d$, the products $y^{(j)}=x^{(j)}e$ yield elements in $A_\infty\cap A'$ satisfying all the properties of (\ref{Lemma:def-dimrok-lift-item-2}) for $p$. Moreover, we have
\[
\alpha_{\infty,t}(y^{(j)})=\alpha_{\infty,t}(x^{(j)}e) = \alpha_{\infty,t}(x^{(j)})e = e^{ipt}x^{(j)}e = e^{ipt}y^{(j)}.
\] 
In particular, we have $y^{(j)}\in\Ainfa\cap A'$ by appealing to Remark~\ref{continuous seq}.

$(\ref{Lemma:def-dimrok-lift-item-4})\implies (\ref{Lemma:def-dimrok-lift-item-1})$: Having chosen $x^{(0)},\dots,x^{(d)}\in\Ainfa\cap A'$ with the given properties, consider the corresponding elements $y^{(j)}=x^{(j)}+\operatorname{Ann}(A,A_\infty)\in F_\infty^{(\alpha)}(A)$ for $j=0,\dots,d$. These will satisfy the properties required by Definition~\ref{Def:dimrok}.
\end{proof}
 
 \begin{Lemma}
 \label{Lemma:hereditary-subalg}
 Let $A$ be a separable $C^*$-algebra. Let $\alpha$ be a flow on $A$. If $B \subseteq A$ is an $\alpha$-invariant, hereditary $C^*$-subalgebra, then $\dimrok(\alpha|_{B}) \leq \dimrok(\alpha)$.
 \end{Lemma}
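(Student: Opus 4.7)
Assume $d := \dimrok(\alpha) < \infty$ (otherwise there is nothing to prove), and fix $p \in \R$. The plan is to take Rokhlin witnesses for $\alpha$ in the sequence algebra of $A$ and to ``compress'' them by an approximately central unit of $B$ sitting inside $B_\infty$, exploiting heredity to land back in $B$.

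First, by the implication $(\ref{Lemma:def-dimrok-lift-item-1})\Rightarrow(\ref{Lemma:def-dimrok-lift-item-4})$ of Lemma~\ref{Lemma:def-dimrok-lift}, I will choose contractions $x^{(0)},\dots,x^{(d)} \in \Ainfa \cap A'$ satisfying, for all $a \in A$, the identities $x^{(j)}x^{(j)*}a = x^{(j)*}x^{(j)}a$, $\sum_j x^{(j)}x^{(j)*}a = a$ and $\alpha_{\infty,t}(x^{(j)})a = e^{ipt}x^{(j)}a$ for all $t\in\R$. Next, applying Lemma~\ref{Lemma:invariant-approx-unit} to the flow $\alpha|_B$ on the $\sigma$-unital algebra $B$, I get an approximate unit $(e_n)$ for $B$ with $\|\alpha_t(e_n)-e_n\|\to 0$ uniformly on compact subsets of $\R$. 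Passing to a suitable subsequence, the class $e := [(e_n)] \in B_\infty$ satisfies $eb = be = b$ for all $b \in B$ and $\alpha_{\infty,t}(e)=e$ for all $t \in \R$.

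Now define $y^{(j)} := e\,x^{(j)}\,e \in A_\infty$. These are contractions, and since a representing sequence is $(e_n a^{(j)}_n e_n)_n$ with each term lying in $B A B \subseteq B$ (using that $B$ is hereditary in $A$), in fact $y^{(j)} \in B_\infty$. Point-norm continuity of $t \mapsto \alpha_{\infty,t}(y^{(j)}) = e\,\alpha_{\infty,t}(x^{(j)})\,e$ follows from $x^{(j)} \in \Ainfa$ and the invariance of $e$, so $y^{(j)} \in (B_\infty)^{(\alpha|_B)}$. Since $x^{(j)}$ commutes with $A \supseteq B$ and $e$ is a unit for $B$, one checks that $y^{(j)}$ commutes with $B$, and a direct computation (using $eb = b$ to collapse factors of $e^2$ against elements $b\in B$ and commuting the $x^{(j)}$'s past $b$) shows that for every $b \in B$:
\begin{align*}
y^{(j)}y^{(j)*}b &= b\,x^{(j)}x^{(j)*} = b\,x^{(j)*}x^{(j)} = y^{(j)*}y^{(j)}b,\\
\sum_{j=0}^d y^{(j)}y^{(j)*}b &= b\sum_{j=0}^d x^{(j)}x^{(j)*} = b,\\
\alpha_{\infty,t}(y^{(j)})\,b &= e^{ipt}\,y^{(j)}\,b,
\end{align*}
which are precisely the conditions of Lemma~\ref{Lemma:def-dimrok-lift}(\ref{Lemma:def-dimrok-lift-item-4}) for $\alpha|_B$. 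Therefore $\dimrok(\alpha|_B) \leq d$.

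The only genuinely delicate point is ensuring that the compressed elements $e\,x^{(j)}\,e$ actually land in $B_\infty$ (not merely in $A_\infty$); this is exactly where heredity of $B$ inside $A$ is used, via the lift $(e_n a^{(j)}_n e_n)$. Everything else reduces to a bookkeeping exercise with the unit property of $e$ and the commutation of $x^{(j)}$ with $A$.
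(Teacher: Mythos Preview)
Your proof is correct and follows essentially the same strategy as the paper: compress Rokhlin elements for $\alpha$ by an $\alpha_\infty$-invariant element $e\in B_\infty$ coming from an approximately invariant approximate unit of $B$, and use heredity to ensure the compressed elements lie in $B_\infty$. The only minor difference is that the paper invokes Lemma~\ref{Lemma:def-dimrok-lift}(\ref{Lemma:def-dimrok-lift-item-3}) with $E=C^*(A\cup\{e\})$, so that the $x^{(j)}$ already commute with $e$, whereas you work with (\ref{Lemma:def-dimrok-lift-item-4}) and instead absorb every factor of $e$ by first commuting $x^{(j)}$ past $b\in B$ and then using $eb=be=b$; both routes lead to the same verifications.
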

 \begin{proof}
We may assume that $\alpha$ has finite Rokhlin dimension $d=\dimrok(\alpha)$, since otherwise there is nothing to show. 
 
 By Lemma~\ref{Lemma:invariant-approx-unit}, there exists an approximate unit $(e_n)_{n \in \N}$ for $B$ with $\|\alpha_t(e_n)-e_n\|<1/n$ for every $t \in [-n,n]$ and for all $n$.
 Let $e$ be the image of the sequence $(e_1,e_2,\ldots)$ in $B_{\infty} \subseteq A_{\infty}$. Notice that $e$ is fixed under $\alpha_\infty$. In particular, we have $e \in B_{\infty}^{(\alpha)}$. As $B$ is hereditary, it follows that $e A_\infty e \subseteq B_{\infty}$. Use Lemma~\ref{Lemma:def-dimrok-lift} to find elements $x^{(0)},\ldots,x^{(d)}$ as in the statement~(\ref{Lemma:def-dimrok-lift-item-3}), with $E = C^*(A \cup \{e\})$. Set $y^{(j)} = ex^{(j)}e$. Since $e$ was chosen to satisfy $eb=be=b$ for all $b\in B$, it follows that the elements $y^{(0)},\ldots,y^{(d)}$ satisfy the conditions of Lemma~\ref{Lemma:def-dimrok-lift}~(\ref{Lemma:def-dimrok-lift-item-2}) with $B$ in place of $A$ and with $y^{(j)}$ in place of $x^{(j)}$. 
 \end{proof}
 
 \begin{Rmk}
\label{Rmk:Rokhlin-dim}
As we mentioned in Remark~\ref{Rmk:rp-reform} (in the case of unital $A$), the definition of the Rokhlin property for flows  requires that for any $M>0$, there exists a unital $\lambda - \alpha_\infty$ equivariant $^{*}$-homomorphism from $C(\R/M\Z)$ to $A^{(\alpha)}_{\infty} \cap A'$, which then agrees with $F_\infty^{(\alpha)}(A)$ and $\alpha_\infty$ agrees with $\tilde{\alpha}_\infty$. Likewise, $\alpha$ has Rokhlin dimension $d$ if and only if $d$ is the smallest natural number such that, for any $M>0$, there are $(d+1)$-many $\lambda - \tilde{\alpha}_\infty$ equivariant c.p.c.~order zero maps $\mu^{(0)},\mu^{(1)},\ldots, \mu^{(d)} \colon C(\R/M\Z) \to F_\infty^{(\alpha)}(A)$ with $\sum_{j=0}^d \mu^{(j)}(1) = 1$:

Let $x^{(0)},\dots,x^{(d)}\in F_\infty^{(\alpha)}(A)$ be normal contractions satisfying $\tilde{\alpha}_{\infty,t}(x^{(j)}) = e^{2\pi i t /M}x^{(j)}$ and $x^{(0)*}x^{(0)}+\dots+x^{(d)*}x^{(d)}=1$.
Notice that the universal $C^*$-algebra generated by a normal contraction is isomorphic to the continuous functions $C_{0}((0,1]\times\T)$ on the unit disk vanishing at $0$, and coincides with the universal $C^*$-algebra generated by the order zero image of a unitary.
Identifying $\T \cong \R/M\Z$, it follows that for each $j=0,\dots,d$, the normal contraction $x^{(j)}\in F_\infty^{(\alpha)}(A)$ induces a $\lambda-\tilde{\alpha}_\infty$ equivariant c.p.c.~order zero map 
\[
C(\mathbb{R}/M \mathbb{Z})\to F_\infty^{(\alpha)}(A)\quad\text{via}\quad \id_{C(\mathbb{R}/M \mathbb{Z})}\mapsto x^{(j)}|x^{(j)}|.
\]
This viewpoint will be the one that we use for the proof of Theorem~\ref{Thm:dimnuc-bound}.
 \end{Rmk}

We now consider the strong Connes spectrum of a flow with finite Rokhlin dimension. We refer the reader to \cite{kishimoto-strong-connes} for details concerning the strong Connes spectrum, and to \cite{pedersen-book} for a detailed discussion of spectral theory for actions (although it does not cover the strong Connes spectrum). We recall a few definitions. Let $\alpha$ be a flow on a $C^*$-algebra $A$. For $f \in L^1(\R)$ and $a \in A$, we let 
\[
\alpha_f(a) = \int_{-\infty}^{\infty} f(t) \alpha_t(a)~dt 
\, .
\]
For $f \in L^1(\R)$, we let $z(f)$ denote the zero set of the Fourier transform of $f$.
We set 
\[
\Spa(a) = \bigcap \{z(f) \mid \alpha_f(a) = 0\}
\, .
\]
 For any closed subset $\Omega \subseteq \widehat{\R}$, we set 
\[
 A^{\alpha}(\Omega) = \{a \in A \mid \Spa(a) \subseteq \Omega\}
\]
 and  
\[
  A(\Omega) = \overline{\mathrm{span}\{x^*ay \mid a \in A \; \mathrm{and} \; x,y \in A^{\alpha}(\Omega) \} } 
 \, .
\]
 The strong (Arveson) spectrum of $\alpha$, denoted $\widetilde{\Sp}(\alpha)$, is defined to be the set of all $\xi \in \widehat{\R}$ such that for any closed neighborhood $\Omega$ of $\xi$, one has $A(\Omega) = A$. Lastly, the strong Connes spectrum of $\alpha$ is defined to be 
\[
 \widetilde{\Gamma}(\alpha) = \bigcap \{\widetilde{\Sp}(\alpha|_{B}) \mid B \mbox{ is a nonzero invariant hereditary subalgebra}\}
 \, .
\]

\begin{Prop}
\label{prop:full-connes-spectrum}
Let $A$ be a separable $C^*$-algebra, and let $\alpha$ be a flow on $A$ with finite Rokhlin dimension. Then $\widetilde{\Gamma}(\alpha) = \widehat{\R}$.
\end{Prop}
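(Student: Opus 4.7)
The plan is to first reduce to the strong Arveson spectrum. By Lemma~\ref{Lemma:hereditary-subalg}, every nonzero $\alpha$-invariant hereditary subalgebra $B \subseteq A$ inherits finite Rokhlin dimension from $\alpha$, so it suffices to show that any flow of finite Rokhlin dimension has full strong Arveson spectrum. Fixing $\xi \in \widehat{\R}$ and a closed neighborhood $\Omega$ of $\xi$, the task reduces to proving $A(\Omega) = A$.

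The main tool is spectral smoothing. I would choose $p \in \R$ such that any pure $e^{ipt}$-eigenvector of $\alpha$ has Arveson spectrum $\{\xi\}$ (the sign depending on the Fourier convention), and pick a Schwartz function $g \in L^1(\R)$ whose Fourier transform is $C^\infty$, compactly supported in the interior of $\Omega$, and normalised so that $\alpha_g$ acts as the identity on such eigenvectors. Standard Arveson spectral calculus yields $\alpha_g(A) \subseteq A^\alpha(\Omega)$. Given $a \in A$ and $\eps > 0$, choose $T$ so that $\int_{|t| > T} |g(t)|\,dt$ is small, then $\delta > 0$ correspondingly small, and apply Lemma~\ref{Lemma:def-dimrok-lift}~(\ref{alternativedefinitionofRokhlindimensionforflows}) with parameters $p$, $T$, $\delta$ and the finite set $\mathcal{F} = \{a, a^*\}$ to obtain contractions $x^{(0)}, \ldots, x^{(d)} \in A$. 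Setting $y^{(l)} = \alpha_g(x^{(l)}) \in A^\alpha(\Omega)$, the element $b = \sum_{l=0}^{d} y^{(l)*} a y^{(l)}$ automatically lies in $A(\Omega)$.

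The final step is to estimate $\|a - b\|$ via the decomposition
\[
a - b = \Big(a - \sum_l x^{(l)*} a x^{(l)}\Big) + \Big(\sum_l x^{(l)*} a x^{(l)} - \sum_l y^{(l)*} a y^{(l)}\Big).
\]
The first summand, rewritten as $(a - \sum_l x^{(l)} x^{(l)*} a) + \sum_l [x^{(l)}, x^{(l)*}] a + \sum_l x^{(l)*}[x^{(l)}, a]$, is controlled by conditions (b), (d), (c) of the same lemma, where placing $a^* \in \mathcal{F}$ together with the self-adjointness of $[x^{(l)}, x^{(l)*}]$ handles the left-right asymmetry of the original formulation. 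The second summand telescopes into pieces $x^{(l)*} a (x^{(l)} - y^{(l)})$ and $(x^{(l)} - y^{(l)})^* a y^{(l)}$; the identity $y^{(l)} - x^{(l)} = \int g(t)(\alpha_t(x^{(l)}) - e^{ipt} x^{(l)})\,dt$ combined with condition (a) (applied with both $a$ and $a^*$) bounds these by $\delta \|g\|_{L^1}$ on $[-T, T]$, while the tail contributes at most a term proportional to $\|a\| \int_{|t| > T}|g(t)|\,dt$. Taken together this yields $\|a - b\| < \eps$, so $a \in A(\Omega)$ and the proof closes. The principal technical obstacle lies precisely in this bookkeeping: the Rokhlin dimension condition only supplies approximate eigenvectors on a prescribed finite set, whereas membership in the closed spectral span $A(\Omega)$ requires norm-approximation by genuine elements of the spectral subspace; the spectral smoothing $\alpha_g$ is what trades approximate for exact spectral support at the cost of a controllable error.
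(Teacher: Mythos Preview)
Your argument is correct and follows the same essential strategy as the paper: reduce to the strong Arveson spectrum via Lemma~\ref{Lemma:hereditary-subalg}, then use spectral smoothing $\alpha_g$ (the paper writes $\alpha_f$) to convert the approximate Rokhlin eigenvectors into genuine elements of $A^\alpha(\Omega)$, and finally observe that sandwiching $a$ between these elements approximates $a$.

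The difference is purely in execution. The paper works in the central sequence algebra: it takes the exact eigenvectors $x^{(j)} \in F_\infty^{(\alpha)}(A)$, lifts them to $\ell^{\infty,(\alpha)}(\N,A)$, and replaces each lift by $\alpha_f$ applied to it. Because $x^{(j)}$ is an exact eigenvector and $\widehat{f}(-p)=1$, the smoothed lift still represents $x^{(j)}$, so the relations $\sum_j x^{(j)*}x^{(j)}a = a$ and $[x^{(j)},a]=0$ hold on the nose in $F_\infty^{(\alpha)}(A)$ and become asymptotic statements along the sequence; no explicit error bookkeeping is needed. You instead invoke the finite-stage characterisation of Lemma~\ref{Lemma:def-dimrok-lift}(\ref{alternativedefinitionofRokhlindimensionforflows}) directly and track all the $\delta$'s and tail integrals by hand. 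This buys you a self-contained argument that avoids the sequence-algebra machinery, at the cost of the telescoping and left/right bookkeeping you describe. Both routes arrive at exactly the same conclusion that $\sum_l y^{(l)*} a\, y^{(l)}$ approximates $a$ with $y^{(l)} \in A^\alpha(\Omega)$.
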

\begin{proof}
By Lemma~\ref{Lemma:hereditary-subalg}, the restriction of $\alpha$ to any invariant hereditary subalgebra also has finite Rokhlin dimension. Thus, it suffices to show that if $\alpha$ is a flow on a separable $C^*$-algebra $A$ with finite Rokhlin dimension, then $\widetilde{\Sp}(\alpha) = \widehat{\R}$. 

Fix $p \in \widehat{\R}$. Use Lemma~\ref{Lemma:def-dimrok-lift}(\ref{Lemma:def-dimrok-lift-item-4}) to find $x^{(0)},\ldots,x^{(d)} \in F_\infty^{(\alpha)}(A)$ as in Definition~\ref{Def:dimrok}, but also admitting representatives in $A_\infty^{(\alpha)}$. These are eigenvectors for the action of $\tilde{\alpha}_\infty$ on $F_\infty^{(\alpha)}(A)$, so for every $f \in L^1(\R)$ and $j=0,1,\ldots,d$, we have 
\[
\tilde{\alpha}_{\infty,f}(x^{(j)}) = \int_{-\infty}^{\infty}f(t)\tilde{\alpha}_{\infty,t}(x^{(j)})~dt =
x^{(j)}\int_{-\infty}^{\infty}f(t)e^{ipt}~dt =
\widehat{f}(-p)x^{(j)}
\, .
\] 
Pick a closed neighborhood $\Omega$ of $-p$, and find a function $f \in L^1(\R)$ such that $\widehat{f}$ is supported in $\Omega$ and $\widehat{f}(-p) = 1$. Then $x^{(j)} = \tilde{\alpha}_{\infty,f}(x^{(j)})$ for all $j$. 

For every $j$, pick a lift $\widetilde{x}^{(j)} = (x^{(j)}(1),x^{(j)}(2),\ldots) \in \ell^{\infty , (\alpha)}(\N,\A)$. We can replace $\widetilde{x}^{(j)}$ by $\alpha_f(\widetilde{x}^{(j)})$, and it is still a lift, so we may assume without loss of generality that we have done so. If $g \in L^1(\R)$ satisfies $z(g) \supseteq \Omega$, then $g*f = 0$, because $\widehat{g*f} = \widehat{g}\cdot \widehat{f} = 0$. For all $j=0,1,\ldots,d$ and all $m$, we thus have $\alpha_g(x^{(j)}(m)) = \alpha_{g*f}(x^{(j)}(m)) = 0$. So each $x^{(j)}(m)$ is in $A^{\alpha}(\Omega)$. Now, for any $a \in A$, we have
\[
\lim_{m \to \infty} \sum_{j=0}^d x^{(j)}(m)^*x^{(j)}(m)a = a
\]
and $\lim_{m\to\infty} x^{(j)}(m)a - ax^{(j)}(m) = 0$, and thus $a \in A(\Omega)$. Therefore $-p \in \widetilde{\Sp}(\alpha)$. As $p$ was arbitrary, we have $\widetilde{\Sp}(\alpha) = \widehat{\R}$, as required.
\end{proof}

We recall that by \cite[Theorem 3.5]{kishimoto-strong-connes}, if $\alpha: \R\to\operatorname{Aut}(A)$ has full strong Connes spectrum, then the crossed product $A\rtimes_\alpha \R$ is simple if and only if $A$ has no non-trivial $\alpha$-invariant ideals.
Thus, we have the following corollary.

\begin{Cor}
Let $A$ be a separable $C^*$-algebra, and let $\alpha$ be a flow on $A$ with finite Rokhlin dimension. If $A$ has no $\alpha$-invariant ideals, then $A \rtimes_{\alpha} \R$ is simple.
\end{Cor}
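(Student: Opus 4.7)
The proof is essentially immediate by combining the preceding proposition with Kishimoto's characterization of simplicity for crossed products, which is explicitly invoked in the paragraph just before the corollary. The plan is therefore to simply assemble these two ingredients without doing any new work.

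First I would invoke Proposition~\ref{prop:full-connes-spectrum} to conclude that, under the hypothesis of finite Rokhlin dimension on $\alpha$, the strong Connes spectrum satisfies $\widetilde{\Gamma}(\alpha) = \widehat{\R}$. Then I would apply \cite[Theorem 3.5]{kishimoto-strong-connes}, which says that whenever a flow has full strong Connes spectrum, the crossed product $A\rtimes_\alpha \R$ is simple precisely when $A$ contains no nontrivial $\alpha$-invariant ideals. Since this latter condition is assumed in the hypothesis, we conclude that $A\rtimes_\alpha\R$ is simple.

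There is no genuine obstacle here: the work has already been done in Proposition~\ref{prop:full-connes-spectrum}, and Kishimoto's theorem is a black box that precisely produces the desired conclusion. The only thing worth being careful about is the convention on ``invariant ideal''—one should make clear that ``no $\alpha$-invariant ideals'' in the statement means no nontrivial proper $\alpha$-invariant (closed two-sided) ideals, so as to match the hypothesis in Kishimoto's theorem. Accordingly, the full proof can be written in one or two sentences.
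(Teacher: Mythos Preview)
Your proposal is correct and matches the paper's approach exactly: the paper states this corollary immediately after Proposition~\ref{prop:full-connes-spectrum} and the recollection of \cite[Theorem 3.5]{kishimoto-strong-connes}, without even writing out a separate proof. Your two-sentence assembly of these ingredients is precisely what is intended.
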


%%%%%%%%%%%%%%%%%%%%%%%

\section{Nuclear dimension of crossed products}
\label{Section:Nuclear dimension}

\noindent
Let $A$ be a $C^*$-algebra, and let $\alpha:\R \to \aut(A)$ be a flow. 
As before, we let $\sigma: \R \to \aut(C_0(\R))$ be the shift flow. Fix $M>0$, and let $\lambda : \R \to \aut(C(\R/M\Z))$ be the shift modulo $M$.
We consider the actions $\sigma \otimes \alpha : \R \to \aut(C_0(\R) \otimes A)$ and $\lambda \otimes \alpha : \R \to \aut(C(\R/M\Z) \otimes A)$. We recall from Notation~\ref{Not:flow-conventions} that $(C_0(\R) \otimes A) \rtimes_{\sigma \otimes \alpha} \R \cong A \otimes\mathcal{K}$. 
Let 
\begin{align*}
B_0  = &  \{
f \in L^1(\R, C_0((-M/2,M/2) , A)) \mid \\
&  f(t)(x) = 0 \mbox{ whenever } x -t \not \in (-M/2,M/2)
\}
\end{align*}
We can view $C_0((-M/2,M/2),A)$ as a subalgebra of $C_0(\R,A)$, as well as a subalgebra of $C_0(\R/M\Z,A)$, where the latter identification is obtained by identifying 
$$
C(\R/M\Z,A) \cong \{f \in C([-M/2,M/2],A) \mid f(-M/2) = f(M/2)\} .
$$
With those identifications, we can view $B_0$ as a closed, self-adjoint, linear subspace of the twisted convolution algebras $L^1(\R,C_0(\R,\A))$ and of $L^1(\R,C(\R/M\Z,A))$. Moreover, we have the following. 

\begin{Claim}
$B_0$ is closed under the product operations of both of the algebras $L^1(\R,C_0(\R,\A))$ and $L^1(\R,C(\R/M\Z,A))$, and the two restricted product operations in fact coincide. 
\end{Claim}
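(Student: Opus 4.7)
The plan is to first unpack what the support condition defining $B_0$ really forces: if $f \in B_0$, then $f(t)(x) \neq 0$ requires simultaneously $x \in (-M/2, M/2)$ (because $f(t) \in C_0((-M/2, M/2), A)$) and $x - t \in (-M/2, M/2)$ (by the explicit defining condition). This ``doubly restricted'' support of the integrand is really the sole mechanism that will make the two product structures coincide.

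Next I would write out the twisted convolutions explicitly in each algebra. Using $(\sigma_s \otimes \alpha_s)(h)(x) = \alpha_s(h(x-s))$ with $x-s$ computed in $\R$, the product in $L^1(\R, C_0(\R, A))$ is
\[
(f *_1 g)(t)(x) = \int_{\R} f(s)(x)\, \alpha_s\bigl(g(t-s)(x-s)\bigr)\, ds,
\]
while using $(\lambda_s \otimes \alpha_s)(h)(\bar x) = \alpha_s(h(\overline{x-s}))$ and evaluating at the class of some $x \in (-M/2, M/2)$, the product in $L^1(\R, C(\R/M\Z, A))$ is
\[
(f *_2 g)(t)(\bar x) = \int_{\R} f(s)(x)\, \alpha_s\bigl(g(t-s)(\overline{x-s})\bigr)\, ds.
\]
The key observation is that whenever $f(s)(x) \neq 0$, the defining condition of $B_0$ forces $x - s \in (-M/2, M/2)$. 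On this set $x - s$ is the canonical representative in $(-M/2, M/2)$ of the class $\overline{x-s}$, so after the extension-by-zero identification $C_0((-M/2, M/2), A) \hookrightarrow C(\R/M\Z, A)$ we have $g(t-s)(\overline{x-s}) = g(t-s)(x-s)$. Outside this set, both integrands vanish. Hence they agree almost everywhere in $s$, and the two products coincide. This is the step I expect to be the main (and really the only nontrivial) point; everything else is routine unpacking.

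Finally, I would check that the common product belongs to $B_0$. Tracing the support once more, $(f * g)(t)(x) \neq 0$ forces some $s$ with both $f(s)(x)$ and $g(t-s)(x-s)$ nonzero, which yields $x \in (-M/2, M/2)$, $x - s \in (-M/2, M/2)$, and
\[
x - t = (x-s) - (t-s) \in (-M/2, M/2).
\]
The first conclusion, combined with the $C_0$-vanishing of $f(s)$ at the endpoints, places $(f*g)(t)$ in $C_0((-M/2, M/2), A)$ (interpreted identically inside either ambient algebra), and the third conclusion is exactly the defining support condition of $B_0$. The estimate $\|f*g\|_1 \le \|f\|_1 \|g\|_1$ is the standard one, valid in both convolution algebras. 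Together with the self-adjointness and linearity of $B_0$ (which are immediate from the definition of the involution $\widetilde{f}(t) = \alpha_t(f(-t)^*)$ and the symmetry of the support condition under $(t,x) \mapsto (-t, x-t)$), this gives the claim.
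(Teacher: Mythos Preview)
Your proposal is correct and follows essentially the same approach as the paper's proof: both arguments hinge on the observation that the support condition forces $x-s \in (-M/2,M/2)$ whenever the integrand is nonzero, so the periodic and non-periodic evaluations of $g(t-s)$ at $x-s$ coincide, and the resulting product satisfies the $B_0$ support condition via $x-t = (x-s)-(t-s)$. Your write-up is slightly more explicit about the $C_0((-M/2,M/2),A)$ membership of $(f*g)(t)$, which the paper leaves implicit.
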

\begin{proof}
Let us first consider the product in $L^1(\R,C_0(\R,\A))$, twisted by $\sigma \otimes \alpha$,
\begin{align*}
f * g (t)(x) & =  \int_{\R} f(s)(x)(\sigma_s \otimes \alpha_s) (g(t-s))(x)~ds\\
& =  \int_{\R} f(s)(x)\alpha_s (g(t-s)(x-s))~ds.
\end{align*}
If $x-s \not \in (-M/2,M/2)$ then $f(s)(x) = 0$ for $f \in B_{0}$. Thus, the latter integral can be rewritten as
\[
\int_{x-M/2}^{x+M/2} f(s)(x)\alpha_s (g(t-s)(x-s))~ds.
\]
If $x-t \not \in (-M/2,M/2)$ then $(x-s) - (t-s) = x-t \not \in (-M/2,M/2)$, so $g(t-s)(x-s) = 0$ for $g \in B_{0}$, and thus it follows for such $x$ that $f*g(t)(x) = 0$, so $f*g \in B_0$. 

If we consider instead the product twisted by $\lambda \otimes \alpha$, then the expression $x-s$ above is considered modulo $M$. However, in the way that we represent the integral, we choose $s \in (x-M/2,x+M/2)$. Thus we have $x-s \in (-M/2,M/2)$, and it does not need to be modified. Therefore, we get the exact same expression.
\end{proof}

We denote by $B_{\sigma}$ and $B_{\lambda}$ the completions of $B_0$ in $(C_0(\R) \otimes \A) \rtimes_{\sigma \otimes \alpha} \R$ and in $(C(\R/M\Z) \otimes \A) \rtimes_{\lambda \otimes \alpha} \R$, respectively. By representing those two crossed product $C^*$-algebras using the standard left regular representation module, the claim above shows that the norm of any $f \in B_0$ is the same in either completion. Thus, the identity function $B_0 \to B_0$ extends to an  
isomorphism 
\begin{equation} \label{eq:xi}
\zeta \colon B_{\sigma} \to B_{\lambda}.
\end{equation}

Let $h \in C_0(\R) \subseteq \mathcal{M}(C_0(\R,A))$ be the function defined as follows. 
\begin{center}
\begin{picture}(230,65)
 \put(0,10){\vector(1,0){200}}
 \put(86,3){\vector(0,1){50}}
 \put(86,7){\line(0,1){6}}
 \put(167,7){\line(0,1){6}}
% \put(81,37){\line(1,0){10}}
\thicklines
 \put(5,10){\line(3,1){81}}
 \put(86,37){\line(3,-1){81}}
 \put(75,40){\makebox(0,0){$1$}}
 \put(5,-4){\makebox(0,0)[b]{\footnotesize $-\frac{M}{2}$\normalsize}}
 \put(86,-4){\makebox(0,0)[b]{\footnotesize $0$\normalsize}}
 \put(167,-4){\makebox(0,0)[b]{\footnotesize $\frac{M}{2}$\normalsize}}
 \put(25,47){\makebox(0,0){$h$}}
\end{picture}
\end{center}
Viewing $h$ as an element of  $\mathcal{M}((C_0(\R) \otimes A) \rtimes_{\sigma \otimes \alpha} \R)$, we can consider the hereditary subalgebra of the crossed product given by $h$, that is, 
$$
B_h = \overline{ h \big( (C_0(\R) \otimes A) \rtimes_{\sigma \otimes \alpha} \R \big) h} \, .
$$ 
Notice that $h$ and the $C^*$-algebras $B_{\sigma}$ and $B_{\lambda}$ depend on the choice of $M$. We will write $h(M), B_{\sigma}(M)$ and $B_{\lambda}(M)$ if there is room for confusion, but otherwise we suppress it so as to lighten notation. 

\begin{Claim} \label{claim 2}
With the notation as in the discussion above, we have $B_h = B_{\sigma}$.
\end{Claim}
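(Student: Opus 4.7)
The plan is to establish the two inclusions $B_h \subseteq B_\sigma$ and $B_\sigma \subseteq B_h$ separately, using the explicit formula for two-sided multiplication by $h$ in the twisted convolution algebra, together with an approximate-unit argument.

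First, I would verify $B_h \subseteq B_\sigma$ by computing $hfh$ for $f$ in the dense subspace $C_c(\R, C_0(\R, A))$. Using the relation $u_t a = (\sigma_t \otimes \alpha_t)(a) u_t$ inside the crossed product, together with $(\sigma_t \otimes \alpha_t)(h)(x) = h(x - t)$ (since $h$ lies in the $C_0(\R)$-tensor factor and is fixed by $\alpha$), a direct calculation yields
\[
(h f h)(t)(x) = h(x) \, f(t)(x) \, h(x - t).
\]
As $h$ is supported on $[-M/2, M/2]$ and vanishes at the endpoints, this element lies in $B_0 \subseteq B_\sigma$. Taking norm closures gives $B_h \subseteq B_\sigma$.

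For the reverse inclusion $B_\sigma \subseteq B_h$, I would show that every $f \in B_0$ lies in $B_h$ by using $h^{1/n}$ as a two-sided approximating sequence. The subtlety is that elements of $B_0$ are only required to vanish at the boundary points $\pm M/2$, not on an interior neighborhood, while $h$ itself also vanishes exactly there. So I would first reduce to ``strictly supported'' elements by a cutoff: choose $\chi_\varepsilon \in C_c((-M/2, M/2))$ with $0 \le \chi_\varepsilon \le 1$ and $\chi_\varepsilon \equiv 1$ on $[-M/2 + \varepsilon, M/2 - \varepsilon]$, and define
\[
f_\varepsilon(t)(x) := \chi_\varepsilon(x) \, f(t)(x) \, \chi_\varepsilon(x - t).
\]
Then $f_\varepsilon \in B_0$, and by dominated convergence in $L^1(\R, C_0(\R, A))$ (using that the $L^1$ norm dominates the crossed product norm), $f_\varepsilon \to f$ in $B_\sigma$ as $\varepsilon \to 0$. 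It thus suffices to show $f_\varepsilon \in B_h$ for each $\varepsilon > 0$.

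For such an $f_\varepsilon$, on the support of $f_\varepsilon$ both $h(x)$ and $h(x-t)$ are bounded below by a positive constant, so $h(x)^{1/n}$ and $h(x-t)^{1/n}$ converge uniformly to $1$ there as $n \to \infty$. Hence
\[
(h^{1/n} f_\varepsilon h^{1/n})(t)(x) = h(x)^{1/n} \, f_\varepsilon(t)(x) \, h(x-t)^{1/n} \longrightarrow f_\varepsilon(t)(x)
\]
uniformly in $(t,x)$ and under a uniform $L^1$ majorant, giving convergence in the crossed product norm. This places $f_\varepsilon$ in $B_h$, and closing finishes the argument. The main obstacle is the cutoff step: without first passing to $f_\varepsilon$, one cannot control the convergence $h^{1/n} f h^{1/n} \to f$ in crossed product norm, since $h$ vanishes precisely where elements of $B_0$ are permitted to live.
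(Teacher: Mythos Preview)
Your proof is correct and follows essentially the same approach as the paper. Both arguments establish $B_h \subseteq B_\sigma$ via the explicit formula $(hfh)(t)(x) = h(x)\,f(t)(x)\,h(x-t)$. For the reverse inclusion, the paper simply asserts that $h \cdot L^1(\R,C_0(\R,A)) \cdot h$ is dense in $B_\sigma$ without further justification, whereas you supply the details via the cutoff $\chi_\varepsilon$ and the approximate unit $h^{1/n}$; your argument is exactly the kind of verification the paper's ``easy to see'' is hiding. One minor simplification: once you have reduced to $f_\varepsilon$ with strictly interior support, you can avoid the $h^{1/n}$ limit entirely by choosing $\phi \in C_c((-M/2,M/2))$ equal to $h^{-1}$ on $[-M/2+\varepsilon,M/2-\varepsilon]$, so that $h\,(\phi f_\varepsilon \phi)\,h = f_\varepsilon$ directly exhibits $f_\varepsilon \in hCh \subseteq B_h$.
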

\begin{proof} We first show that if $f \in L^1(\R,C_0(\R,A))$, then $h \cdot f \cdot h \in B_0$. For $t \in \R$, denote $h_t(x) = h(x-t)$. Notice that we have $(f \cdot h) (t) = f (t) h_t$. So for every $t,x \in \R$, we have $(h \cdot f \cdot h) (t) (x) = f (t)(x) h(x-t) h(x)$, and $h(x-t)h(x) = 0$ whenever $x \not \in (-M/2,M/2)$ and whenever $x-t \not \in (-M/2,M/2)$. Therefore, we have $B_h \subseteq B_{\sigma}$. It is easy to furthermore see that $h \cdot L^1(\R,C_0(\R,A)) \cdot h$ is in fact dense in $B_{\sigma}$, and therefore we have the reverse inclusion as well.
\end{proof}
\begin{Rmk}
\label{Rmk:not-hereditary}
Denote by $h_0 \in C(\R/M\Z)$ the function given by $h$ on the interval $(-M/2,M/2)$. 
We caution the reader that, even if the notational similarity might suggest so, $B_{\lambda}$ is \emph{not} $\overline{h_0 \big( (C(\R/M\Z) \otimes \A)\rtimes_{\lambda \otimes \alpha} \R \big) h_0}$. In fact, $B_{\lambda}$ is generally not even a hereditary subalgebra of $(C(\R/M\Z) \otimes \A)\rtimes_{\lambda \otimes \alpha} \R$.
\end{Rmk}

We record the following immediate fact.
\begin{Claim}
\label{claim:h+h-shifted}
Let $h_0$ be as in Remark~\ref{Rmk:not-hereditary} above. Denoting $h_1=\lambda_{M/2}(h_0)$, we have $h_0 + h_1 = 1$.
\end{Claim}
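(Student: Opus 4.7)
The plan is to verify the identity by direct piecewise computation on $\R/M\Z$, which I identify with $[-M/2, M/2]$ (with endpoints glued). From the picture defining $h$, we have the piecewise formula
\[
h_0(x) = \begin{cases} 1 + 2x/M, & x \in [-M/2, 0], \\ 1 - 2x/M, & x \in [0, M/2]. \end{cases}
\]
Note $h_0(-M/2) = 0 = h_0(M/2)$, so $h_0$ is well defined as an element of $C(\R/M\Z)$.

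By definition $h_1(x) = \lambda_{M/2}(h_0)(x) = h_0(x - M/2)$, where the argument is interpreted modulo $M$. I would split into two cases. For $x \in [0, M/2]$, the argument $x - M/2$ lies in $[-M/2, 0]$ with no reduction needed, so $h_1(x) = 1 + 2(x - M/2)/M = 2x/M$, while $h_0(x) = 1 - 2x/M$; the two sum to $1$. For $x \in [-M/2, 0]$, the argument $x - M/2$ lies in $[-M, -M/2]$ and must be reduced by adding $M$, giving $h_1(x) = h_0(x + M/2) = 1 - 2(x+M/2)/M = -2x/M$, while $h_0(x) = 1 + 2x/M$; again the sum is $1$.

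Since neither case requires any machinery beyond the explicit formula for $h$, there is no real obstacle; the only point that needs a moment's care is the modular reduction on the interval $[-M/2, 0]$, which is the reason the two ``half-triangles'' of $h_1$ assemble into a function that complements $h_0$ linearly on both halves of $\R/M\Z$. This justifies stating the claim as immediate.
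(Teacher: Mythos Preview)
Your proof is correct. The paper records this claim as an ``immediate fact'' and gives no argument, so your direct piecewise verification is exactly the kind of check the paper is leaving to the reader; there is nothing to compare.
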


We now come to the main result of this section, which asserts that taking crossed products by flows with finite Rokhlin dimension preserves the property of having finite nuclear dimension. This is the analogue for flows of \cite[Theorem 4.1]{HWZ}. 

\begin{Thm}
\label{Thm:dimnuc-bound}
Let $A$ be a separable $C^*$-algebra and let $\alpha: \R \to \aut(A)$ be a flow. Then we have
\[
\dimnucone(A \rtimes_{\alpha}\R) \leq 2\cdot \dimrokone(\alpha)\cdot \dimnucone(A).
\]
\end{Thm}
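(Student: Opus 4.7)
The plan is to invoke Lemma~\ref{Lemma:dimnuc-central-sequence} with a factoring $C^*$-algebra $B$ of nuclear dimension at most $\dimnuc(A)$ and with $2(d+1)$ c.p.c.\ order zero maps, where $d=\dimrok(\alpha)$. The factor of $2$ will appear by splitting the time direction into two half-period shifted cutoffs whose sum, on $\R/M\Z$, is a partition of unity. For $M>0$ large, Remark~\ref{Rmk:Rokhlin-dim} yields $\lambda-\tilde\alpha_\infty$ equivariant c.p.c.\ order zero maps $\mu^{(0)},\ldots,\mu^{(d)}\colon C(\R/M\Z)\to F_\infty^{(\alpha)}(A)$ with $\sum_j\mu^{(j)}(1)=1$. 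Via the multiplication of Remark~\ref{F(A)} these extend to equivariant c.p.c.\ order zero maps $\tilde\mu^{(j)}\colon C(\R/M\Z)\otimes A\to A_\infty^{(\alpha)}$, and Lemma~\ref{Rmk:equivariant-order-zero-maps} followed by composition with the $\Phi$ of Lemma~\ref{Rmk:cont-part-crossed-product-embeds} gives c.p.c.\ order zero maps
\[
\Theta^{(j)}\colon (C(\R/M\Z)\otimes A)\rtimes_{\lambda\otimes\alpha}\R \;\longrightarrow\;(A\rtimes_\alpha\R)_\infty.
\]

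Set $h_0=h$ and $h_1=\sigma_{M/2}(h)$ in $C_0(\R)\subset\mathcal{M}(A\otimes\K)$, and let $B_{h_i}=\overline{h_i(A\otimes\K)h_i}$ be the corresponding hereditary subalgebras of $A\otimes\K\cong(C_0(\R)\otimes A)\rtimes_{\sigma\otimes\alpha}\R$. By Claim~\ref{claim 2} (applied to each $h_i$, with an appropriate translation of coordinates for $i=1$), each $B_{h_i}$ is isomorphic via an isomorphism $\zeta_i$ as in~(\ref{eq:xi}) to a subalgebra $B_\lambda^{(i)}$ of $(C(\R/M\Z)\otimes A)\rtimes_{\lambda\otimes\alpha}\R$. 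Put $B=B_{h_0}\oplus B_{h_1}$; since hereditary subalgebras do not increase nuclear dimension, $\dimnuc(B)\leq\dimnuc(A\otimes\K)=\dimnuc(A)$. Using the canonical embedding $A\rtimes_\alpha\R\hookrightarrow\mathcal{M}(A\otimes\K)$, define the c.p.c.\ map
\[
\varphi\colon A\rtimes_\alpha\R\to B,\quad \varphi(a)=\bigl(h_0^{1/2}ah_0^{1/2},\,h_1^{1/2}ah_1^{1/2}\bigr),
\]
and for each $j\in\{0,\dots,d\}$ and $i\in\{0,1\}$ set
\[
\psi^{(j,i)}\colon B\to(A\rtimes_\alpha\R)_\infty,\quad \psi^{(j,i)}(b_0,b_1)=\Theta^{(j)}\bigl(\zeta_i(b_i)\bigr).
\]
Each $\psi^{(j,i)}$ is c.p.c.\ order zero, because the coordinate projection $B\to B_{h_i}$ preserves orthogonality and the subsequent maps are themselves c.p.c.\ order zero.

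The heart of the argument is the approximation $\sum_{j,i}\psi^{(j,i)}\circ\varphi\approx\iota$ on a dense subset, which I would verify on elements of the form $a=f$ for $f\in C_c(\R,A)$ with $\supp f\subset[-T,T]$ and $T\ll M/2$. For such $a$, $h_i^{1/2}ah_i^{1/2}$ lies in $B_0$ and is represented in $L^1$ by the function $t\mapsto k_t^{(i)}\otimes f(t)\in C(\R/M\Z)\otimes A$ with $k_t^{(i)}(x)=h_i^{1/2}(x)h_i^{1/2}(x-t)$. Carrying this representative through $\zeta_i$, $\tilde\mu^{(j)}\rtimes\R$, and $\Phi$ produces the element of $(A\rtimes_\alpha\R)_\infty$ represented by $t\mapsto\mu^{(j)}(k_t^{(i)})\cdot f(t)$. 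As $M\to\infty$, $k_t^{(i)}\to h_i$ uniformly in $C(\R/M\Z)$ for $|t|\leq T$ (the modulus of continuity of $h_i^{1/2}$ is controlled by $T/M$); combining this with the identity $h_0+h_1=1$ of Claim~\ref{claim:h+h-shifted} and the normalization $\sum_j\mu^{(j)}(1)=1$, c.p.\ linearity of the $\mu^{(j)}$'s yields
\[
\sum_{j,i}\mu^{(j)}(k_t^{(i)})\cdot f(t)\;\approx\;\sum_j\mu^{(j)}(h_0+h_1)\cdot f(t)=f(t).
\]
The main technical hurdle is the convolution bookkeeping that identifies $\Theta^{(j)}(\zeta_i(h_i^{1/2}ah_i^{1/2}))$ with the above function on the nose and controls the error uniformly in a prescribed finite set and tolerance by letting $M\to\infty$. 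Once this is in hand, Lemma~\ref{Lemma:dimnuc-central-sequence} applied to $B$, $\varphi$, and the $2(d+1)$ maps $\psi^{(j,i)}$ delivers
\[
\dimnucone(A\rtimes_\alpha\R)\leq\bigl(\dimnuc(B)+1\bigr)\cdot 2(d+1)\leq 2\dimrokone(\alpha)\cdot\dimnucone(A),
\]
as required.
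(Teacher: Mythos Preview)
Your proposal is correct and follows essentially the same approach as the paper. The only organizational difference is cosmetic: you compress by two shifted cutoffs $h_0,h_1=\sigma_{M/2}(h_0)\in C_0(\R)$ and invoke Claim~\ref{claim 2} twice, whereas the paper compresses once by $h$ into $B_\sigma$, duplicates $x\mapsto x\oplus x$ into $B_\sigma\oplus B_\sigma$, and implements the half-period shift on the $C(\R/M\Z)$ side via $\zeta_1=\widetilde{\lambda}_{M/2}\circ\zeta_0$; the resulting convolution bookkeeping and the appeal to Lemma~\ref{Lemma:dimnuc-central-sequence} are identical.
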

\begin{proof}
We may assume that both the Rokhlin dimension of $\alpha$ and the nuclear dimension of $A$ are finite, as there is otherwise nothing to show. Denote $d=\dimrok(\alpha)$. 
Fix a finite set $\mathcal{F} \subseteq A \rtimes_{\alpha}\R$, and fix $\eps>0$.
We may assume without loss of generality that $\mathcal{F} \subseteq C_c(\R,A)$, and fix $m>0$ such that all elements of $\mathcal{F}$ are supported in the interval $(-m,m)$. Denote 
\begin{equation} \label{eq:L}
L =  \sup_{x \in \mathcal{F}}\|x\|_{L^1}.
\end{equation} 

For the rest of this proof, we adopt the notations that we introduced in the beginning of this section. We pick $M>0$ large enough such that $h = h(M)$ satisfies
\begin{equation} \label{eq:h}
\|h^{1/2}-\sigma_t(h)^{1/2}\|< \frac{\eps}{2L(d+1)}
\quad \textrm{for all } t \in [-m,m].
\end{equation}

Using Remark~\ref{Rmk:Rokhlin-dim} and find $\lambda-\tilde{\alpha}_\infty$ equivariant c.p.c.~order zero maps 
$$
\mu^{(0)},\dots,\mu^{(d)}: C(\mathbb{R}/M \mathbb{Z})\to F_\infty^{(\alpha)}(A)
$$
with
$$ 
\mu^{(0)}(1)+\dots+\mu^{(d)}(1)=1.
$$
In view of Remark~\ref{F(A)}, each map $\mu^{(j)}$ induces a $\lambda\otimes\alpha - \alpha_\infty$ equivariant c.p.c.~order zero map $\eta^{(j)} \colon C(\mathbb{R}/M \mathbb{Z}) \otimes A \to \Ainfa$. Moreover, the identity $\mu^{(0)}(1)+\dots+\mu^{(d)}(1)=1$ implies 
\begin{equation} \label{eq:eta-sum}
\eta^{(0)}(1\otimes a)+\dots+\eta^{(d)}(1\otimes a)=a %\hspace{20mm}
\end{equation}
for all $a\in A$.
Applying Lemma~\ref{Rmk:equivariant-order-zero-maps}, these maps give rise to c.p.c.~order zero maps between the crossed products
\[
\eta^{(j)}\rtimes \R \colon (C(\mathbb{R}/M \mathbb{Z}) \otimes A) \rtimes_{\lambda \otimes \alpha} \mathbb{R} \to \Ainfa \rtimes_{\alpha_{\infty}} \mathbb{R}
\]
that satisfy the equation
\begin{equation} \label{eq:eta-sum2}
\sum_{j=0}^d (\eta^{(j)}\rtimes \R)\circ\big( (1\otimes \id_A)\rtimes \R \big) = \left( \sum_{j=0}^d \eta^{(j)}\circ (1\otimes\id_A) \right)\rtimes \R \stackrel{\eqref{eq:eta-sum}}{=} \id_{A\rtimes_\alpha \R}.
\end{equation}

Let $B_{\sigma}$ and $B_{\lambda}$ be as above, and let $h \in C_0(\R) \cong C^*(\widehat{\R})$  be as above as well.
We define a completely positive contraction
\[
\varphi\colon A \rtimes_{\alpha}\R \to B_{\sigma} \subseteq A \rtimes_{\alpha} \R \rtimes_{\widehat{\alpha}} \widehat{\R} \quad\text{via}\quad \varphi(a) = h^{1/2} a h^{1/2}.
\] 
(The inclusion $B_{\sigma} \subseteq A \rtimes_{\alpha} \R \rtimes_{\widehat{\alpha}} \widehat{\R}$ is as in Claim~\ref{claim 2} and the preceding discussion, using Notation \ref{Not:flow-conventions}.)

We now define c.p.c.~order zero maps 
\[
\zeta_0,\zeta_1\colon B_{\sigma} \to (C(\R/M\Z) \otimes A) \rtimes_{\lambda \otimes \alpha} \R
\] 
as follows. We first define $\zeta_0$ to be the isomorphism $\zeta$ given in \eqref{eq:xi} above:
\[
\zeta_0  = \zeta \colon B_{\sigma} \stackrel{\cong}{\longrightarrow} B_{\lambda} \subseteq ( C(\R/M\Z) \otimes A) \rtimes_{\lambda \otimes \alpha} \R 
\, .
\]
For $\zeta_1$, we first note that the automorphism  $\lambda_{M/2} \otimes \id \in \aut(C(\R/M\Z)\otimes A)$ commutes with $\lambda_t \otimes \alpha_t$ for all $t$. This gives rise to an automorphism $\widetilde{\lambda}_{M/2}=(\lambda_{M/2}\otimes\id)\rtimes\R$ of $( C(\R/M\Z) \otimes A) \rtimes_{\lambda \otimes \alpha} \R$. Now, we define 
\[
\zeta_1 =\widetilde{\lambda}_{M/2}\circ \zeta_0. 
\]
Consider the c.p.c.~map $\kappa$ given by the following diagram: 
\[
\xymatrix{
A \rtimes_{\alpha} \R \ar[dr]_{\varphi} \ar@{.>}[rrrr]^{\kappa} & & & & \Ainfa \rtimes_{\alpha_{\infty}} \R \\
& B_{\sigma} \ar[rr]^{\!\!\!\!\!\! x \mapsto x \oplus x} &&  
B_{\sigma} \oplus B_{\sigma} \ar[ur]_{\qquad \quad \, (x,y) \mapsto \sum_{j=0}^d (\eta^{(j)}\rtimes \R) \circ \zeta_0 (x) + (\eta^{(j)}\rtimes \R)\circ \zeta_1 (y)} &
}
\]
Note that the upwards map is a sum of $2(d+1)$ c.p.c.~order zero maps.
 
Let $\iota \colon A\rtimes_\alpha\R \to \Ainfa\rtimes_{\alpha_\infty}\R$ be the natural inclusion that is induced by the equivariant inclusion of $A$ into $\Ainfa$ as constant sequences. We wish to show that 
\[
\|\kappa(x) - \iota(x)\|\le\eps
\] 
for all $x \in \mathcal{F}$. 

Recall that $h_0 \in C(\R/M\Z)$ is induced by $h$ as in Remark~\ref{Rmk:not-hereditary} and that by Claim~\ref{claim:h+h-shifted}, we have $h_0 + h_1 = 1 \in C(\R/M\Z)$. Moreover, we would like to apply Claim~\ref{claim 2} to an expression of the form $h^{1/2} \cdot x \cdot h^{1/2}$, $x \in \mathcal{F}$. For this to make sense, we may regard such an $x$ as an element in $L^1(\R,C_0(\R,A))$ by setting
\[
x(t)(s) = \left\{ \begin{matrix} x(t) & \mid & s \in (-M/2,M/2) \\ 
0 & \mid & \mbox{ else }  \end{matrix} \right. 
\, .
\]
From Claim~\ref{claim 2} we then see that $h^{1/2} \cdot x \cdot h^{1/2} \in B_0 \subset B_\sigma$ and
\[
(h^{1/2} \cdot x \cdot h^{1/2}) (t) (s) = x (t)(s) h^{1/2}(s-t) h^{1/2}(s)
\, .
\]
The map $\zeta_0$ sends this element to the function in $L^1(\R,C(\R/M\Z)\otimes A)$ given by 
\[
\zeta_0 (x)(t)(s) =  x (t)(s) h_0^{1/2}(s-t) h_0^{1/2}(s)
\]
where here $s \in (-M/2,M/2) \subseteq \R/M\Z$, and we recall that the range of applicable $t \in \R$ for which the expression is not zero is taken such that $s-t \in (-M/2,M/2)$. 
Thus
\[
\begin{array}{rcl}
\|\zeta_0 (x) (t)(s)- (x \cdot h_0)(t)(s)\| & = & 
\|x(t)(s) h_0^{1/2}(s) (h_0^{1/2}(s-t) - h_0^{1/2}(s))\| \\
& \stackrel{\eqref{eq:h}}{\leq} &  
\displaystyle \|x(t)(s)\|\cdot \frac{\eps}{2L(d+1)}
\end{array}
\]
for all $t\in\R$ and $s\in (-M/2, M/2)$.
Therefore, we have 
\begin{samepage}
\[
\begin{array}{rcl}
\|(\eta^{(j)}\rtimes \R) \circ \zeta_0 (x)- (\eta^{(j)}\rtimes \R)(x \cdot h_0)\|  & \leq &  \|\zeta_0 (x)-x \cdot h_0\|_{L^1}  \\
& \leq & \displaystyle \frac{\eps}{2L(d+1)}\cdot \|x\|_{L^1} \\
& \stackrel{\eqref{eq:L}}{\leq} & \displaystyle \frac{\eps}{2(d+1)}
\end{array}
\]
for all $x\in \mathcal{F}$.\end{samepage}
Likewise, 
\[
\zeta_1 (x)(t)(s) =  x (t)(s) h_1^{1/2}(s-t) h_1^{1/2}(s)
\]
and a similar computation shows that
\[
\|(\eta^{(j)}\rtimes \R) \circ \zeta_1 (x)- (\eta^{(j)}\rtimes \R)(x \cdot h_1)\|  \leq 
\frac{\eps}{2(d+1)} 
\, .
\]
Thus, for any $x \in \mathcal{F}$ we have
\[
\begin{array}{ccl}
\|\kappa(x) - \iota(x)\| &=& \displaystyle \Bigg\| \sum_{j=0}^d (\eta^{(j)}\rtimes \R) \circ \zeta_0 (x) + (\eta^{(j)}\rtimes \R) \circ \zeta_1 (x) - x \Bigg\| \\
&\leq& \displaystyle 2(d+1)\cdot\frac{\eps}{2(d+1)} \\
& & \displaystyle + \Bigg\| \sum_{j=0}^d (\eta^{(j)}\rtimes \R)(x \cdot h_0) + (\eta^{(j)}\rtimes \R)(x \cdot h_1)-\iota(x) \Bigg\| \\
&=& \displaystyle\eps + \Bigg\| \sum_{j=0}^d (\eta^{(j)}\rtimes \R)(1\otimes x)-\iota(x) \Bigg\| \stackrel{\eqref{eq:eta-sum2}}{=} \eps .
\end{array}
\]
The $C^*$-algebra $B_{\sigma}$ is a hereditary $C^*$-subalgebra of $A \otimes\mathcal{K}$ by Claim~\ref{claim 2}, so 
\[
\dimnuc(B_{\sigma}) \leq \dimnuc(A),
\]
and $\dimnuc(B_{\sigma} \oplus B_{\sigma}) \leq \dimnuc(A)$ as well. We can now consider the upward maps that we have constructed, and compose them with the natural $^{*}$-homomorphism $\Ainfa \rtimes_{\alpha} \R\to(A\rtimes_{\alpha}\R)_{\infty}$ from Lemma~\ref{Rmk:cont-part-crossed-product-embeds}. Since this $^{*}$-homomorphism is compatible with respect to standard embeddings, we can apply Lemma~\ref{Lemma:dimnuc-central-sequence} and see that indeed 
\[
\dimnucone(A \rtimes_{\alpha}\R) \leq 2(d+1)\dimnucone(A) \, ,
\]
as required.
\end{proof}

\begin{Rmk}
\label{Rn-dimnuc}
To keep notation simple, we restricted ourselves here to actions of $\R$. However, it seems clear that the argument generalizes in a straightforward manner to actions of $\R^n$ with the analogous notion of Rokhlin dimension for $\R^n$-actions (using equivariant order zero maps from $C(\mathbb{R}^n/p \mathbb{Z}^n)$ in place of normal elements, which correspond to equivariant order zero maps from $C(\mathbb{T}) \cong C(\mathbb{R}/p \mathbb{Z})$), where the bound is given by
\[
\dimnucone(A \rtimes_{\alpha}\R^n) \leq 2^n\cdot \dimrokone(\alpha)\cdot \dimnucone(A)\, .
\]
\end{Rmk}

%%%%%%%%%%%%%%%%%%%%%%%%%%%%%%%%%%%%%%%

\section{Reduction to cocompact subgroups}
\label{Section:reduction}

\noindent
In this section, we define and study the Rokhlin dimension of an action of a second-countable, locally compact group relative to a closed, cocompact subgroup. Finite Rokhlin dimension in this sense allows us to study permanence properties of the nature that we discussed in the previous section, but with minimal restrictions on the acting group. In this context, one cannot expect to obtain a direct connection between the nuclear dimension of the crossed product $C^*$-algebra $A\rtimes_\alpha G$ and that of the coefficient $C^*$-algebra $A$. We can establish a connection between the nuclear dimension $A\rtimes_\alpha G$ and that of $A\rtimes_{\alpha|_H} H$, where $H$ is a closed, cocompact subgroup. If one has sufficient information about the restricted action $\alpha|_H : H\to\operatorname{Aut}(A)$ or its crossed product, the method discussed in this section can have some advantages that \emph{global} Rokhlin dimension, in the sense of the previous section or \cite{HWZ, SWZ}, does not have in the non-compact case.
For example, obtaining bounds concerning decomposition rank of crossed products by non-compact groups appears to be difficult, and would require different techniques and more severe constraints on the action than just finite Rokhlin dimension. The methods developed in this section yield the following conditional statement: Let $A$ be a separable $C^*$-algebra, and let $\alpha \colon \R \to \aut(\A)$ be a flow with finite Rokhlin dimension. If there exists $t>0$ such that $A \rtimes_{\alpha_t}\Z$ has finite decomposition rank, then $A \rtimes_{\alpha}\R$ has finite decomposition rank.

 \begin{Def} \label{Def:dimrok-H}
 Let $G$ be a second-countable, locally compact group, let $A$ be a separable $C^*$-algebra and let $\alpha: G \to \aut(A)$ be a point-norm continuous action. Let $H < G$ be a closed and cocompact subgroup, i.e.~ a closed subgroup such that $G/H$ is compact. We use $\lambda$ to denote the action of $G$ on $C(G/H)$ by left translation, i.e.~$\lambda_g(f)(hH) = f(g^{-1}hH)$. The \emph{Rokhlin dimension of $\alpha$ relative to $H$}, denoted $\dimrok(\alpha, H)$, is the smallest natural number $d$ such that there exist $\lambda - \tilde{\alpha}_\infty$ equivariant c.p.c.~order zero maps
\[
\mu^{(0)},\ldots,\mu^{(d)}: C(G/H) \to F_\infty^{(\alpha)}(A)
\]
with $\mu^{(0)}(1)+\dots+\mu^{(d)}(1)=1$.
 \end{Def}
 
 \begin{Rmk}
 \label{Rmk:comparison-global-local}
 The connection between Rokhlin dimension of a flow and Rokhlin dimension relative to cocompact subgroups of $\R$ is as follows.  If $\alpha: \R\to\operatorname{Aut}(A)$ is a flow on a separable $C^*$-algebra, then
 $$\dimrok(\alpha)=\sup_{t>0}~\dimrok(\alpha, t\Z) \, .
 $$
 This follows immediately from  Definition~\ref{Def:dimrok} and Remark~\ref{Rmk:Rokhlin-dim}.
 \end{Rmk}

 \begin{Thm}
\label{Thm:Green}
Let $G$ be a second-countable, locally compact group, let $A$ be a separable $C^*$-algebra and let $\alpha: G \to \aut(A)$ be a point-norm continuous action. Let $H < G$ be a closed and cocompact subgroup.
Then 
\[
\dimnucone(A \rtimes_{\alpha}G) \leq \dimrokone(\alpha, H)\cdot\dimnucone(A \rtimes_{\alpha|_H}H)
\]
and
\[
\drone(A \rtimes_{\alpha}G) \leq \dimrokone(\alpha, H)\cdot\drone(A \rtimes_{\alpha|_H}H).
\] 
 \end{Thm}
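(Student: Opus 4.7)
The plan is to produce, for the target algebra $A \rtimes_\alpha G$, a witness for Lemma~\ref{Lemma:dimnuc-central-sequence} in which the intermediate algebra is $B := (C(G/H) \otimes A) \rtimes_{\lambda \otimes \alpha} G$. The virtue of this choice is twofold: the Rokhlin data for $\alpha$ relative to $H$ gives equivariant c.p.c.~order zero maps out of $C(G/H)$ that propagate through crossed products into $(A \rtimes_\alpha G)_\infty$; and since $H$ is cocompact (so $C_0(G/H) = C(G/H)$), Green's imprimitivity theorem identifies $B$ up to Morita equivalence with $A \rtimes_{\alpha|_H} H$. As both algebras are $\sigma$-unital, Brown--Green--Rieffel yields a stable isomorphism, and because $\dimnuc$ and $\dr$ are $\mathcal{K}$-stable, one gets $\dimnuc(B) = \dimnuc(A \rtimes_{\alpha|_H} H)$ and $\dr(B) = \dr(A \rtimes_{\alpha|_H} H)$.

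Setting $d = \dimrok(\alpha, H)$, fix $\lambda - \tilde{\alpha}_\infty$ equivariant c.p.c.~order zero maps $\mu^{(0)}, \dots, \mu^{(d)} \colon C(G/H) \to F_\infty^{(\alpha)}(A)$ with $\sum_j \mu^{(j)}(1) = 1$. The tensor product $\mu^{(j)} \otimes \id_A$ is c.p.c.~order zero (using the structure theorem for order zero maps, the supporting $^*$-homomorphism and commuting contraction tensor with the identity on $A$), and composing with the $(\tilde{\alpha}_\infty \otimes \alpha) - \alpha_\infty$ equivariant multiplication $^*$-homomorphism $m \colon F_\infty^{(\alpha)}(A) \otimes_{\max} A \to A_\infty^{(\alpha)}$ from Remark~\ref{F(A)} yields $(\lambda \otimes \alpha) - \alpha_\infty$ equivariant c.p.c.~order zero maps $\eta^{(j)} \colon C(G/H) \otimes A \to A_\infty^{(\alpha)}$ satisfying the crucial identity $\sum_j \eta^{(j)}(1 \otimes a) = a$ for all $a \in A$. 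Lemma~\ref{Rmk:equivariant-order-zero-maps} turns each $\eta^{(j)}$ into a c.p.c.~order zero map $\eta^{(j)} \rtimes G \colon B \to A_\infty^{(\alpha)} \rtimes_{\alpha_\infty} G$, and composing with the $^*$-homomorphism $\Phi$ from Lemma~\ref{Rmk:cont-part-crossed-product-embeds} produces $\psi^{(j)} := \Phi \circ (\eta^{(j)} \rtimes G) \colon B \to (A \rtimes_\alpha G)_\infty$. The intermediate c.p.c.~map will be $\varphi = \pi \colon A \rtimes_\alpha G \to B$, the full crossed product $^*$-homomorphism induced by the $\alpha - (\lambda \otimes \alpha)$ equivariant embedding $a \mapsto 1 \otimes a$.

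The identity $\sum_j \psi^{(j)} \circ \pi = \iota_{A \rtimes_\alpha G}$ is verified by a direct computation on generators $\iota^\alpha(a) \lambda^\alpha(f)$: each $\psi^{(j)} \circ \pi$ sends such a generator to the image under $\Phi$ of $\iota^{\alpha_\infty}(\eta^{(j)}(1 \otimes a)) \lambda^{\alpha_\infty}(f)$, and summing collapses via $\sum_j \eta^{(j)}(1 \otimes a) = a$ and the compatibility of $\Phi$ with constant sequence embeddings (stated in Lemma~\ref{Rmk:cont-part-crossed-product-embeds}) to $\iota_{A \rtimes_\alpha G}(\iota^\alpha(a) \lambda^\alpha(f))$. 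Feeding this into Lemma~\ref{Lemma:dimnuc-central-sequence} with $n+1 = \dimrokone(\alpha, H)$ and $\dimnuc(B) \leq \dimnuc(A \rtimes_{\alpha|_H} H)$ gives the asserted $\dimnucone$ bound.

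For the $\drone$ bound one additionally needs $\|\sum_j \psi^{(j)}\| \leq 1$. This follows because $\sum_j \eta^{(j)} = m \circ ((\sum_j \mu^{(j)}) \otimes \id_A)$ is c.p.c.~(the first factor being the c.p.c.~unital map $\sum_j \mu^{(j)}$ tensored with $\id_A$, the second a $^*$-homomorphism) and $G$-equivariant, so by standard functoriality of full crossed products with respect to c.p.c.~equivariant maps the induced map is c.p.c., and it agrees with $\sum_j (\eta^{(j)} \rtimes G)$ on generators. The step most likely to require care is the invocation of Green's imprimitivity theorem and the ensuing Morita invariance of $\dimnuc$ and $\dr$; once this identification is granted, the remaining Rokhlin-tower construction is a streamlined version of the argument for Theorem~\ref{Thm:dimnuc-bound}, but free of the cutoff gymnastics needed there since $G/H$ is genuinely compact and a single system of equivariant order zero maps out of $C(G/H)$ does the job directly.
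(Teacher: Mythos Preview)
Your proposal is correct and follows essentially the same approach as the paper: both use Green's imprimitivity to identify the nuclear dimension/decomposition rank of $(C(G/H)\otimes A)\rtimes_{\lambda\otimes\alpha}G$ with that of $A\rtimes_{\alpha|_H}H$, push the Rokhlin order zero maps $\mu^{(j)}$ through Remark~\ref{F(A)} and Lemma~\ref{Rmk:equivariant-order-zero-maps} to get the $\psi^{(j)}$, and then invoke Lemma~\ref{Lemma:dimnuc-central-sequence} with $\varphi=(1\otimes\id_A)\rtimes G$. The only cosmetic difference is in the contractivity argument for the decomposition rank bound: you argue directly that $\sum_j\eta^{(j)}$ is c.p.c.\ (as a composition of c.p.c.\ maps) and then pass to crossed products, whereas the paper observes that $\sum_j\psi^{(j)}\circ\varphi$ equals the standard embedding and that $\varphi(A\rtimes_\alpha G)$ is non-degenerate in $B$, forcing $\sum_j\psi^{(j)}$ to be contractive.
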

\begin{proof}
Green's Imprimitivity Theorem (\cite[Theorem 4.22]{williams-book}) states that the crossed product $(C(G/H)\otimes A)\rtimes_{\lambda \otimes \alpha} G$ is Morita equivalent to $A \rtimes_{\alpha|_H}H$. In particular, those two $C^*$-algebras have the same decomposition rank and the same nuclear dimension. 

The embedding $A \to C(G/H) \otimes A$ given by $a \mapsto 1 \otimes a$ is $\alpha - (\lambda\otimes \alpha)$ equivariant, and therefore induces a $^{*}$-homomorphism 
\[
\varphi = (1\otimes\id_A)\rtimes G \colon A\rtimes_{\alpha} G \to (C(G/H)\otimes A)\rtimes_{\lambda \otimes \alpha} G.
\] 
Let $\iota \colon A \to \Ainfa$ be the standard embedding as constant sequences. Denote $d=\dimrok(\alpha, H)$ and let 
\[
\mu^{(0)},\ldots,\mu^{(d)}: C(G/H) \to F^{(\alpha)}_{\infty}(A)
\]
be as in Definition~\ref{Def:dimrok-H}. In view of Remark~\ref{F(A)}, those maps induce $(\lambda\otimes\alpha) - \alpha_\infty$ equivariant c.p.c.~order zero maps 
\[
\eta^{(j)} \colon C(G/H)\otimes A \to \Ainfa
\] 
given by $\eta^{(j)}(f \otimes a) = \mu^{(j)}(f)\iota(a)$ for all $j=0,\dots,d$. One then has
\[
\sum_{j=0}^d \eta^{(j)}\circ (1\otimes a) = \Bigg( \sum_{j=0}^d \mu^{(j)}(1) \Bigg) a = a
\]
for all $a\in A$.
By Lemma~\ref{Rmk:equivariant-order-zero-maps}, these maps induce c.p.c.~order zero maps  
\[
\psi^{(j)} = \eta^{(j)}\rtimes G \colon (C(G/H)\otimes A)\rtimes_{\lambda \otimes \alpha} G ~\to~ \Ainfa \rtimes_{\alpha} G ~\stackrel{(\text{L} \ref{Rmk:cont-part-crossed-product-embeds})}{\longrightarrow}~ (A \rtimes_{\alpha} G)_{\infty}
\]
satisfying
\[
\sum_{j=0}^d \psi^{(j)}\circ\varphi =  \Bigg( \sum_{j=0}^d (\eta^{(j)}\circ (1\otimes\id_A))\Bigg)\rtimes G = \iota\rtimes G.
\]
Moreover, this equation shows that the sum $\sum_{j=0}^d \psi^{(j)}$ is contractive because $\varphi(A\rtimes G)\subset (C(G/H)\otimes A)\rtimes_{\lambda \otimes \alpha} G$ is non-degenerate.

Applying Lemma~\ref{Lemma:dimnuc-central-sequence}, we thus obtain 
\[
\dimnucone(A \rtimes_{\alpha}G) \leq (d+1)\dimnucone(A \rtimes_{\alpha|_H}H)
\]
and
\[
\drone(A \rtimes_{\alpha}G) \leq (d+1)\drone(A \rtimes_{\alpha|_H}H).
\]
\end{proof}

\begin{Rmk}
In Definition~\ref{Def:dimrok-H}, one can restrict to the special case in which $G$ is compact and $H$ is the trivial subgroup. In that case, one recovers the Rokhlin dimension of $\alpha$ as an action of a compact group, which has been introduced in \cite{Gardella1}. In this way, Theorem~\ref{Thm:Green} can be viewed as a generalization of the main result of \cite{Gardella2}.
\end{Rmk}

Specializing now to the case of flows, we show that finite Rokhlin dimension passes from the action of $\R$ to the restriction to $t\Z$ for any $t > 0$.

\begin{prop}
\label{Prop:dimrok-restricted-flow}
Let $A$ be a separable $C^*$-algebra with a flow $\alpha: \R\to\operatorname{Aut}(A)$. For any $t>0$, one has 
$$
\dimrokone(\alpha_t)\leq 2\cdot\dimrokone(\alpha) \, .
$$
%where here we adopt the convention that the expression $\dimrok(\gamma)$, for an automorphism $\gamma\in\operatorname{Aut}(A)$, denotes the single-tower Rokhlin dimension of $\gamma$ viewed as a $\Z$-action on a $C^*$-algebra $A$. (See \cite[Definition 2.3(c)]{HWZ} in the case of unital $A$, and \cite[Definitions 4.3 and 4.7]{SWZ} for the non-unital case.)

\end{prop}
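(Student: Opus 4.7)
The plan is to reduce to the relative formulation from Remark~\ref{Rmk:comparison-global-local}, apply it with the cocompact subgroup $nt\Z < \R$ for each $n \in \N$, and then split the resulting order zero data into a ``bulk'' and a ``boundary'' part to produce equivariant c.p.c.\ order zero maps out of $C(\Z/n\Z)$ into the central sequence algebra for $\alpha_t$ viewed as a $\Z$-action.

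Write $d = \dimrok(\alpha)$. Remark~\ref{Rmk:comparison-global-local} applied with the cocompact subgroup $nt\Z$ yields $\lambda - \tilde{\alpha}_\infty$ equivariant c.p.c.\ order zero maps
\[
\mu^{(0)}, \ldots, \mu^{(d)} : C(\R / nt \Z) \to F_\infty^{(\alpha)}(A)
\]
with $\sum_l \mu^{(l)}(1) = 1$. The key combinatorial step is to construct two $\lambda_t$-equivariant families $(e_k)_{k=0}^{n-1}$ and $(e_k')_{k=0}^{n-1}$ of pairwise orthogonal positive contractions in $C(\R/nt\Z)$ satisfying $\sum_k e_k + \sum_k e_k' = 1$. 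To do this, fix a $t$-periodic continuous function $b \in C(\R/t\Z) \subset C(\R/nt\Z)$ with $0 \le b \le 1$ that vanishes on a neighborhood of $0 \bmod t$ and equals $1$ on a neighborhood of $t/2 \bmod t$. Then the function $e_k \in C(\R/nt\Z)$ defined as $e_k = b$ on $[kt, (k+1)t]$ and $0$ elsewhere, and $e_k'$ defined as $e_k' = 1 - b$ on $[(k-1/2)t, (k+1/2)t]$ and $0$ elsewhere, are continuous and $\lambda_t$-equivariant, each family is pairwise orthogonal, and $\sum_k e_k = b$, $\sum_k e_k' = 1 - b$.

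The assignments $\delta_k \mapsto e_k$ and $\delta_k \mapsto e_k'$ extend to shift-equivariant c.p.c.\ order zero maps $\iota, \iota' : C(\Z/n\Z) \to C(\R/nt\Z)$. Composing with each $\mu^{(l)}$ then yields, for $l = 0, \ldots, d$, equivariant c.p.c.\ order zero maps
\[
\phi^{(l)} = \mu^{(l)} \circ \iota, \qquad \phi^{(l+d+1)} = \mu^{(l)} \circ \iota' : C(\Z/n\Z) \to F_\infty(A)
\]
that satisfy $\sum_{j=0}^{2d+1} \phi^{(j)}(1) = \sum_l \mu^{(l)}(1) = 1$. Producing these $2(d+1)$ maps for every $n \in \N$ is precisely the Rokhlin dimension bound $\dimrokone(\alpha_t) \leq 2(d+1) = 2\dimrokone(\alpha)$, by the standard reformulation of Rokhlin dimension for $\Z$-actions in terms of equivariant order zero maps from $C(\Z/n\Z)$.

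The main obstacle is the construction of the equivariant orthogonal partitions: since $\R/nt\Z$ is connected, no single family of $n$ pairwise orthogonal positive contractions can sum to~$1$, so a two-family decomposition is essentially forced, and this splitting accounts for the factor $2$ in the bound.
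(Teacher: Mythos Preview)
Your argument is correct and takes a genuinely different route from the paper's proof. The paper chooses $M>0$ \emph{rationally independent} from $t$, so that $\lambda_t$ acts on $C(\R/M\Z)$ as an irrational rotation; it then invokes \cite[Theorem~6.2]{HWZ} to obtain (for each prime $n$) two approximate cyclic Rokhlin towers of height $n$ inside $C(\R/M\Z)$, and pushes these forward through the $\mu^{(l)}$ to get $2(d+1)$ approximate towers in $A$, finally verifying the conditions of \cite[Definition~1.21]{hirshberg-phillips}. You instead take $M = nt$, so that $\lambda_t$ is the periodic rotation of exact order $n$ on $C(\R/nt\Z)$, and you build the two towers $\{e_k\},\{e_k'\}$ by hand as an explicit ``bulk plus boundary'' partition of unity. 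This gives exact (rather than approximate) equivariant c.p.c.\ order zero maps $C(\Z/n\Z)\to F_\infty^{(\alpha)}(A)\subset F_\infty(A)$, and avoids any appeal to the irrational rotation result. Both arguments arrive at the factor $2$ for the same underlying reason --- the circle needs two colours to carry cyclic Rokhlin towers --- but yours is more self-contained and stays entirely inside the central sequence algebra without the detour through approximate relations and lifting.
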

\begin{proof}
Let $t>0$ be given. Choose a rationally independent number $M>0$ from $t$. It follows that the shift $\lambda_t$ on $C(\R/M\Z)$ is an irrational rotation. In particular, it has Rokhlin dimension 1 by \cite[Theorem 6.2]{HWZ}, and in fact, by the proof of \cite[Theorem 6.2]{HWZ} with single towers if we assume that the height of the towers is a prime number. Let $\eps>0$, $n\in\N$ and a finite set $\mathcal{F}\subset A$ be given.  By Remark \cite[Remark 2.4(v)]{HWZ}, it suffices to consider towers with arbitrarily large height, so we may assume without loss of generality that $n$ is prime.
Then there exist positive contractions $b_0^{(0)},\dots,b_{n-1}^{(0)},b_0^{(1)},\dots,b_{n-1}^{(1)}\in C(\R/M\Z)$ which satisfy, 
for  $i=0,1$ and for all $j,j_1,j_2=0,\dots,n-1$ with $j_1\neq j_2$,
\begin{itemize}
\item $1 = \sum_{i=0,1} \sum_{j=0}^{n-1} b_j^{(i)}$
\item $\|b_{j+1}^{(i)}-\lambda_t(b_j^{(i)})\|\leq\eps$
\item $\|b_{0}^{(i)}-\lambda_t(b_{n-1}^{(i)})\|\leq\eps$
\item $\|b_{j_1}^{(i)}b_{j_2}^{(i)}\|\leq\eps .$
\end{itemize}
Denoting $d=\dimrok(\alpha)$, choose $\lambda-\tilde{\alpha}_\infty$ equivariant c.p.c.~order zero maps
$$
\mu^{(0)},\mu^{(1)},\dots,\mu^{(d)}: C(\R/M\Z) \to F_\infty^{(\alpha)}(A)
$$
with $\mu^{(0)}(1)+\mu^{(1)}(1)+\dots+\mu^{(d)}(1)=1$. In particular, these maps are $\lambda_t - \tilde{\alpha}_{\infty,t}$ equivariant. So defining positive contractions $\{f_j^{(i,l)}\}_{j=0,\dots,n-1}^{i=0,1;~ l=0,\dots,d}$ in $F_\infty(A)$ via $f_j^{(i,l)}=\mu^{(l)}(b_j^{(i)})$ yields the following relations, 
for $i=0,1$, for $l=0,\dots,d$ and for $j,j_1,j_2=0,\dots,n-1$ with $j_1\neq j_2$:
\begin{itemize}
\item $\displaystyle 1 = \sum_{i=0,1}\sum_{l=0}^d \sum_{j=0}^{n-1} f_j^{(i,l)}$
\item $\|f_{j+1}^{(i,l)}-\tilde{\alpha}_{\infty,t}(f_j^{(i,l)})\|\leq\eps$
\item $\|f_{0}^{(i,l)}-\tilde{\alpha}_{\infty,t}(f_{n-1}^{(i,l)})\|\leq\eps$
\item $\|f_{j_1}^{(i,l)}f_{j_2}^{(i,l)}\|\leq\eps \, .$
\end{itemize}

If we represent the elements $f_j^{(i,l)}$ by positive bounded sequences, say  $(h_j^{(i,l)}(m))_m$ in $\ell^\infty(\N,A)$, then these satisfy
\begin{itemize}
\item $\displaystyle\lim_{m\to\infty} \Big( \sum_{i=0,1}\sum_{l=0}^d\sum_{j=0}^{n-1} h_j^{(i,l)}(m) \Big) a = a$
\item $\displaystyle\limsup_{m\to\infty}\| \big( h_{j+1}^{(i,l)}(m)-\alpha_{t}(h_j^{(i,l)}(m)) \big) a\|\leq\eps$
\item $\displaystyle\limsup_{m\to\infty}\| \big( h_{0}^{(i,l)}(m)-\alpha_{t}(h_{n-1}^{(i,l)}(m)) \big) a\|\leq\eps$
\item $\displaystyle\limsup_{m\to\infty}\|h_{j_1}^{(i,l)}(m)h_{j_2}^{(i,l)}(m)a\|\leq\eps$
\item $\displaystyle\lim_{m\to\infty} \|[h_j^{(i,l)}(m),a]\|=0$
\end{itemize}
for all $a\in A$, for $i=0,1$, for $l=0,\dots,d$ and for $j,j_1,j_2=0,\dots,n-1$ with $j_1\neq j_2$. In particular, if we choose $m$ sufficiently large, we find positive contractions $h_j^{(i,l)}$ in $A$ satisfying
\begin{itemize}
\item $\displaystyle \Big\| \Big( \sum_{i=0,1}\sum_{l=0}^d\sum_{j=0}^{n-1} h_j^{(i,l)} \Big)\cdot a - a \Big\|\leq\eps$
\item $\| \big( h_{j+1}^{(i,l)}-\alpha_{t}(h_j^{(i,l)}) \big)a\|\leq 2\eps$
\item $\| \big( h_{0}^{(i,l)}-\alpha_{t}(h_{n-1}^{(i,l)}) \big)a\|\leq 2\eps$
\item $\| h_{j_1}^{(i,l)}h_{j_2}^{(i,l)}a \|\leq 2\eps$
\item $\|[h_j^{(i,l)},a]\|\leq\eps$
\end{itemize}
for all $a\in \mathcal{F}$, for $i=0,1$, for $l=0,\dots,d$ and for $j,j_1,j_2=0,\dots,n-1$ with $j_1\neq j_2$. Thus, $\alpha_t$ satisfies \cite[Definition 1.21]{hirshberg-phillips} (c.f.~ \cite[Proposition 4.5]{SWZ}) and the claim follows.
\end{proof}

We conclude the section by showing how the above results can give a short, alternative proof of Theorem~\ref{Thm:dimnuc-bound}, although with the following weaker bound on the nuclear dimension: 
\[
\dimnucone(A \rtimes_{\alpha} \R) \leq 4\cdot \dimrokone(\alpha)^2\cdot \dimnucone(A).
\]
\begin{proof}[Second proof of Theorem~\ref{Thm:dimnuc-bound} with a weaker bound]
Denote $d=\dimrok(\alpha)$. Let $t>0$. By Proposition~\ref{Prop:dimrok-restricted-flow}, we get $\dimrok(\alpha_t)\leq 2d+1$.

Now, by the straightforward generalization of \cite[Theorem 4.1]{HWZ} to the non-unital setting (\cite[Theorem 3.1]{hirshberg-phillips} and \cite[Theorem 5.2]{SWZ}), we have 
\[
\dimnucone(A \rtimes_{\alpha_t} \Z) \leq 4(d+1)\dimnucone(A)
\, .
\]
By Remark~\ref{Rmk:comparison-global-local}, we have $\dimrok(\alpha, t\Z)\leq d$. Thus, by Theorem~\ref{Thm:Green}, we obtain
\[
\dimnucone(A \rtimes_{\alpha}\R) \leq (d+1)\dimnucone(A \rtimes_{\alpha_t} \Z) \leq 4(d+1)^2\dimnucone(A)
\]
as required.
\end{proof}

%%%%%%%%%%%%%%%%%%%%%%%%%%%%%%%

\section{Rokhlin dimension with commuting towers and $D$-absorption}
\label{Section:D-absorption}

\noindent
In this section, we study permanence with respect to $D$-absorption for a strongly self-absorbing $C^*$-algebra $D$. Recall from \cite{TomsWinter07} that a separable, unital $C^*$-algebra $D$ is called strongly self-absorbing, if the first-factor embedding $d\mapsto d\otimes 1$ from $D$ to $D\otimes D$ is approximately unitarily equivalent to an isomorphism. Given such $D$ and a $C^*$-algebra $A$, it is interesting to know when $A$ is $D$-absorbing, that is, when $A\cong A\otimes D$. (See, for instance, \cite{Winter14}).

As in the case of discrete groups (cf.\ \cite{HWZ,SWZ,hirshberg-phillips}), finite Rokhlin dimension does not appear sufficient for the purpose of proving that $D$-absorption passes to the crossed product. Therefore, we consider a stronger variant of finite Rokhlin dimension, namely with commuting towers. The main result of this section is a generalization of \cite[Theorem 5.2]{HW}.

\begin{Def}
 \label{Def:dimrok-comm}
 Let $A$ be a separable $C^*$-algebra with a flow $\alpha: \R\to\aut(A)$. The \emph{Rokhlin dimension of $\alpha$ with commuting towers} is the smallest natural number $d\in\N$, such that the following holds. For every $p\in \R$ there exist pairwise commuting normal contractions $x^{(0)},x^{(1)},\ldots,x^{(d)}\in F_\infty^{(\alpha)}(A)$ with $x^{(0)*}x^{(0)}+\dots+x^{(d)*}x^{(d)}=1$ and $\tilde{\alpha}_{\infty,t}(x^{(j)})=e^{ipt}x^{(j)}$ for all $t\in\R$ and $j=0,\dots,d$. In this case, we write $\dimrokc(\alpha)=d$.
 \end{Def}
 
 \begin{Rmk}
\label{Rmk:cRokhlin-dim}
As in the case of Rokhlin dimension without commuting towers, the above definition has a useful equivalent reformulation: $\dimrokc(\alpha)\leq d$ if and only if for all $M>0$, there exist $\lambda - \tilde{\alpha}_\infty$ equivariant c.p.c.~order zero maps $\mu^{(0)},\ldots, \mu^{(d)} \colon C(\R/M\Z) \to F_\infty^{(\alpha)}(A)$ with pairwise commuting images such that $\sum_{j=0}^d \mu^{(j)}(1) = 1$.
 \end{Rmk}

For the rest of this section, we sometimes use the expression $\id_A$ both for the identity map on a $C^*$-algebra $A$ and for the trivial action of a locally compact group $G$ on $A$.
Our main theorem for this section is the following generalization and strengthening of \cite[Theorem 5.2]{HW} from the case of Kishimoto's Rokhlin property (that is, Rokhlin dimension zero) to finite Rokhlin dimension with commuting towers. Recall that two flows $\alpha: \R \to \aut(A)$ and $\beta: \R \to \aut(B)$ are called cocycle conjugate, if there exists a $^{*}$-isomorphism $\psi: A\to B$ and a strictly continuous map $u: \R \to \mathcal{U}(M(A))$ satisfying $u_{t+s}=u_t\alpha_t(u_s)$ and $\psi^{-1}\circ\beta_t\circ\psi = \operatorname{Ad}(u_t)\circ\alpha_t$ for all $t,s\in\R$. It is well-known that cocycle conjugate flows yield naturally isomorphic crossed products; cf.\ \cite{PackerRaeburn}. 

\begin{Thm}
\label{Thm:Z-absorption} 
Let $D$ be a strongly self-absorbing $C^*$-algebra. Let $A$ be a separable, $D$-absorbing $C^*$-algebra. Let $\alpha \colon \R \to \aut(A)$ be a flow with $\dimrokc(\alpha)<\infty$. Then $\alpha$ is cocycle conjugate to $\alpha\otimes\id_D$, and in particular, $A \rtimes_{\alpha} \R$ is $D$-absorbing.
\end{Thm}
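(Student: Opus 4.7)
The plan is to first reduce the statement to constructing a unital $^*$-homomorphism $\theta \colon D \to F_\infty^{(\alpha)}(A)$ whose image lies in the fixed-point subalgebra of $\tilde{\alpha}_\infty$. Granting such a $\theta$, an equivariant McDuff-type theorem for flows, modeled on \cite[Theorem 5.2]{HW} but using the strict-continuity machinery developed in Section~\ref{preliminaries}, produces the cocycle conjugacy between $\alpha$ and $\alpha \otimes \id_D$ via a one-sided approximate intertwining argument. From the cocycle conjugacy one then obtains $A \rtimes_\alpha \R \cong (A \otimes D) \rtimes_{\alpha \otimes \id_D} \R \cong (A \rtimes_\alpha \R) \otimes D$, giving the $D$-absorption statement for the crossed product.

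The main work is therefore the construction of $\theta$. Since $A$ is $D$-absorbing and $D$ is strongly self-absorbing, there is a unital $^*$-homomorphism $\rho \colon D \to F_\infty(A)$. By a standard reindexation argument, combined with Kasparov's Lemma~\ref{Lemma:invariant-approx-unit} to deal with the non-unital setting, we may arrange that $\rho$ factors through $F_\infty^{(\alpha)}(A)$ and that its image commutes with any prescribed separable subalgebra of $F_\infty^{(\alpha)}(A)$; in particular, with the images of the c.p.c.\ order-zero maps $\mu^{(0)}, \dots, \mu^{(d)} \colon C(\R/M\Z) \to F_\infty^{(\alpha)}(A)$ furnished by Remark~\ref{Rmk:cRokhlin-dim} for any prescribed $M > 0$. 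The commuting-towers hypothesis ensures that the $\mu^{(j)}$'s have pairwise commuting ranges, so that together with $\rho(D)$ they generate a structure amenable to the construction below.

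Fix $M > 0$ large and choose a fine partition of unity $\{f_1, \dots, f_N\}$ on $\R/M\Z$ that is approximately $\lambda_t$-equivariant for $t$ in a prescribed compact interval $[-T, T]$ with $T \ll M$. Using $D \cong D^{\otimes \infty}$ one can produce commuting unital embeddings $\rho_{j,k} \colon D \to F_\infty^{(\alpha)}(A)$ for $j = 0, \dots, d$ and $k = 1, \dots, N$, each commuting with the images of all $\mu^{(j')}$, and define
\[
\theta_{M,N}(d) = \sum_{j=0}^{d} \sum_{k=1}^{N} \mu^{(j)}(f_k)^{1/2}\, \rho_{j,k}(d)\, \mu^{(j)}(f_k)^{1/2}.
\]
The commutativity guarantees that this is a well-defined c.p.c.\ map, and the identity $\sum_{j,k}\mu^{(j)}(f_k) = 1$ makes it unital; moreover the equivariance $\mu^{(j)} \circ \lambda_t = \tilde{\alpha}_{\infty,t} \circ \mu^{(j)}$ forces $\theta_{M,N}$ to be approximately $\tilde{\alpha}_{\infty,t}$-invariant for $t \in [-T,T]$, since shifting a single bump $f_k$ by such a $t$ only creates boundary overlap of total mass $O(TN/M)$, which is negligible once $M$ is chosen much larger than $NT$.

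Finally, a diagonal reindexation argument along a sequence of parameters with $T_n \to \infty$ and $M_n/(N_n T_n) \to \infty$, together with the separability of $D$, upgrades $\{\theta_{M_n, N_n}\}$ to an honest unital $^*$-homomorphism $\theta \colon D \to F_\infty^{(\alpha)}(A)$ whose image is pointwise fixed by $\tilde{\alpha}_\infty$. The main obstacle, I expect, is the precise formulation and proof of the equivariant McDuff theorem for flows in the non-unital setting: one must ensure that the one-sided intertwining produced from $\theta$ yields a unitary cocycle $u \colon \R \to \mathcal{U}(\mathcal{M}(A))$ that is genuinely strictly continuous, which requires careful bookkeeping with the continuous part $F_\infty^{(\alpha)}(A)$ at every approximation stage rather than merely with the larger central sequence algebra $F_\infty(A)$.
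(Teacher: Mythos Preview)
Your overall architecture is right and matches the paper: the goal is to produce a unital $^*$-homomorphism $D \to F_\infty^{(\alpha)}(A)^{\tilde{\alpha}_\infty}$ and then invoke an equivariant McDuff theorem. The latter is not something you need to prove; it is available as Theorem~\ref{Thm:equ-D-absorption} (quoted from \cite{Szabo15ssa}), so your worry about strict continuity of the cocycle is already taken care of in the literature.

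The genuine gap is in your construction of $\theta$. Your formula
\[
\theta_{M,N}(d)=\sum_{j,k}\mu^{(j)}(f_k)^{1/2}\,\rho_{j,k}(d)\,\mu^{(j)}(f_k)^{1/2}
\]
is \emph{not} approximately $\tilde{\alpha}_{\infty,t}$-invariant. Applying $\tilde{\alpha}_{\infty,t}$ acts on both factors: it sends $\mu^{(j)}(f_k)$ to $\mu^{(j)}(\lambda_t f_k)$ (which you control via the partition), but it also sends $\rho_{j,k}(d)$ to $\tilde{\alpha}_{\infty,t}(\rho_{j,k}(d))$, and you have no mechanism whatsoever for making this close to $\rho_{j,k}(d)$ or to any other $\rho_{j',k'}(d)$. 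The embeddings $\rho_{j,k}$ are just arbitrary commuting copies of $D$ inside the central sequence algebra; nothing in your setup ties them to the flow. If you could arrange $\tilde{\alpha}_{\infty,t}(\rho_{j,k}(d))\approx\rho_{j,k}(d)$ you would already be done without any $\mu^{(j)}(f_k)$ at all. (Relatedly, your claim that a reindexation plus Kasparov's lemma makes $\rho$ land in $F_\infty^{(\alpha)}(A)$ is suspect for the same reason: multiplying by approximately invariant units gives continuity of the orbit but not approximate invariance, and it does not preserve multiplicativity.)

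The paper supplies the missing idea: rather than placing static copies of $D$ at the bump locations, one \emph{spreads $A$ along the flow} before applying the Rokhlin maps. Concretely (Lemma~\ref{technical dimrokc}), one defines c.p.c.\ order zero maps $\varphi^{(i)}\colon A\to C(\R/M\Z)\otimes A$ by $\varphi^{(0)}(a)(t)=h_0(t)\alpha_t(a)$ and $\varphi^{(1)}(a)(t)=h_1(t)\alpha_{t-M/2}(a)$, where $h_0+h_1=1$ are the two tent functions. These are approximately $(\alpha,\lambda\otimes\alpha)$-equivariant by design, since the flow is built into the formula. One then lifts $D\to F_\infty(A)$ to c.p.c.\ maps $\kappa^{(i,l)}\colon D\to A$ and takes the compositions
\[
\psi^{(i,l)}\colon D\xrightarrow{\kappa^{(i,l)}}A\xrightarrow{\varphi^{(i)}}C(\R/M\Z)\otimes A\xrightarrow{\mu^{(l)}\otimes\id_A}F_\infty^{(\alpha)}(A)\otimes_{\max}A\to A_\infty^{(\alpha)}.
\]
The equivariance of $\varphi^{(i)}$ and $\mu^{(l)}$ now forces $\alpha_{\infty,t}\circ\psi^{(i,l)}\approx\psi^{(i,l)}$ automatically, with no hypothesis on $\kappa^{(i,l)}$. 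This yields $2(d+1)$ c.p.c.\ order zero maps into $F_\infty^{(\alpha)}(A)^{\tilde{\alpha}_\infty}$ with pairwise commuting ranges summing to $1$; one does \emph{not} get a $^*$-homomorphism directly. The final upgrade to a unital $^*$-homomorphism is a separate step (Lemma~\ref{Lemma:universal-n-cones-D}) using that $D$ is strongly self-absorbing and the fibered structure over the simplex.
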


The main criterion for the conclusion of Theorem~\ref{Thm:Z-absorption} is given by the following theorem from \cite{Szabo15ssa}:

\begin{Thm}[cf.~{\cite[Corollary 3.8]{Szabo15ssa}}]
\label{Thm:equ-D-absorption}
Let $D$ be a strongly self-absorbing $C^*$-algebra. Let $A$ be a separable $C^*$-algebra. Let $G$ be a second-countable, locally compact group and $\alpha \colon G\to\aut(A)$ a point-norm continuous action. Then $\alpha$ is cocycle conjugate to $\alpha\otimes\id_D$ if and only if there exists a unital $^{*}$-homomorphism from $D$ to the fixed point algebra $F_\infty^{(\alpha)}(A)^{\tilde{\alpha}_\infty}$.
\end{Thm}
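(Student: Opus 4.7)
My plan is to invoke Theorem~\ref{Thm:equ-D-absorption}, which reduces the cocycle conjugacy claim $\alpha\sim\alpha\otimes\id_D$ to the construction of a single unital $^*$-homomorphism $\varphi\colon D \to F_\infty^{(\alpha)}(A)^{\tilde\alpha_\infty}$. Once such a $\varphi$ is in hand, the second assertion follows because crossed products of cocycle conjugate flows are isomorphic (cf.\ \cite{PackerRaeburn}), combined with the identification $(A\otimes D)\rtimes_{\alpha\otimes\id_D}\R \cong (A\rtimes_\alpha\R)\otimes D$. So the task is to construct $\varphi$.

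For the construction, I would first use that $A$ is separable and $D$-absorbing to obtain, via the standard characterization of $D$-absorbing $C^*$-algebras (\cite{TomsWinter07}), a unital $^*$-homomorphism $\iota\colon D \to F_\infty(A)$. Invoking the hypothesis $\dimrokc(\alpha)\le d$ through Remark~\ref{Rmk:cRokhlin-dim}, I also have, for each $M>0$, a family of $\lambda-\tilde\alpha_\infty$ equivariant c.p.c.\ order-zero maps $\mu^{(0)},\ldots,\mu^{(d)}\colon C(\R/M\Z)\to F_\infty^{(\alpha)}(A)$ with pairwise commuting images and $\sum_j\mu^{(j)}(1)=1$. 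A standard sequential reindexation argument, applied to a fixed sequence of scales $M_n\to\infty$, allows me to choose $\iota$ so that $\iota(D)$ commutes with the joint image of the $\mu_n^{(j)}$'s at every $M_n$ simultaneously.

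The core technical step is to twist $\iota$ by the Rokhlin data so as to force $\tilde\alpha_\infty$-invariance. Schematically, this is achieved by flow-averaging $\iota$ weighted by a narrow approximate delta $\psi_n\in C(\R/M_n\Z)$ with $\int\psi_n=1$, using a formula of the form
\[
\varphi_n(d) \;=\; \sum_{j=0}^{d}\int_0^{M_n} \mu_n^{(j)}\bigl(\psi_n(\cdot - t)\bigr)\,\tilde\alpha_{\infty,t}\bigl(\iota(d)\bigr)\,dt,
\]
interpreted at the sequence-algebra level. Unitality of $\varphi_n$ is immediate from $\sum_j\mu_n^{(j)}(1)=1$ and $\int\psi_n=1$, and if $\psi_n$ is supported well inside a fundamental domain of $\R/M_n\Z$, the $\lambda-\tilde\alpha_\infty$ equivariance of the $\mu_n^{(j)}$ forces $\varphi_n$ to be approximately $\tilde\alpha_{\infty,s}$-invariant for $s$ in an arbitrarily large compact interval as $n\to\infty$.

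The principal obstacle is asymptotic multiplicativity of the $\varphi_n$. In the expansion of $\varphi_n(d_1)\varphi_n(d_2)$ one encounters a double integral whose integrand involves $\mu_n^{(j)}(\psi_n(\cdot - t_1))\mu_n^{(j')}(\psi_n(\cdot - t_2))$. Within a single tower ($j=j'$) the order-zero property concentrates this product on the near-diagonal $|t_1-t_2|\lesssim\operatorname{diam}\operatorname{supp}(\psi_n)$, where the $\alpha$-continuity of $\iota$ allows $\tilde\alpha_{\infty,t_1}(\iota(d_1))\tilde\alpha_{\infty,t_2}(\iota(d_2))$ to be replaced by $\tilde\alpha_{\infty,t_1}(\iota(d_1d_2))$ with small error. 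The cross-tower terms ($j\ne j'$) are where the \emph{commuting}-tower hypothesis becomes decisive: the images $\{\mu_n^{(j)}(f)\}_{j,f}$ generate a commutative $C^*$-subalgebra of $F_\infty^{(\alpha)}(A)$ that is centralised by $\iota(D)$, so these mixed products can be reorganised inside this commutative subalgebra and matched against the diagonal contribution. A diagonal argument over $n$, combined with a careful choice of the $\psi_n$, then yields a genuine unital $^*$-homomorphism $\varphi\colon D\to F_\infty^{(\alpha)}(A)^{\tilde\alpha_\infty}$, completing the reduction to Theorem~\ref{Thm:equ-D-absorption}.
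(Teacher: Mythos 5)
Your proposal does not prove the stated theorem. Theorem~\ref{Thm:equ-D-absorption} is an abstract equivalence, valid for an arbitrary point-norm continuous action $\alpha$ of a second-countable locally compact group $G$ on a separable $C^*$-algebra $A$: cocycle conjugacy of $\alpha$ with $\alpha\otimes\id_D$ holds if and only if there is a unital $^*$-homomorphism $D\to F_\infty^{(\alpha)}(A)^{\tilde{\alpha}_\infty}$. There is no hypothesis of $D$-absorption on $A$, no Rokhlin dimension, and in particular no restriction to $G=\R$. Your very first sentence ``invokes Theorem~\ref{Thm:equ-D-absorption}'' to reduce to constructing a $^*$-homomorphism --- which is circular, as this is exactly the theorem you were asked to prove. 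What you then go on to sketch (start from $A$ being $D$-absorbing and $\dimrokc(\alpha)\le d$, manufacture a unital $^*$-homomorphism into $F_\infty^{(\alpha)}(A)^{\tilde{\alpha}_\infty}$, and feed it into Theorem~\ref{Thm:equ-D-absorption}) is an outline of the paper's proof of Theorem~\ref{Thm:Z-absorption}, not a proof of Theorem~\ref{Thm:equ-D-absorption}.

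For the record, Theorem~\ref{Thm:equ-D-absorption} is not proved in this paper at all; it is quoted from \cite[Corollary~3.8]{Szabo15ssa}. A self-contained proof of it would have to establish both implications of the equivalence. The ``only if'' direction would extract from a cocycle conjugacy between $\alpha$ and $\alpha\otimes\id_D$ an invariant unital copy of $D$ in the (corrected) continuous central sequence algebra; the harder ``if'' direction would turn such a unital $^*$-homomorphism into an $\alpha$-cocycle implementing the conjugacy, via an equivariant intertwining and reindexation argument that makes no use of Rokhlin-type data. Neither direction is addressed by what you wrote. Even taken as a sketch of Theorem~\ref{Thm:Z-absorption}, your proposed flow-averaged formula aims to produce a single $^*$-homomorphism in one step; the paper instead first produces $2(d+1)$ c.p.c.\ order zero maps from $D$ into $F_\infty^{(\alpha)}(A)^{\tilde{\alpha}_\infty}$ with pairwise commuting ranges summing to $1$ (Lemma~\ref{technical dimrokc}), and then passes through the universal simplex algebra $E|_\Delta$ (Lemma~\ref{Lemma:universal-n-cones-D}) to obtain a genuine unital $^*$-homomorphism, precisely because multiplicativity of a directly averaged map is hard to control.
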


%%%
For the reader's convenience, we provide a short argument proving that if there exists a unital $^{*}$-homomorphism from $D$ to $F_\infty^{(\alpha)}(A)^{\tilde{\alpha}_\infty}$, then the crossed product $A\rtimes_\alpha G$ is $D$-absorbing.For unital $C^*$-algebras $A$, this statement appeared in \cite[Lemma 2.3]{HW}.
We recall that any strongly self-absorbing $C^*$-algebra is $K_1$-injective; see \cite{winter-ssa-Z-stable}. Thus the $K_1$-injectivity condition that appears in the relevant part of \cite{TomsWinter07} used below holds automatically, and we can omit it from the statement.

\begin{Lemma}
\label{Lemma:D-absorption-condition-3}
Let $A$ be a separable $C^*$-algebra and let $D$ be a strongly self-absorbing $C^*$-algebra. Suppose that $\alpha: G\to\aut(A)$ is a point-norm continuous action of a second-countable, locally compact group. If there exists a unital  $^{*}$-homomorphism from $D$ to the fixed point algebra $F_\infty^{(\alpha)}(A)^{\tilde{\alpha}_\infty}$, then $A\rtimes_\alpha G$ is $D$-absorbing.
\end{Lemma}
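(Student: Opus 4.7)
The strategy is to invoke the Toms--Winter criterion \cite{TomsWinter07, Kirchberg-Abel}: a separable $C^{*}$-algebra $B$ is $D$-absorbing if and only if there exists a unital ${}^{*}$-homomorphism $D \to F_\infty(B)$. (The $K_{1}$-injectivity hypothesis that used to appear in this criterion is automatic for strongly self-absorbing $C^{*}$-algebras by \cite{winter-ssa-Z-stable}.) Setting $B = A \rtimes_\alpha G$, the task reduces to producing such a map from the given unital ${}^{*}$-homomorphism $\varphi_0 \colon D \to F_\infty^{(\alpha)}(A)^{\tilde{\alpha}_\infty}$.

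First, recall that $D$ is nuclear. Combining $\varphi_0 \otimes \id_A$ with the canonical ${}^{*}$-homomorphism of Remark \ref{F(A)} produces an equivariant ${}^{*}$-homomorphism
\[
\psi \colon D \otimes A \longrightarrow A_\infty^{(\alpha)}
\]
that intertwines $\id_D \otimes \alpha$ with $\alpha_\infty$ and satisfies $\psi(1_D \otimes a) = \iota_A(a)$ for all $a \in A$. Since the action on the $D$ factor is trivial, there is a canonical identification $(D \otimes A) \rtimes_{\id_D \otimes \alpha} G \cong D \otimes (A \rtimes_\alpha G)$. Passing to full crossed products via the functoriality in Lemma \ref{Rmk:equivariant-order-zero-maps} and post-composing with Lemma \ref{Rmk:cont-part-crossed-product-embeds} then yields a ${}^{*}$-homomorphism
\[
\Phi \colon D \otimes B \longrightarrow B_\infty
\]
with $\Phi(1_D \otimes b) = \iota_B(b)$ for all $b \in B$.

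The main step, and the one I expect to be the most delicate, is to upgrade $\Phi$ to a unital ${}^{*}$-homomorphism $\tilde{\varphi} \colon D \to F_\infty(B)$. Conceptually, one would like to set $\tilde{\varphi}(d) = \Phi(d \otimes 1_B)$; the identity $(d \otimes 1)(1 \otimes b) = (1 \otimes b)(d \otimes 1)$ in $D \otimes B$ combined with $\Phi(1_D \otimes b) = \iota_B(b)$ would then give that $\tilde{\varphi}(d)$ commutes with $\iota_B(B)$ and that $\tilde{\varphi}(1_D) = 1$. Since $B$ need not be unital, this prescription has to be realized via an approximate unit. The plan is to fix a countable approximate unit $(e_n)$ of $B$ (available since $B$ is separable), and to define $\tilde{\varphi}(d)$ as the class in $B_\infty$ of an appropriate diagonal sequence representing the family $\bigl( \Phi(d \otimes e_n) \bigr)_n$. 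Using that $(e_n)$ is an approximate unit together with $\Phi(1_D \otimes b) = \iota_B(b)$, a standard reindexation/Kirchberg $\eps$-test argument then shows that $\tilde{\varphi}$ is multiplicative modulo $\operatorname{Ann}(B, B_\infty)$, that its image lies in the commutant of $B$ modulo the same ideal, and that $\tilde{\varphi}(1_D)$ represents the unit of $F_\infty(B)$. The output is the desired unital ${}^{*}$-homomorphism $D \to F_\infty(B)$, completing the proof via the Toms--Winter criterion.
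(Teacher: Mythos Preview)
Your construction of $\Phi \colon D \otimes B \to B_\infty$ with $\Phi(1_D \otimes b) = \iota_B(b)$ is exactly what the paper does, using the same ingredients (Remark~\ref{F(A)}, the identification $(D\otimes A)\rtimes G \cong D\otimes(A\rtimes G)$, functoriality of crossed products, and Lemma~\ref{Rmk:cont-part-crossed-product-embeds}). The only divergence is at the very end: the paper stops here and cites \cite[Theorem~2.3]{TomsWinter07} directly, since that result already states that the existence of such a $\Phi$ is equivalent to $D$-absorption of $B$. Your additional step---reindexing along an approximate unit to produce a unital ${}^*$-homomorphism $D \to F_\infty(B)$---is correct in outline, but it amounts to reproving the equivalence between the two standard formulations of the absorption criterion (the tensor-map form of Toms--Winter and the $F_\infty$ form of Kirchberg). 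It is an unnecessary detour; once you have $\Phi$, you are done.
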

\begin{proof}
Consider the embedding $1\otimes\id_A : A\to D\otimes A$ as the second factor. This map is clearly $\alpha - \id_D\otimes\alpha$ equivariant. 
A unital  $^{*}$-homomorphism from $D$ to $F_\infty^{(\alpha)}(A)^{\tilde{\alpha}_\infty}$ is the same as an $\id_D - \tilde{\alpha}_\infty$ equivariant unital  $^{*}$-homomorphism from $D$ to $F_\infty^{(\alpha)}$. 
In view of Remark~\ref{F(A)}, we obtain an $\id_D\otimes\alpha - \alpha_\infty$ equivariant  $^{*}$-homomorphism $\psi: D\otimes A\to A_\infty^{(\alpha)}$ such that $\psi\circ(1\otimes\id_A) $ coincides with the standard embedding of $A$ into $A_\infty^{(\alpha)}$. Forming the crossed product everywhere, we get induced  $^{*}$-homomorphisms
\[
(1\otimes\id_A)\rtimes G: A\rtimes_\alpha G \to (D\otimes A)\rtimes_{\id_D\otimes\alpha} G
\]
and
\[
\psi\rtimes G: (D\otimes A)\rtimes_{\id_D\otimes\alpha} G \to A_\infty^{(\alpha)}\rtimes_{\alpha_\infty} G \stackrel{(\mathrm{Lemma} \, \ref{Rmk:cont-part-crossed-product-embeds})}{\longrightarrow} (A\rtimes_\alpha G)_\infty
\]
such that $(\psi\rtimes G)\circ \big( (1\otimes\id_A)\rtimes G \big)$ coincides with the standard embedding of $A\rtimes_\alpha G$ into $(A\rtimes_\alpha G)_\infty$. We have a natural isomorphism 
\[
\mu: (D\otimes A)\rtimes_{\id_D\otimes\alpha} G \to D\otimes (A\rtimes_\alpha G)
\, .
\]
Using the notation from the proof of Lemma~\ref{Rmk:equivariant-order-zero-maps}, this map is given on the generators by
\[
\mu\big( \iota^{\id_D\otimes\alpha}(d\otimes a)\lambda^{\id_D\otimes\alpha}(f) \big) = d\otimes \big( \iota^{\alpha}(a)\lambda^\alpha(f) \big)
\]
for all $d\in D$, $a\in A$ and $f\in C_c(G)$.

In particular, we can see that $\mu\circ \big( (1\otimes\id_A)\rtimes G \big) = 1_D\otimes\id_{A\rtimes_\alpha G}$. This implies that the $^{*}$-homomorphism $\varphi=(\psi\rtimes G)\circ\mu^{-1}: D\otimes(A\rtimes_\alpha G)\to (A\rtimes_\alpha G)_\infty$ satisfies $\varphi(1_D\otimes x)=x$ for all $x\in A\rtimes_\alpha G$.
It now follows from  \cite[Theorem 2.3]{TomsWinter07} that $A\rtimes_\alpha G$ is $D$-absorbing.
\end{proof}
%%%

The following lemma follows directly by repeated application of \cite[Lemma 5.2]{HWZ}, using the correspondence between order zero maps and $^{*}$-homomorphisms from cones given in \cite[Corollary 4.1]{winter-zacharias-order-zero}. This can be thought of as a multivariable generalization of the characterization of order zero maps as $^{*}$-homomorphisms from cones: a single c.p.c.\ order zero map corresponds to a $^{*}$-homomorphism from the cone, and $n$ c.p.c.\ order zero maps with commuting images correspond to a $^{*}$-homomorphism for a section algebra $E$ of a bundle over $[0,1]^n$ with suitable boundary conditions on the edges.

\begin{Lemma}
\label{Lemma:universal-n-cones}
Let $D_1,D_2,\ldots,D_n$ be unital $C^*$-algebras. For $k=1,2,\ldots,n$ and $t \in [0,1]$, denote 
\[
D_k^{(t)} = \left \{ \begin{matrix} D_k & \mid & t > 0 \\ 
\C \cdot 1_{D_k} & \mid & t=0  \end{matrix} \right . 
\, .
\]
 For $\vec{t} = (t_1,t_2,\ldots t_n) \in [0,1]^n$, write 
 $$
 D^{(\vec{t})} = D_1^{(t_1)} \otimes_{\max} D_2^{(t_2)} \otimes_{\max} \cdots  \otimes_{\max}  D_n^{(t_n)} .
 $$ 
Denote $\vec{0}=(0,0,\ldots,0) \in [0,1]^n$ and let 
\[
E = \big\{f \in C_0([0,1]^n \setminus \{\vec{0}\},D_1 \otimes_{\max} D_2 \otimes_{\max} \cdots \otimes_{\max} D_n \mid f(\vec{t}) \in D^{(\vec{t})} \big\} \, .
\]
For $k=1,2,\ldots,n$, define c.p.c.~order zero maps $\eta^{(k)} \colon D_k \to E$ by
\[
\eta^{(k)}(a)(\vec{t}) = t_k (1_{D_{1}} \otimes 1_{D_{2}} \otimes \cdots \otimes a \otimes \cdots \otimes 1_{D_{n}})
\, ,
\]
where $a$ is inserted in the $k$-th factor. 

Then $E$ has the following universal property: For any $C^*$-algebra $B$ and any $n$ c.p.c.~order zero maps $\psi^{(k)} \colon D_k \to B$ for $k=1,2,\ldots,n$ with pairwise commuting images, there exists a $^{*}$-homomorphism $\mu \colon E \to B$ such that $\psi^{(k)} = \mu \circ \eta^{(k)}$ for all $k$.
\end{Lemma}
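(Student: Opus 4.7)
The plan is to exploit the structure theorem for c.p.c.\ order zero maps, together with \cite[Corollary 4.1]{winter-zacharias-order-zero} and \cite[Lemma 5.2]{HWZ}, in order to construct $\mu$ directly rather than via an inductive tensor-product identification. Recall that each c.p.c.\ order zero map $\psi^{(k)}: D_k \to B$ factors through the cone as a $*$-homomorphism $\hat\psi^{(k)}: C_0((0,1], D_k) \to B$ with $\hat\psi^{(k)}(\id_{(0,1]} \otimes a) = \psi^{(k)}(a)$, and equivalently decomposes as $\psi^{(k)}(a) = p_k \cdot \pi^{(k)}(a)$, where $p_k = \psi^{(k)}(1_{D_k})$ is a positive contraction in $B$ and $\pi^{(k)}: D_k \to M(C^*(\psi^{(k)}(D_k)))$ is a $*$-homomorphism commuting with $p_k$.

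First I would invoke \cite[Lemma 5.2]{HWZ} iteratively to propagate the pairwise commutativity of the images $\psi^{(k)}(D_k)$ into pairwise commutativity of the $\pi^{(k)}(D_k)$'s (both with each other and with each $p_l$), and hence of the $p_k$'s. The commuting positive contractions $p_1,\ldots,p_n$ then generate a commutative $C^*$-subalgebra of $B$ isomorphic to $C_0(\sigma)$ for some closed $\sigma \subseteq [0,1]^n \setminus \{\vec{0}\}$, yielding a $*$-homomorphism $\alpha: C_0([0,1]^n \setminus \{\vec{0}\}) \to B$ that sends the $k$-th coordinate function $t_k$ to $p_k$. In parallel, the pairwise commuting $\pi^{(k)}$'s combine via the universal property of the max tensor product into a $*$-homomorphism $\Pi: D_1 \otimes_{\max} \cdots \otimes_{\max} D_n \to M(C^*(\bigcup_k \psi^{(k)}(D_k)))$. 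Since the images of $\alpha$ and $\Pi$ commute, another application of the max tensor product's universal property produces a $*$-homomorphism
\[
\Phi: C_0([0,1]^n \setminus \{\vec{0}\}) \otimes_{\max} (D_1 \otimes_{\max} \cdots \otimes_{\max} D_n) \longrightarrow M(B).
\]

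By nuclearity of $C_0$, the domain of $\Phi$ may be identified with $C_0([0,1]^n \setminus \{\vec{0}\}, D_1 \otimes_{\max} \cdots \otimes_{\max} D_n)$, into which $E$ embeds as the $C^*$-subalgebra of sections satisfying the prescribed fiber constraints. Setting $\mu := \Phi|_E$, a direct calculation gives $\mu(\eta^{(k)}(a)) = \alpha(t_k) \cdot \Pi(1 \otimes \cdots \otimes a \otimes \cdots \otimes 1) = p_k \pi^{(k)}(a) = \psi^{(k)}(a) \in B$. Since $E$ is generated as a $C^*$-algebra by the images of the $\eta^{(k)}$'s---a fact one verifies by checking fiberwise that the $\eta^{(k)}(D_k)$'s exhaust $D^{(\vec{t})}$ at each $\vec{t}$, together with the observation that joint continuous functional calculus on the commuting positive contractions $\eta^{(1)}(1_{D_1}),\ldots,\eta^{(n)}(1_{D_n})$ produces all of $C_0([0,1]^n \setminus \{\vec{0}\}) \cdot 1$ inside $E$---it follows that the image of $\mu$ is contained in $B$, as required.

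The main obstacle will be carefully tracking the commutativity relations through the iterated applications of \cite[Lemma 5.2]{HWZ}: commutativity must first be lifted from $\psi^{(k)}(D_k)$ to $\pi^{(k)}(D_k)$ and $p_k$ for each $k$ separately, and then across distinct values of $k$ to yield the joint compatibility required for the combined $\Pi$ and $\alpha$. A related bookkeeping issue is ensuring that all the various multiplier algebras admit a common ambient setting in which these commutation identities hold simultaneously, which I handle by working throughout inside $M(C^*(\bigcup_k \psi^{(k)}(D_k)))$.
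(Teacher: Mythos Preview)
Your direct construction has a genuine gap at the verification step
\[
\mu(\eta^{(k)}(a)) = \alpha(t_k)\cdot\Pi(1\otimes\cdots\otimes a\otimes\cdots\otimes 1) = p_k\,\pi^{(k)}(a) = \psi^{(k)}(a).
\]
The middle equality fails. In the structure-theorem decomposition, $\pi^{(j)}$ is a unital $*$-homomorphism into $M(C_j)$ with $C_j = C^*(\psi^{(j)}(D_j))$, so when you realize everything in $B^{**}$ you have $\pi^{(j)}(1_{D_j}) = q_j$, the support projection of $p_j$. Consequently
\[
\Pi(1\otimes\cdots\otimes a\otimes\cdots\otimes 1) = q_1\cdots q_{k-1}\,\pi^{(k)}(a)\,q_{k+1}\cdots q_n,
\]
and your $\mu(\eta^{(k)}(a))$ equals $\psi^{(k)}(a)\prod_{j\neq k} q_j$, not $\psi^{(k)}(a)$. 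A two-dimensional counterexample: take $B=\C^2$, $D_1=D_2=\C$, $\psi^{(1)}(1)=(1,0)$, $\psi^{(2)}(1)=(0,1)$; then $q_2=(0,1)$ and your map sends $\eta^{(1)}(1)$ to $(1,0)\cdot(0,1)=0$, not $(1,0)$. Working in $M(C^*(\bigcup_k\psi^{(k)}(D_k)))$ does not help: there is no canonical embedding $M(C_j)\hookrightarrow M(C^*(\bigcup_k C_k))$ sending $1_{M(C_j)}$ to the unit.

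The cleanest repair of your direct approach is to avoid the $\pi^{(k)}$'s entirely and use the cone $*$-homomorphisms $\hat\psi^{(k)}: C_0((0,1],D_k)\to B$ (from \cite[Corollary~4.1]{winter-zacharias-order-zero}). Their images are exactly $C^*(\psi^{(k)}(D_k))$, so they pairwise commute in $B$. Passing to unitizations, the unital $*$-homomorphisms $(\hat\psi^{(k)})^{\sim}: C_0((0,1],D_k)^{\sim}\to \widetilde B$ still have pairwise commuting images, and the universal property of the max tensor product gives a unital $*$-homomorphism
\[
\Psi:\ \bigotimes_{k=1}^n{}_{\!\!\max}\, C_0((0,1],D_k)^{\sim}\ \longrightarrow\ \widetilde B.
\]
The left-hand side is canonically isomorphic to $\{f\in C([0,1]^n,D_1\otimes_{\max}\cdots\otimes_{\max}D_n): f(\vec t)\in D^{(\vec t)}\}$, which is precisely the unitization $E^{\sim}$; restricting $\Psi$ to the maximal ideal $E$ lands in $B$ and satisfies $\Psi(\eta^{(k)}(a)) = 1\cdots 1\cdot\hat\psi^{(k)}(\id\otimes a)\cdot 1\cdots 1 = \psi^{(k)}(a)$, now genuinely because the extra factors are the unit of $\widetilde B$. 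This is essentially what the paper's one-line proof encodes: the iterated use of \cite[Lemma~5.2]{HWZ} amounts to the same tensor-product combination of cone homomorphisms, done two at a time.
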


We can view $E$ from the previous lemma as a $C([0,1]^n)$-algebra in a natural way (the fiber over $\vec{0}$ is $0$). 
The following is an immediate corollary.

\begin{Cor}
\label{Cor:universal-n-cones-quotient}
Let $D_1,D_2,\ldots,D_n$ and $E$ be as in Lemma~\ref{Lemma:universal-n-cones}. Let 
\[
\Delta = \{\vec{t} \in [0,1]^n \mid t_1+t_2+\cdots+t_n = 1\} \, .
\] 
Let $E|_{\Delta}$ denote the restriction of $E$ to $\Delta$, that is, 
$$
E|_{\Delta} = E / (C_0([0,1]^n \setminus \Delta)\cdot E) .
$$ 
Then $E|_{\Delta}$ has the following universal property.  For any unital $C^*$-algebra $B$ and any $n$ c.p.c.~order zero maps $\psi^{(k)} \colon D_k \to B$ for $k=1,2,\ldots,n$ whose images pairwise commute and furthermore satisfy 
$$
\psi^{(1)}(1_{D_{1}}) + \cdots + \psi^{(n)}(1_{D_{n}}) = 1_{B} \, ,
$$
 there exists a unital $^{*}$-homomorphism 
\[
\mu \colon E|_{\Delta} \to B
\]
such that $\psi^{(k)} = \mu \circ \eta^{(k)}$ for all $k$.
\end{Cor}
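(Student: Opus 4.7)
The plan is to leverage the universal property of $E$ from Lemma~\ref{Lemma:universal-n-cones} and then show that the induced $^*$-homomorphism $\mu \colon E \to B$ automatically kills the ideal $C_0([0,1]^n \setminus \Delta) \cdot E$. Given c.p.c.~order zero maps $\psi^{(k)} \colon D_k \to B$ with pairwise commuting images and $\sum_k \psi^{(k)}(1_{D_k}) = 1_B$, Lemma~\ref{Lemma:universal-n-cones} provides $\mu$ with $\mu \circ \eta^{(k)} = \psi^{(k)}$. Since $B$ is unital, $\mu$ extends canonically to a $^*$-homomorphism $\tilde\mu$ on the unitization $\tilde E$.

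The central element I would exploit is $p := \sum_k \eta^{(k)}(1_{D_k}) \in E$. Pointwise $p(\vec t) = \bigl(\sum_k t_k\bigr) \cdot 1_{D^{(\vec t)}}$, so $p$ is a scalar-valued central section. Its restriction to $\Delta$ is identically $1$, so $p$ is (the image in $E$ of) an element that serves as the unit of $E|_\Delta$. On the other hand, the hypothesis gives $\tilde\mu(1-p) = 1_B - \sum_k \psi^{(k)}(1_{D_k}) = 0$. Centrality of $p$ then yields, for every $b \in E$,
\[
\mu\bigl((1-p)^2 \cdot b\bigr) \;=\; \mu\bigl((1-p)\, b\, (1-p)\bigr) \;=\; \tilde\mu(1-p)\, \mu(b)\, \tilde\mu(1-p) \;=\; 0.
\]

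The main technical step is then to show that the closed two-sided ideal generated by $q := (1-p)^2$ in $E$ coincides with $I := C_0([0,1]^n \setminus \Delta) \cdot E$. The inclusion $\overline{qE} \subseteq I$ is immediate since $q$ vanishes on $\Delta$. For the reverse, I would fix $a \in I$, approximate $a$ by $f_\varepsilon \cdot q \cdot a$, where $f_\varepsilon \in C([0,1]^n)$ is a continuous function that agrees with $1/q$ on $\{q \geq 2\varepsilon\}$, vanishes on $\{q \leq \varepsilon\}$, and is interpolated continuously in between. Since $a$ is a continuous section vanishing on $\Delta$, its norm is uniformly small on the neighborhood $\{q \leq 2\varepsilon\}$ of $\Delta$; hence $\|a - f_\varepsilon q a\| \to 0$ and $a \in \overline{qE}$. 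This is the step I expect to require the most care --- continuity of $a$ as a section into the fixed tensor product $D_1 \otimes_{\max} \cdots \otimes_{\max} D_n$ is what makes it work, and one has to verify that the jumps in the fibers $D^{(\vec t)}$ along the coordinate hyperplanes do not obstruct the approximation (they do not, because the approximating functions $f_\varepsilon q$ are scalar-valued multipliers of $E$).

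Having established $\mu(I) = 0$, the map $\mu$ factors through a $^*$-homomorphism $\bar\mu \colon E|_\Delta \to B$ with $\bar\mu \circ \eta^{(k)} = \psi^{(k)}$. Unitality of $\bar\mu$ is then automatic, because $E|_\Delta$ is unital with unit $\bar p := p + I$ (its fiberwise value on $\Delta$ is $1$), and $\bar\mu(\bar p) = \mu(p) = \sum_k \psi^{(k)}(1_{D_k}) = 1_B$. Uniqueness of $\bar\mu$ follows from the fact that the images of the $\eta^{(k)}$ generate $E$, and hence their images in $E|_\Delta$ generate $E|_\Delta$.
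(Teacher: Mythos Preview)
Your argument is correct and is precisely the natural elaboration of what the paper treats as an immediate corollary of Lemma~\ref{Lemma:universal-n-cones}: the additional hypothesis forces $\mu(p)=1_B$ for the central element $p=\sum_k\eta^{(k)}(1_{D_k})$, and since the scalar function $(1-\sum_k t_k)^2$ is strictly positive in $C_0([0,1]^n\setminus\Delta)$ it generates that algebra, so $\mu$ vanishes on $C_0([0,1]^n\setminus\Delta)\cdot E$ and descends to a unital map on $E|_\Delta$. Two minor remarks: you could use $1-p$ directly rather than its square (since $\tilde\mu((1-p)b)=\tilde\mu(1-p)\mu(b)=0$), and the approximation step you flag as delicate is in fact routine---the strict positivity of $q$ in $C_0([0,1]^n\setminus\Delta)$ already gives $\overline{qE}=I$ without needing to worry about the fiber structure, exactly because $f_\varepsilon q$ is scalar-valued.
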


\begin{Lemma}
\label{Lemma:universal-n-cones-D}
Let $D$ be a strongly self-absorbing $C^*$-algebra. Let $B$ be a unital $C^*$-algebra. Suppose that $\psi^{(1)},\psi^{(2)},\ldots,\psi^{(n)} \colon D \to B$ are c.p.c.~order zero maps with pairwise commuting images such that  $\psi^{(1)}(1_{D}) + \cdots + \psi^{(n)}(1_{D}) = 1_{B}$. Then there exists a unital $^{*}$-homomorphism from $D$ to $B$.
\end{Lemma}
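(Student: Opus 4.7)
The plan is to apply Corollary~\ref{Cor:universal-n-cones-quotient} with $D_1 = \cdots = D_n = D$ to produce a unital $^{*}$-homomorphism $\mu \colon E|_{\Delta} \to B$ satisfying $\mu \circ \eta^{(k)} = \psi^{(k)}$ for each $k = 1, \ldots, n$. This reduces the problem to exhibiting a unital $^{*}$-homomorphism $\tau \colon D \to E|_{\Delta}$, for then the composition $\mu \circ \tau$ provides the desired unital $^{*}$-homomorphism $D \to B$.

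Next I would unpack the structure of $E|_{\Delta}$ from Lemma~\ref{Lemma:universal-n-cones}. It is a $C(\Delta)$-algebra over the standard $(n-1)$-simplex $\Delta$ whose fiber at $\vec{t} \in \Delta$ is the tensor product $D^{\otimes |\operatorname{supp}(\vec{t})|}$; note that $D$ is nuclear, so there is no ambiguity in the choice of tensor product norm. Since $D$ is strongly self-absorbing, $D^{\otimes k} \cong D$ for every $k \geq 1$, and so every fiber of $E|_{\Delta}$ is abstractly isomorphic to $D$.

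The central step of the plan is to show that $E|_{\Delta}$ itself is $D$-absorbing. Once this is granted, the first-factor embedding $d \mapsto 1_{E|_{\Delta}} \otimes d$ followed by an isomorphism $E|_{\Delta} \otimes D \cong E|_{\Delta}$ will furnish the required $\tau$. To establish $D$-absorption I would invoke the theorem of Hirshberg, R\o rdam and Winter, according to which a separable continuous $C(X)$-algebra over a compact metrizable space $X$ of finite covering dimension, all of whose fibers are $D$-absorbing, is itself $D$-absorbing. In our setting, $X = \Delta$ is a finite-dimensional simplex, $E|_{\Delta}$ is separable, and every fiber is isomorphic to $D$ and hence trivially $D$-absorbing.

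The main obstacle I foresee is verifying that $E|_{\Delta}$ is genuinely a \emph{continuous} $C(\Delta)$-algebra, rather than merely upper semicontinuous; equivalently, that $\vec{t} \mapsto \|f(\vec{t})\|$ is continuous for every $f \in E|_{\Delta}$. Since $E|_{\Delta}$ sits inside sections of the trivial bundle $\Delta \times D^{\otimes n}$ cut out by algebraic boundary conditions along the faces of $\Delta$, one must check that these conditions do not cause fiber-norm drops as one approaches the faces. Should this continuity analysis prove cumbersome, I would fall back on an induction on $n$: the case $n = 1$ is immediate, since a c.p.c.~order zero map $\psi \colon D \to B$ with $\psi(1_D) = 1_B$ is a unital $^{*}$-homomorphism by the Winter--Zacharias structure theorem, while the inductive step would amalgamate two of the commuting maps into a single c.p.c.~order zero map out of $D \cong D \otimes D$ using strong self-absorption---the delicate point being the preservation of commutativity with the remaining $\psi^{(j)}$'s.
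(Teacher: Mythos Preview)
Your proposal is correct and follows the paper's proof essentially verbatim: apply Corollary~\ref{Cor:universal-n-cones-quotient} with $D_k = D$, observe that $E|_\Delta$ is a $C(\Delta)$-algebra with all fibers isomorphic to $D$, invoke \cite[Theorem~4.6]{HRW} to conclude $E|_\Delta$ is $D$-absorbing, and compose the resulting unital embedding $D \hookrightarrow E|_\Delta$ with $\mu$. Your continuity worry is not an obstacle: since $E|_\Delta$ sits inside $C(\Delta, D^{\otimes n})$ and the fiber at $\vec{t}$ is exactly the image of evaluation (a bump-function argument gives surjectivity, and any section vanishing at $\vec{t}$ lies in the ideal $C_0(\Delta\setminus\{\vec{t}\})\cdot E|_\Delta$), the fiber norm of $f$ at $\vec{t}$ equals $\|f(\vec{t})\|_{D^{\otimes n}}$, which is continuous in $\vec{t}$.
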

\begin{proof}
Let $E|_{\Delta}$ be as in Corollary~\ref{Cor:universal-n-cones-quotient}, with $D_k = D$ for $k=1,2,\ldots,n$. $\Delta$ is an $n-1$ dimensional simplex, and $E|_{\Delta}$ is a $C(\Delta)$-algebra such that the fiber over each point is isomorphic to $D$. By \cite[Theorem 4.6]{HRW}, $E|_{\Delta}$ is $D$-absorbing (in fact, by the main theorem of \cite{dadarlat-winter-trivialization}, it follows that $E|_{\Delta} \cong C(\Delta) \otimes D$). In particular, there exists some unital $^{*}$-homomorphism $\gamma \colon D \to E_{\Delta}$.  
Let $\mu \colon E|_{\Delta} \to B$ be as in Corollary~\ref{Cor:universal-n-cones-quotient}, and define $\psi \colon D \to B$ by $\psi = \mu \circ \gamma$. 
\end{proof}

We record some further technical lemmas. The first one follows from \cite[Theorem 3.3]{winter-zacharias-order-zero}.

\begin{Lemma}
\label{lemma:xy-x'y'}
Let $A$ and $B$ be $C^*$-algebras, and let $\nu \colon A \to B$ be a c.p.c.~order zero map. Then for every $x,y,x',y' \in A$, we have
\[
\|\nu(x)\nu(y) - \nu(x')\nu(y')\| \leq \|xy-x'y'\|.
\]
\end{Lemma}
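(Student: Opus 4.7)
The plan is to invoke the structure theorem for c.p.c.~order zero maps (the cited \cite[Theorem 3.3]{winter-zacharias-order-zero}), which produces a convenient multiplicative formula for the product $\nu(x)\nu(y)$, and then the estimate becomes essentially trivial.

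More concretely, the structure theorem provides a positive contraction $h\in\mathcal{M}(C^*(\nu(A)))$ together with a $^*$-homomorphism $\pi\colon A\to\mathcal{M}(C^*(\nu(A)))$ whose image commutes with $h$, such that $\nu(a)=h\pi(a)$ for every $a\in A$. Using that $h$ commutes with $\pi(A)$, one immediately computes
\[
\nu(x)\nu(y)=h\pi(x)h\pi(y)=h^2\pi(x)\pi(y)=h^2\pi(xy),
\]
and analogously $\nu(x')\nu(y')=h^2\pi(x'y')$. Subtracting these two identities yields
\[
\nu(x)\nu(y)-\nu(x')\nu(y') = h^2\,\pi(xy-x'y').
\]

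Taking norms, the desired bound follows at once: since $\nu$ is contractive we have $\|h\|\le 1$, hence $\|h^2\|\le 1$, and $\pi$ is a $^*$-homomorphism, so in particular contractive. Therefore
\[
\|\nu(x)\nu(y)-\nu(x')\nu(y')\|\le\|h^2\|\cdot\|\pi(xy-x'y')\|\le\|xy-x'y'\|.
\]
There is no real obstacle here; the only point that merits care is making sure that the multiplicative identity $\nu(x)\nu(y)=h^2\pi(xy)$ is applied with both factors, so that the cross-differences cancel cleanly and only the difference $xy-x'y'$ survives inside $\pi$.
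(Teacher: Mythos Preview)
Your proof is correct and is exactly the argument the paper has in mind: the paper simply states that the lemma follows from \cite[Theorem 3.3]{winter-zacharias-order-zero}, which is precisely the structure theorem $\nu=h\pi$ that you invoke. The computation $\nu(x)\nu(y)=h^2\pi(xy)$ and the resulting norm estimate constitute the intended one-line proof.
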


\begin{Lemma}
\label{lemma:commutators}
Let $Y$ be a locally compact Hausdorff space, and $A, B$ two $C^*$-algebras. Let $\mu_1,\mu_2: C_0(Y)\to B$ be two c.p.c.~order zero maps with commuting images. Then for every two functions $f_1, f_2\in C_0(Y,A)\cong C_0(Y)\otimes A$, we have
\[
\| [(\mu_1\otimes\id_A)(f_1),(\mu_2\otimes\id_A)(f_2)] \|_{B \otimes_{\max}A} \leq \max_{y_1,y_2\in Y} \| [f_1(y_1), f_2(y_2)] \|.
\]
\end{Lemma}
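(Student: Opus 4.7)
My plan is to repackage $\mu_1$ and $\mu_2$ into a single c.p.c.\ order zero map $\nu:C_0(Y\times Y)\to B$ satisfying $\nu(g_1\otimes g_2)=\mu_1(g_1)\mu_2(g_2)$, identify the commutator in the statement with $(\nu\otimes\id_A)(F)$ for $F(y_1,y_2)=[f_1(y_1),f_2(y_2)]$, and then conclude by contractivity of $\nu\otimes\id_A$.

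To build $\nu$, I would invoke the Winter--Zacharias cone/structure correspondence \cite[Corollary 4.1]{winter-zacharias-order-zero} to lift each $\mu_i$ to a $^*$-homomorphism $\bar\mu_i: C_0((0,1]\times Y)\to B$ with $\bar\mu_i(\id_{(0,1]}\otimes g)=\mu_i(g)$. Since the range of $\bar\mu_i$ is the $C^*$-subalgebra of $B$ generated by $\mu_i(C_0(Y))$, the hypothesis that $\mu_1$ and $\mu_2$ have commuting images transfers to commutativity of the ranges of $\bar\mu_1$ and $\bar\mu_2$; these therefore amalgamate into a single $^*$-homomorphism
\[
\bar\mu: C_0\bigl((0,1]^2\times Y^2\bigr)\to B,
\]
where the identification of the source with the (unique) tensor product uses nuclearity of abelian $C^*$-algebras. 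I would then pre-compose with the pullback along the continuous, proper map
\[
\phi:(0,1]^2\times Y^2\to(0,1]\times Y^2,\qquad(t_1,t_2,y_1,y_2)\mapsto(t_1 t_2,y_1,y_2)
\]
(properness is immediate, since $t_1 t_2\geq a>0$ together with $t_i\leq 1$ forces each $t_i\geq a$), obtaining a $^*$-homomorphism $\bar\nu: C_0((0,1]\times Y^2)\to B$ that sends $\id_{(0,1]}\otimes g_1\otimes g_2$ to $\bar\mu(\id_{(0,1]}\otimes \id_{(0,1]}\otimes g_1\otimes g_2)=\mu_1(g_1)\mu_2(g_2)$. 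Running the cone correspondence backward converts $\bar\nu$ into the desired c.p.c.\ order zero map $\nu$.

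With $\nu$ in hand, nuclearity of $C_0(Y\times Y)$ produces a c.p.c.\ order zero map $\nu\otimes\id_A: C_0(Y\times Y,A)\to B\otimes_{\max}A$, which is in particular contractive. Approximating $f_1,f_2$ by finite elementary tensors $\sum_j g_{ij}\otimes a_{ij}$ and using that every $\mu_1(g_{1j})$ commutes with every $\mu_2(g_{2k})$, one checks that the commutator $[(\mu_1\otimes\id_A)(f_1),(\mu_2\otimes\id_A)(f_2)]$ collapses to $\sum_{j,k}\mu_1(g_{1j})\mu_2(g_{2k})\otimes[a_{1j},a_{2k}]=(\nu\otimes\id_A)(F)$ on simple tensors, and the identity then passes to arbitrary $f_i$ by continuity. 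Contractivity now yields $\|(\nu\otimes\id_A)(F)\|\leq\|F\|=\max_{y_1,y_2}\|[f_1(y_1),f_2(y_2)]\|$, which is exactly the required estimate. The only substantive step is the construction of $\nu$, resting on the fact that commutativity of images of c.p.c.\ order zero maps lifts to commutativity of the associated cone $^*$-homomorphisms; everything else is formal.
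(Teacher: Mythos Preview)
Your argument is correct and somewhat different in spirit from the paper's. Both proofs invoke the Winter--Zacharias cone correspondence to pass from the c.p.c.\ order zero maps $\mu_i$ to $^*$-homomorphisms $\bar\mu_i:C_0((0,1]\times Y)\to B$ with commuting images. From there the approaches diverge: the paper first treats the $^*$-homomorphism case directly, using Gelfand duality on the commutative $C^*$-algebra generated by the two images to extract a concrete spectrum $Z$ and proper maps $\kappa_i:Z_i\to Y$, then evaluates the commutator pointwise on $Z$; the general case is then reduced to this via the cones. You instead amalgamate the two cone homomorphisms into one $^*$-homomorphism on $C_0((0,1]^2\times Y^2)$ via the tensor product universal property, pull back along the proper map $(t_1,t_2,y_1,y_2)\mapsto(t_1t_2,y_1,y_2)$, and read off a single c.p.c.\ order zero map $\nu:C_0(Y\times Y)\to B$ with $\nu(g_1\otimes g_2)=\mu_1(g_1)\mu_2(g_2)$; the estimate then falls out from contractivity of $\nu\otimes\id_A$ applied to $F(y_1,y_2)=[f_1(y_1),f_2(y_2)]$. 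Your route is cleaner and more functorial---it bypasses the explicit spectrum computation entirely---while the paper's is more hands-on and perhaps makes the geometric picture (points of $Z$ mapping to pairs $(y_1,y_2)$) more visible. The two are equivalent at the level of ideas: your $\nu$ essentially encodes the same data as the paper's pair $(\kappa_1,\kappa_2)$.
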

\begin{proof}
Let us first assume that $\mu_1$ and $\mu_2$ are $^{*}$-homomorphisms. As the ranges of these maps commute, the $C^*$-algebra generated by them is commutative. We define $Z$ as the Gelfand-Naimark spectrum 
\[
Z = \operatorname{Spec}\Big( C^*\big( \mu_1(C_0(Y)), \mu_2(C_0(Y)) \big) \Big).
\] 
We thus have $^{*}$-homomorphisms $\mu_1, \mu_2: C_0(Y)\to C_0(Z)\subset B$. It suffices to show the assertion for $C_0(Z)$ instead of $B$. To this end, we set
\[
Z_i = \operatorname{Spec}\Big( \mu_i(C_0(Y))C_0(Z) \Big),\quad i=1,2.
\]
Both these sets are open subsets in $Z$. The $^{*}$-homomorphisms $\mu_i$, viewed as having image in $C_0(Z_i)$, are non-degenerate and thus come from some proper continuous maps $\kappa_i: Z_i\to Y$. Embedding each algebra $C_0(Z_i)$ into $C_0(Z)$ by extending trivially, we get that the $^{*}$-homomorphisms $\mu_i: C_0(Y)\to C_0(Z)$ have the form
\[
\mu_i(f)(z) = \begin{cases} f(\kappa_i(z)) &\mid z\in Z_i \\ 0 &\mid z\notin Z_i. \end{cases}
\]
The $^{*}$-homomorphisms $\mu_i\otimes\id_A: C_0(Y,A)\to C_0(Z,A)$ are thus given by
\[
(\mu_i\otimes\id_A)(f)(z) = \begin{cases} f(\kappa_i(z)) &\mid z\in Z_i \\ 0 &\mid z\notin Z_i. \end{cases}
\] 
for all $f\in C_0(Y,A)$ and $z\in Z$. Hence for every $f_1, f_2\in C_0(Y,A)$ and $z\in Z$, it follows that
\[
\def\arraystretch{1.2}
\begin{array}{cl}
\multicolumn{2}{l} {\| [(\mu_1\otimes\id_A)(f_1),(\mu_2\otimes\id_A)(f_2)](z) \| }\\
\leq& \displaystyle \max_{z_1\in Z_1, z_2\in Z_2} \| [f_1(\kappa_1(z_1)), f_2(\kappa_2(z_2)] \|\\
\leq & \displaystyle \max_{y_1,y_2\in Y} \| [f_1(y_1), f_2(y_2)] \|.
\end{array}
\]
This indeed shows our claim under the assumption that $\mu_1$ and $\mu_2$ are $^{*}$-homomorphisms.

Let us now turn to the general case. Let $\iota: C_0(Y)\to C_0\big( (0,1]\times Y \big)$ be the canonical order zero embedding given by $\iota(f)(t,y)=t\cdot f(y)$ for $0 < t\leq 1$ and $y\in Y$.
By identifying $C_0\big( (0,1]\times Y \big)\cong C_0\big( (0,1], C_0(Y) \big)$, the structure theorem for c.p.c.~order zero maps from \cite[Corollary 4.1]{winter-zacharias-order-zero} implies that there exist $^{*}$-homomorphisms $\tilde{\mu}_i: C_0\big( (0,1], C_0(Y) \big) \to B$ making the following diagram commutative for $i=1,2$:
\[
\xymatrix{
C_0(Y) \ar[rr]^\iota \ar[rrd]_{\mu_i} && C_0\big( (0,1]\times Y \big) \ar[d]^{\tilde{\mu}_i} \\
&& B 
}
\]
It is clear that $\tilde{\mu}_1$ and $\tilde{\mu}_2$ have commuting images as well. Thus the above applies to the maps $\tilde{\mu}_i$ and we compute for all $f_1,f_2\in C_0(Y)$ that
\[
\def\arraystretch{1.2}
\begin{array}{cl}
\multicolumn{2}{l}{  \| [ (\mu_1\otimes\id_A)(f_1),(\mu_2\otimes\id_A)(f_2) ] \| } \\
=& \| [ ( (\tilde{\mu}_1\circ\iota)\otimes\id_A)(f_1) , ( (\tilde{\mu}_2\circ\iota)\otimes\id_A)(f_2)] \| \\
\leq & \displaystyle \max_{0<t_1,t_2\leq 1} \max_{y_1,y_2\in Y} \| [ (\iota\otimes\id_A)(f_1)(t_1,y_1), (\iota\otimes\id_A)(f_2)(t_2,y_2) ] \| \\
=& \displaystyle \max_{0<t_1,t_2\leq 1} \max_{y_1,y_2\in Y} t_1 t_2 \| [ f_1(y_1), f_2(y_2) ] \| \\
=& \displaystyle \max_{y_1,y_2\in Y} \| [ f_1(y_1), f_2(y_2) ] \| .
\end{array}
\]
This finishes the proof.
\end{proof}

The following is the main technical result of this section.

\begin{Lemma}
\label{technical dimrokc}
Let $A$ be a separable $C^*$-algebra and let $\alpha \colon \R \to \aut(A)$ a flow with $d=\dimrokc(\alpha)<\infty$. Let $D$ be a separable, unital and nuclear $C^*$-algebra such that there is a unital  $^{*}$-homomorphism from $D$ to $F_\infty(A)$. Then for $l=1,\dots,2(d+1)$ there exist c.p.c.~order zero maps 
\[
\psi^{(l)}\colon D\to F_\infty^{(\alpha)}(A)^{\tilde{\alpha}_\infty}
\] 
 with pairwise commuting images such that $\displaystyle \sum_{l=1}^{2(d+1)}\psi^{(l)}(1)=1$.\end{Lemma}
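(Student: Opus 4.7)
My plan is to construct the required maps in the form $\psi^{(i,j)}$ indexed by $(i,j)\in\{0,1\}\times\{0,\ldots,d\}$, by combining the commuting-towers Rokhlin structure with the given unital $^*$-homomorphism $\rho\colon D\to F_\infty(A)$. First, I apply Remark~\ref{Rmk:cRokhlin-dim} to obtain, for each $M>0$, $\lambda$-$\tilde{\alpha}_\infty$ equivariant c.p.c.\ order zero maps $\mu^{(0)},\ldots,\mu^{(d)}\colon C(\R/M\Z)\to F_\infty^{(\alpha)}(A)$ with pairwise commuting images satisfying $\sum_{j=0}^d\mu^{(j)}(1)=1$. Using separability of $D$ and a standard reindexation argument (based on the Kirchberg $\epsilon$-test), I can arrange that the image of $\rho$ additionally commutes with all the images $\mu^{(j)}(C(\R/M\Z))$; by running the reindexation diagonally over a cofinal sequence $M_n\to\infty$, I get this simultaneously for all such $M_n$.

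Next, I choose positive contractions $g_0,g_1\in C(\R/M\Z)$ with $g_0+g_1=1$ and $g_1=\lambda_{M/2}(g_0)$, for instance $g_0(\theta)=\cos^2(\pi\theta/M)$, and I introduce the candidate maps
\[
\tilde\psi^{(i,j)}(d) \;=\; \mu^{(j)}(g_i)\,\rho(d), \qquad (i,j)\in\{0,1\}\times\{0,\dots,d\}.
\]
Thanks to the commutativity arranged in the previous step, each $\tilde\psi^{(i,j)}$ is c.p.c.\ order zero into $F_\infty^{(\alpha)}(A)$ (product of an order zero map and a commuting $^*$-homomorphism), the family has pairwise commuting images, and
\[
\sum_{i,j}\tilde\psi^{(i,j)}(1) \;=\; \sum_{j=0}^d\mu^{(j)}(g_0+g_1) \;=\; \sum_{j=0}^d\mu^{(j)}(1) \;=\; 1.
\]
What is missing is invariance under $\tilde{\alpha}_\infty$. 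The defect decomposes as
\[
\tilde{\alpha}_t(\tilde\psi^{(i,j)}(d)) - \tilde\psi^{(i,j)}(d) \;=\; \mu^{(j)}(\lambda_t g_i - g_i)\,\tilde{\alpha}_t(\rho(d)) + \mu^{(j)}(g_i)\bigl(\tilde{\alpha}_t\rho(d)-\rho(d)\bigr).
\]
The first summand is controlled by the slow variation of $g_i$: $\|\lambda_t g_i^M - g_i^M\|_\infty\to 0$ as $M\to\infty$ uniformly for $t$ in any fixed compact set.

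To deal with the second summand I further reindex $\rho$ along the sequence, using Lemma~\ref{Lemma:invariant-approx-unit} to insert invariant approximate units and to effectively ``continuize'' the representatives of $\rho(d)$ on each scale $M_n$. A diagonal argument, together with one last application of the Kirchberg $\epsilon$-test, then extracts genuine c.p.c.\ order zero maps $\psi^{(i,j)}\colon D\to F_\infty^{(\alpha)}(A)^{\tilde{\alpha}_\infty}$ whose images pairwise commute and satisfy $\sum_{i,j}\psi^{(i,j)}(1)=1$, inheriting these properties from the finite-$M$ approximations. The main obstacle is precisely this last step: the element $\rho(d)$ lives in $F_\infty(A)$, not in the continuous part, so $\tilde{\alpha}_\infty$ need not act continuously on it, and converting the approximate invariance of the $\tilde\psi^{(i,j)}$ into exact invariance in the limit is where the interplay between the Rokhlin tower structure, the commuting-towers hypothesis, Lemma~\ref{Lemma:invariant-approx-unit}, and reindexation must be orchestrated carefully---mirroring and generalizing the argument used in \cite[Theorem~5.2]{HW} for Kishimoto's Rokhlin property.
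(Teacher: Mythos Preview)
Your setup is correct up to the point you yourself flag as the obstacle, but the fix you sketch does not work. The offending term $\mu^{(j)}(g_i)\bigl(\tilde{\alpha}_{\infty,t}(\rho(d))-\rho(d)\bigr)$ is not small in general: $\rho(d)$ lives in $F_\infty(A)$ with no relation to the flow, and neither reindexing $\rho$ nor multiplying by approximately invariant approximate units from Lemma~\ref{Lemma:invariant-approx-unit} makes $\tilde{\alpha}_{\infty,t}(\rho(d))$ close to $\rho(d)$. Invariant approximate units can push elements into the \emph{continuous} part $A_\infty^{(\alpha)}$, but they do not manufacture \emph{invariance}; no diagonal or $\eps$-test argument will produce invariance out of nothing here. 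So your product ansatz $\mu^{(j)}(g_i)\,\rho(d)$, which amounts to composing with the untwisted embedding $a\mapsto g_i\otimes a$ into $C(\R/M\Z)\otimes A$, cannot be made $\tilde{\alpha}_\infty$-invariant.

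The paper supplies exactly the missing mechanism. One first lifts the unital $^*$-homomorphism $D\to F_\infty(A)$ via Choi--Effros to c.p.c.\ maps $\kappa^{(i,l)}\colon D\to A$, and then passes through explicit \emph{twisted} c.p.c.\ order zero maps $\varphi^{(i)}\colon A\to C(\R/M\Z)\otimes A$ given by $\varphi^{(0)}(a)(t+M\Z)=h_0(t)\,\alpha_t(a)$ (and the $M/2$-shift for $i=1$). The built-in twist $a\mapsto\alpha_t(a)$ converts the diagonal action $\lambda\otimes\alpha$ into a mere shift of the scalar function $h_i$, so $\|(\lambda_t\otimes\alpha_t)\circ\varphi^{(i)}-\varphi^{(i)}\|\le\|\lambda_t(h_i)-h_i\|\to 0$ as $M\to\infty$, \emph{regardless} of how $\alpha$ moves the argument. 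Setting $\psi^{(i,l)}=\theta\circ(\mu^{(l)}\otimes\id_A)\circ\varphi^{(i)}\circ\kappa^{(i,l)}$, the $\lambda$-$\tilde{\alpha}_\infty$ equivariance of $\mu^{(l)}$ then turns this into $\alpha_\infty$-approximate-invariance of $\psi^{(i,l)}$. The commuting-towers hypothesis is used, much as you intended, together with an inductive choice of the $\kappa^{(i,l)}$ (condition (a4) in the paper) to force the pairwise commutation of images; but the invariance comes from the twist $\varphi^{(i)}$, not from any property of $\rho$.
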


We first indicate how Theorem~\ref{Thm:Z-absorption} follows from what he have so far, and then prove Lemma~\ref{technical dimrokc}.

\begin{proof}[Proof of Theorem~\ref{Thm:Z-absorption}]	
It follows from combining Lemma~\ref{Lemma:universal-n-cones-D} and Lemma~\ref{technical dimrokc} that there exists a unital $^{*}$-homomorphism from $D$ to $F_\infty^{(\alpha)}(A)^{\tilde{\alpha}_\infty}$. By Theorem~\ref{Thm:equ-D-absorption}, $\alpha$ is cocycle conjugate to $\alpha\otimes\id_D$. By Lemma~\ref{Lemma:D-absorption-condition-3}, $A \rtimes_{\alpha} \R$ is $D$-absorbing.
\end{proof} 

\begin{proof}[Proof of Lemma~\ref{technical dimrokc}]
Let $T>0$ be a positive number and $\eps>0$. Fix some $M>2T/\varepsilon$. Recall the notation $\lambda$ for the periodic shift flow on $C(\R/M\Z)$. 
Let $h_0 \in C_0(-M/2,M/2)\subseteq C(\R/M\Z)$ be the function defined as follows on the interval $[-M/2,M/2]$ (viewed as a periodic function on $\R$). 
\begin{center}
\begin{picture}(230,65)
 \put(5,10){\line(1,0){162}}
 \put(86,3){\vector(0,1){50}}
 \put(86,7){\line(0,1){6}}
 \put(167,7){\line(0,1){6}}
 \put(5,7){\line(0,1){6}}
% \put(81,37){\line(1,0){10}}
\thicklines
 \put(5,10){\line(3,1){81}}
 \put(86,37){\line(3,-1){81}}
 \put(75,40){\makebox(0,0){$1$}}
 \put(5,-4){\makebox(0,0)[b]{\footnotesize $-\frac{M}{2}$\normalsize}}
 \put(86,-4){\makebox(0,0)[b]{\footnotesize $0$\normalsize}}
 \put(167,-4){\makebox(0,0)[b]{\footnotesize $\frac{M}{2}$\normalsize}}
 \put(25,47){\makebox(0,0){$h$}}
\end{picture}
\end{center}

Setting $h_1=\lambda_{M/2}(h_0)$, we have $h_0+h_1=1$ in $C(\R/M\Z)$. Consider the two c.p.c.~order zero maps
\[
 \varphi^{(i)}: A\to C(\R/M\Z)\otimes A \cong C(\R/M\Z, A),\quad i=0,1
\]
given by 
\begin{equation} \label{eq:phi}
 \varphi^{(i)}(a)(t+M\Z) = \begin{cases} h_0(t+M\Z)\alpha_t(a) &\mid i=0 \\
 h_1(t+M\Z)\alpha_{t-M/2}(a) &\mid i=1
 \end{cases}
\end{equation}
 for $t\in [-M/2,M/2]$. 
Since $M> \frac{2T}{\eps}$, for all $t \in [-T,T]$ and for $ i=0,1$ we have
\[
\|h_i-\lambda_t(h_i)\|\leq\eps 
\] 
Therefore, for $i=0,1$ and for all $t \in [-T,T]$ we have
\begin{equation} \label{eq:phii}
\|(\lambda_t\otimes\alpha_t)\circ\varphi^{(i)} - \varphi^{(i)}\|\leq\eps \, .
\end{equation}
To see this for $i=0$, note that (applying \eqref{eq:phi} with $t_0-t$ in place of $t$) for every contraction $a\in A$ and $t_0\in\R$ we have
\[
\def\arraystretch{1.5}
\begin{array}{ccl}
\Big((\lambda_t\otimes\alpha_t)(\varphi^{(0)}(a))\Big)(t_0+M\Z) &=& \alpha_t\Bigl( \varphi^{(0)}(a)(t_0-t+M\Z) \Bigl) \\
&=& \alpha_t\Bigl( h_0(t_0-t+M\Z)\alpha_{t_0-t}(a) \Bigl) \\
&=& h_0(t_0-t+M\Z)\alpha_{t_0}(a),
\end{array}
\]
which is equal to $\varphi^{(0)}(a)(t_0+M\Z)$ up to $\eps$. An analogous calculation shows this for $i=1$.

By assumption, there is a unital  $^{*}$-homomorphism $\tilde{\kappa}: D\to F_\infty(A)$. 
As $D$ is nuclear, we can apply the Choi-Effros lifting theorem \cite{Choi-Effros} and find a c.p.c.~lift of this map to $\ell^\infty(\N, A)$, and represent it by a sequence of  c.p.c.~maps $\kappa_n: D\to A$ such that $\tilde{\kappa}(b)$ is the image of $(\kappa_1(b),\kappa_2(b),\ldots)$ for all $b\in D$. 

Let $\mathcal{F}_D\subset D$ and $\mathcal{F}_A\subset A$ be finite subsets, with $1_D \in \mathcal{F}_D$. We may assume that they consist of elements of norm at most $1$. Applying Lemma~\ref{Lemma:invariant-approx-unit}, we may furthermore assume that $\mathcal{F}_A$ contains a positive element $e$ of norm $1$, such that $\|ea - a\|<\eps$ and $\|ae-a\|<\eps$ for all $a \in \mathcal{F}_A\setminus\{e\}$ and such that $\|\alpha_t(e) - e\|<\eps$ for all $t \in [-M,M]$.

By picking $\kappa=\kappa_n$ for some sufficiently large $n$, we have a c.p.c.~map satisfying the following for all $t\in[-M,M]$, for all $a\in \mathcal{F}_A$ and for all $d_1,d_2\in \mathcal{F}_D$:
 \begin{enumerate}[label=\textup{({a}\arabic*)}]
 \item $\|\kappa(1)\alpha_t(a)-\alpha_t(a)\|\leq\eps$; \label{eqa1}
 \item $\|\bigl( \kappa(d_1)\kappa(d_2)-\kappa(d_1d_2) \bigl) \alpha_t(a)\|\leq\eps$; \label{eqa2}
 \item $\|[\alpha_t(a),\kappa(d_1)]\|\leq\eps$. \label{eqa3}
 \end{enumerate}
 We choose inductively c.p.c.~maps $\kappa^{(i,l)}: D\to A$ for $i=0,1$ and for $l=0,1,\dots,d$ satisfying the above conditions and such that we also have 
\begin{enumerate}[label=\textup{({a}\arabic*)}, resume]
\item $\|[\alpha_t\circ\kappa^{(i,l)}(d_1),\kappa^{(i',l')}(d_2)]\|\leq\eps$ \label{eqa4}
\end{enumerate} 
for all $t\in [-2M,2M]$, for all $d_1,d_2\in \mathcal{F}_D$ and whenever $(i,l)\neq (i',l')$.

Combining the properties of the maps $\kappa^{(i,l)}$ and $\varphi^{(i)}$, we see that the following  hold for all $i,i'=0,1$ and for all $l,l'=0,\dots,d$ with $(i',l')\neq (i,l)$, for all $t\in [-T,T]$, for all $a\in \mathcal{F}_A$, and for all $d_1,d_2\in \mathcal{F}_D$:
\begin{enumerate}[label=\textup{({b}\arabic*)}]
 \item $\displaystyle \Big\| \Big(1- (\varphi^{(0)}\circ\kappa^{(0,l)}(1) +  \varphi^{(1)}\circ\kappa^{(1,l)}(1) )  \Big) \cdot (1_{C(\R/M\Z)}\otimes a ) \Big\|\leq 2\eps$; \label{eqb1}
  \item $\| (\lambda_t\otimes\alpha_t)\circ\varphi^{(i)}\circ\kappa^{(i,l)}-\varphi^{(i)}\circ\kappa^{(i,l)}\|\leq\eps$. \label{eqb2}
 %\item $\|\Bigl( (\varphi^{(i)}\circ\kappa^{(i,l)})(d_1)(\varphi^{(i)}\circ\kappa^{(i,l)})(d_2)-(\varphi^{(i)}\circ\kappa^{(i,l)})(1)(\varphi^{(i)}\circ\kappa^{(i,l)})(d_1d_2)\Bigl) 1_{C(\R/M\Z)}\otimes a\|\leq\eps$
 %\item $\|[1_{C(\R/M\Z)}\otimes a,(\varphi^{(i)}\circ\kappa^{(i,l)})(d_1)]\|\leq\eps$ 
 %\item $\|[(\varphi^{(i)}\circ\kappa^{(i,l)})(d_1),(\varphi^{(i')}\circ\kappa^{(i',l')})(d_2)]\|\leq\eps$.
\end{enumerate}
Here, \ref{eqb1} follows from \ref{eqa1} and condition \ref{eqb2} follows from \eqref{eq:phii}.

Consider the canonical $\tilde{\alpha}_\infty\otimes\alpha - \alpha_\infty$ equivariant  $^{*}$-homomorphism
\[
\theta: F_\infty^{(\alpha)}(A)\otimes_{\max} A\to A_\infty^{(\alpha)}
\] 
from Remark~\ref{F(A)}. Since $d=\dimrokc(\alpha)<\infty$, we can choose $\lambda-\tilde{\alpha}_\infty$ equivariant c.p.c.~order zero maps 
$$
\mu^{(0)},\mu^{(1)},\dots,\mu^{(d)}\colon C(\R/M\Z)\to F_\infty^{(\alpha)}(A)
$$ 
with pairwise commuting images and such that $\mu^{(0)}(1)+\dots+\mu^{(d)}(1)=1$. 

For $i=0,1$ and for $l=0,\dots,d$, we define c.p.c.~maps $\psi^{(i,l)}: D\to A_\infty^{(\alpha)}$ as the composition 
\[
\xymatrix@C+3mm{
	D \ar[r]^{\kappa^{(i,l)}}  \ar@/_2pc/[rrrr]_{\psi^{(i,l)}}
	& A \ar[r]^(0.27){\varphi^{(i)}} 
	& C(\R/M\Z) \otimes A \ar[r]^(0.5){\mu^{(l)}\otimes\id_A}   
	&  F_\infty^{(\alpha)}(A)\otimes_{\max} A \ar[r]^(0.65){\theta}
	& A_\infty^{(\alpha)}.
}
\]
Now by choice, we know that for all $a\in A$,
\begin{equation} \label{eq:sum-one}
\sum_{l=0}^d \theta\circ(\mu^{(l)}\otimes\id_A)(1_{C(\R/M\Z)}\otimes a)=\sum_{l=0}^d \mu^{(l)}(1)\cdot a = a \, .
\end{equation} 
We claim now that the maps $\psi^{(i,l)}$ satisfy the following properties, for  $i,i'=0,1$ and for $l,l'=0,1,\dots,d$ with $(i,l)\neq (i',l')$, for all $t\in [-T,T]$, for all $a\in \mathcal{F}_A$, and for all $d_1,d_2\in \mathcal{F}_D$:
\begin{enumerate}[label=\textup{({c}\arabic*)}]
 \item $\displaystyle \Big\| \Big( 1-\sum_{l=0}^d \big( \psi^{(0,l)}(1) + \psi^{(1,l)}(1) \big) \Big) a \Big\|\leq 2(d+1)\eps$; \label{eqc1}
 \item $\|\alpha_{\infty,t}\circ\psi^{(i,l)}-\psi^{(i,l)}\|\leq\eps$; \label{eqc2}
 \item $\| \bigl( \psi^{(i,l)}(d_1)\psi^{(i,l)}(d_2)-\psi^{(i,l)}(d_1d_2)\psi^{(i,l)}(1) \bigl) a\|\leq 6\eps$; \label{eqc3}
 \item $\|[a,\psi^{(i,l)}(d_1)]\|\leq \eps$; \label{eqc4}
 \item $\|[\psi^{(i,l)}(d_1),\psi^{(i',l')}(d_2)]\|\leq \eps$. \label{eqc5}
\end{enumerate}
Property \ref{eqc1} follows from \ref{eqb1} and \eqref{eq:sum-one} and  \ref{eqc2} follows from \ref{eqb2}.

As for \ref{eqc3}, notice first that since $\|ea - a\|<\eps$, it suffices to prove the claim for $a = e$ and $4\eps$ in place of $6\eps$. Note that since $e$ is almost $\alpha$-invariant, we have $\|\varphi^{(i)}(e) - h_i \otimes e\|<\eps$ for $i=0,1$. Furthermore, for any  $x \in A$ and for $i=0,1$, we have 
\begin{equation}
\label{eqn-xe}
\|\varphi^{(i)}(xe) - \varphi^{(i)}(x) \cdot (1 \otimes e)\|\leq \eps\|x\| \, .
\end{equation}

Note that if $y \in C(\R/M\Z) \otimes A$ and $a \in A$ then 
\begin{equation}
\label{eqn-eta-semi-multiplicative}
(\mu^{(l)}\otimes\id_A)(y \cdot (1_{C(\R/M\Z)} \otimes a)) = (\mu^{(l)}\otimes\id_A)(y) \cdot (1_{F_\infty^{(\alpha)}(A)} \otimes a)
\end{equation}
This follows from the fact that $\mu^{(l)}$ is a completely bounded map, and is seen by first verifying the formula for elementary tensors.
In particular, for any $d \in D$ we have 
\[
\psi^{(i,l)}(d)e = \theta \circ (\mu^{(l)}\otimes\id_A)(\varphi^{(i)} \circ \kappa^{(i,l)}(d) \cdot (1 \otimes e)) \, .
\]
We thus have
\[
\arraycolsep=0.7mm
\def\arraystretch{1.5}
\begin{array}{cl}
\multicolumn{2}{l}{ \| \big( \psi^{(i,l)}(d_1)\psi^{(i,l)}(d_2)-\psi^{(i,l)}(d_1d_2)\psi^{(i,l)}(1) \big) e\| }
\\

\stackrel{\eqref{eqn-eta-semi-multiplicative}}{=} & \big\| \theta \left (  (\mu^{(l)}\otimes\id_A) (\varphi^{(i)} ( \kappa^{(i,l)}(d_1) )) \cdot (\mu^{(l)}\otimes\id_A) (\varphi^{(i)} ( \kappa^{(i,l)}(d_2)) \cdot (1\otimes e)) \right ) 
\\
& \quad -\theta \left ((\mu^{(l)}\otimes\id_A) (\varphi^{(i)} ( \kappa^{(i,l)}(d_1d_2))) \right. 
\\
& \left. \quad \quad \cdot (\mu^{(l)}\otimes\id_A)(\varphi^{(i)} ( \kappa^{(i,l)}(1) \cdot (1 \otimes e))   \right ) \big\| 
\\

\leq & \|  (\mu^{(l)}\otimes\id_A) (\varphi^{(i)} ( \kappa^{(i,l)}(d_1) )) \cdot(\mu^{(l)}\otimes\id_A) (\varphi^{(i)} ( \kappa^{(i,l)}(d_2)) \cdot (1\otimes e))  
\\
& \quad  -(\mu^{(l)}\otimes\id_A) (\varphi^{(i)} ( \kappa^{(i,l)}(d_1d_2))) \\
& \quad \quad \cdot(\mu^{(l)}\otimes\id_A)(\varphi^{(i)} ( \kappa^{(i,l)}(1) \cdot (1 \otimes e))  \| 
\\

\stackrel{(\text{L}\ref{lemma:xy-x'y'})}{\leq} & \| \bigl( \varphi^{(i)} ( \kappa^{(i,l)}(d_1) ) \cdot \varphi^{(i)} ( \kappa^{(i,l)}(d_2)) \\
& \quad -\varphi^{(i)} ( \kappa^{(i,l)}(d_1d_2)) \cdot \varphi^{(i)} ( \kappa^{(i,l)}(1)) \bigl) \cdot (1 \otimes e)\| 
\\

\stackrel{\eqref{eqn-xe}}{\leq} & 
\|  \varphi^{(i)}( \kappa^{(i,l)}(d_1)) \cdot \varphi^{(i)}(\kappa^{(i,l)}(d_2)e)-\varphi^{(i)} ( \kappa^{(i,l)}(d_1d_2)) \cdot \varphi^{(i)} ( \kappa^{(i,l)}(1)e) \| \\
& + 2\eps
\\

\stackrel{\ref{eqa1}}{\leq} & 
\|  \varphi^{(i)} ( \kappa^{(i,l)}(d_1)) \cdot \varphi^{(i)}(\kappa^{(i,l)}(d_2)e)-\varphi^{(i)} ( \kappa^{(i,l)}(d_1d_2)) \cdot \varphi^{(i)} ( e) \| + 3\eps 
\\

\stackrel{(\text{L}\ref{lemma:xy-x'y'})}{\leq} &
\|  \kappa^{(i,l)}(d_1)) \kappa^{(i,l)}(d_2)e - \kappa^{(i,l)}(d_1d_2)) e \| + 3\eps \\
\stackrel{\ref{eqa2}}{\leq} & 4\eps \, .
\end{array}
\]
This shows \ref{eqc3}. As for \ref{eqc4}, we can use \eqref{eqn-eta-semi-multiplicative} and get
\[
[a,\psi^{(i,l)}(d)] = \theta ( (\mu^{(l)}\otimes\id_A) ([1 \otimes a,\varphi^{(i)} (\kappa^{(i,l)}(d))])) \, .
\]
 Thus, it suffices to show that for all $a \in \mathcal{F}_A$ and all $d \in \mathcal{F}_D$, we have $\|[1 \otimes a,\varphi^{(i)} (\kappa^{(i,l)}(d))]\| \leq\eps$. But this follows directly from \ref{eqa3} and the definition of the map $\varphi^{(i)}$. 
Lastly, for \ref{eqc5}, we check the following:
\[
\def\arraystretch{1.5}
\begin{array}{cl} 
\multicolumn{2}{l}{ \|[\psi^{(i,l)}(d_1),\psi^{(i',l')}(d_2)]\| } \\

\leq & \|[(\mu^{(l)}\otimes\id_A)(\varphi^{(i)}(\kappa^{(i,l)}(d_1))),
(\mu^{(l')}\otimes\id_A)(\varphi^{(i')}(\kappa^{(i',l')}(d_2)))]\|
\\

\stackrel{(\text{L}\ref{lemma:commutators})}{\leq} & 
\displaystyle \max_{t_1, t_2\in\R}  \|[\varphi^{(i)}(\kappa^{(i,l)}(d_1))(t_1+M\Z),
\varphi^{(i')}(\kappa^{(i',l')}(d_2))(t_2+M\Z)]\|
\\

\stackrel{\eqref{eq:phi}}{\leq} &
\displaystyle \max_{-M\leq t_1,t_2\leq M} \big\| \big[ \alpha_{t_1} \big( \kappa^{(i,l)}(d_1) \big) ,\alpha_{t_2} \big( \kappa^{(i',l')}(d_2) \big) \big] \big\| \stackrel{\ref{eqa4}}{\leq} \eps \, .
\end{array}
\]
Since $D$ and $A$ are separable, we can apply a standard reindexation argument as follows.
By choosing an increasing sequence of finite subsets $\mathcal{F}_D$ with dense union in the unit ball of $D$, finite subsets $\mathcal{F}_A$ with dense union in the  unit ball of $A$, a decreasing sequence of positive numbers $\eps_n$ tending to $0$ and an increasing sequence of positive numbers $T_n$ tending to infinity,
  we successively choose c.p.c.~maps from $D$ to $A$ satisfying the conditions~\ref{eqc1} to \ref{eqc5}, and thus obtain c.p.c.~maps $\widehat{\psi}^{(i,l)}: D\to A_\infty^{(\alpha)}$ satisfying
\begin{itemize}
 \item $\displaystyle a = \sum_{l=0}^d \left (\widehat{\psi}^{(0,l)}(1) +\widehat{\psi}^{(1,l)}(1) \right )a$
 \item $\alpha_{\infty,t}\circ\widehat{\psi}^{(i,l)} = \widehat{\psi}^{(i,l)}$
 \item $\widehat{\psi}^{(i,l)}(d_1)\widehat{\psi}^{(i,l)}(d_2)a = \widehat{\psi}^{(i,l)}(d_1d_2)\widehat{\psi}^{(i,l)}(1)a$
 \item $[a,\widehat{\psi}^{(i,l)}(d_1)]=0$ 
 \item $[\widehat{\psi}^{(i,l)}(d_1),\widehat{\psi}^{(i',l')}(d_2)]=0$
\end{itemize}
for  $i,i'=0,1$, and for $l,l'=0,1,\dots,d$ with $(i,l)\neq (i',l')$, for all $t\in\R$, for all $a\in A$ and for all $d_1,d_2\in D$.

For $i=0,1$ and $l=0,\dots,d$, consider the maps $ \widetilde{\psi}^{(i,l)}: D\to F_\infty^{(\alpha)}(A)^{ \tilde{\alpha}_\infty}$ given by $ \widetilde{\psi}^{(i,l)}(d)= \widehat{\psi}^{(i,l)}(d)+\operatorname{Ann}(A,A_\infty)$ for all $d\in D$. Because of the properties of the maps $\widehat{\psi}^{(i,l)}$ listed above, these yield well-defined  c.p.c.~order zero maps from $D$ to $F_\infty^{(\alpha)}(A)^{ \widetilde{\alpha}_\infty}$ with pairwise commuting images and satisfying the equation $1=\sum_{i=0,1} \sum_{l=0}^d \widetilde{\psi}^{(i,l)}(1)$. This concludes the proof.
 \end{proof}
 
\begin{Exl}
Suppose that $X$ is a locally compact, metrizable space with finite covering dimension. Let $D$ be a strongly self-absorbing $C^*$-algebra. Suppose that $A$ is a separable, unital and $D$-absorbing $C^*$-algebra with primitive ideal space $X$. If $\alpha$ is a flow on $A$, then it induces a topological flow $\Phi$ on $X$ associated to $A$. If $\Phi$ is free, then the restriction of $\alpha$ to the center of $A$, $Z(A) \cong C(X)$, has finite Rokhlin dimension by Corollary~\ref{cor:top-flow-Rokhlin-estimate} below. It follows from this that $\alpha$ has finite Rokhlin dimension with commuting towers, so by Theorem~\ref{Thm:Z-absorption}, $A \rtimes_{\alpha} \R$ is $D$-absorbing as well. 
\end{Exl}

We conclude this section with some further remarks.

 \begin{Rmk}
The results of this section generalize to $\R^n$-actions with the analogous notion of Rokhlin dimension with commuting towers, in a straightforward manner (see also Remark~\ref{Rn-dimnuc}). One would need to generalize Lemma~\ref{technical dimrokc}. This works in a similar way with $2^n(d+1)$ maps instead of $2(d+1)$. Specifically, the functions $h_0=h$ and $h_1=\lambda_{M/2}(h)$ from the proof have to be replaced by $2^n$ functions of the form $h_{i_1}\otimes\dots\otimes h_{i_n} \in C(\R/M\Z)^{\otimes n} \cong C(\R^n/M\Z^n)$ for $(i_1,\dots,i_n)\in\{0,1\}^n$. The rest of the argument is, for the most part, identical.
 \end{Rmk}

\begin{Rmk}
 This same method of proof can be used to obtain the analogous result for actions of $\Z$: if $D$ is strongly self-absorbing, $A$ is a separable $D$-absorbing $C^*$-algebra and $\alpha$ is an automorphism of $A$ that has finite Rokhlin dimension with commuting towers, then $A \rtimes_{\alpha} \Z$ is $D$-absorbing as well. This generalizes \cite[Theorem 5.8]{HWZ} and \cite[Theorem 3.2]{hirshberg-phillips}, where this was proven for the special case of $D = \mathcal{Z}$.  This is made rigorous in \cite[Section 10]{SWZ} more generally for actions of residually finite groups with finite dimensional box spaces.
  \end{Rmk}

 \begin{Rmk}
In Lemma~\ref{technical dimrokc} and its proof, the commuting tower assumption is only needed in order to get pairwise commuting images of the c.p.c.~order zero maps in the statement. Repeating the same argument as in the proof of Lemma~\ref{technical dimrokc} without the commuting tower assumption yields the following potentially useful observation.

 Let $A$ be a separable $C^*$-algebra and let $\alpha \colon \R \to \aut(A)$ be a flow with $d=\dimrok(\alpha)<\infty$. Let $D$ be a separable, nuclear and unital $C^*$-algebra such that there is a unital  $^{*}$-homomorphism from $D$ to $F_\infty(A)$. Then there exist c.p.c.~order zero maps $\psi^{(l)}: D\to F_\infty^{(\alpha)}(A)^{\tilde{\alpha}_\infty}$ for $l=1,\dots,2(d+1)$ such that $1=\sum_{l=1}^{2(d+1)}\psi^{(l)}(1)$.

 For example, if in the situation described above, $D$ is strongly self-absor\-bing and $A$ is $D$-absorbing, then one can use this to show that $F_\infty(A\rtimes_\alpha\R)$ has no characters. This is sufficient for deducing weaker structural properties for $A\rtimes_\alpha\R$, such as the strong corona factorization property; see \cite{KirchbergRordam14}.
 \end{Rmk}

%%%%%%%%%%%%%%%%%%%%%%%%%%%%%%%

\section{Stability of crossed products}
\label{Section:stability}

In this section we show that for every flow with finite Rokhlin dimension, the associated crossed product $C^*$-algebra is stable, i.e., it tensorially absorbs the algebra of compact operators $K(H)$ on a separable infinite-dimensional Hilbert space. In fact, we will see that a much weaker consequence of finite Rokhlin dimension, which makes sense for flows on non-separable $C^*$-algebras as well, is sufficient for this purpose. We will use a local characterization of stability developed in \cite{HjelmborgRordam1998stability} for $\sigma$-unital $C^*$-algebras: 

\begin{lem}\label{lem:local-stability}
 A $\sigma$-unital $C^*$-algebra $B$ is stable if and only if for any $b \in B_+$ and any $\varepsilon > 0$, there exists $y \in B$ such that $\| yy^* - b \| <\varepsilon $ and $\| y^2 \| < \varepsilon$. 
\end{lem}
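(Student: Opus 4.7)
The plan is to prove the two directions of the biconditional separately. The forward implication is straightforward from the tensor decomposition, while the reverse one is the heart of the result and ultimately reduces to a Cuntz-theoretic criterion from the paper being cited.

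For the direction ``$B$ stable $\Rightarrow$ the local condition,'' I would fix an isomorphism $B \cong B \otimes \K$ and exploit the matrix-unit structure of $\K$. Given $b \in B_+$ and $\eps > 0$, choose a finite-rank projection $p \in \K$ large enough that, after transporting $b$ into $B \otimes \K$, the compression $\tilde{b} := (1\otimes p)\,b\,(1\otimes p)$ approximates $b$ to within $\eps$. Pick an isometry $s \in \mathcal{M}(\K)$ that shifts $p$ onto an orthogonal projection, so that $(1\otimes s)^*(1\otimes p) = 0$. Then $y := \tilde{b}^{1/2}(1\otimes s)^* \in B\otimes\K \cong B$ satisfies $yy^* = \tilde{b}^{1/2}(1\otimes s^*s)\tilde{b}^{1/2} = \tilde{b}$, which is within $\eps$ of $b$, while $y^2 = \tilde{b}^{1/2}(1\otimes s)^*\tilde{b}^{1/2}(1\otimes s)^* = 0$, since $(1\otimes s)^* \tilde{b}^{1/2} = 0$.

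For the reverse direction, I would extract from the local condition a Cuntz-theoretic statement that is known to characterize stability. Given $b \in B_+$, the hypothesis produces $y \in B$ with $yy^* \approx b$ and $y^2 \approx 0$. Set $c := y^*y$. Since $c$ and $yy^*$ have the same nonzero spectrum, a standard perturbation via functional calculus yields Cuntz subequivalences $(b-\delta)_+ \precsim c$ and $(c-\delta)_+ \precsim b$ for $\delta$ comparable to $\eps$. Moreover,
\[
\|c\cdot yy^*\| \;=\; \|y^*\, y^2\, y^*\| \;\le\; \|y\|^2\,\|y^2\| \;<\; \|y\|^2\,\eps,
\]
so $c$ is almost orthogonal to $b$. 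Upgrading the approximate orthogonality to exact orthogonality (by replacing $b$ and $c$ by small functional-calculus cut-downs) puts us in precisely the situation covered by the Hjelmborg--R{\o}rdam criterion from \cite{HjelmborgRordam1998stability}: for $\sigma$-unital $B$, stability is equivalent to the property that for every $b \in B_+$ and $\eps > 0$ there exists $d \in B_+$ with $b d = 0$ and $(b-\eps)_+ \precsim d$.

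The main obstacle is that producing a single approximately orthogonal Cuntz copy of $b$ does not by itself exhibit a $\K$-structure inside $B$; one needs to iterate the construction to obtain countably many pairwise orthogonal Cuntz copies of a strictly positive element with summable error terms, and then glue these copies into a system of matrix units that witnesses $B \cong B \otimes \K$. This bookkeeping step is the technical core of the Hjelmborg--R{\o}rdam theorem, and in a full write-up I would invoke their result after establishing the clean local-to-Cuntz-equivalence implication sketched above, rather than redo the iterative gluing argument from scratch.
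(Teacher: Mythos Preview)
Your proposal is correct and follows the same route as the paper: the forward direction is handled directly from the tensor decomposition $B\cong B\otimes\K$, and the backward direction is reduced to the Hjelmborg--R{\o}rdam characterization of stability. The paper's own proof is a one-line citation of \cite[Theorem~2.1 and Proposition~2.2]{HjelmborgRordam1998stability}, so your write-up simply spells out the translation step---producing from $y$ an approximately orthogonal Cuntz copy $c=y^*y$ of $b$---that the paper leaves implicit.
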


\begin{proof}
 The ``only if'' part is straightforward to verify while the ``if'' part follows directly from \cite[Theorem~2.1 and Proposition~2.2]{HjelmborgRordam1998stability}.
\end{proof}

Let us now consider a consequence of finite Rokhlin dimension for flows on separable $C^*$-algebras. The only difference between the following and condition~(\ref{Lemma:def-dimrok-lift-item-5a}) in Lemma~\ref{Lemma:def-dimrok-lift}(\ref{alternativedefinitionofRokhlindimensionforflows}) is the extra factor the exponential function. 

\begin{lem}\label{lem:alternativedefinitionofRokhlindimensionforflows-variant}
 Let $A$ be a separable $C^*$-algebra with a flow $\alpha \colon \R \to \aut(A)$ of finite Rokhlin dimension $d$. Then for any $ p, T, \delta > 0 $ and any finite set $\mathcal{F} \subset A$, there are contractions $ x^{(0)}, \dots, x^{(d)} \in A $ satisfying conditions~(\ref{Lemma:def-dimrok-lift-item-5b}) - (\ref{Lemma:def-dimrok-lift-item-5d}) in Lemma~\ref{Lemma:def-dimrok-lift}(\ref{alternativedefinitionofRokhlindimensionforflows}) together with 
   \begin{enumerate}[label={(\ref*{Lemma:def-dimrok-lift-item-5a}')}]
    \item
    \label{Lemma:def-dimrok-lift-item-5a'} \quad
    $ \left\| a (\alpha_{t}( x^{(l)} ) - e^{ip (l+1) t} \cdot x^{(l)}) \right\| \le \delta $ 
   \end{enumerate} 
   for  $ l = 0, \dots, d $, for all $t \in [ -T, T] $ and for all $a \in \mathcal{F}$.
\end{lem}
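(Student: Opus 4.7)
The plan is to reduce the statement to the analogous property inside the continuous central sequence algebra $F_\infty^{(\alpha)}(A)$, where we can produce normal contractions $y^{(0)}, \dots, y^{(d)}$ with $\tilde{\alpha}_{\infty,t}(y^{(l)}) = e^{i(l+1)pt} y^{(l)}$ and $\sum_{l=0}^d y^{(l)} y^{(l)*} = 1$. A standard lifting argument, modelled on the implication (1)$\Rightarrow$(5) of Lemma~\ref{Lemma:def-dimrok-lift}, then turns these into the desired contractions $x^{(l)} \in A$ satisfying (5a$'$) and (5b)--(5d).

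Set $M = 2\pi/p$ and use Remark~\ref{Rmk:Rokhlin-dim} to pick $\lambda$-$\tilde{\alpha}_\infty$ equivariant c.p.c.~order zero maps $\mu^{(0)}, \dots, \mu^{(d)} \colon C(\R/M\Z) \to F_\infty^{(\alpha)}(A)$ with $\sum_{l=0}^d \mu^{(l)}(1) = 1$. By the structure theorem for order zero maps (\cite[Corollary~4.1]{winter-zacharias-order-zero}), each $\mu^{(l)}$ is induced by a genuine $^*$-homomorphism
\[
 \rho^{(l)} \colon C_0((0,1], C(\R/M\Z)) \to F_\infty^{(\alpha)}(A)
\]
determined by $\rho^{(l)}(\id_{(0,1]} \otimes f) = \mu^{(l)}(f)$, which is automatically $(\id \otimes \lambda)$-$\tilde{\alpha}_\infty$ equivariant (as in the proof of Lemma~\ref{Rmk:equivariant-order-zero-maps}). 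Let $f_l \in C(\R/M\Z)$ be the unitary $f_l(x + M\Z) = e^{-i(l+1)px}$, which is well-defined since $pM \in 2\pi\Z$ and $l+1 \in \Z$, and satisfies $\lambda_t(f_l) = e^{i(l+1)pt} f_l$. Set
\[
 y^{(l)} := \rho^{(l)}(\id_{(0,1]}^{1/2} \otimes f_l) \in F_\infty^{(\alpha)}(A).
\]
Each $y^{(l)}$ is a contraction, and since $\rho^{(l)}$ is a $^*$-homomorphism and $f_l$ is unitary,
\[
 y^{(l)*} y^{(l)} = \rho^{(l)}(\id_{(0,1]} \otimes f_l^* f_l) = \mu^{(l)}(1) = \rho^{(l)}(\id_{(0,1]} \otimes f_l f_l^*) = y^{(l)} y^{(l)*},
\]
so $y^{(l)}$ is normal with $\sum_{l=0}^d y^{(l)} y^{(l)*} = \sum_{l=0}^d \mu^{(l)}(1) = 1$. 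By equivariance, $\tilde{\alpha}_{\infty,t}(y^{(l)}) = \rho^{(l)}(\id_{(0,1]}^{1/2} \otimes \lambda_t(f_l)) = e^{i(l+1)pt} y^{(l)}$.

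The main obstacle is precisely the construction of the $y^{(l)}$. The natural-looking guess $y^{(l)} := \mu^{(l)}(f_l)$ produces the correct eigenvalue but yields $y^{(l)} y^{(l)*} = \mu^{(l)}(1)^2$, whose sum over $l$ is not $1$ unless the $\mu^{(l)}(1)$ are orthogonal projections---precisely the classical tower situation we cannot assume in positive Rokhlin dimension. Passing to the associated cone $^*$-homomorphism $\rho^{(l)}$ and replacing the height $\id_{(0,1]}$ by $\id_{(0,1]}^{1/2}$ repairs the defect: $\id_{(0,1]}^{1/2} \cdot \id_{(0,1]}^{1/2} = \id_{(0,1]}$ recovers the full height $\mu^{(l)}(1)$, while the unitary factor $f_l$ is left free to carry the prescribed eigenvalue. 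With the $y^{(l)}$ at hand, the passage to $A$ follows the same pattern as in Lemma~\ref{Lemma:def-dimrok-lift}: using Lemma~\ref{Lemma:invariant-approx-unit} pick an $\alpha$-invariant positive contraction $e \in A_\infty^{(\alpha)}$ with $ea = ae = a$ for all $a \in A$, lift the $y^{(l)}$'s to $A_\infty \cap C^*(A \cup \{e\})'$, multiply by $e$ to sharpen the eigenvalue relation to an exact identity in $A_\infty^{(\alpha)} \cap A'$, and finally evaluate a representing sequence in $\ell^{\infty,(\alpha)}(\N, A)$ at a sufficiently large index. Uniformity in $t \in [-T,T]$ required by (5a$'$) follows from the norm-continuity of $t \mapsto \alpha_{\infty,t}$ on $A_\infty^{(\alpha)}$ by a standard compactness argument.
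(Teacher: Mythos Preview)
Your proof is correct and follows essentially the same route as the paper. The paper obtains the modified eigenvalues by precomposing each $\mu^{(l)}$ with the $\lambda^{(l+1)}$--$\lambda$ equivariant endomorphism $f\mapsto f((l+1)\cdot)$ of $C(\R/M\Z)$ and then invoking the equivalence of Remark~\ref{Rmk:Rokhlin-dim}; you instead plug the power $f_l$ of the generator into the cone $^*$-homomorphism $\rho^{(l)}$, which is exactly the same construction unpacked. Your explicit use of $\rho^{(l)}(\id_{(0,1]}^{1/2}\otimes f_l)$ makes transparent the square-root correction (so that $y^{(l)*}y^{(l)}=\mu^{(l)}(1)$ rather than $\mu^{(l)}(1)^2$), a point the paper absorbs into the phrase ``unraveling the definition of $F_\infty^{(\alpha)}(A)$ as in Lemma~\ref{Lemma:def-dimrok-lift}''.
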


\begin{proof}
 For any $s \in \R$, we consider the action $\lambda^{(s)} \colon \R \curvearrowright C(\R/p\Z)$ given by $\lambda^{(s)}_t (f) (x) = f(x - st)$ for $f \in C(\R/p\Z)$. Recall that $\lambda^{(1)} = \lambda$ in our previous notation. Given $M>0$, we apply Remark~\ref{Rmk:Rokhlin-dim} and find $\lambda - \tilde{\alpha}_\infty$ equivariant c.p.c.~order zero maps 
\[
\mu^{(0)}, \ldots, \mu^{(d)} \colon C(\R/M\Z) \to F_\infty^{(\alpha)}(A)
\] 
with $\sum_{j=0}^d \mu^{(j)}(1) = 1$. By composing each c.p.c.~order zero map $\mu^{(l)}$ with the $\lambda^{(l+1)} - \lambda$ equivariant unital endomorphism $C(\R/M\Z) \to C(\R/M\Z)$ mapping $f$ to $f( (l+1) \cdot -)$, we obtain c.p.c.~order zero maps 
\[
\widetilde{\mu}^{(0)}, \ldots, \widetilde{\mu}^{(d)} \colon C(\R/M\Z) \to F_\infty^{(\alpha)}(A)
\] 
with $\sum_{j=0}^d \widetilde{\mu}^{(j)}(1) = 1$ and each map $\widetilde{\mu}^{(l)}$ being $\lambda^{(l+1)} - \tilde{\alpha}_\infty$ equivariant. For $l = 0, \ldots, d$, let $\widetilde{x}^{(l)}$ be the image of the standard generator $z = [ x \mapsto e^{2\pi i \frac{x}{M}}]$ of $C(\R/M\Z)$ under $\widetilde{\mu}^{(l)}$. This yields normal contractions $\widetilde{x}^{(0)},\dots,\widetilde{x}^{(d)}\in F_\infty^{(\alpha)}(A)$ satisfying the equations $\tilde{\alpha}_{\infty,t}(\widetilde{x}^{(j)}) = e^{2\pi i t (l+1)/ M} \cdot \widetilde{x}^{(j)}$ and $\widetilde{x}^{(0)*}\widetilde{x}^{(0)}+\dots+\widetilde{x}^{(d)*}\widetilde{x}^{(d)}=1$. Directly unraveling the definition of $F_\infty^{(\alpha)}(A)$ as in Lemma~\ref{Lemma:def-dimrok-lift} leads to the desired conclusion. 
\end{proof}

For the main result of this section concerning the stability of crossed products, we will in fact only need to require conditions \ref{Lemma:def-dimrok-lift-item-5a'} and \eqref{Lemma:def-dimrok-lift-item-5b}. This gives rise to the following ad-hoc definition:

\begin{Def} \label{def:adhoc-dimrok}
Let $\alpha: \R\to\aut(A)$ be a flow on a $C^*$-algebra. We say that $\alpha$ \emph{admits a $d$-dimensional eigenframe}, if there exists a natural number $d$ satisfying:
For every $p,T,\delta>0$ and finite set $\mathcal{F}\subset A$, there exist contractions $x^{(0)},\dots,x^{(d)}\in A$ such that
\[
\left\| a \big( \alpha_{t}( x^{(l)} ) - e^{ip (l+1) t} \cdot x^{(l)} \big) \right\| \le \delta
\]  
and
\[
\Bigg\| a - a\cdot \sum_{l=0}^d x^{(l)} x^{(l)*} \Bigg\| \leq \delta
\]
for all $ l = 0, \dots, d $, for all $t \in [ -T, T] $ and for $a \in \mathcal{F}$.
\end{Def}

\begin{Rmk} \label{Rmk:dimrok-implies-adhoc-dimrok}
Lemma \ref{lem:alternativedefinitionofRokhlindimensionforflows-variant}
in particular shows that any flow on a separable $C^*$-algebra with finite Rokhlin dimension admits a $d$-dimensional eigenframe. We note, however, that one can easily construct examples of flows $\alpha$ admitting a $d$-dimensional eigenframe such that $\alpha_t$ is inner for every $t\in\R$. By Proposition \ref{Prop:dimrok-restricted-flow}, such an example is far away from having finite Rokhlin dimension.
\end{Rmk}

In what follows, we make use of the canonical embeddings of $A$ and $C^*(\R)$ into the multiplier algebra of $A \rtimes_\alpha \R$, so that for any $h,g \in C_c(\R, A) \subset A \rtimes_\alpha \R$, any $a \in A$, any $f \in C_c(\R) \subset C^*(\R)$ and any $t \in \R$, we have 
\begin{align*}
 & (a h) (t) = a h(t) \;, && (h a) (t) = h(t) \alpha_t(a) \;, \\
 & (h g) (t) = \int_{\R} h(s) \alpha_s (g (t-s) ) \, ds \;, && (f h) (t) = \int_{\R} f(s) h(t-s) \, ds \;, \\
 & (a f) (t) = a f(t) \;, && (f a) (t) = f(t) \alpha_t(a) \;.
\end{align*}

The proof of the next lemma is standard and we omit it. 

\begin{lem}\label{lem:approx-unit}
 Let $A$ be a $\sigma$-unital $C^*$-algebra with a flow $\alpha \colon \R \to \aut(A)$. Then $A \rtimes_\alpha \R$ is $\sigma$-unital and there is a countable approximate unit of the form $(\sqrt{a_j} g_j \sqrt{a_j} )_{j = 1}^\infty$, where $(g_j )_{j = 1}^\infty$ is a countable approximate unit of $C^*(\R)$
 consisting of functions in the convolution subsalgebra $L^1(\R)$ whose Fourier transform has compact support,
  $(a_j)_{j = 1}^\infty$ is a countable approximately invariant approximate unit of $A$, and they satisfy $[a_j, g_j] \to 0$ as $j \to \infty$. 
 \qed
\end{lem}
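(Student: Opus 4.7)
The plan is to construct $(a_j)$ and $(g_j)$ independently, then pass to a diagonal subsequence to arrange the commutator condition, and finally verify the approximate unit property on a dense subset of $A\rtimes_\alpha \R$.

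First, since $A$ is $\sigma$-unital, I apply Lemma~\ref{Lemma:invariant-approx-unit} to obtain a countable approximate unit $(a_j)_{j\in\N}$ of $A$ with $\|\alpha_t(a_j) - a_j\| \to 0$ uniformly on compact subsets of $\R$. For $(g_j)$, one may take any sequence of non-negative $L^1$-functions whose Fourier transforms are compactly supported, converge pointwise to $1$, and are bounded by $1$; a standard choice is the Fej\'er-type kernel on $\R$ given by $\hat g_j(p) = \max(0, 1 - |p|/j)$, for which $\|g_j\|_{L^1} = \hat g_j(0) = 1$ and $g_j$ concentrates near $0$ as $j \to \infty$. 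Under the Fourier isomorphism $C^*(\R)\cong C_0(\widehat\R)$, this $(g_j)$ is an approximate unit of $C^*(\R)$.

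Using the canonical formulas from Notation~\ref{Not:flow-conventions}, the commutator $[a_j,g_j]$ corresponds to the $L^1$-function $s\mapsto g_j(s)(a_j - \alpha_s(a_j))$, so that
$$
\|[a_j,g_j]\|_{A\rtimes_\alpha\R} \;\leq\; \|[a_j,g_j]\|_{L^1(\R,A)} \;=\; \int_\R |g_j(s)|\,\|a_j - \alpha_s(a_j)\|\,ds.
$$
For any $\varepsilon > 0$, the localization of $g_j$ near $0$ allows one to choose $T>0$ (depending on $j$) with $\int_{|s|>T}|g_j(s)|\,ds < \varepsilon/4$, while Kasparov's property bounds the interior integral by $\sup_{|s|\leq T}\|a_j-\alpha_s(a_j)\|$, which tends to $0$ as $j\to\infty$. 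A standard diagonal reindexing then guarantees $\|[a_j,g_j]\|\to 0$.

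For the approximate unit property, it suffices to test on a dense subset of $A\rtimes_\alpha \R$, e.g.\ elements of the form $y = af$ with $a\in A$ and $f\in C_c(\R)$. By continuous functional calculus on $[0,1]$, $\|[a_j,g_j]\|\to 0$ implies $\|[\sqrt{a_j},g_j]\|\to 0$, hence $\|\sqrt{a_j}g_j\sqrt{a_j} - a_jg_j\|\to 0$. A direct computation in $L^1(\R,A)$ yields $(a_jg_j\cdot af)(t) = a_j\int g_j(s)\alpha_s(a)f(t-s)\,ds$: the inner integral converges in $L^1$ to $t\mapsto af(t)$ by the approximate identity property of $g_j$ combined with strong continuity of $\alpha$ and compact support of $f$, and left multiplication by $a_j$ recovers $af$ in norm. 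A symmetric argument handles right multiplication. This shows that $(\sqrt{a_j}g_j\sqrt{a_j})$ is a countable positive approximate unit for $A\rtimes_\alpha \R$; in particular $A\rtimes_\alpha \R$ is $\sigma$-unital. The main obstacle is the interplay between the rate at which $g_j$ localizes near $0$ and the rate at which $\|\alpha_s(a_j)-a_j\|$ shrinks, which necessitates the diagonal argument; the rest is routine.
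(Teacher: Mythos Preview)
Your proposal is correct and follows precisely the standard route the paper has in mind; indeed, the paper omits the proof entirely, stating only that it ``is standard.'' Your argument---Kasparov's approximately invariant approximate unit for $(a_j)$, Fej\'er-type kernels for $(g_j)$, a diagonal reindexing to force $[a_j,g_j]\to 0$, and verification on the dense set $\{af:a\in A,\ f\in C_c(\R)\}$---is exactly the intended filling-in of the details.
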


\begin{thm}\label{thm:stability}
 Let $A$ be a $\sigma$-unital $C^*$-algebra with a flow $\alpha \colon \R \to \aut(A)$. If $\alpha$ admits a $d$-dimensional eigenframe, then $A \rtimes_\alpha \R$ is stable. 
\end{thm}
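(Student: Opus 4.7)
By Lemma~\ref{lem:local-stability}, it suffices to show that for every $b \in (A \rtimes_\alpha \R)_+$ and every $\varepsilon > 0$, there exists $y \in A \rtimes_\alpha \R$ with $\|y y^* - b\| < \varepsilon$ and $\|y^2\| < \varepsilon$.

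The first step is to replace $b$ by an approximant of a convenient form. Using Lemma~\ref{lem:approx-unit}, approximate $b$ (within $\varepsilon/3$) by $h b h$ for $h = \sqrt{a}\,g\,\sqrt{a}$, where $a \in A_+$ is approximately $\alpha$-invariant on a large interval $[-T, T]$ and $g \in L^1(\R)$ has compactly supported Fourier transform. This ensures that $b$ is effectively localized in its dual-action spectrum, in the sense that multiplication by a scalar multiplier $\lambda(\psi) \in \mathcal{M}(A \rtimes_\alpha \R)$ with $\hat{\psi} \approx 1$ on $\mathrm{supp}(\hat{g})$ acts as an approximate identity on $b$.

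Next, choose a large $p > 0$ and a small $\delta > 0$ (both to be tuned), together with a finite set $\mathcal{F} \subset A$ containing $\sqrt{a}$, $a$, and sufficient approximants $\alpha_s(\sqrt{a})$ for a $\delta$-net of $s \in [-T, T]$. Apply Definition~\ref{def:adhoc-dimrok} to obtain contractions $x^{(0)}, \dots, x^{(d)} \in A$ satisfying the approximate eigenvalue relation $\|\sqrt{a}\,(\alpha_t(x^{(l)}) - e^{ip(l+1)t} x^{(l)})\| \le \delta$ for $|t| \le T$ together with the completeness $\|\sqrt{a} - \sqrt{a}\sum_l x^{(l)} x^{(l)*}\| \le \delta$. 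Choose a Schwartz function $\phi \in L^1(\R)$ with $\hat{\phi} \geq 0$ compactly supported and $|\hat{\phi}|^2$ approximately equal to $1$ on a neighbourhood of $\mathrm{supp}(\hat{g})$; set $\phi_l(t) = \phi(t)\,e^{ip(l+1)t}$. The candidate is
\[
y \;=\; \sum_{l=0}^{d} b^{1/2}\, x^{(l)}\, \phi_l \;\in\; A \rtimes_\alpha \R.
\]
For $\|y y^* - b\| < \varepsilon$, a direct computation in the twisted convolution algebra shows that $y_l(t) := (b^{1/2}\, x^{(l)}\, \phi_l)(t) \approx e^{ip(l+1)t}\, H(t)\, x^{(l)}$, where $H = b^{1/2} \ast \phi$ is \emph{independent} of $l$ (the apparent $s$-oscillation inside the defining integral cancels exactly against the factor $e^{ip(l+1)(t-s)}$ coming from $\phi_l(t-s)$), and similarly $y_{l'}^*(t) \approx x^{(l')*}\, \alpha_t(H(-t)^*)$. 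From these expressions one obtains
\[
(y_l\, y_{l'}^*)(t) \;\approx\; \int e^{ip(l-l')s}\, H(s)\, x^{(l)}\, x^{(l')*}\, \alpha_t(H(s-t)^*)\, ds,
\]
so the cross terms ($l \neq l'$) vanish to leading order via the oscillating factor $e^{ip(l-l')s}$ once $p$ is large. On the diagonal ($l = l'$) there is no oscillation and, using $\sum_l x^{(l)} x^{(l)*} \approx 1$ on $\mathcal{F}$, the sum $\sum_l y_l y_l^*$ telescopes to approximately $H H^* = b^{1/2}\,\lambda(\phi \ast \tilde{\phi})\, b^{1/2}$. By the choice of $\hat{\phi}$, the scalar multiplier $\lambda(\phi \ast \tilde{\phi})$ acts as an approximate identity on $b$, giving $y y^* \approx b$.

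The main technical hurdle is $\|y^2\| < \varepsilon$. The analogous calculation yields
\[
(y_l\, y_{l'})(t) \;\approx\; e^{ip(l'+1)t}\, \int e^{ip(l+1)s}\, H(s)\, x^{(l)}\, \alpha_s(H(t-s))\, x^{(l')}\, ds;
\]
notice that, unlike for $y y^*$, the oscillating factor $e^{ip(l+1)s}$ is present for \emph{every} pair $(l, l')$ (including the diagonal $l = l'$), since there is no sign cancellation between the eigenvalues of $x^{(l)}$ and of $\alpha_s(x^{(l')})$. Exploiting the smoothness of $\phi$ (and hence of $H$), iterated integration by parts against the rapidly oscillating exponential yields $\|y_l y_{l'}\|_{A \rtimes_\alpha \R} = O(p^{-k})$ for any prescribed $k$, so summing over the $(d+1)^2$ pairs and choosing $p$ sufficiently large forces $\|y^2\| < \varepsilon$. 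The hardest part of the argument is coordinating these oscillation estimates with the eigenframe precision $\delta$ and the other approximation parameters, so as to control all accumulated error terms simultaneously and recover the required bounds in the $A \rtimes_\alpha \R$-norm rather than merely in $L^1$.
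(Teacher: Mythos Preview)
Your overall ansatz is essentially the paper's: define $y$ as $\sqrt{b}$ times eigenframe elements $x^{(l)}$ times modulated scalar functions, and use the modulation to kill $y^2$ while the eigenframe completeness recovers $yy^*\approx b$. However, there are two genuine gaps in your mechanism for the estimates.

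First, the approximation $y_l(t)\approx e^{ip(l+1)t}H(t)x^{(l)}$ uses the eigenframe relation $\alpha_s(x^{(l)})\approx e^{ip(l+1)s}x^{(l)}$, but Definition~\ref{def:adhoc-dimrok} only guarantees this \emph{after left multiplication by a fixed $a\in\mathcal{F}$}. In your $y$, the element adjacent to $\alpha_s(x^{(l)})$ is $b^{1/2}(s)$, over which you have no such control (replacing $b$ by $hbh$ with $h=\sqrt{a}\,g\,\sqrt{a}$ does not give you $b^{1/2}\approx a\cdot(\text{something})$). The paper builds the factor $a$ directly into $y$, setting
\[
y=\sqrt{b}\,a\!\sum_{l=0}^d x^{(l)}\,\mu_{(l+1)p}(g),
\]
so that the eigenframe estimate appears exactly as the commutation relation $\|a(gx^{(l)}-x^{(l)}\mu_{(l+1)p}(g))\|\le 3\varepsilon$.

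Second, and more seriously, your $\|y^2\|$ estimate via integration by parts does not go through. The integrand you display contains $\alpha_s(H(t-s))$; its $s$-derivative involves $\alpha_s(\delta(H(t-s)))$, which only exists if $H(t-s)$ is an $\alpha$-smooth vector, something you have no reason to expect for a generic $b$. Riemann--Lebesgue alone does not save you either, because the integrand depends on $p$ through the choice of $x^{(l)}$, so you cannot send $p\to\infty$ against a fixed function. The paper avoids all of this by using \emph{exact} orthogonality of Fourier supports: since $\widehat{g}$ has compact support, one chooses $p$ so that the translates $\widehat{\mu_{(l+1)p}(g)}$ are pairwise disjoint and disjoint from $\widehat{g}$. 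Then the cross terms in $yy^*$ vanish because $\mu_{(l+1)p}(g)\cdot\mu_{(l'+1)p}(g)^*=0$, and for $y^2$ one inserts the approximate unit $\sqrt{b}\approx ga\sqrt{b}$ and uses $\mu_{(l+1)p}(g)\cdot g=0$. No oscillatory estimates are needed, and the dependence of $x^{(l)}$ on $p$ is harmless.
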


\begin{proof}
Throughout this proof, when we refer to $L^1(\R)$, we think of it as the dense convolution algebra inside of $C^*(\R)$, which in turn is identified with its canonical copy inside of the multiplier algebra of $A \rtimes_{\alpha} \R$.

Let $d \in \N$ be a natural number as required by Definition \ref{def:adhoc-dimrok}. 
 Given $b \in (A \rtimes_\alpha \R)_+$ with $\|b\|=1$ and $\varepsilon > 0$, we shall find $y \in A \rtimes_\alpha \R$ satisfying the conditions in Lemma~\ref{lem:local-stability} with $\varepsilon$ replaced by $(6d + 10) \varepsilon $. For convenience, let us assume from now on that $\eps$ was chosen sufficiently small so that $(6d+10)\eps \leq 35$.
 
 We let $\lambda \colon \widehat{\R} \to \aut(C_0(\widehat{\R})$ denote the shift flow, $\lambda_t(f)(x) = f(x-t)$. Under the identification of $C_0(\widehat{\R})$ with $C^*(\R)$ via the Fourier transform (with a suitable choice of normalization), the action $\lambda$ corresponds to the modulation action $\mu \colon \R \to \aut(C^*(\R))$ given, for elements $h \in L^1(\R) \subset C^*(\R)$, by $\mu_t(h)(s) = e^{2\pi i ts}h(s)$. 
 
  We now make the following successive choices.
 \begin{enumerate}[label=\textup{({D}\arabic*)}]
  \item\label{proof:them:stability-D1} Using Lemma~\ref{lem:approx-unit}, we choose 
  $g \in L^1(\R)$ such that $\widehat{g} \in C_c(\widehat{\R})_{+, \leq 1}$ and $a \in A_{+, \leq 1}$ such that $g a \sqrt{b}$, $\sqrt{b} g a$ and $a \sqrt{b} g$ are all no more than $\varepsilon$ away from $\sqrt{b}$. %(The square root sign refers to the square root of $g$ as a positive element of the $C^*$-algebra $C^*(\R)$.)
   Moreover, we may assume $\|[a,g]\|\leq\eps$. We can furthermore assume that $\|g\|_{C^*(\R)} = 1$, and therefore in particular, the  
  $L^1$ norm of $g$, $\|g\|_1$, is at least 1. 
  \item\label{proof:them:stability-D2} Let $p>0$ be such that $\widehat{g}$ is supported within $\left(-\frac{p}{2}, \frac{p}{2}\right) \subset \widehat{\R}$. Note that ${\lambda}_{(l+1)p} (\widehat{g}) \in C_c(\widehat{\R})$ is supported within $\left(\frac{2l-1}{2} p, \frac{2l + 1}{2} p \right)$ for each $l \in \{0, \ldots, d\}$ and thus $\left\{ \mu_{(l+1)p} (g) \right\}_{l=0,\dots,d}$ are mutually orthogonal. 
  \item\label{proof:them:stability-D3} 
  Choose a compactly supported positive definite function $\widetilde{g} \in L^1(\R)$ such that $\|\widetilde{g} - g\|_1 < \eps$ and $\|\widetilde{g}\|_{C^*(\R)} = 1$, and in particular, $\|\widetilde{g}\|_1\geq 1$ as well. Note that $\|\widetilde{g} - g\|_{C^*(\R)} \leq \|\widetilde{g} - g\|_1 < \eps$.
  \item\label{proof:them:stability-D4} Let $T > 0$ be such that $\widetilde{g}$ is supported within $[-T, T] \subset \R$. Set $\delta =  \varepsilon / \big\| \widetilde{g} \big\|_1$. 
  \item\label{proof:them:stability-D6} Choose contractions $ x^{(0)}, \dots, x^{(d)} \in A $ satisfying the conditions in Definition \ref{def:adhoc-dimrok} with respect to $p, T, \delta$ and $\mathcal{F} = \{a\}$.
 \end{enumerate}
 We claim that for any $l \in \{0, \ldots, d\}$, we have 
 \begin{equation}\label{eq:thm:stability}
  \left\| a \left( g \, x^{(l)} - x^{(l)} \, \mu_{(l+1)p} (g) \right) \right\| \leq 3 \varepsilon \; .
 \end{equation}
 Indeed, since $\| g - \widetilde{g}\| \leq \varepsilon$ by \ref{proof:them:stability-D3} and $\| x^{(l)}\| \leq 1$, it suffices to show 
 \[
  \left\| a \left( \widetilde{g} \, x^{(l)} - x^{(l)} \, \mu_{(l+1)p} (\widetilde{g}) \right) \right\| \leq \varepsilon \; .
 \]
 To this end, we note that for any $s \in \R$, we have  
\[
\left( a \widetilde{g} \, x^{(l)} \right) (s) = \widetilde{g} (s) a \cdot \alpha_{s} (x^{(l)}) \in A
\] 
and 
 \begin{align*}
  \left( a \, x^{(l)} \, \mu_{(l+1)p} (\widetilde{g}) \right) (s) = a \,  \mu_{(l+1)p} (\widetilde{g}) (s) \cdot x^{(l)} = e^{2 \pi i (l+1)p s} \widetilde{g} (s) a x^{(l)} \; ,
 \end{align*}
and thus 
\[
\def\arraystretch{2}
 \begin{array}{cl}
\multicolumn{2}{l} { \displaystyle\left\|  a \left( \widetilde{g} \, x^{(l)} - x^{(l)} \, \mu_{(l+1)p} (\widetilde{g}) \right) \right\| }\\
  \leq  &\displaystyle \left\| \left[ s \mapsto  \left( a \left( \widetilde{g} \, x^{(l)} - x^{(l)} \, \mu_{(l+1)p} (\widetilde{g}) \right)\right) (s) \right] \right\|_{L^1} \\
  \stackrel{\ref{proof:them:stability-D4}}{ \leq} &\displaystyle \int_{-T}^T \left| \widetilde{g} (s) \, a \cdot \left( \alpha_{s} (x^{(l)}) - e^{2 \pi i (l+1)p s} \cdot x^{(l)} \right) \right| \, d s \\
  \stackrel{\ref{proof:them:stability-D6}}{ \leq} & \displaystyle \delta \int_{-T}^T \left| \widetilde{g} (s) \right| \, d s ~ \stackrel{\ref{proof:them:stability-D4}}{\leq}~ \varepsilon \; ,
 \end{array}
 \]
 which proves \eqref{eq:thm:stability}.  
 Now define 
 \[
  y = \sqrt {b} a \left( \sum_{l=0}^d x^{(l)} \cdot \mu_{(l+1)p} (g) \right) \; .
 \]
 %Since $a$, $a$, $x^{(0)}, \ldots, x^{(d)}$, and $f$ are all contractions, we have $\|y\| \leq d+1$. 
 We have
 \[
 \begin{array}{cl}
 \multicolumn{2}{l}{\left\| y y^* - b \right\| }\\
  = & \displaystyle \left\| \sqrt {b} a \left( \sum_{l=0}^d x^{(l)} \cdot \mu_{(l+1)p} (g) \right) \left( \sum_{k=0}^d \mu_{(k+1)p} (g) \cdot x^{(k)*} \right) a \sqrt {b}  - b \right\| \\
  \stackrel{\ref{proof:them:stability-D2}}{=} & \displaystyle \left\|  \sqrt {b} a \left( \sum_{l=0}^d x^{(l)} \cdot \mu_{(l+1)p} (g) \cdot  \mu_{(l+1)p} (g) \cdot  x^{(l)*}  \right) a \sqrt {b}  - b \right\| \\
  \stackrel{(\ref{eq:thm:stability})}{\leq} & \displaystyle 6(d+1) \varepsilon + \left\|  \sqrt {b} a \left( \sum_{l=0}^d \left(g \, x^{(l)} \right) \cdot \left(g \, x^{(l)} \right)^* \right) a \sqrt {b}  - b \right\| \\
  = & \displaystyle 6(d+1)\varepsilon+ \left\|  \sqrt {b} a g \left( \sum_{l=0}^d  x^{(l)} x^{(l)*} \right) g a \sqrt {b}  - b \right\| \\
  \stackrel{\ref{proof:them:stability-D1}}{\leq} & \displaystyle (6d+7)\eps + \left\|  \sqrt {b} g a \left( \sum_{l=0}^d  x^{(l)} x^{(l)*} \right) g a \sqrt {b}  - b \right\| \\
  \stackrel{\ref{proof:them:stability-D6}}{\leq} & \displaystyle (6d+8)\eps + \left\|  (\sqrt {b} g a ) ( g a \sqrt {b} ) - b \right\|  \\
  \stackrel{\ref{proof:them:stability-D1}}{\leq} & (6d + 10) \varepsilon\; .
 \end{array}
 \]
 In particular, we have $ \|y\|^2 \leq 1 + (6d + 10) \varepsilon $.  We further have
 \[
 \begin{array}{cl}
 \multicolumn{2}{l}  {\displaystyle \left\| y^2 \right\|} \\
  \leq & \displaystyle \left\| \sqrt {b} a \left( \sum_{l=0}^d x^{(l)} \cdot \mu_{(l+1)p} (g) \right) \sqrt {b} a \left( \sum_{l=0}^d x^{(l)} \cdot \mu_{(l+1)p} (g) \right)  \right\| \\
  \stackrel{\ref{proof:them:stability-D1}}{ \leq} & \displaystyle \left\| \sqrt {b} a \left( \sum_{l=0}^d x^{(l)} \cdot \mu_{(l+1)p} (g) \right) g a \sqrt {b} a \left( \sum_{l=0}^d x^{(l)} \cdot \mu_{(l+1)p} (g) \right)  \right\|  \\
  &\displaystyle  + \varepsilon (d+1) \sqrt{ 1 + (6d + 10) \varepsilon } \\
  \stackrel{\ref{proof:them:stability-D2}}{ \leq} & 0 + \varepsilon (d+1) \sqrt{ 1 + 35 } = 6(d+1) \varepsilon \leq (6d+10)\eps \; .
 \end{array}
 \]
 Therefore we have verified the condition in Lemma~\ref{lem:local-stability} with $\varepsilon$ replaced by $(6d + 10) \varepsilon $, and the proof is complete. 
\end{proof}

\begin{Cor} \label{cor:stability}
Let $A$ be a separable $C^*$-algebra with a flow $\alpha \colon \R \to \aut(A)$. If $\alpha$ has finite Rokhlin dimension, then $A \rtimes_\alpha \R$ is stable.
\end{Cor}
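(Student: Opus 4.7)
The proof will be a short assembly of pieces that have already been put in place. My plan is as follows.

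First, I would observe that since $A$ is separable it is in particular $\sigma$-unital, so the hypothesis of Theorem~\ref{thm:stability} on the coefficient algebra is satisfied. Consequently, it suffices to verify that a flow of finite Rokhlin dimension admits a $d$-dimensional eigenframe in the sense of Definition~\ref{def:adhoc-dimrok}, because then Theorem~\ref{thm:stability} applies directly to give stability of $A \rtimes_\alpha \R$.

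Second, to produce the eigenframe, I would invoke Lemma~\ref{lem:alternativedefinitionofRokhlindimensionforflows-variant}. Given $p, T, \delta > 0$ and a finite set $\mathcal{F} \subset A$, that lemma supplies contractions $x^{(0)}, \ldots, x^{(d)} \in A$ satisfying the twisted approximate equivariance condition \ref{Lemma:def-dimrok-lift-item-5a'} as well as the approximate partition-of-unity condition (\ref{Lemma:def-dimrok-lift-item-5b}) from Lemma~\ref{Lemma:def-dimrok-lift}. These are precisely the two conditions required in Definition~\ref{def:adhoc-dimrok}, with $d$ being the Rokhlin dimension of $\alpha$ (which is finite by hypothesis). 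This step is essentially bookkeeping, as pointed out already in Remark~\ref{Rmk:dimrok-implies-adhoc-dimrok}.

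There is no real obstacle: the technical work has been done in Lemma~\ref{lem:alternativedefinitionofRokhlindimensionforflows-variant} (which inserts the multiplicative factor $l+1$ in the exponentials via composition of the equivariant c.p.c.\ order zero maps with the unital endomorphisms $f \mapsto f((l+1)\,\cdot\,)$ of $C(\R/M\Z)$) and in Theorem~\ref{thm:stability} (the Hjelmborg--R\o rdam local criterion together with the explicit construction of the element $y$ from the eigenframe). The corollary is therefore just the one-line combination of these two results.
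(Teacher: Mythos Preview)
Your proposal is correct and follows exactly the paper's own argument: the paper's proof simply says ``This follows directly from Remark~\ref{Rmk:dimrok-implies-adhoc-dimrok} and Theorem~\ref{thm:stability}.'' You have spelled out the two ingredients (separability gives $\sigma$-unitality, and Lemma~\ref{lem:alternativedefinitionofRokhlindimensionforflows-variant} furnishes the eigenframe) in slightly more detail, but the route is identical.
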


\begin{proof}
This follows directly from Remark \ref{Rmk:dimrok-implies-adhoc-dimrok} and Theorem \ref{thm:stability}.
\end{proof}

\begin{Rmk}
Related results concerning stability of crossed products by compact groups with finite Rokhlin dimension will be explored in \cite{GHS}.
\end{Rmk}

%%%%%%%%%%%%%%%%%%%%%%%%%%%%%%%
 
\section{The tube dimension}
\label{Section:Tube}

\noindent
In the next two sections, we study topological flows and their induced ${C}^*$-dynamical systems. We shall show that any free flow on a finite dimensional, locally compact and metrizable space induces a one-parameter automorphism group with finite Rokhlin dimension. For this purpose, we will need a few (special cases of) technical results of Bartels, L\"{u}ck and Reich \cite{BarLRei081465306017991882} and later improvements by Kasprowski and R{\"u}ping \cite{Kasprowski-Rueping}.

We will define the notion of tube dimension for a topological flow, whose main purpose is to serve as a purely topological counterpart to Rokhlin dimension. This plays an analogous role to the purely topological variant of Rokhlin dimension for $\Z^d$-actions in \cite{Szabo}. We begin with recalling the definition of a box due to Bartels, L{\"u}ck and Reich. 

\begin{defn}[cf.~{\cite[Definition 2.2]{BarLRei081465306017991882}}]
\label{definitionofboxes}
 Let $Y$ be a locally compact and metrizable space with a flow $\Phi$.
 A \emph{box} (or a \emph{tube}) for $(Y,\Phi)$ is a compact subset $B \subset Y$ such that there exists a real number $ l = l_B $ with the property that for every $y \in B$, there exist real numbers $ a_-(y) \le 0 \le a_+(y)$ and $\varepsilon(y) > 0 $ satisfying 
 \begin{enumerate}[label=(\roman*)]
\item $l =  a_+(y) - a_-(y)$ ;
\item $\Phi_t(y) \in B$ for $t \in [a_-(y), a_+(y)  ]$  ;
\item $\Phi_t(y) \not\in B$ for $t \in ( a_-(y) - \varepsilon(y), a_-(y) ) \cup (a_+(y), a_+(y) +  \varepsilon(y) )$.
 \end{enumerate}
 Given such a box $B$, we will implicitly keep track of the following data that it comes with:
 \begin{enumerate}
  \item the \emph{length} $ l_B $;
  \item the maps $a_\pm: B \to {\mathbb{R}^{}}, \ y \mapsto a_\pm(y)$;
  \item the topological interior $B^o$, called the \emph{open box};
  \item the subset $ S_B = \{ y \in B \ |\ a_-(y) + a_+(y) =0 \}$, called the \emph{central slice} of $B$;
  \item the subsets $ \partial_+ B $ and $ \partial_- B $, respectively called the \emph{top} and the \emph{bottom} of $B$, defined by 
   \[ 
   \partial_\pm B = \{ y \in B \ | \ a_\pm (y) =0 \} = \{ \Phi_{a_\pm(y)} (y) \ |\ y \in S_B \} ; 
   \]
  \item similarly, the \emph{open top} $ \partial_+ B ^o $ and the \emph{open bottom} $ \partial_- B ^o $, defined by
   \[ 
   \partial_\pm B^o = \{ \Phi_{a_\pm(y)} (y) \ |\ y \in S_B \cap B^o \} .
   \]
 \end{enumerate}
\end{defn}

Intuitively speaking, what a box is to a topological flow is like what a Rokhlin tower is to a single homeomorphism, in that they facilitate local trivialization of the action. We use this concept to define a notion of dimension, which is closely related to Rokhlin dimension. A natural term for such a dimension might have been \emph{box dimension}, but this term has been used in the literature for something else. From the perspective of the intuition behind the term, these boxes could have legitimately been named \emph{tubes} as well. Thus, we opted for the term \emph{tube dimension} in what follows.

\begin{lem}[cf.~{\cite[Lemma 2.6]{BarLRei081465306017991882}}]
\label{basicpropertiesofboxes}
 Let $B \subset Y$ be a box of length $l = l_B$. Then:
 \begin{enumerate}
  \item the maps 
   \[ 
   a_\pm: B \to {\mathbb{R}^{}}, \ y \mapsto a_\pm(y) 
   \]
  are continuous;
  \item there exists $\varepsilon_B > 0 $ depending only on $B$ such that the numbers $\varepsilon(y)$ appearing in the definition of a box can be chosen so that $ \varepsilon(y) \ge \varepsilon_B$ holds for all $y \in B$;
  \item the map 
   \[ 
   S_B \times \left[ - \frac{l}{2}, \frac{l}{2} \right] \to B , \ (y, t) \mapsto \Phi_t(y) 
   \]
  is a homeomorphism.
 \end{enumerate}
\end{lem}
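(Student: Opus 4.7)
My plan is to establish the three parts in the order (3), (1), (2), using each to bootstrap the next. The single key observation is that the ``just outside'' clause in Definition~\ref{definitionofboxes} rigidly determines $a_\pm(y)$ from any interval of length $l$ around $0$ on which $\Phi_t(y)$ stays in $B$: indeed, $a_+(y) - a_-(y) = l$ combined with the local exit requirement forces this interval to be exactly $[a_-(y), a_+(y)]$.

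A preliminary step is to verify that $S_B$ is closed in $B$, hence compact. If $y_n \to y$ in $B$ with $y_n \in S_B$, then $\Phi_t(y_n) \in B$ for $t \in [-l/2, l/2]$ and closedness of $B$ gives the same for $y$; by the rigidity observation above, $a_\pm(y) = \pm l/2$, i.e., $y \in S_B$. With this in hand, part (3) reduces to checking that $F \colon S_B \times [-l/2, l/2] \to B$, $(y', t) \mapsto \Phi_t(y')$, is a bijection, since then it is automatically a homeomorphism as a continuous bijection from a compact space to a Hausdorff space. For surjectivity, given $y \in B$ I set $t = -(a_+(y) + a_-(y))/2 \in [-l/2, l/2]$ and $y' = \Phi_{-t}(y)$; the box structure at $y$ translates by $t$ to a valid interval of length $l$ centered at $0$ at $y'$, so by rigidity $a_\pm(y') = \pm l/2$. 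For injectivity, if $\Phi_{t_1}(y_1) = \Phi_{t_2}(y_2)$ with $y_i \in S_B$ and $s := t_1 - t_2 \in (0, l]$, then $y_2 = \Phi_s(y_1)$; the box at $y_2$ forces $\Phi_v(y_1) \notin B$ for $v$ slightly below $s - l/2$, while the box at $y_1$ gives $\Phi_v(y_1) \in B$ throughout $(-l/2, l/2)$. For $s \in (0, l)$ these conclusions collide; for $s = l$ the shared point $\Phi_{l/2}(y_1)$ would lie in both $\partial_+ B$ and $\partial_- B$, forcing $a_+ = a_- = 0$ there, which contradicts $a_+ - a_- = l$.

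Part (1) is then immediate from (3): writing $y = F(y', t)$, one has $a_\pm(y) = \pm l/2 - t$, where $t$ is the continuous second component of $F^{-1}$. Finally, for (2), I argue by contradiction. Suppose there exist $y_n \in B$ with (say, on the top side) $s_n := \varepsilon_+(y_n) \searrow 0$, so that $w_n := \Phi_{a_+(y_n) + s_n}(y_n) \in B$ while $\Phi_{a_+(y_n) + \delta}(y_n) \notin B$ for $\delta \in (0, s_n)$. Passing to a subsequence with $y_n \to y$ and using part (1), we get $w_n \to z := \Phi_{a_+(y)}(y) \in \partial_+ B$ with $(a_+(z), a_-(z)) = (0, -l)$. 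Continuity of $a_\pm$ then gives $(-s_n, 0) \subset (a_-(w_n), a_+(w_n))$ for large $n$, so the box at $w_n$ forces $\Phi_\tau(w_n) \in B$ for all $\tau \in (-s_n, 0)$. But by the choice of $s_n$, $\Phi_\tau(w_n) = \Phi_{\tau + a_+(y_n) + s_n}(y_n) \notin B$ for $\tau \in (-s_n, 0)$. Contradiction.

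The main obstacle is the injectivity step in the proof of (3), which is the only place where the ``just outside'' clause is used in a delicate way to rule out overlaps of central slices along a single orbit. Once that is settled, the rest follows from routine compactness arguments combined with the rigidity observation.
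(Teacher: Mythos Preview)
The paper does not prove this lemma; it is quoted from \cite[Lemma~2.6]{BarLRei081465306017991882} without argument. Your self-contained proof is correct, and the order (3)$\Rightarrow$(1)$\Rightarrow$(2) together with the rigidity observation (uniqueness of $a_\pm$ once an interval of length $l$ through $0$ is exhibited) is a clean way to organise it.

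One small point of presentation: in the injectivity step for $s=l$ you invoke the identity $\partial_\pm B = \{\Phi_{a_\pm(y)}(y) : y \in S_B\} = \{y \in B : a_\pm(y)=0\}$ from Definition~\ref{definitionofboxes}. That equality is asserted there but not justified, and its justification is precisely your rigidity observation, so there is no circularity---but it would read more transparently to argue directly: with $z=\Phi_{l/2}(y_1)=\Phi_{-l/2}(y_2)$, the $y_2$-interval gives $\Phi_t(z)\in B$ for all $t\in[0,l]$, while the exit clause at $y_1$ gives $\Phi_t(z)\notin B$ for $t\in(0,\varepsilon(y_1))$, an immediate contradiction. Everything else (closedness of $S_B$, the compact--Hausdorff bijection trick for (3), the formula $a_\pm(y)=\pm l/2 - t$ for (1), and the first-return contradiction for (2)) is fine as written.
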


\begin{rmk}
 The last statement in the previous lemma can be turned into an alternative definition for boxes: a box is a pair $(S, l)$ for a compact subset $S \subset Y$ and a positive number $l >0$ such that the map 
  \[
   S \times \left[ - \frac{l}{2}, \frac{l}{2} \right] \to Y , \ (y, t) \mapsto \Phi_t(y)
  \]
 is an embedding. 
\end{rmk}

Occasionally we will need to \emph{stretch} a box, as formalized in the following lemma, the proof of which is but a simple exercise based on the definition.

\begin{lem}\label{lemaboutstretchingabox}
 Let $B$ be a box and $\displaystyle 0 < L < \frac{\varepsilon_B}{2} $. Then the set $ \Phi_{[-L, L]} (B) $ is also a box with the same central slice and a new length $ l_B + 2 L $. 
\end{lem}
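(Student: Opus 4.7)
My plan is to exhibit $\tilde B := \Phi_{[-L,L]}(B)$ as the image of a parameterization analogous to Lemma~\ref{basicpropertiesofboxes}(3) with the stretched length, and then read off the required box data.

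First, using the homeomorphism $S_B \times [-l_B/2, l_B/2] \to B$ provided by Lemma~\ref{basicpropertiesofboxes}(3), I would observe that
\[
 \tilde B = \{\Phi_s(y) : y \in S_B,\ s \in [-l_B/2 - L,\ l_B/2 + L] \}.
\]
Thus $\tilde B$ is compact, and there is a natural continuous surjection
\[
 \Psi \colon S_B \times [-l_B/2 - L,\ l_B/2 + L] \to \tilde B, \quad (y, s) \mapsto \Phi_s(y).
\]
The plan is to show $\Psi$ is injective, whence it is a homeomorphism (continuous bijection from compact to Hausdorff).

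The main step is injectivity of $\Psi$, and this is where the assumption $L < \varepsilon_B/2$ enters. Suppose $\Phi_{s_1}(y_1) = \Phi_{s_2}(y_2)$ with $y_i \in S_B$ and $|s_i| \le l_B/2 + L$. After swapping, set $\tau := s_2 - s_1 \in [0, l_B + 2L]$, so $\Phi_\tau(y_1) = y_2 \in S_B$. I claim $\tau = 0$: otherwise, the box property at $y_2$ gives $\Phi_r(y_1) \in B$ for all $r \in [\tau - l_B/2, \tau + l_B/2]$, while the box property at $y_1$ (using $a_+(y_1) = l_B/2$) gives $\Phi_r(y_1) \notin B$ for $r \in (l_B/2, l_B/2 + \varepsilon_B)$. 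These two intervals are disjoint only when $\tau \le 0$ or $\tau \ge l_B + \varepsilon_B$, and neither holds since $0 < \tau \le l_B + 2L < l_B + \varepsilon_B$. This contradiction forces $\tau = 0$, and hence $s_1 = s_2$ and $y_1 = y_2$. I expect this step to be the main obstacle, as it is the only place where the precise interaction between $L$ and $\varepsilon_B$ matters.

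With $\Psi$ a homeomorphism, verifying the box structure on $\tilde B$ is essentially bookkeeping. For $z = \Psi(y, s)$, define
\[
 \tilde a_\pm(z) := \pm(l_B/2 + L) - s,
\]
so that $\tilde a_+(z) - \tilde a_-(z) = l_B + 2L$, and for $t \in [\tilde a_-(z), \tilde a_+(z)]$ one has $s + t \in [-l_B/2 - L, l_B/2 + L]$, giving $\Phi_t(z) = \Phi_{s+t}(y) \in \tilde B$. For the exit condition, suppose $\Phi_{l_B/2 + L + \delta}(y) = \Phi_{s'}(y') \in \tilde B$ with $\delta > 0$; this yields $\Phi_{\tau'}(y) = y'$ with $\tau' = l_B/2 + L + \delta - s' \in [\delta,\ l_B + 2L + \delta]$. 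For $\delta < \varepsilon_B - 2L$, the same case analysis as in the injectivity step produces a contradiction, so a uniform choice $\varepsilon_{\tilde B} := \varepsilon_B - 2L > 0$ works (symmetrically on the bottom). Finally, $\tilde a_+(z) + \tilde a_-(z) = -2s$, so the central slice of $\tilde B$ is exactly $\{\Psi(y, 0) : y \in S_B\} = S_B$, completing the proof.
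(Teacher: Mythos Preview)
Your proof is correct and is precisely the kind of argument the paper has in mind: the paper itself omits the proof, calling it ``but a simple exercise based on the definition,'' and your approach via the parameterization of Lemma~\ref{basicpropertiesofboxes}(3) together with the $\varepsilon_B$-gap is the natural way to carry out that exercise. One cosmetic slip: with $\tau := s_2 - s_1 \ge 0$ and $\Phi_{s_1}(y_1) = \Phi_{s_2}(y_2)$ one gets $\Phi_\tau(y_2) = y_1$, not $\Phi_\tau(y_1) = y_2$; but since $y_1, y_2 \in S_B$ play symmetric roles, the subsequent contradiction argument is unaffected.
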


\begin{Notation}
\label{def:mult}
Given a topological space $X$ with a collection $\mathcal{U}$ of open subsets of $X$, its \emph{multiplicity} $d=\operatorname{mult}(\mathcal{U})$ is the smallest natural number so that the intersection of any $d+1$ pairwise distinct elements in $\mathcal{U}$ is empty.
\end{Notation}

In order to study the nuclear dimension of the crossed product ${C}_{0}(Y) \rtimes {\mathbb{R}^{}} $ of a topological flow, we would like to decompose $Y$ in a dimensionally controlled fashion into open subsets such that the flow is trivialized when restricted to each open subset. This is encapsulated in the following definition of the so-called tube dimension.

\begin{defn}\label{def:tube-dimension}
 The \emph{tube dimension} of a topological flow $ (Y, \Phi) $, denoted by $ \mathrm{dim}_\mathrm{tube} (\Phi) $, is the smallest natural number $ d $ such that for any $L > 0$ and compact set $ K \subset Y $, there is a collection $ \mathcal{U} $ of open subsets of $Y$ satisfying:
 \begin{enumerate}
  \item\label{def:tube-dimension-1} for any $ y\in K $, there is $U \in \mathcal{U}$ such that $ \Phi_{[-L, L]}(y) \subset U $; \label{def:boxdim-item1}
  \item\label{def:tube-dimension-2} each $ U \in \mathcal{U} $ is contained in a box $B_U$;  \label{def:boxdim-item2}
  \item\label{def:tube-dimension-3} the multiplicity of $\mathcal{U}$ is at most $d+1$.  \label{def:boxdim-item3}
 \end{enumerate}
 If no such $ d $ exists, we define $ \mathrm{dim}_\mathrm{tube} (\Phi) = \infty $.
\end{defn}

\begin{rmk}\label{rmk:tube-dim-basics}
 It is not hard to see that as $L$ gets larger, so does the length of the box $ B_U $ for each $U$, unless $U$ is redundant. Therefore $\Phi$ cannot have periodic points, as they limit the lengths of boxes. In particular, every topological flow with finite tube dimension must be free.
\end{rmk}

It turns out that any free flow $\Phi$ on a locally compact and metrizable space $ Y $ with finite covering dimension has finite tube dimension. For this, we will need to invoke a recent result by Kasprowski and R\"{u}ping (\cite[Theorem~5.3]{Kasprowski-Rueping}), which itself is an improvement of a pioneering construction of the so-called ``long thin covers" by Bartels, L\"{u}ck and Reich (\cite[Theorem~1.2, Proposition~4.1]{BarLRei081465306017991882}), a crucial step in their solution of the Farrell-Jones conjecture for Gromov's hyperbolic groups. Since we are dealing with a simplified situation, we shall give a somewhat different presentation of their theorem that is sufficient for our purposes, cf. Remark~\ref{rmk:KRcoverbyboxes-differences}.

\begin{thm}[cf.~{\cite[Theorem~5.3]{Kasprowski-Rueping}}]
\label{thm:KRcoverbyboxes}
 Let $Y$ be a locally compact and metrizable space with a free continuous action $\Phi$ by $\mathbb{R}$. Let $L$ be a positive number. Then there is a cover of $Y$ of multiplicity at most $5 (\mathrm{dim}(Y) + 1)$ consisting of open boxes and satisfying the property that for any point $x \in Y$, there is an open set in this cover containing $\Phi_{[-L, L]}(x)$.
 \qed
\end{thm}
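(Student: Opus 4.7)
The plan proceeds in three main stages: construction of local long boxes, a dimension-theoretic refinement performed on transversal slices rather than on boxes directly, and a combinatorial packing argument along orbits that yields the constant $5$.

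First, I would show that every point $y\in Y$ admits a box $B_y$ of length strictly larger than $2L$ together with a concentric open sub-box $U_y\subset B_y^o$ of length $2L+\delta$, so that $\Phi_{[-L,L]}(x)\subset B_y^o$ for every $x\in U_y$. This follows from freeness of $\Phi$ combined with local compactness and metrizability: one picks a small compact set $T\ni y$ transverse to the orbit; freeness together with the joint continuity of $\Phi$ allows one to shrink $T$ until the map $T\times[-\ell/2,\ell/2]\to Y,\ (s,t)\mapsto \Phi_t(s)$, becomes injective, and hence a homeomorphism onto its image by compactness. The resulting family $\{U_y\}_{y\in Y}$ covers $Y$, and by $\sigma$-compactness I may pass to a countable subcover.

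Second, to reduce the multiplicity using the hypothesis $\dim(Y)<\infty$, I would perform the refinement transversally rather than on the boxes themselves. Each box is homeomorphic via the flow to a product $S_y\times[-\ell_y/2,\ell_y/2]$, and the central slices $S_y$ assemble into a locally trivial transversal structure. The trace of the cover on these slices yields an open cover of each transversal, which, via Ostrand's theorem applied to the $\dim(Y)$-dimensional space $Y$, can be refined to multiplicity at most $\dim(Y)+1$. Flowing the refined base sets back through $\Phi$ produces refined open boxes whose overlap in the transverse direction is now controlled by $\dim(Y)+1$.

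Finally, one must account for overlaps in the flow direction: two refined boxes centered at different times along the same orbit may still overlap. A direct packing argument along each orbit bounds how many refined boxes can simultaneously contain a given tube $\Phi_{[-L,L]}(x)$, purely in terms of the ratio of box lengths to $L$, and can be tuned to contribute a factor at most $5$ (this is the place where the sharp combinatorial bookkeeping of Kasprowski--R\"uping enters). Multiplying the two bounds gives total multiplicity at most $5(\dim(Y)+1)$. The main technical obstacle---and the reason the construction is delicate---is the tension between multiplicity control and preservation of the box structure: a naive open refinement of a cover by boxes is almost never a cover by boxes, so one must refine on the transversals and reassemble by flowing. All of the work to pass from a crude bound to the sharp constant $5$ lives in this last combinatorial packing step.
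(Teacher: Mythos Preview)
The paper does not give a proof of this statement: it is cited directly from Kasprowski--R\"uping \cite{Kasprowski-Rueping} (note the \qed after the statement and the absence of any proof environment). The paper's contribution here is only Remark~\ref{rmk:KRcoverbyboxes-differences}, which explains how the statement as used differs from the original formulation. So there is no ``paper's own proof'' to compare against.

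That said, your outline is broadly consistent with what Remark~\ref{rmk:KRcoverbyboxes-differences}(4) reveals about the actual Kasprowski--R\"uping construction: one first produces long boxes $\Phi_{[-10L,10L]}(S_i)$ from transversal slices $S_i$, then performs a dimension-theoretic refinement to obtain sets $B_i^k$ indexed by $k\in\{0,\ldots,\dim(Y)\}$, and finally flows these out to $\Phi_{(-4L,4L)}(B_i^k)$. Your three-stage plan matches this shape.

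However, your sketch is too loose in two places to stand as a proof. First, the sentence ``the central slices $S_y$ assemble into a locally trivial transversal structure'' papers over the main difficulty: different boxes have unrelated central slices, and there is no global transversal on which to run a single dimension-theoretic refinement. The actual argument must refine carefully chosen local transversals and control how the refined pieces interact when flowed out --- this is genuinely delicate and is the heart of \cite{Kasprowski-Rueping}. Second, your packing argument for the factor $5$ is asserted rather than carried out; getting $5$ rather than some larger constant requires a specific choice of box lengths relative to $L$ (the ratios $4L$ and $10L$ visible in the remark are not accidental) and a careful count. As written, your outline would at best yield \emph{some} finite multiple of $\dim(Y)+1$, which is enough for the qualitative corollaries but not the stated bound.
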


This immediately gives us a bound for the tube dimension of $ \Phi$.

\begin{cor}\label{cor:estimate-tube-dim}
 Let $Y$ be a locally compact and metrizable space and $\Phi$ a flow on $Y$. Suppose that $Y$ has finite covering dimension and that $\Phi$ is free. Then 
\[ 
 \mathrm{dim}_\mathrm{tube}^{\!+1} (\Phi) \le 5 \cdot \mathrm{dim}^{\!+1}(Y)    
 \, .
\]
\qed
\end{cor}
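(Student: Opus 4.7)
My plan is to deduce the corollary directly from Theorem~\ref{thm:KRcoverbyboxes} by matching the definitions. The key observation is that the conclusion of the theorem, once unpacked, already supplies a cover that satisfies every single clause in Definition~\ref{def:tube-dimension}; no additional construction is needed.

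Concretely, given any $L > 0$ and any compact $K \subset Y$, I would invoke Theorem~\ref{thm:KRcoverbyboxes} with this $L$ to obtain an open cover $\mathcal{U}$ of $Y$ consisting of open boxes with $\mathrm{mult}(\mathcal{U}) \le 5(\dim(Y)+1)$, such that for every $x \in Y$ there exists $U \in \mathcal{U}$ with $\Phi_{[-L,L]}(x) \subset U$. Since $K \subset Y$, condition~(\ref{def:tube-dimension-1}) is satisfied. Since the elements of $\mathcal{U}$ are themselves open boxes, each $U \in \mathcal{U}$ trivially sits inside a box (namely its closure or itself interpreted in the sense of the alternative formulation), so condition~(\ref{def:tube-dimension-2}) holds. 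Condition~(\ref{def:tube-dimension-3}) requires $\mathrm{mult}(\mathcal{U}) \le d+1$, and we have $\mathrm{mult}(\mathcal{U}) \le 5(\dim(Y)+1)$.

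Putting $d = 5(\dim(Y)+1) - 1$, all three conditions of Definition~\ref{def:tube-dimension} are met, so $\dim_{\mathrm{tube}}(\Phi) \le 5(\dim(Y)+1) - 1$. Rearranging by adding $1$ to both sides yields $\dim_{\mathrm{tube}}^{\!+1}(\Phi) \le 5 \cdot \dim^{\!+1}(Y)$, as claimed.

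There is essentially no obstacle here: the entire content of the corollary is packaged inside Theorem~\ref{thm:KRcoverbyboxes}, and the only step that requires any attention is the bookkeeping involving the $^{\!+1}$-convention for multiplicity versus tube dimension. The one minor subtlety worth noting in a write-up is that Theorem~\ref{thm:KRcoverbyboxes} provides a cover of all of $Y$ (not just of the compact set $K$), which is more than enough, so one does not even need to use compactness of $K$ in the argument.
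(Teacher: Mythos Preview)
Your proposal is correct and matches the paper's approach exactly: the paper treats the corollary as immediate from Theorem~\ref{thm:KRcoverbyboxes} (hence the \qed\ with no proof), and your write-up simply unpacks that immediacy. The only cosmetic point is that an ``open box'' is by definition the interior $B^o$ of a box $B$ (see Definition~\ref{definitionofboxes}), so the containing box for condition~(\ref{def:tube-dimension-2}) is $B$ itself rather than ``its closure or itself''; otherwise everything is in order.
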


\begin{rmk}\label{rmk:KRcoverbyboxes-differences}
 We explain some diviation from the original presentation of the above theorem in \cite{Kasprowski-Rueping}:
 \begin{enumerate}
  \item In the original version, the authors consider not only a flow $\Phi$ on $Y$, but also a proper action of a discrete group $G$ that commutes with $\Phi$, and the cover they produce is required to be a so-called \emph{$\mathcal{F}in$}-cover with regard to the second action: it is invariant, and for each open set $U$ in the cover, only finitely many elements of $G$ fix $U$, while any other element carry $U$ to a set disjoint from $U$ ({\cite[Notation~1.3(4)]{Kasprowski-Rueping}}). Since this is not needed for proving our main result, we drop this assumption, or equivalently, we assume this extra group $G$ that appears in their theorem to be the trivial group, in which case the {$\mathcal{F}in$}-cover condition is automatic.
  \item After we ignore the extra group $G$, a second difference appears: the flow $\Phi$ is originally not assumed to be free. With this generality, one cannot hope to cover the entire space $Y$ with open boxes, as explained in Remark~\ref{rmk:tube-dim-basics}. Consequently, the cover constructed in the original version is only for the subspace $Y_{>20 L}$, which consists of all points whose orbits have length more than $20L$. Since we are only interested in a free action of $\mathbb{R}$, the space $Y_{>20 L}$ is equal to $Y$.
  \item The dimension estimate in the original version is in terms of the \emph{small inductive dimension} $\mathrm{ind}(Y)$, but as they remarked in \cite[Theorem~3.5]{Kasprowski-Rueping}, in the context their Theorem~5.3 applies to, where $Y$ is locally compact and metrizable, the small inductive dimension is equal to the covering dimension $\mathrm{dim}(Y)$.
  \item It is not made explicit in the original statement of their theorem that the cover consists of open boxes, but this is evident from their proof: the cover is made up of the sets $\Phi_{(-4L, 4L)} (B_i^k)$ for $i \in \mathbb{N}$ and $k \in \{0, \ldots, \mathrm{dim}(Y)\}$, and each of them is the interior of a box $\displaystyle \Phi_{[-4L, 4L]} (\overline{B_i^k} )$, which is restricted from the larger box $\Phi_{[-10L, 10L]} (S_i)$ constructed in \cite[Lemma~4.6]{Kasprowski-Rueping}. 
  \item It is also clear here that although in the original statement of the theorem, it is only claimed that the cover has dimension at most $5 \mathrm{dim}^{\!+1}(Y)$, but in fact the cover they obtained has multiplicity at most $5 \mathrm{dim}^{\!+1}(Y)$.
 \end{enumerate}

\end{rmk}

The rest of this section is devoted to exhibiting a number of seemingly stonger but equivalent characterizations of finite tube dimension for a flow. In view of the application to Rokhlin dimension and nuclear dimension, the characterizations using certain partitions of unity are of particular interest. The intuitive idea is that covers having large overlaps along flow lines, as in Definition~\ref{def:tube-dimension}, give rise to partitions of unity that are, in a sense, almost flat along the flow, and vice versa. First, we need to collect a few technical tools needed later for the proof of the main proposition.\\

%\subsubsection{Flow-wise Lipschitz partitions of unity}
\paragraph{\textbf{Flow-wise Lipschitz partitions of unity.} }Let us make precise what we mean by almost flat partitions of unity. Although there is some freedom in picking the exact condition expressing this intuitive idea, we find the following Lipschitz-type characterization most convenient for our purposes.

\begin{defn}\label{definitionofflowwiseLipschitz}
 Let $ \Phi: {\mathbb{R}^{}} \curvearrowright Y $ be a flow and $ F : Y \to X $ a map to a metric space $(X, d)$. The map $F$ is called \emph{$\Phi$-Lipschitz with constant $\delta$}, if for every $y \in Y$, the map $ t \mapsto F(\Phi_t (y) ) $ is Lipschitz with  constant $\delta$. In other words, we have 
\[ 
d( F(\Phi_t (y) ), F(y) ) \le \delta \cdot |t|  
\]
for all $y \in Y$ and $t \in {\mathbb{R}^{}}$.
\end{defn}

\begin{Rmk}
One way to produce flow-wise Lipschitz functions from any given function is to \emph{smear} it along the flow. More precisely, for any bounded Borel function $f$ on $Y$ and any $\lambda_+ , \lambda_- \in {\mathbb{R}^{}} $ such that $ \lambda_+ > \lambda_- $, we define $\mathbb{E} (\Phi_*)_{[\lambda_-, \lambda_+]} (f) : Y \to {\mathbb{C}^{}}$ by
\begin{equation}\label{definitionofsmearing}
 \mathbb{E} (\Phi_*)_{[\lambda_-, \lambda_+]} (f) (y) = \frac{1}{\lambda_+ - \lambda_-} \int_{\lambda_-} ^{\lambda_+} f (\Phi_{-t} (y) ) \: d t .
\end{equation}
\end{Rmk}

\begin{lem}\label{lemaboutsmearingandflowLipschitz}
 For every bounded Borel function $f$ on $Y$ and any $\lambda_+ , \lambda_- \in {\mathbb{R}^{}} $ such that $ \lambda_+ > \lambda_- $, the function $ \mathbb{E} (\Phi_*)_{[\lambda_-, \lambda_+]} (f) $ is bounded by $\| f \|_\infty$ and $\Phi$-Lipschitz with  constant $ \displaystyle \frac{2 \| f \|_\infty }{ \lambda_+ - \lambda_- } $. If $f$ is continuous and has compact support, then so does $ \mathbb{E} (\Phi_*)_{[\lambda_-, \lambda_+]} (f) $.
 If $f$ is continuous and vanishes at infinity, then so does $ \mathbb{E} (\Phi_*)_{[\lambda_-, \lambda_+]} (f) $.
\end{lem}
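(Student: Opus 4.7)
The plan is to verify the four assertions directly from the definition of $\mathbb{E}(\Phi_*)_{[\lambda_-,\lambda_+]}(f)$ as the (normalized) flow average. The uniform bound is immediate: for any $y \in Y$,
\[
\left| \mathbb{E}(\Phi_*)_{[\lambda_-,\lambda_+]}(f)(y) \right| \le \frac{1}{\lambda_+-\lambda_-}\int_{\lambda_-}^{\lambda_+}|f(\Phi_{-t}(y))|\,dt \le \|f\|_\infty.
\]

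For the $\Phi$-Lipschitz estimate, fix $y \in Y$ and $s \in \R$, and use $\Phi_{-t}\Phi_s = \Phi_{s-t}$ together with the change of variables $u = t-s$:
\[
\mathbb{E}(\Phi_*)_{[\lambda_-,\lambda_+]}(f)(\Phi_s(y)) = \frac{1}{\lambda_+-\lambda_-}\int_{\lambda_- - s}^{\lambda_+ - s} f(\Phi_{-u}(y))\,du.
\]
Subtracting the corresponding expression at $y$, the difference is
\[
\frac{1}{\lambda_+-\lambda_-}\left(\int_{\lambda_- - s}^{\lambda_+ - s} - \int_{\lambda_-}^{\lambda_+}\right) f(\Phi_{-u}(y))\,du.
\]
The symmetric difference of the two intervals $[\lambda_-,\lambda_+]$ and $[\lambda_- - s,\lambda_+ - s]$ has Lebesgue measure at most $2|s|$, so the absolute value of the difference is at most $\frac{2|s|\,\|f\|_\infty}{\lambda_+-\lambda_-}$, which is exactly the required Lipschitz bound.

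For the support and continuity claims, note that if $f$ is continuous with compact support $K$, then $f(\Phi_{-t}(y)) = 0$ unless $y \in \Phi_t(K)$; hence $\mathbb{E}(\Phi_*)_{[\lambda_-,\lambda_+]}(f)$ is supported in the compact set $\Phi_{[\lambda_-,\lambda_+]}(K)$. Continuity at a point $y_0$ follows from the dominated convergence theorem applied to $t \mapsto f(\Phi_{-t}(y))$ on $[\lambda_-,\lambda_+]$, combined with joint continuity of the flow (which gives pointwise convergence of the integrand as $y \to y_0$) and the uniform domination $|f(\Phi_{-t}(y))| \le \|f\|_\infty$. Finally, if $f \in C_0(Y)$, approximate it uniformly by a sequence $(f_n)$ in $C_c(Y)$. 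By the $\|\cdot\|_\infty$ bound, $\mathbb{E}(\Phi_*)_{[\lambda_-,\lambda_+]}(f_n) \to \mathbb{E}(\Phi_*)_{[\lambda_-,\lambda_+]}(f)$ uniformly, and each summand lies in $C_c(Y) \subset C_0(Y)$; since $C_0(Y)$ is closed under uniform convergence, the limit is in $C_0(Y)$ as well.

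None of the steps is expected to present real difficulty; the only subtlety worth flagging is ensuring measurability of the integrand in the Borel (non-continuous) case, which is handled by joint Borel measurability of $(t,y) \mapsto \Phi_{-t}(y)$ and Fubini.
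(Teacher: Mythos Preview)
Your proof is correct and follows essentially the same approach as the paper: the bound, the $\Phi$-Lipschitz estimate via the symmetric difference of shifted intervals, the support containment, and the $C_0$ case by approximation are all argued identically. The only cosmetic difference is in the continuity step, where the paper uses a tube-lemma argument (uniform smallness of $|f(\Phi_{-t}(y))-f(\Phi_{-t}(y'))|$ on $[\lambda_-,\lambda_+]\times U$) instead of dominated convergence; both are standard and equally valid here.
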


\begin{proof}
 For all $y \in Y$ and $t \in {\mathbb{R}^{}}$, we have
 \[
  \left| \left( \mathbb{E} (\Phi_*)_{[\lambda_-, \lambda_+]} (f) \right) (y) \right| \leq  \frac{1}{\lambda_+ - \lambda_-} \int_{\lambda_-} ^{\lambda_+} \left| f (\Phi_{-t} (y) ) \: d t \right| \leq \| f \|_\infty
 \]
 and
 \[
 \def\arraystretch{2}
 \begin{array}{cl}
 \multicolumn{2}{l} {\left| \left( \mathbb{E} (\Phi_*)_{[\lambda_-, \lambda_+]} (f) \right) (\Phi_t (y) ) - \left( \mathbb{E} (\Phi_*)_{[\lambda_-, \lambda_+]} (f) \right) (y) \right| }\\
  = &   \displaystyle \left| \frac{1}{\lambda_+ - \lambda_-} \int_{ (\lambda_-, \lambda_+) \bigtriangleup (\lambda_- + t, \lambda_+ + t) } f (\Phi_{-s} (y) ) \: d s \right| \\
  \le &  \displaystyle \frac{1}{\lambda_+ - \lambda_-} \left( \left| \int_{\lambda_-} ^{\lambda_- + t} f (\Phi_{-s} (y) ) \ d s \right| + \left| \int_{\lambda_+} ^{\lambda_+ + t} f (\Phi_{-s} (y) ) \: d s \right| \right) \\
  \le &  \displaystyle \frac{2 \| f \|_\infty }{ \lambda_+ - \lambda_- } \cdot | t | .
 \end{array}
 \]
 This proves the first statement. 
 
 For the second statement, it is easy to see that $ \mathrm{supp}(  \mathbb{E} (\Phi_*)_{[\lambda_-, \lambda_+]} (f) ) \subset \Phi_{[\lambda_-, \lambda_+]} (\mathrm{supp} (f)) $ and is thus compact. The continuity of $ \mathbb{E} (\Phi_*)_{[\lambda_-, \lambda_+]} (f) $ is proved by considering a similar estimate 
 \[
 \def\arraystretch{2}
 \begin{array}{cl}
 \multicolumn{2}{l} {\left| \left( \mathbb{E} (\Phi_*)_{[\lambda_-, \lambda_+]} (f) \right) ( y ) - \left( \mathbb{E} (\Phi_*)_{[\lambda_-, \lambda_+]} (f) \right) ( y' ) \right| }\\
  \le\ &  \displaystyle  \frac{1}{\lambda_+ - \lambda_-} \int_{\lambda_-} ^{\lambda_+} \left|f (\Phi_{-t} (y) ) - f (\Phi_{-t} (y') ) \right|  \: d t  \; .
 \end{array}
 \]
 Let us fix $y$ in the formula, together with an arbitrary $\varepsilon > 0$. To prove continuity, it suffices to find a neighborhood $U$ of $y$ such that the above expression is no more than $\varepsilon$ whenever $y' \in U$. Consider the continuous function $\psi_y \colon [\lambda_-, \lambda_+] \times Y \to [0, \infty)$ defined by
 \[
  \psi (t , y') = \left|f (\Phi_{-t} (y) ) - f (\Phi_{-t} (y') ) \right| \; .
 \]
 Since $\psi([\lambda_-, \lambda_+] \times \{y\}) = \{0\}$, there is an open neighborhood $V$ of $[\lambda_-, \lambda_+] \times \{y\}$ such that $\psi(V) \subset [0, \varepsilon)$. Since $[\lambda_-, \lambda_+]$ is compact, by the tube lemma in basic topology, there is an open neighborhood $U$ of $y$ such that $[\lambda_-, \lambda_+] \times U \subset V$. This choice of $U$ is clearly what we need. 
 
 Finally, since the linear map $\mathbb{E} (\Phi_*)_{[\lambda_-, \lambda_+]} $ is contractive in the uniform norm $\| \cdot \|_\infty$, the statement for $ f \in {C}_{0}(Y) $ follows from the compactly supported continuous case by approximation. 
\end{proof}

This smearing technique has the advantage that it preserves, if applied to a family of functions, the property of being a partition of unity. For our purpose, it is necessary to introduce a relative version of partitions of unity.

\begin{defn}\label{def:relative-pou}
 Let $ X $ be a topological space and $ A \subset X $ a subset. Let $ \mathcal{U} = \{ U_i \}_{i\in I} $ be a locally finite collection of open sets in $X$ such that $A \subset \bigcup \mathcal{U}$. Then a \emph{partition of unity for $ A \subset X $ subordinate to $ \mathcal{U} $} is a collection of continuous functions $ \{ f_i : X \to [0,\infty) \}_{i\in I} $ such that
 \begin{enumerate}
  \item for each $i\in I$, the support of $f_U$ is contained in $ U_i\in I $;
  \item for all $x \in A$, one has $\displaystyle \sum_{i\in I} f_i (x) =1 $.
 \end{enumerate}
\end{defn}

\begin{lem}\label{lem:relative-pou}
 Let $ X $ be a locally compact Hausdorff space and $ A \subset X $ a compact subset. Let $ \mathcal{U} = \{ U_i \}_{i\in I} $ be a finite collection of open sets in $X$ such that $A \subset \bigcup \mathcal{U}$. Then there exists a partition of unity for $ A \subset X $ subordinate to $ \mathcal{U} $.
\end{lem}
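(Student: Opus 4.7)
The plan is to reduce to a standard Urysohn-plus-normalization argument, with one mild technical adjustment to avoid a discontinuous quotient.

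First I would shrink the cover. Using local compactness of $X$ and compactness of $A$, for each $x \in A$ I pick some $i(x) \in I$ with $x \in U_{i(x)}$ together with a precompact open neighborhood $W_{x}$ of $x$ with $\overline{W_x} \subset U_{i(x)}$. Compactness of $A$ yields a finite subcover $W_{x_1}, \dots, W_{x_n}$, and I set $V_i = \bigcup_{j : i(x_j) = i} W_{x_j}$. The resulting open sets $V_i$ satisfy $\overline{V_i} \subset U_i$ with $\overline{V_i}$ compact, and $A \subset \bigcup_i V_i$.

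Next I would produce bump functions via the locally compact Hausdorff version of Urysohn's lemma: for each $i \in I$ there is $g_i \in C_c(X)$ with $0 \le g_i \le 1$, $g_i \equiv 1$ on $\overline{V_i}$, and $\operatorname{supp}(g_i) \subset U_i$. Letting $g = \sum_{i \in I} g_i$, each point of $A$ lies in some $V_j$, whence $g_j = 1$ there, so $g \ge 1$ on $A$.

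Finally, to normalize without dividing by zero outside $A$, I would fix a continuous function $\psi \colon [0, \infty) \to [0, \infty)$ with $\psi(0) = 0$ and $\psi(t) = 1/t$ for $t \ge 1$ (for instance, $\psi(t) = \min(t, 1/t)$ for $t > 0$, $\psi(0) = 0$), and then define
\[
f_i = g_i \cdot (\psi \circ g) \in C(X).
\]
Each $f_i$ is continuous with $\operatorname{supp}(f_i) \subset \operatorname{supp}(g_i) \subset U_i$, and on $A$ one has $\sum_{i \in I} f_i = g \cdot (\psi \circ g) = 1$, since $g \ge 1$ there. There is no real obstacle here; the only subtlety is avoiding a naive normalization $g_i / g$ which would fail to be continuous on the zero set of $g$, and composing with $\psi$ sidesteps this cleanly.
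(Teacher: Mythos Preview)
Your proof is correct and takes a genuinely different route from the paper's. The paper simply enlarges the cover by adjoining $U_\infty = X \setminus A$, invokes an ordinary partition of unity $\{f_i\}_{i \in I^+}$ on all of $X$ subordinate to this finite cover $\mathcal{U}^+$, and then drops $f_\infty$; since $f_\infty$ vanishes on $A$, the remaining $\{f_i\}_{i \in I}$ already sum to $1$ there. This is shorter, but it quietly presupposes that finite open covers of locally compact Hausdorff spaces admit partitions of unity (true, but typically justified via one-point compactification or an argument very much like yours). Your shrink--Urysohn--normalize approach is more explicit and self-contained, and the $\psi$-trick for avoiding division by zero on $\{g = 0\}$ is a clean way to handle the only genuine subtlety. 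Both arguments are standard; yours trades brevity for transparency.
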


\begin{proof}
 Define $ U_{\infty} = X \setminus A $ and $ I^+ = I \sqcup \{\infty\} $. Then the collection $ \mathcal{U}^+ = \{ U_i \}_{i\in I^+} $ is a finite open cover of $ X $. Pick a partition of unity $ \{ f_i \}_{i \in I^+} $ subordinate to $\mathcal{U}^+$. Then the subcollection $ \{ f_i \}_{i \in I} $ is a partition of unity for $ A \subset X $ subordinate to $ \mathcal{U} $, where the second condition is proved by observing that $ f_{\infty} (x) = 0 $ for any $x \in A$. 
\end{proof}

\begin{lem}\label{lemaboutsmearingofpartitionofunity}
 Let $ (X, \Phi) $ be a topological flow, $ A \subset X $ a subset and $\lambda > 0 $. Let $ \mathcal{U}  = \{ U_i \}_{i\in I} $ be a finite collection of open sets in $X$ that covers $ \Phi_{ \left[ - \lambda, \lambda \right] } (A )$ and $ \{ f_i \}_{i\in I} $ a partition of unity for $  \Phi_{ \left[ - \lambda, \lambda \right] } (A ) \subset X $ subordinate to $ \mathcal{U} $. Then the collection $\big\{ \mathbb{E} (\Phi_*)_{ \left[ - \lambda, \lambda \right] } (f_i)  \big\} _{i\in I}$ is a partition of unity for $ A \subset X$ subordinate to $ \{  \Phi_{ \left[ - \lambda, \lambda \right] } (U ) \}_{U\in\mathcal{U}} $, the members of which are $\Phi$-Lipschitz with  constant $ \frac{1 }{\lambda} $. Moreover, the function $ \displaystyle \mathrm{1}_X - \sum_{i\in I} \mathbb{E} (\Phi_*)_{ \left[ - \lambda, \lambda \right] } (f_i) $ is also $\Phi$-Lipschitz with  constant $ \frac{1 }{\lambda} $.
 %Let $ \{ \ffunc_\dset \} _{\dset \in \dcoll } $ be a partition of unity of $ \kset \subset \ysp $ subordinate to the cover $\widehat{\dcoll}$, and let $ \aset $ be a subset of $ \kset $ such that $ \Phimap_{ \left[ - \frac{\Lparamtr}{4}, \frac{\Lparamtr}{4} \right] } (\aset ) \subset \kset $. For any $\dset \in \dcoll$, define $ \widetilde{\ffunc}_\dset := \expectn (\Phimap_*)_{ \left[ - \frac{\Lparamtr}{4}, \frac{\Lparamtr}{4} \right] } (\ffunc_\dset) $. Then the collection $\{ \widetilde{\ffunc}_\dset \} _{\dset \in \dcoll }$ is a partition of unity of $ \aset \subset \ysp$ subordinate to $\widetilde{\dcoll}$, and every member $ \widetilde{\ffunc}_\dset $ is $\Phimap$-Lipschitz with the constant $ \frac{4 }{\Lparamtr} $.
\end{lem}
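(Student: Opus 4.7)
The plan is to verify each of the four claims in the statement in turn, relying on the formula \eqref{definitionofsmearing} for the smearing map and the quantitative bounds already established in Lemma~\ref{lemaboutsmearingandflowLipschitz}.

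First, for the subordination claim, I would fix $y \notin \Phi_{[-\lambda,\lambda]}(U_i)$ and note that this is equivalent to $\Phi_{-t}(y) \notin U_i$ for all $t \in [-\lambda,\lambda]$. Since $\operatorname{supp}(f_i) \subset U_i$, the integrand $f_i(\Phi_{-t}(y))$ vanishes identically on $[-\lambda,\lambda]$, so $\mathbb{E}(\Phi_*)_{[-\lambda,\lambda]}(f_i)(y) = 0$. Second, for the partition-of-unity property on $A$, I would take $y \in A$ and observe that for every $t \in [-\lambda,\lambda]$ the point $\Phi_{-t}(y)$ lies in $\Phi_{[-\lambda,\lambda]}(A)$, so $\sum_{i\in I} f_i(\Phi_{-t}(y)) = 1$. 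Linearity of the smearing operator together with Fubini then yields
\[
\sum_{i\in I} \mathbb{E}(\Phi_*)_{[-\lambda,\lambda]}(f_i)(y) = \frac{1}{2\lambda}\int_{-\lambda}^{\lambda} 1 \, dt = 1 \, .
\]

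For the Lipschitz bounds, I would apply Lemma~\ref{lemaboutsmearingandflowLipschitz} directly with $\lambda_\pm = \pm \lambda$. The bound it provides is $\tfrac{2\|f_i\|_\infty}{2\lambda}$, so I only need to verify the uniform bound $\|f_i\|_\infty \le 1$. This holds because in the standard construction (cf.\ the proof of Lemma~\ref{lem:relative-pou}) one produces the $f_i$ by omitting the $\infty$-th member of a classical partition of unity, so $f_i \in [0,1]$ and $\sum_i f_i \le 1$ hold globally. This gives the Lipschitz constant $1/\lambda$ for each $\mathbb{E}(\Phi_*)_{[-\lambda,\lambda]}(f_i)$. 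For the moreover statement, I would use linearity and the elementary identity $\mathbb{E}(\Phi_*)_{[-\lambda,\lambda]}(\mathbf{1}_X) = \mathbf{1}_X$ to rewrite
\[
\mathbf{1}_X - \sum_{i\in I}\mathbb{E}(\Phi_*)_{[-\lambda,\lambda]}(f_i) = \mathbb{E}(\Phi_*)_{[-\lambda,\lambda]}\Bigl(\mathbf{1}_X - \sum_{i\in I} f_i\Bigr) \, ,
\]
and then apply Lemma~\ref{lemaboutsmearingandflowLipschitz} once more, noting that the argument $\mathbf{1}_X - \sum_i f_i$ takes values in $[0,1]$ by the same uniform control on the $f_i$.

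The ``main obstacle'' here is really just a bookkeeping point rather than a genuine difficulty: one must be mindful that Lemma~\ref{lemaboutsmearingandflowLipschitz} bounds the flow-Lipschitz constant by $2\|f\|_\infty/(\lambda_+ - \lambda_-)$, so obtaining the clean constant $1/\lambda$ depends on having $\|f_i\|_\infty \le 1$ and $\|\mathbf{1}_X - \sum_i f_i\|_\infty \le 1$. Both bounds are guaranteed by the construction in Lemma~\ref{lem:relative-pou} and amount to assuming that the $f_i$ genuinely arise from a classical (subordinate) partition of unity rather than merely a collection of nonnegative continuous functions summing to $1$ on $\Phi_{[-\lambda,\lambda]}(A)$.
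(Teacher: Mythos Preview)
Your proposal is correct and follows essentially the same route as the paper's proof: the paper verifies the sum-to-one property by the same integral computation, dismisses support stretching and nonnegativity as ``immediate from definition,'' and derives the Lipschitz constant directly from Lemma~\ref{lemaboutsmearingandflowLipschitz}, including the identity $\mathbf{1}_X - \sum_i \mathbb{E}(\Phi_*)_{[-\lambda,\lambda]}(f_i) = \mathbb{E}(\Phi_*)_{[-\lambda,\lambda]}\bigl(\mathbf{1}_X - \sum_i f_i\bigr)$ for the moreover clause. Your observation that the clean constant $1/\lambda$ relies on $\|f_i\|_\infty \le 1$ (not formally part of Definition~\ref{def:relative-pou}) is a point the paper glosses over; since the only use of the lemma is via partitions produced by Lemma~\ref{lem:relative-pou}, this causes no trouble.
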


\begin{proof}
 For any $x \in A$, we have 
 \begin{align*}
  \sum_{i\in I} \mathbb{E} (\Phi_*)_{ \left[ - \lambda, \lambda \right] } (f_i)  (x)  =\ & \frac{1 }{2 \lambda} \int_{-\lambda }  ^{ \lambda } \left( \sum_{i\in I} f_i (\Phi_{-t} (x) ) \right) \ d t \\
  =\ & \frac{1 }{2 \lambda} \int_{-\lambda }  ^{ \lambda } 1 \: d t \ = 1 .
 \end{align*}
 The non-negativity of each $\mathbb{E} (\Phi_*)_{ \left[ - \lambda, \lambda \right] } (f_i) $ and the stretching of their supports are immediate from definition, and the flow-Lipschitz constant is derived from Lemma~\ref{lemaboutsmearingandflowLipschitz}. The last statement uses the fact that 
  $$ \mathrm{1}_X - \sum_{i\in I} \mathbb{E} (\Phi_*)_{ \left[ - \lambda, \lambda \right] } (f_i) = \mathbb{E} (\Phi_*)_{ \left[ - \lambda, \lambda \right] } \left( \mathrm{1}_X - \sum_{i\in I} f_i \right) $$
  together with Lemma~\ref{lemaboutsmearingandflowLipschitz}.
\end{proof}

\paragraph{\textbf{Simplicial techniques.} }Another aspect of the flexibility of the tube dimension is the conversion between the two most common ways of defining a notion of dimension. Consider a collection $\mathcal{U}$ of subsets of a space. The dimension of $\mathcal{U}$ is usually defined to be either one of the following two numbers minus $1$:
\begin{enumerate}
 \item the multiplicity (or colloquially known as the covering number), the maximal cardinality of subcollections with nonempty intersection (cf.~Definition~\ref{def:mult});
 \item the coloring number, the minimal number of subfamilies needed to partition $\mathcal{U}$ into, so that elements within the same subfamily are disjoint.
\end{enumerate}
It is clear that the latter bounds the former from above. Definition~\ref{def:tube-dimension} of the tube dimension uses the covering number, while in order to establish connection with the Rokhlin dimension of the associated $C^*$-flow, we will need to make use of the coloring number, as will be made precise in Proposition~\ref{prop:characterizations-tube-dim}(\ref{prop:characterizations-tube-dim-3})-(\ref{prop:characterizations-tube-dim-5}). A standard way to show that they are equivalent is to apply certain simplicial techniques.

\begin{defn}\label{def:simplicial-complex}
 For us, an \emph{abstract simplicial complex} $Z$ consists of:
\begin{itemize}
 \item a set $Z_0$, called the set of \emph{vertices}, and
 \item a collection of its finite subsets closed under taking subsets, called the collection of \emph{simplices}. 
\end{itemize}
We often write $\sigma \in Z$ to denote that $\sigma$ is a simplex of $Z$. We also associate the following structures to $Z$:
\begin{enumerate}
 \item\label{def:simplicial-complex:dimension} The dimension of a simplex is the cardinality of the corresponding finite subset minus $1$, and the (simplicial) dimension of the abstract simplicial complex is the supremum of the dimensions of its simplices.
 \item\label{def:simplicial-complex:realization} The \emph{geometric realization} of an abstract simplicial complex $Z$, denoted as $|Z|$, is the set of tuples
 \[
  \bigcup_{\sigma \in Z} \left\{ (z_v)_{v} \in [0,1]^{Z_0} ~\Bigl|~  \sum_{v\in\sigma} z_v = 1 \,, ~\text{and}~ z_v = 0 ~\text{for~any}~ v \in Z_0 \setminus \sigma \right\}.
 \]
 %\[  |Z| = \left\{ \sum_{v\in\sigma} \lambda_v v ~\Bigl|~ \sigma\in Z~\text{and}~ \lambda_v\geq 0~\text{with}~\sum_{v\in\sigma} \lambda_v = 1 \right\}. \] By convention, $\lambda_v$ is defined to be zero for $v \in Z_0 \setminus \sigma$. 
 Similarly for a simplex $\sigma$ of $Z$, we define its \emph{closed} (respectively, \emph{open}) \emph{geometric realization} $\overline{|\sigma|}$ (respectively, $|\sigma|$) as follows:
 \begin{align*}
  \overline{|\sigma|} & = \left\{ (z_v)_v \in |Z| ~\Bigl|~  \sum_{v\in\sigma} z_v = 1 \right\} \\
  |\sigma| & = \left\{ (z_v)_v \in |Z| ~\Bigl|~  \sum_{v\in\sigma} z_v = 1 ~ \text{with}~ z_v >0 ~\text{for~any~} v\in\sigma \right\}\; .
 \end{align*}
 \item\label{def:simplicial-complex:metric} Although usually $|Z|$ is equipped with the weak topology, for our purposes we consider the $\ell^1$-topology, induced by the $\ell^1$-metric $d^1: |Z| \times |Z| \to [0, 2]$ defined by 
 %as follows: for any two points $(z_v)_v $ and $ (z'_v)_v$, if $(z_v)_v, (z'_v)_v \in \overline{|\sigma|}$ for some $\sigma \in Z$, we define
 \[ 
  d^1\Bigl( (z_v)_v , (z'_v)_v \Bigl) = \sum_{v \in Z_0} |z_v - z'_v | \; .
 \]
 %then we extend $d^1$ to the entire set $|Z|$ by concatenation, that is, we define $d^1\Bigl( (z_v)_v , (z'_v)_v \Bigl)$ to be the infimum of sums $\displaystyle \sum_{k=1}^n d^1 \left( z^{k-1}, z^{k} \right)$ where $n$ is a positive integer and $z^{0}, \ldots, z^{n} \in |Z|$ satisfy $z^{0} = (z_v)_v$, $z^{n} = (z'_v)_v$, and for any $k \in \{1, \ldots, n\}$, there is $\sigma^{k} \in Z$ such that $z^{k-1}, z^{k} \in \overline{|\sigma^{k}|}$. 
 %\begin{align*}  d^1\Bigl( (z_v)_v , (z'_v)_v \Bigl) = \inf_{ \underset{\stackrel{z^{(0)} = (z_v)_v , ~ z^{(n)} = (z'_v)_v , }{ \text{for~any~}k \in \{1, \ldots, n\},~\text{there~is~}\sigma^{(k)} \in Z \text{~such~that~} z^{(k-1)}, z^{(k)} \in \overline{|\sigma|}}}{} } \left\{ \sum_{k=1}^n d^1(z^{(k-1)}, z^{(k)}) ~\bigl|~ z^{(0)} = (z_v)_v , ~ z^{(n)} = (z'_v)_v ,  ~ z^{(k-1)}, z^{(k)} \in || \right\} \end{align*}
 \item\label{def:simplicial-complex:star} For any vertex $v_0 \in Z_0$, the \emph{(simplicial) star} around $v_0$ is the set of simplices of $Z$ that contain $v_0$, and the \emph{open star} around $v_0$ is the union of the open geometric realizations of such simplices in $|Z|$, that is, the set 
 \[
  \left\{ (z_v)_v \in |Z| ~\Bigl|~  z_{v_0} > 0  \right\} \; .
 \]
 \item\label{def:simplicial-complex:cone} The \emph{simplicial cone} $CZ$ is the abstract simplicical complex
 \[
  \left\{ \sigma, \sigma \sqcup \{\infty\} \ \big| \ \sigma \in Z \right\} \; ,
 \]
 where $\infty$ is an additional vertex. More concretely, we have $(CZ)_0 = Z_0 \sqcup \{\infty\}$, each simplex $\sigma$ in $Z$ spawns two simplices $\sigma$ and $\sigma \sqcup \{\infty\}$ in $CZ$, and all simplices of $CZ$ arise this way. 
 \item\label{def:simplicial-complex:subcomplex} A \emph{subcomplex} of $Z$ is an abstract simplicial complex $Z'$ with $Z'_0 \subset Z_0$ and $Z' \subset Z$. It is clear that there is a canonical embedding $|Z'| \subset |Z|$ preserving the $\ell^1$-metric.
\end{enumerate}
\end{defn}

A major advantage of simplicial complexes for us is that many ways of defining dimension agree for them. The following lemma gives a canonical open cover on an abstract simplicial complex with nice properties. 

\begin{lem}\label{lem:canonical-cover}
 Let $Z$ be an abstract simplicial complex. For each simplex $\sigma \in Z$, the set $V_\sigma$ given by
 \[
  V_\sigma = \left\{ (z_v)_v \in | Z | \ \bigg|\ z_v > z_{v'},\ \text{for\ all\ } v  \in \sigma \ \text{and}\  v' \in Z_0 \setminus \sigma  \right\}
 \]
 is an open neighborhood of $|\sigma|$ in $|Z|$.
 %, we let $l$ be the dimension of $\sigma$. Then the following two sets are the same and they define an open neighborhood $V_\sigma$ of $|\sigma|$ in $|Z|$: \begin{align*} & \left\{ (z_v)_v \in | Z | \ \bigg|\ z_v > z_{v'},\ \text{for\ all\ } v  \in \sigma \ \text{and}\  v' \in Z_0 \setminus \sigma  \right\} \\ =& \left\{ z \in | Z | \ \bigg|\ d^1(z, |\sigma|) < d^1(z, |\sigma'|), \text{for\ all\ } \sigma' \in Z \setminus \{\sigma\} \text{with} \ \mathrm{dim}(\sigma') = l \right\} .\end{align*}
 Furthermore, if $Z$ has finite dimension $d$, then the following are true:
 \begin{enumerate}
  \item\label{lem:canonical-cover:disjoint} For $l = 0, \ldots, d$, the collection $ {\mathcal{V}}^{(l)} = \{ V_\sigma \ | \ \sigma \in Z , \ \mathrm{dim}(\sigma) = l \} $ consists of disjoint open sets. 
  \item\label{lem:canonical-cover:cover} The collection $ \mathcal{V} = {\mathcal{V}}^{(0)} \cup \cdots \cup {\mathcal{V}}^{(d)} $ is an open cover of $|Z|$ with Lebesgue number at least $\frac{1}{(d+1)(d+2)}$.
  \item\label{lem:canonical-cover:pou} For each simplex $\sigma \in Z$, the formula
  \begin{equation*}%\label{eq:pou-nerve}
   \nu_\sigma (z) =  \frac{d^1(z, | Z | \setminus V_\sigma)}{ \displaystyle \sum_{ \sigma' \in Z} d^1(z, | Z | \setminus V_{\sigma'}) } 
  \end{equation*}
  defines a function $\nu_\sigma \colon |Z| \to [0,1]$ that is $2(d+1) (d+2) (2d+3)$-Lipschitz. And the collection $ \{ \nu_\sigma \}_{\sigma \in Z } $ is a  partition of unity for $|Z|$ subordinate to the open cover $\mathcal{V}$.
 \end{enumerate}
\end{lem}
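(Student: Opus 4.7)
The plan is to verify the four claims in order, all via direct calculations in the $\ell^1$-metric on $|Z|$, leveraging the crucial fact that every point of $|Z|$ has finite support (of size at most $d+1$).

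For the opening statement, if $z = (z_v)_v \in V_\sigma$, then only finitely many coordinates of $z$ are nonzero, so the defining strict inequalities $z_v > z_{v'}$ (for $v \in \sigma$, $v' \in Z_0 \setminus \sigma$) reduce to finitely many conditions $z_v > \max\{z_{v'} : v' \notin \sigma\}$. Setting $\eta := \min_{v \in \sigma} z_v - \max_{v' \notin \sigma} z_{v'} > 0$, any $z' \in |Z|$ with $d^1(z, z') < \eta/2$ still satisfies these inequalities, since $|z'_w - z_w| < \eta/2$ for every $w$; this gives openness. The inclusion $|\sigma| \subseteq V_\sigma$ is immediate because any $z \in |\sigma|$ has $z_v > 0 = z_{v'}$ for $v \in \sigma$ and $v' \notin \sigma$. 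For disjointness (\ref{lem:canonical-cover:disjoint}), if $\sigma \ne \sigma'$ both have dimension $l$, pick $v \in \sigma \setminus \sigma'$ and $v' \in \sigma' \setminus \sigma$; a putative $z \in V_\sigma \cap V_{\sigma'}$ would yield the contradictory pair $z_v > z_{v'}$ and $z_{v'} > z_v$.

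For (\ref{lem:canonical-cover:cover}), given $z \in |Z|$ whose support lies in some simplex of cardinality $k \le d+1$, list the nonzero coordinates in decreasing order $z_{v_1} \ge \cdots \ge z_{v_k}$ and set $z_{v_{k+1}} = 0$. The consecutive gaps $g_i := z_{v_i} - z_{v_{i+1}} \ge 0$ satisfy $\sum_{i=1}^k i \cdot g_i = \sum_{i=1}^k z_{v_i} = 1$, so some index $i^*$ achieves $g_{i^*} \ge 2/(k(k+1)) \ge 2/((d+1)(d+2))$. The simplex $\sigma := \{v_1, \dots, v_{i^*}\}$ satisfies $z \in V_\sigma$ since $z_v \ge z_{v_{i^*}} > z_{v_{i^*+1}} \ge z_{v'}$ whenever $v \in \sigma$, $v' \in Z_0 \setminus \sigma$. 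Moreover, any $z' \in |Z|$ with $d^1(z, z') < 1/((d+1)(d+2))$ satisfies $z'_v - z'_{v'} > g_{i^*} - 2/((d+1)(d+2)) \ge 0$ for the same pairs, hence $z' \in V_\sigma$.

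For (\ref{lem:canonical-cover:pou}), set $A_{\sigma'}(z) := d^1(z, |Z| \setminus V_{\sigma'})$ and $N(z) := \sum_{\sigma' \in Z} A_{\sigma'}(z)$. Each $A_{\sigma'}$ is $1$-Lipschitz and vanishes outside $V_{\sigma'}$, so by (\ref{lem:canonical-cover:disjoint}) at most $d+1$ summands of $N(z)$ are nonzero at any given $z$, and by (\ref{lem:canonical-cover:cover}) we have $N(z) \ge 1/((d+1)(d+2))$. Thus $\nu_\sigma = A_\sigma/N$ is well-defined, vanishes off $V_\sigma$, satisfies $\nu_\sigma \le 1$, and $\sum_\sigma \nu_\sigma \equiv 1$. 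For the Lipschitz estimate, the key local finiteness observation is that only at most $2(d+1)$ simplices $\sigma'$ satisfy $z \in V_{\sigma'}$ or $z' \in V_{\sigma'}$, giving $|N(z) - N(z')| \le 2(d+1)\, d^1(z, z')$. Then the identity
\[
A_\sigma(z) N(z') - A_\sigma(z') N(z) = A_\sigma(z) \bigl(N(z') - N(z)\bigr) + \bigl(A_\sigma(z) - A_\sigma(z')\bigr) N(z),
\]
divided through by $N(z) N(z')$ and combined with $\nu_\sigma \le 1$ and $N \ge 1/((d+1)(d+2))$, yields the desired Lipschitz constant.

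The main obstacle is the Lipschitz estimate in (\ref{lem:canonical-cover:pou}): one must control a potentially infinite sum $N$ and deduce a Lipschitz constant depending only on $d$, not on the size of $Z$. This rests on the local finiteness of $\{V_{\sigma'}\}$ granted by the disjointness of (\ref{lem:canonical-cover:disjoint}).
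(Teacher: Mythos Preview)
Your argument is correct and, for the opening claim and parts~(\ref{lem:canonical-cover:disjoint}) and~(\ref{lem:canonical-cover:cover}), essentially identical to the paper's: the same $\eta$-ball computation for openness, the same contradiction (phrased slightly differently) for disjointness, and the same gap argument $\sum_i i\,g_i = 1$ for the Lebesgue number. For part~(\ref{lem:canonical-cover:pou}) the paper simply invokes a lemma from Nowak--Yu, whereas you supply the full quotient-of-Lipschitz-functions estimate directly; your computation in fact yields the slightly better constant $(d+1)(d+2)(2d+3)$, which of course still verifies the stated bound.
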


\begin{proof}
 It is clear that $|\sigma| \subset V_\sigma$. Now to show $V_\sigma$ is open, we produce, for any $(z_v)_v \in V_\sigma$, an open set $U$ such that $(z_v)_v \in U \subset V_\sigma$. Define 
 \begin{equation*}%\label{eqn:distance-to-outside}
  \delta =  \frac{1}{2} \min \left\{ z_v - z_{v'} ~\bigl|~ v  \in \sigma ~\text{and}~  v' \in Z_0 \setminus \sigma \right\} \; ,
 \end{equation*}
 which is a positive number. Then we can take $U = B_{\delta} \big((z_v)_v \big)$, the open ball around $(z_v)_v$ with radius $\delta$, since 
 \[
  B_{\delta} \big((z_v)_v \big) \subset \left\{ (z'_v)_v \in |Z| ~\Bigl|~ \sup_{v \in Z_0} |z_v - z'_v| < \delta \right\} \subset V_\sigma \; .
 \]
Let us now prove statements (\ref{lem:canonical-cover:disjoint})-(\ref{lem:canonical-cover:pou}):
 \begin{enumerate}
  \item It suffices to show that for any $l \in \{0, \ldots, d\}$ and any two simplices $\sigma$ and $\sigma'$ of dimension $l$, if there is $(z_v)_v \in V_\sigma \cap V_{\sigma'}$, then $\sigma = \sigma'$, but this is obvious, since $\sigma$ is uniquely determined as the collection of indices of the $(l+1)$ largest coordinates of $(z_v)_v$. 
  \item It suffices to show that for any $z \in |Z|$, there is $\sigma \in Z$ such that $B_{\delta} \big((z_v)_v \big) \subset V_\sigma$, for $\delta = \frac{1}{(d+1)(d+2)}$. To this end, we let $z^{l}$ be the value of the $l$-th greatest coordinate(s) of $z$, for $l \geq 1$. (We count with multiplicities and set $z^{d+2}=0$.) We know 
  \[
 1=\sum_{l=1}^{d+1} z^{l} = \sum_{l=1}^{d+1} l \cdot (z^l - z^{l+1}) \le  \sum_{l=1}^{d+1} l \cdot \max_k(z^k-z^{k+1}),
 \]
from which follows that there is $l_0 \in \{1, \ldots, d+1 \}$ such that $z^{l_0} - z^{l_0+1} \geq \frac{2}{(d+1)(d+2)}$. Now let $\sigma \in Z$ consist of the indices of the $l_0 $ greatest coordinates of $z$. Then as we argued above, we have $B_{\delta} \big((z_v)_v \big) \subset V_\sigma$.
  \item This follows directly from \cite[Lemma~4.3.5]{NowakYu2012Large}.
 \end{enumerate}
 %In order to show the two sets are equal, we observe that for any $z \in |Z|$ and $\sigma \in Z$, if we write $z = \displaystyle (z_v)_v$, then  \[  d^1(z, \sigma) \geq \sum_{v \in Z_0 \setminus \sigma} z_v = 1- \sum_{v \in \sigma} z_v \; , \] and the inequality becomes equality when there is $v \in \sigma$ such that $z_v > 0$. Hence if $z_v > z_{v'} $ for all $ v  \in \sigma$ and $ v' \in Z_0 \setminus \sigma$, then for any $\sigma' \in Z$ with $\mathrm{dim}(\sigma) = \mathrm{dim}(\sigma')$, we have  \[  d^1(z, \sigma') \geq 1- \sum_{v \in \sigma'} z_v > 1- \sum_{v \in \sigma} z_v = d^1(z, \sigma) \; , \] which shows the first set is contained in the second. Conversely, if there are $v_0  \in \sigma$ and $ v_1 \in Z_0 \setminus \sigma$ such that $z_{v_0} \leq z_{v_1} $, then we let $\sigma' = \left( \sigma \sqcup \{v_1\} \right) \setminus \{v_0\}$
\end{proof}

The following construction makes a link between simplicial complexes and open covers.

\begin{defn}\label{def:nerve-complex}
 Let $ X $ be a topological space, and let $ \mathcal{U} $ be a locally finite collection of open subsets of $ X $. Then the \emph{nerve complex} of  $ \mathcal{U} $, denoted as $\mathcal{N}(\mathcal{U})$, is the abstract simplicical complex with ${\mathcal{U}}$ as its vertex set and the simplices corresponding to subcollections of ${\mathcal{U}}$ with nonempty intersections. 
\end{defn}

The following lemmas are immediate from the definitions.

\begin{lem}\label{lem:nerve-complex}
 Let $ X $ be a topological space, and let $ \mathcal{U} $ be a locally finite collection of open subsets of $ X $. Then the simplicial dimension of $\mathcal{N}(\mathcal{U})$ is equal to $\mathrm{mult}(\mathcal{U}) - 1$. Moreover, if $\mathcal{V}$ is another locally finite collection of open subsets of $ X $ such that $\mathcal{U} \subset \mathcal{V}$, then $\mathcal{N}(\mathcal{U})$ embeds as a subcomplex into $\mathcal{N}(\mathcal{V})$.
 \qed
\end{lem}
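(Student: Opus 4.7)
The plan is to verify both claims directly by unraveling Definitions~\ref{def:mult}, \ref{def:simplicial-complex}(\ref{def:simplicial-complex:dimension}), \ref{def:simplicial-complex}(\ref{def:simplicial-complex:subcomplex}), and \ref{def:nerve-complex}; as the authors note, nothing deeper is needed.

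For the first statement, I would argue as follows. By Definition~\ref{def:nerve-complex}, a simplex of $\mathcal{N}(\mathcal{U})$ is precisely a finite nonempty subcollection $\{U_0,\ldots,U_k\}\subseteq\mathcal{U}$ of pairwise distinct open sets with $U_0\cap\cdots\cap U_k\neq\emptyset$, and its simplicial dimension in the sense of Definition~\ref{def:simplicial-complex}(\ref{def:simplicial-complex:dimension}) is $k$. Setting $d=\operatorname{mult}(\mathcal{U})$, the defining property in Notation~\ref{def:mult} says that every collection of $d+1$ pairwise distinct members of $\mathcal{U}$ has empty intersection, while the minimality of $d$ produces some collection of $d$ pairwise distinct members of $\mathcal{U}$ with nonempty intersection. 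Translating this via the nerve construction, the maximum cardinality of a simplex of $\mathcal{N}(\mathcal{U})$ is exactly $d$, so the simplicial dimension of $\mathcal{N}(\mathcal{U})$ equals $d-1=\operatorname{mult}(\mathcal{U})-1$.

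For the second statement, assume $\mathcal{U}\subseteq\mathcal{V}$. Then $\mathcal{N}(\mathcal{U})_0=\mathcal{U}\subseteq\mathcal{V}=\mathcal{N}(\mathcal{V})_0$, and any simplex $\{U_0,\ldots,U_k\}$ of $\mathcal{N}(\mathcal{U})$ consists of pairwise distinct elements of $\mathcal{V}$ with nonempty intersection, hence is also a simplex of $\mathcal{N}(\mathcal{V})$. This is exactly the condition in Definition~\ref{def:simplicial-complex}(\ref{def:simplicial-complex:subcomplex}) for $\mathcal{N}(\mathcal{U})$ to be a subcomplex of $\mathcal{N}(\mathcal{V})$, which completes the proof.

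I do not anticipate any substantive obstacle; the only minor point to be careful about is the off-by-one convention in both Notation~\ref{def:mult} (multiplicity of a family of sets) and Definition~\ref{def:simplicial-complex}(\ref{def:simplicial-complex:dimension}) (dimension of a simplex is cardinality minus one), which is what makes the equality $\dim\mathcal{N}(\mathcal{U})=\operatorname{mult}(\mathcal{U})-1$ rather than $\dim\mathcal{N}(\mathcal{U})=\operatorname{mult}(\mathcal{U})$ come out correctly.
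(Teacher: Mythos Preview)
Your proposal is correct and matches the paper's approach: the lemma is stated with a \qed and no proof, i.e.\ the authors regard it as immediate from Definitions~\ref{def:mult}, \ref{def:simplicial-complex}, and~\ref{def:nerve-complex}, which is exactly what you unravel.
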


\begin{lem}\label{lem:map-to-nerve}
Let $ X $ be a locally compact, metrizable space, let $ K \subset X $ be a compact subset, let $ \mathcal{U} $ be a locally finite collection of open subsets of $ X $ that covers $ K $, and let $ \{ \mu_U \}_{U \in \mathcal{U}} $ be a partition of unity of $ K \subset X $ subordinate to and indexed by $ \mathcal{U} $ in the sense of Definition~\ref{def:relative-pou}. Consider the open cover $ \mathcal{U}^+ = \mathcal{U} \cup \{ X \setminus K \} $ and put $ \mu_{X \setminus K} = \mathrm{1}_X - \sum_{U \in \mathcal{U}} \mu_U $. Then there is a continuous map $\mu_{\mathcal{U}^+} \colon X \to | \mathcal{N}({\mathcal{U}^+}) |$ given by
\begin{equation}\label{eq:map-to-nerve}
  \mu_{\mathcal{U}^+} (x) =  \big( \mu_U (x) \big)_{U \in {\mathcal{U}^+} }  . 
\end{equation}
Furthermore $\mu_{\mathcal{U}^+}$ maps $ K $ into $ | \mathcal{N}({\mathcal{U}}) |$ (as a subspace of $| \mathcal{N}({\mathcal{U}^+}) | $), and for each $U \in {\mathcal{U}^+}$, the preimage of the open star in $| \mathcal{N}({\mathcal{U}^+}) | $ around the vertex $U$ is contained in $U$ (as a subset of $Y$). 
\qed
\end{lem}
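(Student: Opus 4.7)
The plan is to verify the four assertions in turn, each reducing to an unpacking of the relevant definitions. The argument is essentially formal, and the only subtle point is checking that the tuple $\bigl(\mu_U(x)\bigr)_{U\in\mathcal{U}^+}$ does land in $|\mathcal{N}(\mathcal{U}^+)|$ rather than merely in the standard simplex. For this we first note that, on $K$, the coordinate $\mu_{X\setminus K}$ vanishes (by the defining property of a relative partition of unity), and on $X\setminus K$ it is manifestly a continuous nonnegative function bounded above by $1$ (assuming, as is standard and as given by the construction in Lemma~\ref{lem:relative-pou}, that $\sum_{U\in\mathcal{U}}\mu_U\leq 1$ pointwise).

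Next I would fix $x\in X$ and let $\sigma(x)=\{U\in\mathcal{U}^+ \mid \mu_U(x)>0\}$. By local finiteness of $\mathcal{U}$ this is a finite set, and the sum $\sum_{U\in\mathcal{U}^+}\mu_U(x)$ equals $1$ by construction. The key observation is that $x$ lies in \emph{every} element of $\sigma(x)$: for $U\in\mathcal{U}$ this is because $\mathrm{supp}(\mu_U)\subset U$, while if $\mu_{X\setminus K}(x)>0$ then $x\notin K$ (since $\mu_{X\setminus K}$ vanishes on $K$), i.e.\ $x\in X\setminus K$. Thus $\bigcap_{U\in\sigma(x)}U\ni x$, so $\sigma(x)$ is a simplex of $\mathcal{N}(\mathcal{U}^+)$, and $\mu_{\mathcal{U}^+}(x)$ lies in $|\mathcal{N}(\mathcal{U}^+)|$.

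For continuity, I would pick an arbitrary $x_0\in X$ and, using local finiteness, choose an open neighborhood $W$ of $x_0$ meeting only finitely many $U\in\mathcal{U}$. On $W$ only finitely many of the coordinate functions $\mu_U$, $U\in\mathcal{U}^+$, are not identically zero, so the $\ell^1$-distance $d^1(\mu_{\mathcal{U}^+}(x),\mu_{\mathcal{U}^+}(x_0))$ reduces to a finite sum of continuous functions of $x$, yielding continuity.

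Finally, the last two assertions are immediate. If $x\in K$ then $\mu_{X\setminus K}(x)=0$, so $\mu_{\mathcal{U}^+}(x)$ has its $(X\setminus K)$-coordinate equal to $0$, placing it in the geometric realization of the subcomplex $\mathcal{N}(\mathcal{U})\subset \mathcal{N}(\mathcal{U}^+)$ (cf.\ Lemma~\ref{lem:nerve-complex}). For the open star around a vertex $U\in\mathcal{U}^+$, its preimage under $\mu_{\mathcal{U}^+}$ is $\{x\in X\mid \mu_U(x)>0\}$; this is contained in $\mathrm{supp}(\mu_U)\subset U$ when $U\in\mathcal{U}$, and when $U=X\setminus K$ the observation above gives the same containment. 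No step presents a real obstacle; the entire lemma is a routine application of the nerve construction, whose sole purpose is to record the naturality properties needed in the subsequent sections.
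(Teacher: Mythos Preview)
Your verification is correct and complete. The paper does not actually give a proof of this lemma: it is stated as ``immediate from the definitions'' and marked with \qed, so there is nothing to compare against beyond noting that your unpacking matches the intended meaning.

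One small remark: you rightly flag that the nonnegativity of $\mu_{X\setminus K} = 1_X - \sum_{U\in\mathcal{U}}\mu_U$ requires $\sum_{U\in\mathcal{U}}\mu_U \le 1$ pointwise, which is not literally part of Definition~\ref{def:relative-pou}. This is a mild imprecision in the paper rather than in your argument; in every application (via Lemma~\ref{lem:relative-pou} or Lemma~\ref{lemaboutsmearingofpartitionofunity}) the partitions arise from restricting a genuine partition of unity, so the bound holds. Your handling of this point is appropriate.
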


\paragraph{\textbf{Equivalent characterizations.} }Now we can put together the tools we have gathered above and prove a key proposition regarding various equivalent ways to define the tube dimension. We first record a general lemma, which we assume is well known. 

\begin{Lemma}
\label{Lemma:compact-open-topology}
Let $X,K,Y$ be metrizable spaces, with $K$ compact. Let $f \colon X \times K \to Y$ be a continuous function, and let $U \subseteq Y$ be open. Then $\{x \in X \mid f(x,t) \in U \; \mathrm{ for } \, \mathrm{ all } \; t \in K\}$ is an open subset of $X$.
\end{Lemma}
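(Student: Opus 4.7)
The plan is to deduce this from the classical tube lemma in general topology (the metrizability assumptions are not even needed; only compactness of $K$ matters). Let me fix an $x_0$ in the given set, so $f(x_0, t) \in U$ for every $t \in K$, and find an open neighbourhood of $x_0$ contained in the set.

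By continuity of $f$, for each $t \in K$ the point $(x_0, t)$ has an open product neighbourhood $V_t \times W_t \subseteq X \times K$ mapped into $U$. The family $\{W_t\}_{t\in K}$ is an open cover of the compact space $K$, so it admits a finite subcover $W_{t_1}, \ldots, W_{t_n}$. Setting
\[
V \;=\; V_{t_1} \cap V_{t_2} \cap \cdots \cap V_{t_n},
\]
we obtain an open neighbourhood of $x_0$ in $X$. For any $x \in V$ and any $t \in K$, choose $i$ with $t \in W_{t_i}$; then $(x,t) \in V_{t_i} \times W_{t_i}$, so $f(x,t) \in U$.

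Since $x_0$ was arbitrary, this shows the set is open. The only real ingredient is the compactness of $K$ used to pass from the pointwise covering to a finite one; no other step presents any obstacle.
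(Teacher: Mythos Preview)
Your proof is correct. It is the standard direct tube-lemma argument and, as you note, it works for arbitrary topological spaces $X$ and $Y$ with $K$ compact---metrizability is never used.

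The paper takes a different, more conceptual route: it passes to the function space $C(K,Y)$ with the compact-open topology, observes that this space is metrizable (hence compactly generated), so that the adjoint map $\widetilde{f}\colon X \to C(K,Y)$, $\widetilde{f}(x)(t) = f(x,t)$, is continuous, and then identifies the set in question as $\widetilde{f}^{-1}(C(K,U))$, which is open since $C(K,U)$ is open in $C(K,Y)$. This is slicker if one has the exponential law at hand, but it uses the metrizability hypothesis to ensure $C(K,Y)$ is compactly generated. Your argument is more elementary and strictly more general; the paper's argument has the advantage of giving a one-line conceptual explanation once the machinery is in place.
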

\begin{proof}
The space $C(K,Y)$ with the compact-open topology is metrizable, hence compactly generated. Therefore the map $\widetilde{f} \colon X \to C(K,Y)$ given by $\widetilde{f}(x)(t) = f(x,t)$ is continuous, $C(K,U)$ is an open subset of $C(K,Y)$, and $\{x \in X \mid f(x,t) \in U \; \mathrm{ for } \, \mathrm{ all } \; t \in K\} = \widetilde{f}^{-1} (C(K,U))$. 
\end{proof}

In particular, it follows that if $\Phi$ is a flow on $Y$, and $U \subseteq Y$ is open, then for any $L>0$, the set $\{y \in Y \mid \Phi_{[-L,L]}(y) \subseteq U]\}$ is open.

\begin{prop}\label{prop:characterizations-tube-dim}
 Let $(Y, \Phi)$ be a topological flow. Let $d\in\N$. Then the following are equivalent:
 \begin{enumerate}
  \item The tube dimension of $\Phi$ is at most $d$;
  \label{prop:characterizations-tube-dim-1}
  \item 
  \label{prop:characterizations-tube-dim-2}
  For any $\eta > 0$ and compact set $ K \subset Y $, there is a finite abstract simplicial complex $ Z $ of dimension at most $d$ and a continuous map $ F: Y \to | CZ | $ satisfying:
  \begin{enumerate}
   \item 
   \label{prop:characterizations-tube-dim-2-a}
   $ F $ is $\Phi$-Lipschitz with  constant $\eta$;
   \item 
   \label{prop:characterizations-tube-dim-2-b}
   for any vertex $  v \in Z_0 $, the preimage of the open star around $v$ is contained in a box $B_v$; 
   \item 
   \label{prop:characterizations-tube-dim-2-c}
   $ F(K) \subset Z $.
  \end{enumerate}
  \item 
  \label{prop:characterizations-tube-dim-3}
  For any $\eta > 0$ and any compact set $ K \subset Y $, there is a finite partition of unity $ \{ \varphi_i \}_{i \in I} $ of $ K \subset Y$ satisfying:
  \begin{enumerate}
   \item 
   \label{prop:characterizations-tube-dim-3-a}
   for any $ i\in I $, $ \varphi_i $ is $\Phi$-Lipschitz with  constant $\eta$;
   \item 
   \label{prop:characterizations-tube-dim-3-b}
   for any $ i\in I $, $ \varphi_i $ is supported in a box $B_i$; %with $\epsfunc_\bset \ge \Tparamtr $ (c.f. lemma~\ref{basicpropertiesofboxes}); 
   \item 
   \label{prop:characterizations-tube-dim-3-c}
   there is a decomposition $I = I^{(0)} \dot{\cup} \cdots \dot{\cup} I^{(d)} $ such that for any $ l \in \{ 0, \dots, d \} $ and any two distinct $i, j \in I^{(l)} $, we have $ \varphi_i \cdot \varphi_j =0 $.
  \end{enumerate}
  \item 
  \label{prop:characterizations-tube-dim-4}
  For any $\eta > 0$, $L >0$ and compact set $ K \subset Y $, there is a finite partition of unity $ \{ \varphi_i \}_{i \in I} $ of $ K \subset Y$ satisfying:
  \begin{enumerate}
   \item 
   \label{prop:characterizations-tube-dim-4-a}
   for any $ i\in I $, $ \varphi_i $ is $\Phi$-Lipschitz with constant $\eta$;
   \item 
   \label{prop:characterizations-tube-dim-4-b}
   for any $ i\in I $, $ \Phi_{[-L, L]} \big( \mathrm{supp}( \varphi_i ) \big) $ is contained in a box $B_i$ (or, equivalently, $ \varphi_i $ is supported in a box $B_i$ with $\varepsilon_{B_i} \ge 2L $); %with $\epsfunc_\bset \ge \Tparamtr $ (c.f. lemma~\ref{basicpropertiesofboxes}); 
   \item 
   \label{prop:characterizations-tube-dim-4-c}
   there is a decomposition $I = I^{(0)} \dot{\cup} \cdots \dot{\cup} I^{(d)} $ such that for any $ l \in \{ 0, \dots, d \} $ and any two distinct $i, j \in I^{(l)} $, we have 
    $$ \Phi_{[-L, L]} \big( \mathrm{supp}( \varphi_i ) \big) \cap \Phi_{[-L, L]} \big( \mathrm{supp}( \varphi_j ) \big) = \varnothing . $$
  \end{enumerate}
  \item 
  \label{prop:characterizations-tube-dim-5}
  For any $L > 0$ and any compact set $ K \subset Y $, there is a finite collection $ \mathcal{U} $ of open subsets of $Y$ that covers $K$ and satisfies:
  \begin{enumerate}
   \item
   \label{prop:characterizations-tube-dim-5-a}
    each $ U \in \mathcal{U} $ is contained in a box $B_U$ with $\varepsilon_{B_U} \ge 2L $; 
   \item 
   \label{prop:characterizations-tube-dim-5-b}
   there is a decomposition $\mathcal{U} = \mathcal{U}^{(0)} \dot{\cup} \cdots \dot{\cup}\, \mathcal{U}^{(d)} $ such that for any $ l \in \{ 0, \dots, d \} $ and any two distinct $U, U' \in \mathcal{U}^{(l)} $, we have 
   \[
    \Phi_{[-L, L]} (\overline{U}) \cap \Phi_{[-L, L]} (\overline{U'}) = \varnothing .
   \]
  \end{enumerate}
  %\item for any $\Lparamtr > 0$ and compact $ \kset \subset \ysp $, there is a finite collection $ \ucoll $ of open subsets of $\ysp$ satisfying:
 % \begin{enumerate}   \item for any $ \ypt\in\kset $, there is $\uset \in \ucoll$ such that $ \Phimap_{[-\Lparamtr, \Lparamtr]}(\ypt) \subset \uset $;   \item each $ \uset \in \ucoll $ is contained in a box $\bset_\uset$;    \item there is a decomposition $\ucoll = \ucoll^{(0)} \cup \cdots \cup \ucoll^{(\dind)} $ such that for any $ \lind \in \{ 0, \cdots, \dind \} $ and any different $\uset, \uset' \in \ucoll^{(\lind)} $, we have $ \uset \cap \uset' = \varnothing $.  \end{enumerate}
 \end{enumerate}

\end{prop}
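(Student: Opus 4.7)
I will prove the equivalence via the cyclic chain of implications $(1) \Rightarrow (2) \Rightarrow (3) \Rightarrow (4) \Rightarrow (5) \Rightarrow (1)$. The three principal techniques are: passing between open covers and partitions of unity using Lemma~\ref{lem:relative-pou} together with the smearing construction of Lemma~\ref{lemaboutsmearingofpartitionofunity}, to enforce the flow-Lipschitz property; the nerve construction (Definition~\ref{def:nerve-complex}) together with the map to the nerve from Lemma~\ref{lem:map-to-nerve}, which bridges the ``open cover'' statements (1), (5) and the ``simplicial'' statement (2); and the canonical cover of a simplicial complex from Lemma~\ref{lem:canonical-cover}, whose dimension-indexed decomposition supplies the $(d+1)$-fold coloring appearing in (3), (4), (5).

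The easier directions are as follows. $(4) \Rightarrow (3)$ is immediate. For $(4) \Rightarrow (5)$, I set $U_i = \{\varphi_i > 0\}$: these open sets cover $K$ because $\sum_i \varphi_i \equiv 1$ there, and the coloring and stretched-disjointness transfer from (4) through $\overline{U_i} \subseteq \mathrm{supp}(\varphi_i)$ together with the equivalent form of the box condition in (4). For $(5) \Rightarrow (1)$, I apply (5) at scale $L + \delta$ and define $\widetilde{U} = \Phi_{[-L, L]}(U)$; this is open, contains $\Phi_{[-L, L]}(y)$ for every $y \in U$, and by Lemma~\ref{lemaboutstretchingabox} lies in the box $\Phi_{[-L, L]}(B_U)$, while the stretched-disjointness of the original cover forces same-color $\widetilde{U}$'s to be disjoint, yielding multiplicity $\leq d+1$. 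For $(3) \Rightarrow (2)$, I take $Z$ to be the abstract simplicial complex with vertex set $I$ whose simplices are the subsets $J \subseteq I$ with $\bigcap_{i \in J} \mathrm{supp}(\varphi_i) \neq \emptyset$; the coloring of (3) forces $\dim Z \leq d$, and the map $F \colon Y \to |CZ|$ defined by $F(y)_i = \varphi_i(y)$ for $i \in I$ and $F(y)_\infty = 1 - \sum_i \varphi_i(y)$ inherits all required properties from the $\varphi_i$'s.

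The simplicial steps $(2) \Rightarrow (3)$ and $(3) \Rightarrow (4)$ use Lemma~\ref{lem:canonical-cover}. I pull back the canonical cover $\mathcal{V}$ of $|CZ|$ via $F$: only simplices $\sigma \in Z$ contribute on $F^{-1}(|Z|) \supseteq K$, since $V_\sigma$ for simplices containing the cone vertex $\infty$ requires $z_\infty > 0$ and hence vanishes on $|Z|$; this produces the colored-by-dimension partition of unity $\{\nu_\sigma \circ F\}_{\sigma \in Z}$ with supports in the boxes supplied by (2). To upgrade to the $\Phi_{[-L, L]}$-stretched disjointness of (4), I shrink each $V_\sigma$ to $V_\sigma^\eps = \{z \in V_\sigma : d^1(z, |CZ| \setminus V_\sigma) > \eps\}$ for $\eps$ below the Lebesgue number of $\mathcal{V}$, and require the flow-Lipschitz constant $\eta$ of $F$ to satisfy $2L \eta < \eps$: any collision of the form $\Phi_s(y) = \Phi_t(y')$ with $F(y) \in V_\sigma^\eps$ and $F(y') \in V_{\sigma'}^\eps$ for distinct same-dimensional $\sigma, \sigma'$ and $s, t \in [-L, L]$ would force $d^1(F(y), F(y')) > \eps$ yet $\leq 2L \eta$, a contradiction. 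The condition $\epsilon_B \geq 2L$ needed in the box condition of (4) is arranged by invoking (1) at the start of the chain at a scale large enough that each relevant box can be replaced by a centrally embedded sub-box via Lemma~\ref{basicpropertiesofboxes}(3) with the required escape time.

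The main obstacle is the direction $(1) \Rightarrow (2)$, in which I must construct $F \colon Y \to |CZ|$ with arbitrarily small flow-Lipschitz constant and preimages of open stars inside boxes. My approach is: apply (1) on the enlarged compact set $\Phi_{[-\lambda, \lambda]}(K)$ at a large scale $L' \gg \lambda$ to obtain a finite cover $\mathcal{U}$ of multiplicity $\leq d+1$ with each element contained in a box; build a subordinate partition of unity $\{f_U\}$ via Lemma~\ref{lem:relative-pou}; smear it over $[-\lambda, \lambda]$ via Lemma~\ref{lemaboutsmearingofpartitionofunity} to obtain $\Phi$-Lipschitz functions $\{\widetilde{f}_U\}$ of Lipschitz constant $1/\lambda$; set $Z = \mathcal{N}(\mathcal{U})$, which has $\dim Z \leq d$; and let $F$ be the map to $|CZ|$ given by Lemma~\ref{lem:map-to-nerve}, which is $\Phi$-Lipschitz of constant at most $(|\mathcal{U}| + 1)/\lambda$. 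The delicate point is the interdependence between $\lambda$ (which must be large to suppress the Lipschitz constant) and $L'$ (which must be large relative to $\lambda$ so that the smeared supports remain inside the original boxes, and so that the boxes themselves admit sub-boxes with large escape time); this is resolved by first fixing a cover $\mathcal{U}$ at any scale, recording $|\mathcal{U}|$, then choosing $\lambda \geq (|\mathcal{U}| + 1)/\eta$, and finally re-applying (1) at a sufficiently large $L'$ to make all the constraints consistent.
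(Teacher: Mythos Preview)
Your cyclic chain matches the paper's, and most of your steps are sound. The gap is in $(1) \Rightarrow (2)$, where your proposed resolution of the ``interdependence'' is circular and, separately, your control on the smeared supports is not justified.

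\textbf{The Lipschitz constant.} You estimate the $\ell^1$-Lipschitz constant of $F$ by $(|\mathcal{U}|+1)/\lambda$ and then try to choose $\lambda \geq (|\mathcal{U}|+1)/\eta$ by ``first fixing a cover $\mathcal{U}$ at any scale, recording $|\mathcal{U}|$, \ldots\ and finally re-applying (1) at a sufficiently large $L'$.'' But re-applying (1) produces a \emph{new} cover with a new, uncontrolled cardinality, so the circularity is not broken. The correct observation (and the one the paper uses) is that the multiplicity bound already controls the Lipschitz constant: since $\operatorname{mult}(\mathcal{U}) \leq d+1$ and the smeared functions $\widetilde f_U$ are supported in the sets $U$, at every point of $Y$ at most $d+1$ of the $\widetilde f_U$ are nonzero, hence at most $d+2$ coordinates of $F$ are locally nonzero. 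Since the $\Phi$-Lipschitz condition is local along the real line, this gives the bound $(d+2)/L$ for $F$, independent of $|\mathcal{U}|$. So one simply takes $L = (d+2)/\eta$ from the start.

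\textbf{Supports after smearing.} You apply (1) at scale $L'$, take a partition of unity $\{f_U\}$ subordinate to $\mathcal{U}$, and smear by $\lambda$. The smeared supports then lie in $\Phi_{[-\lambda,\lambda]}(U)$, which need not be contained in any box: Definition~\ref{def:tube-dimension} gives no information about $\varepsilon_{B_U}$, and making $L'$ large does not force $\varepsilon_{B_U}$ to be large. The paper's remedy is to build the partition of unity subordinate not to $\mathcal{U}$ but to the shrunk sets $U' = \{y : \Phi_{[-L,L]}(y) \subset U\}$, which by condition~(\ref{def:tube-dimension-1}) still cover the relevant compact set. Smearing by $L$ then keeps the supports inside $\Phi_{[-L,L]}(U') \subset U \subset B_U$, with no stretching of the boxes needed.

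\textbf{A remark on $(3) \Rightarrow (4)$.} Your route via shrinking $V_\sigma$ to $V_\sigma^\eps$ can be made to work (and in fact already gives $\Phi_{[-L,L]}(\mathrm{supp}\,\varphi_\sigma) \subset B_v$ directly, once $2L\eta_F < \eps$; you do not need the ``centrally embedded sub-box'' detour, which does not work as stated since $U$ need not be centred in $B_U$). The paper instead proves $(3) \Rightarrow (4)$ by an elementary cutoff: replace $\psi_i$ by $(\psi_i - c)_+$ for a small threshold $c$ and renormalise. This avoids revisiting the simplicial layer and is somewhat cleaner.
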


%\begin{rmk} In the statement of the above proposition, the word ``finite" appears in parentheses throughout (\ref{prop:characterizations-tube-dim-2}) - (\ref{prop:characterizations-tube-dim-5}) because in each equivalent statement, the finiteness can be imposed for free.  \end{rmk}

\begin{proof}%[Proof of \ref{prop:characterizations-tube-dim}]
 We proceed in the order (\ref{prop:characterizations-tube-dim-1})$\Rightarrow$(\ref{prop:characterizations-tube-dim-2})$\Rightarrow$(\ref{prop:characterizations-tube-dim-3})$\Rightarrow$(\ref{prop:characterizations-tube-dim-4})$\Rightarrow$(\ref{prop:characterizations-tube-dim-5})$\Rightarrow$(\ref{prop:characterizations-tube-dim-1}).
 \paragraph{(\ref{prop:characterizations-tube-dim-1})$\Rightarrow$(\ref{prop:characterizations-tube-dim-2}):}Given any $\eta > 0$ and compact set $ K \subset Y $, set $ L = \frac{ d + 2 }{\eta}  $ and $ \widehat{K} = \Phi_{ \left[ - L, L \right] } (K)  $, and obtain a collection $ \mathcal{U} $ of open subsets of $Y$ as in Definition~\ref{def:tube-dimension} with regard to $ L $ and $ \Phi_{ \left[ - L, L \right] } (\widehat{K}) $. For each $U \in \mathcal{U}$, define $ U' = \{ y \in Y \ |\ \Phi_{[-L, L]}(y) \subset U \} $, which is open by Lemma~\ref{Lemma:compact-open-topology}. By construction, we have $ \Phi_{[-L, L]}(U') \subset U $. 
 
 Because of condition \eqref{def:boxdim-item1} in Definition~\ref{def:tube-dimension}, the collection $ \mathcal{U}' = \{ U' \}_{U\in\mathcal{U}} $ covers $ \widehat{K} $. Since $K$ is compact, we may find a finite subcover $ \mathcal{U}'_0$ in $\mathcal{U}'$, and define the finite collection $\mathcal{U}_0 = \{U \in \mathcal{U} \ | \ U' \in \mathcal{U}'_0 \}$. Now fix a partition of unity $ \{ f_U \}_{U\in\mathcal{U}_0} $ for $ \widehat{K} \subset Y $ subordinate to $ \mathcal{U}'_0 $. Then by Lemma~\ref{lemaboutsmearingofpartitionofunity}, the collection $ \big\{ \widehat{f}_U =  \mathbb{E} (\Phi_*)_{ \left[ - L, L \right] } (f_U)  \big\}_{U \in \mathcal{U}_0} $ is a finite partition of unity for $ K \subset X$ subordinate to $ \mathcal{U}_0 $, the members of which are $\Phi$-Lipschitz with constant $ \frac{1 }{L} $, and so is the function $ \displaystyle \mathrm{1}_X - \sum_{U \in \mathcal{U}_0} \mathbb{E} (\Phi_*)_{ \left[ - L, L \right] } (f_U) $. 
 
Set $ Z = \mathcal{N}(\mathcal{U}_0) $. Since $\mathrm{mult}(\mathcal{U}_0) \leq \mathrm{mult}(\mathcal{U}) \leq d+1$, we can apply Lemma~\ref{lem:nerve-complex} to deduce that $Z$ is a finite abstract simplicial complex of dimension at most $d$. By Lemma~\ref{lem:map-to-nerve}, we have a continuous map
 \[
  F = \widehat{f}_{\mathcal{U}_0^+} = \left( \bigoplus_{U\in\mathcal{U}_0} \widehat{f}_U \right) \oplus \left( \mathrm{1}_X - \sum_{V \in \mathcal{U}_0} \widehat{f}_{V} \right) : Y \to |\mathcal{N}(\mathcal{U}_0^+) | \subset | C Z |,
 \]
 which is $\Phi$-Lipschitz with regard to the $ \mathit{l}^{1} $-metric with  constant $ \frac{ d + 2 }{L} = \eta $, as at most $ (d+2) $ summands of $F$ are non-zero at each point. It also maps $ K $ into $ | \mathcal{N}(\mathcal{U}_0) | $ because $ \left( \mathrm{1}_X - \sum_{U \in \mathcal{U}_0} \mathbb{E} (\Phi_*)_{ \left[ - L, L \right] } (f_U) \right) $ vanishes on $K$. Finally, the preimage of the open star around each vertex $ U \in \mathcal{U}_0 $ is contained in $ \mathrm{supp}(\widehat{f}_U) \subset U $, which is in turn contained in a box $B_U$. 
 
 \paragraph{(\ref{prop:characterizations-tube-dim-2})$\Rightarrow$(\ref{prop:characterizations-tube-dim-3}):}Given any $\eta > 0$ and compact set $ K \subset Y $, set $ \eta' = \frac{\eta}{2(d+2)(d+3)(2d+5)} $ and obtain a finite abstract simplicial complex $Z$ and a map $ F: Y \to |CZ| $ satisfying the conditions in (\ref{prop:characterizations-tube-dim-2}) with regard to $ \eta' $ and $K$. For $ l = 0,\dots,d $, define $I^{(l)}$ to be the collection of all $l$-dimensional simplices in $Z$, and set $ I = \bigcup_{l=0}^{d} I^{(l)} = Z $. For any $\sigma \in I$, let $\nu_\sigma: |CZ| \to [0,1]$ be a function as in Lemma~\ref{lem:canonical-cover}(\ref{lem:canonical-cover:pou}), and define $ \varphi_\sigma = \nu_\sigma \circ F : Y \to [0,1] $. We claim that $ \{\varphi_\sigma\}_{\sigma\in I} $ together with the decomposition $ I = \bigcup_{l=0}^{d} I^{(l)} $ is a finite partition of unity for $ K \subset Y $ satisfying the required conditions. 
 
 Since $ F $ maps $ K $ into $ | Z | \subset | CZ | $ and  $ \sum_{\sigma \in I } \nu_\sigma (z) = 1 $ for any $ z \in | Z | $, it follows that $ \sum_{\sigma \in I} \varphi_\sigma (y) = 1 $ for any $ y \in K $.  

Since by Lemma~\ref{lem:canonical-cover}~(\ref{lem:canonical-cover:pou}), the map $ \nu_\sigma $ is Lipschitz with the constant $ 2 \big((d+1) +1 \big) \big((d+1) + 2 \big) \big(2(d+1) + 3 \big) $ and $F$ is $\Phi$-Lipschitz with constant $ \eta' $, it follows for any $ \sigma \in I $ that the composition is $\Phi$-Lipschitz with  constant $ \eta' \cdot 2(d+2)(d+3)(2d+5) = \eta $, which proves condition~(\ref{prop:characterizations-tube-dim-3-a}).

 As for condition~(\ref{prop:characterizations-tube-dim-3-b}), we observe that each of the open sets $ V_\sigma = \{ z \in |CZ| \ |\ \nu_\sigma (z) \not= 0 \} $ is contained in the open star of any vertex $ v \in \sigma $, whose preimage under $F$ by assumption is contained in the box $ B_v $, and thus so is $ \mathrm{supp} (\nu_\sigma) $.  

 Lastly, for each $ l \in  \{ 0, \dots, d \} $, $ \{ \varphi_\sigma \}_{\sigma \in I^{(l)} } $ is a family of orthogonal functions because it is a pullback of the orthogonal family $ \{ \nu_\sigma \}_{\sigma \in I^{(l)} } $ by $F$.

 \paragraph{(\ref{prop:characterizations-tube-dim-3})$\Rightarrow$(\ref{prop:characterizations-tube-dim-4}):}Given $ \eta >0 $, $L > 0 $ and compact set $ K \subset Y $, choose $ \eta' > 0  $ with
  $$ 0 <  \frac{ \eta' \left( 1+  (d + 1) (2 - \frac{2\eta'}{L})\right) }{\left( 1- (d+1) \frac{2\eta'}{L} \right)^2} < \eta $$
 and obtain a finite partition unity $ \{ \psi_i \}_{i\in I} $ satisfying the conditions in (\ref{prop:characterizations-tube-dim-3}) with regard to $ \eta' $ and $K$. Now for each $i\in I$ define 
  $$ \psi_i ' = \left( \psi_i - \frac{2\eta'}{L} \right)_+ \; :\ Y \to \left[0, 1 - \frac{2\eta'}{L} \right]  . $$
 Since each $ \psi_i $ is $\Phi$-Lipschitz with  constant $ \eta' $ and since $ \mathrm{supp}(\psi_i ') \subset \{ y \in Y \ \big| \ \psi_i  (y) \ge \frac{2\eta'}{L} \}  $, we have 
  $$ \Phi_{[-L, L]} \big( \mathrm{supp}(\psi_i ') \big) \subset  \left\{ y \in Y \ \left| \ \max_{t\in [-L, L]} \psi_i  (\Phi_{t}(y)) \ge \frac{2\eta'}{L} \right. \right\} \subset  \psi_i ^{-1} \left(\left[ \frac{\eta'}{L} , 1 \right]\right) . $$
 Moreover, if we put 
  $$ \psi' =  \left( \sum_{i\in I} \psi_i' \right) + \left( \mathrm{1}_Y - \sum_{i\in I} \psi_i  \right) , $$
 then for any $ y \in Y $ we have
  $$ (\mathrm{1}_Y - \psi' ) (y) = \sum_{i\in I} ( \psi_i - \psi_i' ) (y) \in \left[0,  (d+1) \frac{2\eta'}{L} \right] , $$
 where the estimate for the upper bound uses the fact that the family $ \{ ( \psi_i - \psi_i' ) \}_{i\in I^{(l)}}  $ is orthogonal for each $ l \in \{0,\dots, d \} $. By putting 
  $$ \varphi_i = \psi_i' / \psi' , $$
 we obtain a finite partition of unity $\{\varphi\}_{i\in I}$ for $K \subset Y$ that satisfies conditions~(\ref{prop:characterizations-tube-dim-4-b}) and (\ref{prop:characterizations-tube-dim-4-c}). As for condition~(\ref{prop:characterizations-tube-dim-4-a}), we observe that each $ \psi_i' $ is also $\Phi$-Lipschitz with constant $ \eta' $ and $\left( \mathrm{1}_Y - \sum_{i\in I} \psi_i  \right) $ is $\Phi$-Lipschitz with constant $ \eta' (d + 1) $. Thus, we deduce that $ \psi' $ is $\Phi$-Lipschitz with  constant $ 2 \eta' (d + 1) $. So for any $y \in Y$ and any $t \in {\mathbb{R}^{}}$, we have 
\[
\def\arraystretch{2.2}
 \begin{array}{cl}
\multicolumn{2}{l}  {\displaystyle \left| \varphi_i(\Phi_t (y) ) -  \varphi_i(y) \right|  } \\
= &\displaystyle \left| \frac{\psi_i'(\Phi_t (y) )}{\psi' (\Phi_t (y) )} -  \frac{\psi_i'(y)}{\psi' (y )} \right| \\
 = & \displaystyle \left| \frac{\psi_i'(\Phi_t (y) )}{\psi' (\Phi_t (y) )} -  \frac{\psi_i'(y)}{\psi' (\Phi_t (y) )} +  \frac{\psi_i'(y)}{\psi' (\Phi_t (y) )} -  \frac{\psi_i'(y)}{\psi' (y )} \right| \\
  \le & \displaystyle \frac{\left| \psi_i'(\Phi_t (y) ) -  \psi_i'(y) \right|}{ \left|\psi' (\Phi_t (y) )  \right|} + \left|\psi_i' ( y )   \right| \cdot \frac{\left| \psi'(y)  - \psi'(\Phi_t (y) )  \right|}{ \left|\psi' (\Phi_t (y) )  \right| \cdot \left|\psi' ( y )  \right|} \\
  \le & \displaystyle \frac{\eta'}{1- (d+1) \frac{2\eta'}{L} }  + \frac{ 2 \eta' (d + 1) }{\left( 1- (d+1) \frac{2\eta'}{L} \right)^2} \\
  \le & \displaystyle \frac{ \eta' \left( 1+  (d + 1) (2 - \frac{2\eta'}{L})\right) }{\left( 1- (d+1) \frac{2\eta'}{L} \right)^2} < \eta \, .
 \end{array}
 \]
 This shows that each $ \varphi_i $ is $\Phi$-Lipschitz with  constant $\eta$. 
 
 \paragraph{(\ref{prop:characterizations-tube-dim-4})$\Rightarrow$(\ref{prop:characterizations-tube-dim-5}):} Set $ \mathcal{U}^{(l)} = \left\{ \mathrm{supp}( \varphi_i )^\mathrm{o} \right\}_{i \in I^{(l)} } $ for $ l = 0, \dots, l $ and shrink each $ B_i $ by $ L $ on both ends. %(Condition~(\ref{prop:characterizations-tube-dim-4-a}) is not used.)
 \paragraph{(\ref{prop:characterizations-tube-dim-5})$\Rightarrow$(\ref{prop:characterizations-tube-dim-1}):} This is done by replacing $ \mathcal{U} $ by the collection $ \{ \Phi_{[-L, L]} (U) \}_{U\in\mathcal{U}} $ and applying Lemma~\ref{lemaboutstretchingabox}.
\end{proof}

\section{Tube dimension vs.~Rokhlin dimension}
\label{Section:Tube2}

\noindent
Using some of the more analytic characterizations of the tube dimension provided by Proposition~\ref{prop:characterizations-tube-dim}, we can demonstrate a close relation between the tube dimension of a topological flow and the Rokhlin dimension of the induced ${C}^*$-dynamical system.

\begin{thm}\label{thmaboutrelationbetweenboxdimensionandRokhlindimension}
 Let $Y$ be a locally compact Hausdorff space and $\Phi$ a topological flow on $Y$. Let $\alpha \colon \R \to \aut(C_{0}(Y))$ be the associated flow. Then 
  $$ \dimrokone(\alpha) \le \mathrm{dim}_\mathrm{tube}^{\!+1}(\Phi) \le 2 \: \dimrokone(\alpha) .$$
\end{thm}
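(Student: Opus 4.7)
The plan is to prove the two inequalities separately, with the easier direction being the lower bound $\dimrok(\alpha)+1 \leq \dim_{\mathrm{tube}}(\Phi)+1$ and the more delicate one being the upper bound $\dim_{\mathrm{tube}}(\Phi)+1 \leq 2(\dimrok(\alpha)+1)$.

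For the lower bound, I would work with the partition-of-unity characterization of tube dimension from Proposition~\ref{prop:characterizations-tube-dim}(\ref{prop:characterizations-tube-dim-4}). Assuming $\dim_{\mathrm{tube}}(\Phi) \leq d$, for any $p \in \R$, compact $K \subset Y$, small $\eta > 0$ and large $L > 0$, I extract a partition of unity $\{\varphi_i\}_{i \in I}$ of $K \subset Y$ with $I = I^{(0)} \dot\cup \cdots \dot\cup I^{(d)}$ such that each $\varphi_i$ is $\Phi$-Lipschitz with constant $\eta$, each $\Phi_{[-L,L]}(\mathrm{supp}(\varphi_i))$ is contained in a box $B_i$, and within each color class these fattened supports are pairwise disjoint. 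On each box $B_i$, parameterized via $(y_0, s) \mapsto \Phi_s(y_0)$, I define a continuous phase $u_i \colon B_i \to \T$ by $u_i(\Phi_s(y_0)) = e^{-ips}$. Since $\varphi_i$ vanishes on $\partial B_i$, the product $\sqrt{\varphi_i}\,u_i$ extends continuously by zero off $B_i$, and I set $x^{(l)} := \sum_{i \in I^{(l)}} \sqrt{\varphi_i}\,u_i$, which is a well-defined continuous contraction with $\sum_{l} |x^{(l)}|^2 = \sum_i \varphi_i$. The approximate equivariance follows because on the $L$-interior of each box we have $\alpha_t(u_i)=e^{ipt}u_i$ exactly for $|t|\le L$, and the error from $\sqrt{\varphi_i}$ is controlled using the standard estimate $|\sqrt{a}-\sqrt{b}|^2 \leq |a-b|$. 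Taking $\eta \to 0$, $L \to \infty$, and $K$ exhausting $Y$, and appealing to Lemma~\ref{Lemma:def-dimrok-lift}(\ref{alternativedefinitionofRokhlindimensionforflows}), I conclude $\dimrok(\alpha) \leq d$.

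For the upper bound, assume $\dimrok(\alpha) = d$ and use Remark~\ref{Rmk:Rokhlin-dim} to obtain, for each $M > 0$, equivariant c.p.c.\ order zero maps $\mu^{(l)} \colon C(\R/M\Z) \to F_\infty^{(\alpha)}(C_0(Y))$ with $\sum_l \mu^{(l)}(1) = 1$. Lift each $\mu^{(l)}$ to a sequence of c.p.c.\ order zero maps $\mu_n^{(l)} \colon C(\R/M\Z) \to C_0(Y)$; by the order zero structure theorem, each has the form $\mu_n^{(l)}(f) = h_n^{(l)} \cdot (f \circ \sigma_n^{(l)})$ with $h_n^{(l)} \in C_0(Y)_+$ supported on an open set $W_n^{(l)}$, and $\sigma_n^{(l)} \colon W_n^{(l)} \to \R/M\Z$ continuous. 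For large $n$, uniformly on compact sets, $h_n^{(l)}$ is approximately flow-invariant and $\sigma_n^{(l)}$ varies approximately linearly along orbits at unit rate. Given compact $K$ and parameter $L>0$, choose $M \gg L$, and pick overlapping open arcs $J_0, J_1 \subset \R/M\Z$ of length less than $M - 2L$ covering the circle, together with cutoffs $f_k \in C_0(J_k)_+$ with $f_0+f_1=1$. For each $(l,k) \in \{0,\dots,d\} \times \{0,1\}$, set $V_n^{(l,k)} := \{y \in W_n^{(l)} : h_n^{(l)}(y) > \delta,\ \sigma_n^{(l)}(y) \in J_k\}$ for suitable small $\delta$. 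The key claim is that for large $n$ each flow-connected component of $V_n^{(l,k)}$ embeds into a box of length less than $M$ with $\varepsilon_B \geq 2L$: its central slice is cut out as a level set of $\sigma_n^{(l)}$ at the midpoint of $J_k$, and freeness of $\Phi$ combined with the approximate flow-linearity of $\sigma_n^{(l)}$ upgrades the slice-to-box map $(y_0,t) \mapsto \Phi_t(y_0)$ to an honest homeomorphism. Collecting these components, grouped by the $2(d+1)$ colors $(l,k)$, gives a finite open cover of $K$ of multiplicity at most $2(d+1)$ meeting the requirements of Proposition~\ref{prop:characterizations-tube-dim}(\ref{prop:characterizations-tube-dim-5}), yielding $\dim_{\mathrm{tube}}(\Phi) \leq 2d+1$.

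The main obstacle is the last claim in the second direction: verifying that each flow-connected component of $V_n^{(l,k)}$ genuinely sits inside a box. The delicate points are (i) that the transversal cross-section defined by a level set of $\sigma_n^{(l)}$ parameterizes the whole component, which requires the approximate flow-linearity to be sharp enough on $K$ after shrinking $\delta$, and (ii) that the resulting central-slice-to-box map is globally injective in a uniform flow neighborhood, for which freeness of $\Phi$ together with a compactness argument lets one choose $n$ so large that the deviation from strict equivariance is negligible relative to the minimal time a $K$-orbit needs to return to any slice.
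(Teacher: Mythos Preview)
Your argument for the first inequality is correct and is essentially the paper's proof: both build $x^{(l)}$ as a sum, over a single color class, of $\sqrt{\varphi_i}$ times a box-local unitary phase, and both control the equivariance defect via the $\Phi$-Lipschitz constant of the $\varphi_i$. The paper uses characterization~(\ref{prop:characterizations-tube-dim-3}) rather than~(\ref{prop:characterizations-tube-dim-4}) and handles the square root via a uniform-continuity auxiliary map $\rho$, but these are cosmetic differences.

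For the second inequality your outline is right --- the factor~$2$ comes from covering the circle by two arcs, giving $2(d+1)$ colors --- but the execution has real gaps. First, a small point: the lift of $\mu^{(l)}$ to a sequence of genuine order zero maps into $C_0(Y)$ works only because $C_0(Y)$ is commutative (so any lifted contraction is automatically normal); you should say this, since lifting normals is false in general. More seriously, the claim that each flow-connected component of $V_n^{(l,k)}$ sits inside a box is not established. The threshold $h_n^{(l)}>\delta$ is only \emph{approximately} flow-invariant, so along a single flow segment a point can drift in and out of $V_n^{(l,k)}$, destroying the uniform-length property a box needs; and if instead you keep each $V_n^{(l,k)}$ as a single member of the cover, you have not shown its closure is compact or that it has the required entrance/exit structure. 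Your proposed fix in~(ii) is also misdirected: freeness and compactness give no lower bound on return times to your $n$-dependent slices that is comparable to~$M$. The injectivity you want comes entirely from the approximate flow-linearity of $\sigma_n^{(l)}$ (returning to the level set forces a time shift close to a nonzero multiple of~$M$), not from freeness.

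The paper avoids all of this with one extra idea you are missing: it replaces the merely approximately equivariant function $x^{(l,\pm)}$ by an \emph{averaged} function
\[
\xi^{(l,\pm)}(y)=\frac{1}{8L}\int_{-4L}^{4L}\gamma^{(l,\pm)}(\Phi_s(y))\,e^{2\pi i s/8L}\,ds,
\]
where $\gamma^{(l,\pm)}$ is a bump supported in a fixed annular sector of the disk. Because $\gamma^{(l,\pm)}\circ\Phi$ vanishes on controlled time-intervals, one gets \emph{exact} local equivariance $\xi^{(l,\pm)}(\Phi_t(y))=e^{-2\pi i t/8L}\xi^{(l,\pm)}(y)$ on the relevant set. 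This allows the paper to write down a single explicit box $B^{(l,\sigma)}$ for each of the $2(d+1)$ indices (no components needed), verify compactness and the box axioms by direct computation with $\xi^{(l,\pm)}$, and check that the resulting $2(d+1)$ open boxes cover~$K$ with the required Lebesgue number. Your level-set idea is morally the same, but without the averaging step you cannot turn approximate data into an honest box.
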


\begin{proof}[Proof of the left hand inequality]
 Let us assume $ \mathrm{dim}_\mathrm{tube}(\Phi) \le d $ for some positive integer $ d$, and show that $ \mathrm{dim}_\mathrm{Rok}(\alpha) \le d $. For simplicity we make use of the auxiliary function $ \rho: \mathbb{C} \to \mathbb{C} $ that maps $ a e^{i \theta} $ to $ \sqrt{a} e^{i \theta} $, where $ a \ge 0 $ and $ \theta \in \mathbb{R} $. It is easy to see that $ \rho $ is uniformly continuous. We also remark that $ {C}_{c}(Y)_{\le 1} $ is dense in $ {C}_{0}(Y)_{\le 1} $. 
 
 Given any $ M, T, \delta > 0 $ and any finite set $\mathcal{F} \subset {C}_{c}(Y)_{\le 1}$, we pick a compact set $ K \subset Y $ so large that it contains the supports of all the functions in $\mathcal{F}$, and also pick $ \eta > 0 $ so small that $ | w - w' | \le \eta T $ implies $ | \rho(w) - \rho(w') | \le \delta $ for all $ w, w' \in \mathbb{C} $. Then we apply Proposition~\ref{prop:characterizations-tube-dim}\eqref{prop:characterizations-tube-dim-3} to obtain a partition of unity $ \{ \varphi_i \}_{i \in I} $ of $Y$ and a decomposition $ I = \bigcup_{l =0} ^d I^{(l)} $ satisfying the three conditions with regard to $\eta$ and $ K $. Thus each $ \varphi_i $ is supported in some box $B_i$, which by definition, yields continuous functions (cf.~Lemma~\ref{basicpropertiesofboxes}) $ a_{B_i, \pm} : B \to {\mathbb{R}^{}} $ that satisfy 
\[
a_{B_i, \pm} ( \Phi_t (y) ) = a_{B_i, \pm} ( y ) - t
\]
 for all $y \in B$ and $t \in [ a_{B_i, -} ( y ), a_{B_i, +} ( y ) ] $. Set 
\[
\psi_i = \varphi_i ^{\frac{1}{2}} \cdot \mathrm{exp} \left( \frac{ 2 \pi i }{M} \cdot a_{B_i, +} \right) : B_i \to {\mathbb{C}^{}} ,
\]
 which we continuously extend to all of $Y$ by setting $ \psi_i (y) = 0$ for $y \in Y \setminus B_i$. We then define 
\[ 
x^{(l)} = \sum_{i \in I^{(l)} }  \psi_i \in {C}_{c}(Y)
\]
 for $l = 0, \dots, d$. Note that for any $y \in Y$, at most one of the functions $\psi_i$ in the sum is nonzero because they are pairwise orthogonal. We check that $ \{ x^{(l)} \}_{l=0, \dots, d} $ satisfies the conditions in Lemma~\ref{Lemma:def-dimrok-lift}(\ref{alternativedefinitionofRokhlindimensionforflows}). 

 Conditions~(\ref{Lemma:def-dimrok-lift-item-5c}) and (\ref{Lemma:def-dimrok-lift-item-5d}) are trivially satisfied since ${C}_{0}(Y)$ is commutative. To check condition~(\ref{Lemma:def-dimrok-lift-item-5b}), we compute for $ l = 0, \dots, d $ that
\[
x^{(l)}  x^{(l)*}  = \sum_{i, j \in I^{(l)} } \psi_i \psi_j^* = \sum_{i \in I^{(l)} } | \psi_i |^2, 
\]
using that $ \psi_i $ and $ \psi_j $ are orthogonal when $i \not= j$. Thus
 \begin{align*}
  \sum_{l=0}^{d} x^{(l)} x^{(l)*}  & = \sum_{i\in I} | \psi_i |^2 = \sum_{i\in I} \varphi_i  ,
 \end{align*}
 which is equal to $1$ on $ K $. This implies that for any $ f \in \mathcal{F} $, we have 
\[ 
f = \sum_{l= 0 }^{d} x^{(l)} x^{(l)*} \cdot f .
\]
This shows condition~(\ref{Lemma:def-dimrok-lift-item-5b}). 

 As for condition~(\ref{Lemma:def-dimrok-lift-item-5a}), it suffices to check that for any $ l \in \{ 0, \dots, d \} $, for any $t \in [ -T, T] $ and for any $y \in Y$, we have $ \left| x^{(l)} ( y ) -  \mathrm{exp}{  \left( \frac{2\pi i t }{M} \right) } \cdot x^{(l)} ( \Phi_{t} (y) ) \right| \le \delta $. For $ i\in I $, let us define the function
  $$ \widetilde{\psi}_i = \varphi_i \cdot \mathrm{exp} \left( \frac{ 2 \pi i }{M} \cdot a_{B_i, +} \right) : B_i \to {\mathbb{C}^{}} , $$
 which we continuously extend to all of $Y$ by setting $ \widetilde{\psi}_i (y) = 0$ for $y \in Y \setminus B_i$. We then define 
  $$ \widetilde{x}^{(l)} = \sum_{i \in I^{(l)} }  \widetilde{\psi}_i \in {C}_{c}(Y) $$
 for $l = 0, \dots, d$. Observe that $ \psi_i = \rho \circ \widetilde{\psi}_i $ for all $ i\in I $, and using the orthogonality of $ \{ \varphi_i \}_{i\in I^{(l)}} $, we also have $ x^{(l)} = \rho \circ \widetilde{x}^{(l)} $. Also notice that $ \rho $ preserves multiplication by complex numbers with norm $1$. Hence by our choice of $ \eta $, it suffices to check that for any $ l \in \{ 0, \dots, d \} $ and $y \in Y$, the function
  $$ \R\ni t \mapsto \mathrm{exp}{  \left( \frac{2\pi i t }{M} \right) } \cdot \widetilde{x}^{(l)} ( \Phi_{t} (y) ) \in \mathbb{C} $$
 is Lipschitz with constant $\eta$. Since the Lipschitz condition on a geodesic space can be checked locally, it suffices to show for any $ l \in \{ 0, \dots, d \} $ and $y \in Y$ that there is $ t_y > 0 $ such that for any $ t \in ( - t_y, t_y) $, we have 
  $$ \left| \widetilde{x}^{(l)} ( y ) -  \mathrm{exp}{  \left( \frac{2\pi i t }{M} \right) } \cdot \widetilde{x}^{(l)} ( \Phi_{t} (y) ) \right| \le \eta \cdot | t | . $$
 This breaks down into two cases:
 \begin{enumerate}[label={Case {\arabic*}:~},leftmargin=*]
  \item Suppose $ \varphi_i (y)  = 0 $ for every $ i\in I^{(l)} $. 
  Since $ \varphi_i (\Phi_{t} (y))  \not= 0 $ for at most one function in $ \{ \varphi_i \}_{i\in I^{(l)}} $, we have for every $t \in \mathbb{R}$ that 
 \[
 \begin{array}{cl}
 \multicolumn{2}{l}{\displaystyle \left| \widetilde{x}^{(l)} ( y ) -  \mathrm{exp}{  \left( \frac{2\pi i t }{M} \right) } \cdot \widetilde{x}^{(l)} ( \Phi_{t} (y) ) \right|  }\\
   = & \displaystyle \left| \widetilde{x}^{(l)} ( \Phi_{t} (y) ) \right| \\
   = &\displaystyle \max_{i\in I^{(l)}} \left\{ \left| \varphi_i ( \Phi_{t} (y) ) \right| \right\} \\
   = &\displaystyle \max_{i\in I^{(l)}} \left\{ \left| \varphi_i ( y ) - \varphi_i ( \Phi_{t} (y) ) \right| \right\} \\
   \le & \eta \cdot |t| .
  \end{array}
  \]
  \item Suppose $ \varphi_{i_0} (y) \not= 0 $ for some $ i_0 \in I^{(l)} $. Then we may pick $ t_y $ small enough so that $ \varphi_{i_0} ( \Phi_{t} (y) ) \not= 0 $ for all $ t \in ( - t_y, t_y ) $. By orthogonality, we know that $ \varphi_{i} ( \Phi_{t} (y) ) = 0 $ for all $ t \in ( - t_y, t_y ) $ and $ i\in I^{(l)} \setminus \{i_0\} $. Observe that the segment $ \Phi_{( - t_y, t_y)} (y) $ of a flow line falls entirely in the box $B_i$. Hence for any $ t \in ( - t_y, t_y ) $, we have
 \[
 \def\arraystretch{2}
  \begin{array}{cl}
  \multicolumn{2}{l} { \displaystyle \left| \widetilde{x}^{(l)} ( y ) -  \mathrm{exp}{  \left( \frac{2\pi i t }{M} \right) } \cdot \widetilde{x}^{(l)} ( \Phi_{t} (y) ) \right| }\\
   = & \displaystyle \left| \widetilde{\psi}_i ( y ) -  \mathrm{exp}{  \left( \frac{2\pi i t }{M} \right) } \cdot \widetilde{\psi}_i ( \Phi_{t} (y) ) \right| \\
   = & \displaystyle \bigg| \varphi_i ( y ) \cdot \mathrm{exp} \left( \frac{ 2 \pi i }{M} \cdot a_{B_i, +} ( y ) \right)   \\
   & \displaystyle  -   \mathrm{exp}{  \left( \frac{2\pi i t }{M} \right) } \cdot \varphi_i ( \Phi_{t} (y) ) \cdot \mathrm{exp} \left( \frac{ 2 \pi i }{M} \cdot a_{B_i, +} ( \Phi_{t} (y) ) \right)  \bigg| \\
   = & \bigg| \varphi_i ( y ) \cdot \mathrm{exp} \left( \frac{ 2 \pi i }{M} \cdot a_{B_i, +} ( y ) \right)   \\
   & \displaystyle  -   \mathrm{exp}{  \left( \frac{2\pi i t }{M} \right) } \cdot \varphi_i ( \Phi_{t} (y) ) \cdot \mathrm{exp} \left( \frac{ 2 \pi i }{M} \cdot \big( a_{B_i, +} ( y ) - t \big)  \right)  \bigg| \\
   = & \displaystyle \left| \Big( \varphi_i ( y ) - \varphi_i ( \Phi_{t} (y) ) \Big) \cdot \mathrm{exp} \left( \frac{ 2 \pi i }{M} \cdot a_{B_i, +} ( y ) \right)  \right| \\
   = & \displaystyle \left| \varphi_i ( y ) - \varphi_i ( \Phi_{t} (y) ) \right| \le  \eta \cdot |t| .
  \end{array}
  \]
 \end{enumerate}
We conclude that $ \left\| f \cdot \left( \alpha_{t}( x^{(l)} ) - \mathrm{exp}{  \left( \frac{2\pi i t }{M} \right) } \cdot x^{(l)} \right) \right\| \le \delta  $ for any $ l \in \{ 0, \dots, d \} $, $t \in [ -T, T] $ and $f \in \mathcal{F}$. This completes the proof of the first inequality. 
\end{proof}

\begin{proof}[Proof of the second inequality]
 Let us assume that $ \mathrm{dim}_\mathrm{Rok}(\alpha) \le d $ for some non-negative integer $d$, and show that $ \mathrm{dim}_\mathrm{tube}(\Phi) \le 2 (d + 1 ) - 1 $ by verifying the conditions in Definition~\ref{def:tube-dimension}. Before we start, let us make some local definitions:
 
 \begin{enumerate}[label=\textup{({D}\arabic*)}]\setcounter{enumi}{\value{proofenumi}}
  \item\label{proof:thmaboutrelationbetweenboxdimensionandRokhlindimension-disk} We present the closed unit disk $ \overline{\mathbb{D}} \subset \mathbb{C} $ as 
   $$ \overline{\mathbb{D}}  \cong ( \mathbb{R} \times [0,1] ) / \sim  $$
  where $ ( \theta_1, r ) \sim (\theta_2, r ) $ if and only if $ r = 0 $ or $ \theta_1 - \theta_2 \in \mathbb{Z} $, for any $ \theta_1, \theta_2 \in \mathbb{R} $. 
  \item\label{proof:thmaboutrelationbetweenboxdimensionandRokhlindimension-varepsilon} Set $ \varepsilon = \frac{1}{96} $.
  \item\label{proof:thmaboutrelationbetweenboxdimensionandRokhlindimension-Dj} For $j= 0,1,2$, define the following annular sections
 \begin{align*}
  D_j = & \left( [-j  \varepsilon, j \varepsilon] \times \left[ \frac{3 - j}{3\sqrt{d+2}} , 1  \right] \right) / \sim 
 \end{align*}
 as subsets of $\overline{\mathbb{D}}$ (note that $D_0$ is a line segment).
 %\end{enumerate}\setcounter{proofenumi}{\value{enumi}}
 
\begin{center}
 \begin{tikzpicture}
 \filldraw[fill=gray] (-20:3cm)  arc [radius=3, start angle=-20, delta angle=40]
 -- (20:1cm) arc [radius=1, start angle=20, delta angle=-40]
                                  -- cycle;
 \filldraw[fill=gray!30] (-10:3cm)  arc [radius=3, start angle=-10, delta angle=20]
 -- (10:2cm) arc [radius=2, start angle=10, delta angle=-20]
                                  -- cycle;
 \draw (0,0) circle (3cm);
 \node at (0:1.5) {$D_2$};
 \node at (0:2.5) {$D_1$};
 \node at (-0.5,0.5) {$\overline{\mathbb{D}}$};
 \end{tikzpicture}
 \end{center}
 Observe that $ D_j \subset D_{j+1}^o $ for $j= 0,1$, where $ D_{j}^o $ is the interior of $ D_j $ relative to $ \overline{\mathbb{D}} $. 
  \item\label{proof:thmaboutrelationbetweenboxdimensionandRokhlindimension-nbhd} Let $ N_\delta (D_j) = \{ z \in \overline{\mathbb{D}} \ | \ d(z, D_j) \le \delta \} $ denote  the $\delta$-neighborhood of $ D_j $ with regard to the Euclidean metric on $ \overline{\mathbb{D}} \subset \mathbb{C} $. Note that the Euclidean metric is invariant under rotation. 
 %\begin{enumerate}[label=({D}\arabic*)]\setcounter{enumi}{\value{proofenumi}}
  \item\label{proof:thmaboutrelationbetweenboxdimensionandRokhlindimension-delta} Pick $ 0< \delta \le \frac{1}{d + 2} $ so small that $ N_\delta(D_j) \subset D_{j+1}^o $ for $j= 0,1$.
  \item\label{proof:thmaboutrelationbetweenboxdimensionandRokhlindimension-g} Pick a continuous function $ g : \overline{\mathbb{D}} \to [0,1] $ such that $ g |_{D_1} = 1 $ and $ \mathrm{supp}(g) \subset D_2 $. In particular, we have $ g |_{\overline{\mathbb{D}} \setminus D_2^o} = 0 $.  
 \end{enumerate}\setcounter{proofenumi}{\value{enumi}}
 Now given $ L >0 $ and a compact subset $K\subset Y$, we would like to find a collection $\mathcal{U}$ of open subsets of $Y$ satisfying conditions~(\ref{def:tube-dimension-1})-(\ref{def:tube-dimension-3}) in Definition~\ref{def:tube-dimension}. %define numbers $M = 8L$ and $T = 8L$. 
 For this we also make the following local definitions:
 \begin{enumerate}[label=\textup{({D}\arabic*)}]\setcounter{enumi}{\value{proofenumi}}
  \item\label{proof:thmaboutrelationbetweenboxdimensionandRokhlindimension-K} By local compactness, there is a compact set $ K' \subset Y $ such that $ \Phi_{ [-8L, 8L] } (K) \subset (K')^o $.
  \item\label{proof:thmaboutrelationbetweenboxdimensionandRokhlindimension-f} By Urysohn's lemma, we can find $ f \in {C}_{0}(Y)_{\le1,+} $ such that $ f (y) = 1 $ for any $y \in K'$.
  %\item Set $F = \{f\} $.
  \item\label{proof:thmaboutrelationbetweenboxdimensionandRokhlindimension-x} Since we have assumed $ \mathrm{dim}_\mathrm{Rok}(\alpha) \le d $, we can apply Lemma~\ref{Lemma:def-dimrok-lift}(\ref{alternativedefinitionofRokhlindimensionforflows}) and find contractions $ x^{(0)}, \dots, x^{(d)} \in {C}_{0}(Y) $ satisfying conditions~(\ref{Lemma:def-dimrok-lift-item-5a}) - (\ref{Lemma:def-dimrok-lift-item-5d}) with $ p, T, \delta $ and $F$ replaced by $\frac{\pi}{4L}$, $8L$, $\delta$ and $\{f\}$. Written out, this means:
  \begin{enumerate}[label=(\alph*)]
    \item 
    \label{proof:thmaboutrelationbetweenboxdimensionandRokhlindimension-x-a} 
    $ \left\| f \cdot (\alpha_{t}( x^{(l)} ) - e^{\frac{2 \pi i t}{8L}} \cdot x^{(l)}) \right\| \le \delta $ for all $ l = 0, \dots, d $, for all $t \in [ -8L, 8L] $;
    \item 
    \label{proof:thmaboutrelationbetweenboxdimensionandRokhlindimension-x-b}
    $ \left\| f-f\cdot\sum_{l= 0 }^{d} x^{(l)} x^{(l)*} \right\| \le \delta  $;
    \item 
    \label{proof:thmaboutrelationbetweenboxdimensionandRokhlindimension-x-c}
    $ \left\| [ x^{(l)} , f ] \right\| \le \delta  $ for all $ l = 0, \dots, d $;
    \item 
    \label{proof:thmaboutrelationbetweenboxdimensionandRokhlindimension-x-d}
    $ \left\| f \cdot  [ x^{(l)} , x^{(l)*} ] \right\| \le \delta  $ for all $ l = 0, \dots, d $.
   \end{enumerate}
 \end{enumerate}\setcounter{proofenumi}{\value{enumi}}
 Note that the spectra of $ x^{(0)}, \dots, x^{(d)} $ (which are nothing but their ranges as functions on $Y$) are contained in $ \overline{\mathbb{D}} $. Also observe that replacing any number of the elements $ x^{(l)} $ by $ - x^{(l)} $ (or, more generally, by $ \lambda \, x^{(l)} $ for any $ \lambda \in \mathbb{C} $ with $ |\lambda| = 1 $) does not violate any of the four conditions~\ref{proof:thmaboutrelationbetweenboxdimensionandRokhlindimension-x-a}-\ref{proof:thmaboutrelationbetweenboxdimensionandRokhlindimension-x-d} in \ref{proof:thmaboutrelationbetweenboxdimensionandRokhlindimension-x}. 
 
 Let us fix an index $ l \in \{0,\dots,d\} $ in the following definitions:
 
 \begin{enumerate}[label=\textup{({D}\arabic*)}] \setcounter{enumi}{\value{proofenumi}}
  \item\label{proof:thmaboutrelationbetweenboxdimensionandRokhlindimension-xpm} Set $ x^{(l, \pm)} = \pm  x^{(l)} $.
  \item\label{proof:thmaboutrelationbetweenboxdimensionandRokhlindimension-Delta} For $ j = 0,1,2 $, define $ \Delta^{(l, \pm)}_j = \left( x^{(l, \pm)} \right) ^{-1} (D_j) , $ which are compact subsets of $ Y $, as $ 0 \not\in D_j $.
  \item\label{proof:thmaboutrelationbetweenboxdimensionandRokhlindimension-gamma} Define continuous functions
   $$ \gamma^{(l, \pm)} = g \circ x^{(l, \pm)} : Y \to [0,1] , $$
  which are supported in $ \Delta^{(l, \pm)}_2 $ and equal to $1$ on $ \Delta^{(l, \pm)}_1 $.
 \item\label{proof:thmaboutrelationbetweenboxdimensionandRokhlindimension-xi} Define   $\xi^{(l, \pm)} \colon Y \to \overline{\mathbb{D}}$ by
  $$ \xi^{(l, \pm)} (y) = \frac{1}{8L} \int_{-4L}^{4L} \gamma^{(l, \pm)}(\Phi_s(y)) \cdot \mathrm{exp}{  \left( \frac{2\pi i s }{8L} \right) }  \: d s , $$
 which are continuous because  the integrand is uniformly continuous. 
 \end{enumerate}\setcounter{proofenumi}{\value{enumi}}
 Intuitively speaking, the integral in \ref{proof:thmaboutrelationbetweenboxdimensionandRokhlindimension-xi} implements an averaging process that turns the approximate equivariance of $x^{(l, \pm)}$ as expressed by \ref{proof:thmaboutrelationbetweenboxdimensionandRokhlindimension-x}\ref{proof:thmaboutrelationbetweenboxdimensionandRokhlindimension-x-a} into exact equivariance of $\xi^{(l, \pm)}$, albeit restricted to a local scale. This will be made precise later in \eqref{eq:thmaboutrelationbetweenboxdimensionandRokhlindimension-equivariant}. 
 
 We first claim that for any $\sigma \in \{+, -\}$, $ y \in \Delta^{(l, \sigma)}_1 \cap K' $ and  
 \begin{equation}\label{eq:thmaboutrelationbetweenboxdimensionandRokhlindimension-t-1}
  t \in [ - (1 - 4\varepsilon ) 8L, - 4\varepsilon \cdot 8L ] \cup [ 4\varepsilon \cdot 8L, (1 - 4\varepsilon) 8L ] \, , 
 \end{equation}
 we have 
 \begin{equation}\label{eq:thmaboutrelationbetweenboxdimensionandRokhlindimension-gamma0}
  \gamma^{(l, \sigma)} (\Phi_t(y)) = 0 \; .
 \end{equation}
 Indeed, since $ f(y) = 1 $ by  \ref{proof:thmaboutrelationbetweenboxdimensionandRokhlindimension-f}, we have, by \ref{proof:thmaboutrelationbetweenboxdimensionandRokhlindimension-x}\ref{proof:thmaboutrelationbetweenboxdimensionandRokhlindimension-x-a}, 
  $$ \left| x^{(l, \sigma)} (\Phi_t(y) ) - \mathrm{exp}{  \left(- \frac{2\pi i t }{8L} \right) } \cdot x^{(l, \sigma)} (y ) \right| \le \delta $$
 and thus
 \[
 \def\arraystretch{2}
 \begin{array}{rcl}
  x^{(l, \sigma)} (\Phi_t(y) ) & \in & \displaystyle  N_\delta \left( \mathrm{exp}{  \left(- \frac{2\pi i t }{8L} \right) } \cdot x^{(l, \sigma)} (y ) \right)  \\
 & \stackrel{\ref{proof:thmaboutrelationbetweenboxdimensionandRokhlindimension-Delta}}{\subset} &  \displaystyle  N_\delta \left( \mathrm{exp}{  \left(- \frac{2\pi i t }{8L} \right) } \cdot   D_1 \right)  \\
&  \stackrel{\ref{proof:thmaboutrelationbetweenboxdimensionandRokhlindimension-nbhd}}{ =} &  \displaystyle \mathrm{exp}{  \left(- \frac{2\pi i t }{8L} \right) } \cdot  N_\delta \left( D_1 \right)    \\
 & \stackrel{\ref{proof:thmaboutrelationbetweenboxdimensionandRokhlindimension-delta}}{ \subset} & \displaystyle \mathrm{exp}{  \left(- \frac{2\pi i t }{8L} \right) } \cdot  D_2  \\
 & \stackrel{\ref{proof:thmaboutrelationbetweenboxdimensionandRokhlindimension-Dj}}{ =} & \displaystyle \left( \left[ \frac{t}{8L} - 2\varepsilon, \frac{t}{8L} + 2\varepsilon \right] \times \left[ \frac{1}{3 \sqrt{d+2}} , 1 \right] \right) / \sim  \\
&  \stackrel{\eqref{eq:thmaboutrelationbetweenboxdimensionandRokhlindimension-t-1}}{ \subset} & \displaystyle  \left( \left( [ -1 + 2\varepsilon, - 2\varepsilon ] \cup [ 2\varepsilon, 1 - 2\varepsilon ] \right) \times \left[ \frac{1}{3 \sqrt{d+2}} , 1 \right] \right) / \sim  \\
&  \stackrel{\ref{proof:thmaboutrelationbetweenboxdimensionandRokhlindimension-Dj}}{ \subset} &  \displaystyle \overline{\mathbb{D}} \setminus D_2^o .
 \end{array}
 \]
But this implies that $  \gamma^{(l, \sigma)} (\Phi_t(y)) \stackrel{\ref{proof:thmaboutrelationbetweenboxdimensionandRokhlindimension-gamma}}{=} g \left( x^{(l, \sigma)} (\Phi_t(y) ) \right) \stackrel{\ref{proof:thmaboutrelationbetweenboxdimensionandRokhlindimension-g}}{=} 0 $, and the claim is proved. 
 
 This brings about several consequences regarding $ \xi^{(l, \pm)} $. To this end, we compute, for any $ y \in \Delta^{(l, \pm)}_1 \cap K' $ and any $ t \in \left[ - \left( \frac{1}{2} - 4\varepsilon \right) 8L , \left( \frac{1}{2} - 4\varepsilon \right) 8L \right] $, 
\begin{equation}
 \stepcounter{equation}\tag{\theequation}\label{eq:thmaboutrelationbetweenboxdimensionandRokhlindimension-integral}
 \def\arraystretch{2}
 \begin{array}{cl}
  \multicolumn{2}{l} { \displaystyle \mathrm{exp}{ \left( \frac{ 2\pi i t }{8L} \right)} \cdot \xi^{(l, \pm)} (\Phi_t(y)) } \\
  \stackrel{\ref{proof:thmaboutrelationbetweenboxdimensionandRokhlindimension-xi}}{ =} &  \displaystyle  \frac{1}{8L} \int_{-4L}^{4L} \gamma^{(l, \pm)}(\Phi_{s + t}(y))  \cdot \mathrm{exp}{  \left( \frac{2\pi i (s+t) }{8L} \right) }   \: d s  \\
  \stackrel{s+t \to s'}{=} &  \displaystyle  \frac{1}{8L}  \int_{-4L + t }^{4L + t } \gamma^{(l, \pm)}(\Phi_{s' }(y))  \cdot \mathrm{exp}{  \left( \frac{2\pi i s' }{8L} \right) }   \: d s'  \\
  \stackrel{\eqref{eq:thmaboutrelationbetweenboxdimensionandRokhlindimension-gamma0}}{ =} &  \displaystyle  \frac{1}{8L}  \int_{-32 \varepsilon L }^{ 32 \varepsilon L } \gamma^{(l, \pm)}(\Phi_{s' }(y))  \cdot \mathrm{exp}{  \left( \frac{2\pi i s' }{8L} \right) }   \: d s' \; , 
  %= &   \frac{1}{8L}  \cdot \mathrm{exp}{ \left( \frac{- 2\pi i t }{8L} \right)}  \cdot \int_{-4L }^{4L } \gamma^{(l, \pm)}(\Phi_{s }(y))  \cdot \mathrm{exp}{  \left( \frac{2\pi i s }{8L} \right) }   \: d s  \\
  %= &  \mathrm{exp}{  \left( - \frac{2\pi i t }{8L} \right) }  \cdot \xi^{(l, \pm)} (y)  .
 \end{array}
 \end{equation}
 where in the last step, we also used the fact
 \begin{align*}
  & (-32 \varepsilon L , 32 \varepsilon L) = \\
  & [-4L+t, 4L+t] \setminus \left( [ - (1 - 4\varepsilon ) 8L, - 4\varepsilon \cdot 8L ] \cup [ 4\varepsilon \cdot 8L, (1 - 4\varepsilon) 8L ] \right) \; .
 \end{align*}
 Since the last integral in \eqref{eq:thmaboutrelationbetweenboxdimensionandRokhlindimension-integral} is independent of $t$, we obtain 
 \begin{equation}\label{eq:thmaboutrelationbetweenboxdimensionandRokhlindimension-equivariant}
  \mathrm{exp}{ \left( \frac{ 2\pi i t }{8L} \right)} \cdot \xi^{(l, \pm)} (\Phi_t(y)) = \mathrm{exp}{ \left( \frac{ 2\pi i 0 }{8L} \right)} \cdot \xi^{(l, \pm)} (\Phi_0(y)) = \xi^{(l, \pm)} (y) 
 \end{equation}
 for any $ y \in \Delta^{(l, \pm)}_1 \cap K' $ and any $ t \in \left[ - \left( \frac{1}{2} - 4\varepsilon \right) 8L , \left( \frac{1}{2} - 4\varepsilon \right) 8L \right] $. 

 Moreover, setting $t = 0$ in \eqref{eq:thmaboutrelationbetweenboxdimensionandRokhlindimension-integral}, we obtain
 \[
 \def\arraystretch{2}
 \begin{array}{cl}
 \multicolumn{2}{l} { \xi^{(l, \pm)} (y)  }\\  \stackrel{\eqref{eq:thmaboutrelationbetweenboxdimensionandRokhlindimension-integral}}{ =}  &  \displaystyle \frac{1}{64 \varepsilon L} \int_{-32 \varepsilon L }^{ 32 \varepsilon L } 8 \varepsilon \cdot \gamma^{(l, \pm)}(\Phi_{s }(y))  \cdot \mathrm{exp}  \left( \frac{2\pi i s }{8L} \right)    \: d s  \\
  \stackrel{\ref{proof:thmaboutrelationbetweenboxdimensionandRokhlindimension-gamma}}{ \in} &  \displaystyle \mathrm{conv} \left( \left\{ 8 \varepsilon \cdot \lambda \cdot \mathrm{exp}  \left( \frac{2\pi i s }{8L} \right) \ \bigg| \ s \in [ -32 \varepsilon L , 32 \varepsilon L  ], \ \lambda \in [0, 1 ] \right\} \right)\\
  = &\displaystyle    \left( [ -4\varepsilon, 4\varepsilon ]  \times \left[ 0 , 8 \varepsilon \right] \right) / \sim  .
 \end{array}
 \]
 Since when $ s =0 $, the integrand above is equal to 
  $$ 8 \varepsilon \cdot \gamma^{(l, \pm)}(\Phi_{0}(y))  \cdot \mathrm{exp}{  \left( \frac{2\pi i \cdot 0 }{8L} \right) }  = 8 \varepsilon , $$
 which falls outside of the faces $\left( \{ 4\varepsilon \}  \times \left[ 0 , 8 \varepsilon \right] \right) / \sim$ and $\left( \{ - 4\varepsilon \}  \times \left[ 0 , 8 \varepsilon \right] \right) / \sim$ of the convex set $\left( [ -4\varepsilon, 4\varepsilon ]  \times \left[ 0 , 8 \varepsilon \right] \right) / \sim $. It follows from the properties of faces of a convex set that 
 \begin{align*}
 \stepcounter{equation}\tag{\theequation}\label{eq:thmaboutrelationbetweenboxdimensionandRokhlindimension-range}
  \xi^{(l, \pm)} (y) \in & \big( \left( [ -4\varepsilon, 4\varepsilon ]  \times \left[ 0 , 8 \varepsilon \right] \right) / \sim \big)  \setminus \big( \left( \{ - 4\varepsilon, 4\varepsilon \}  \times \left[ 0 , 8 \varepsilon \right] \right) / \sim \big) \\
  = & \left( ( -4\varepsilon, 4\varepsilon )  \times \left( 0 , 8 \varepsilon \right] \right) / \sim \; .
 \end{align*}
 We define:
 \begin{enumerate}[label=\textup{({D}\arabic*)}] \setcounter{enumi}{\value{proofenumi}}
  \item\label{proof:thmaboutrelationbetweenboxdimensionandRokhlindimension-V} the subset $\displaystyle V =  \left( \left[ - \left( \frac{1}{2} - 8\varepsilon \right)  ,  \frac{1}{2} - 8\varepsilon   \right]  \times \left( 0 , 8 \varepsilon \right] \right) / \sim $ in $\overline{\mathbb{D}}$, and
  \item\label{proof:thmaboutrelationbetweenboxdimensionandRokhlindimension-B} the subset 
  \[
   B^{(l, \sigma)} =  \Phi_{\left[ - \left( \frac{1}{2} - 4\varepsilon \right) 8L , \left( \frac{1}{2} - 4\varepsilon \right) 8L \right]} ( \Delta^{(l, \sigma)}_1 \cap K' ) \cap \left( \xi^{(l, \sigma)} \right)^{-1} ( V )
  \]
  in $Y$, for any $l \in \{0, \ldots, d\}$ and any $\sigma \in \{+, -\}$. 
 \end{enumerate}\setcounter{proofenumi}{\value{enumi}}
   
 \begin{clm}\label{claim1withintheproofaboutrelationbetweenboxdimandRokdimthatasetisabox}
  For any $l \in \{0, \ldots, d\}$ and any $\sigma \in \{+, -\}$, $ B^{(l, \sigma)} $ is a box with length $ l_{B^{(l, \sigma)} } = (1 - 16 \varepsilon) 8L $. 
 \end{clm}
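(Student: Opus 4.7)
My plan is to parametrize $B^{(l,\sigma)}$ using the angular coordinate of $\xi^{(l,\sigma)}$ in the presentation $\overline{\mathbb{D}} \cong (\mathbb{R} \times [0,1])/\sim$ from \ref{proof:thmaboutrelationbetweenboxdimensionandRokhlindimension-disk}, and exploit both the equivariance identity~\eqref{eq:thmaboutrelationbetweenboxdimensionandRokhlindimension-equivariant} and the range estimate~\eqref{eq:thmaboutrelationbetweenboxdimensionandRokhlindimension-range}. Together these imply that for any $z \in \Delta^{(l,\sigma)}_1 \cap K'$ and any $s$ with $|s| \leq (1/2-4\varepsilon)8L$, the value $\xi^{(l,\sigma)}(\Phi_s(z))$ has positive radius $|\xi^{(l,\sigma)}(z)| \in (0, 8\varepsilon]$ and angle coordinate $\theta(z) - s/(8L)$, where $\theta(z)$ lies strictly in $(-4\varepsilon, 4\varepsilon)$.

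Compactness of $B^{(l,\sigma)}$ I expect to be a short observation: the orbit saturate $\Phi_{[-(1/2-4\varepsilon)8L,(1/2-4\varepsilon)8L]}(\Delta^{(l,\sigma)}_1 \cap K')$ is the continuous image of a compact set, and by the preceding paragraph $\xi^{(l,\sigma)}$ avoids the origin of $\overline{\mathbb{D}}$ on this saturate; so the preimage of $V$ intersected with the saturate equals the preimage of the closed sector $\overline{V}$ intersected with the saturate, hence is closed.

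For each $y \in B^{(l,\sigma)}$ I would fix a presentation $y = \Phi_{s_0}(z_0)$ as above and define
\[
a_\pm(y) = 8L \bigl( \theta(y) \pm (1/2-8\varepsilon) \bigr),
\]
where $\theta(y) = \theta(z_0) - s_0/(8L)$ is the angle of $\xi^{(l,\sigma)}(y)$, which by the condition $\xi^{(l,\sigma)}(y) \in V$ must lie in $[-(1/2-8\varepsilon), 1/2-8\varepsilon]$. This makes $a_-(y) \leq 0 \leq a_+(y)$ and $a_+(y) - a_-(y) = (1-16\varepsilon) \cdot 8L$ immediate, confirming property~(i) of Definition~\ref{definitionofboxes}. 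Property~(ii) should then follow by direct calculation: $|\theta(z_0)| < 4\varepsilon$ implies $|t + s_0| \leq (1/2-4\varepsilon)8L$ throughout $t \in [a_-(y), a_+(y)]$, so $\Phi_t(y) = \Phi_{t+s_0}(z_0)$ remains in the orbit saturate, and equivariance places $\xi^{(l,\sigma)}(\Phi_t(y))$ in $V$ because its angle $\theta(y) - t/(8L)$ sweeps exactly the interval $[-(1/2-8\varepsilon), 1/2-8\varepsilon]$ while its radius stays constant.

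The main obstacle I anticipate is property~(iii), where I must rule out that the flow can immediately re-enter $B^{(l,\sigma)}$ after time $a_+(y)$ via some alternative representation $\Phi_{s_1}(z_1)$ different from the one through $z_0$. The saving fact is that $\theta(z_0) \in (-4\varepsilon, 4\varepsilon)$ \emph{strictly}, which grants enough slack to pick $\varepsilon(y) > 0$ small enough that the inequality $|t + s_0| \leq (1/2-4\varepsilon)8L$ continues to hold for all $t \in (a_-(y) - \varepsilon(y), a_+(y) + \varepsilon(y))$ and additionally $\varepsilon(y)/(8L) < 8\varepsilon$ to prevent any angular wrap-around past $\pm 1/2$. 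With those choices, for $t$ slightly exceeding $a_+(y)$ the angle of $\xi^{(l,\sigma)}(\Phi_t(y))$ sits strictly below $-(1/2-8\varepsilon)$ yet strictly above $-1/2$, lying outside $V$; a symmetric argument handles $t$ just below $a_-(y)$, completing the verification.
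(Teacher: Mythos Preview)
Your proposal is correct and follows essentially the same route as the paper: both arguments establish compactness via the observation that $\xi^{(l,\sigma)}$ avoids the origin on the orbit saturate, define $a_\pm(y)$ through the angular coordinate of $\xi^{(l,\sigma)}(y)$ (the paper writes this as $\Lambda = \tfrac{1}{2\pi}\,\mathrm{Im}\log$), and verify Definition~\ref{definitionofboxes} by exploiting the equivariance identity~\eqref{eq:thmaboutrelationbetweenboxdimensionandRokhlindimension-equivariant} together with the strict inequality $|\theta(z_0)| < 4\varepsilon$ from~\eqref{eq:thmaboutrelationbetweenboxdimensionandRokhlindimension-range}. One small remark: your stated ``main obstacle'' about an alternative representation $\Phi_{s_1}(z_1)$ is not actually an issue, since once you have shown $\xi^{(l,\sigma)}(\Phi_t(y)) \notin V$ the point $\Phi_t(y)$ is excluded from $B^{(l,\sigma)}$ by definition, independently of any representation; your argument for~(iii) is nonetheless correct as written.
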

 \begin{clm}\label{claim2withintheproofaboutrelationbetweenboxdimandRokdimthatasetisabox}
  For any $l \in \{0, \ldots, d\}$, any $\sigma \in \{+, -\}$ and any $ y \in K $ with 
   $$ x^{(l, \sigma)}(y) \in \left( \left[-\frac{1}{4}, \frac{1}{4}\right] \times \left[ \frac{1}{\sqrt{d+2}} , 1  \right] \right) / \sim , $$
  we have 
   $$ \Phi_{\left[ - L, L \right]} (y) \subset \left( B^{(l, \sigma)} \right)^o . $$
 \end{clm}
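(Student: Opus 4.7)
The plan is to use approximate equivariance of $x^{(l,\sigma)}$ (with rate $2\pi/(8L)$) to flow $y$ by some $t_0 \in [-2L, 2L]$ into $\Delta_1^{(l,\sigma)}$, where the exact equivariance \eqref{eq:thmaboutrelationbetweenboxdimensionandRokhlindimension-equivariant} of $\xi^{(l,\sigma)}$ applies, and then verify that $\Phi_{[-L,L]}(y)$ stays comfortably inside the defining conditions of $B^{(l,\sigma)}$, with all inequalities strict enough to yield the interior.

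Write $x^{(l,\sigma)}(y) = r\,e^{2\pi i\theta_0}$ with $\theta_0\in[-\frac{1}{4},\frac{1}{4}]$ and $r\in[\frac{1}{\sqrt{d+2}},1]$. Set $t_0=8L\theta_0$ (so $|t_0|\le 2L$) and $z=\Phi_{t_0}(y)$. By \ref{proof:thmaboutrelationbetweenboxdimensionandRokhlindimension-K}, $z\in(K')^o$, and since $f\equiv 1$ on $K'$, \ref{proof:thmaboutrelationbetweenboxdimensionandRokhlindimension-x}\ref{proof:thmaboutrelationbetweenboxdimensionandRokhlindimension-x-a} gives $|x^{(l,\sigma)}(z)-r|\le\delta$; since $r\in D_0$, \ref{proof:thmaboutrelationbetweenboxdimensionandRokhlindimension-delta} forces $x^{(l,\sigma)}(z)\in D_1^o$. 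Thus $z$ lies in the open set $W_z:=(x^{(l,\sigma)})^{-1}(D_1^o)\cap (K')^o\subseteq \Delta_1^{(l,\sigma)}\cap K'$. For any $s\in[-L,L]$ we have $\Phi_s(y)=\Phi_{s-t_0}(z)$ with $|s-t_0|\le 3L<\frac{11L}{3}=(1/2-4\varepsilon)8L$, so $\Phi_s(y)\in\Phi_{[-(1/2-4\varepsilon)8L,(1/2-4\varepsilon)8L]}(\Delta_1^{(l,\sigma)}\cap K')$. Applying \eqref{eq:thmaboutrelationbetweenboxdimensionandRokhlindimension-equivariant} to $z$ with shift $s-t_0$ yields $\xi^{(l,\sigma)}(\Phi_s(y))=e^{-2\pi i(s-t_0)/(8L)}\xi^{(l,\sigma)}(z)$. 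Writing $\xi^{(l,\sigma)}(z)=\rho\,e^{2\pi i\tau}$ with $\rho\in(0,8\varepsilon]$ and $\tau\in[-4\varepsilon,4\varepsilon]$ by \eqref{eq:thmaboutrelationbetweenboxdimensionandRokhlindimension-range}, the argument of $\xi^{(l,\sigma)}(\Phi_s(y))$ is shifted by at most $3/8$, so it lies in $[-(4\varepsilon+3/8),\,4\varepsilon+3/8]=[-(1/2-8\varepsilon),\,1/2-8\varepsilon]$ thanks to the sharp identity $4\varepsilon+3/8=\frac{1}{24}+\frac{9}{24}=\frac{5}{12}=\frac{1}{2}-8\varepsilon$. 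Hence $\xi^{(l,\sigma)}(\Phi_s(y))\in V$ and $\Phi_s(y)\in B^{(l,\sigma)}$.

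To upgrade to $\Phi_s(y)\in(B^{(l,\sigma)})^o$, we show the inequalities above are in fact strict. The time-interval inclusion is already strict ($|s-t_0|<(1/2-4\varepsilon)8L$). For the $\xi$-part, observe that $\gamma^{(l,\sigma)}(z)=g\circ x^{(l,\sigma)}(z)=1$ since $x^{(l,\sigma)}(z)\in D_1^o\subset D_1$; by continuity, $\gamma^{(l,\sigma)}(\Phi_{s'}(z))>0$ on an open neighborhood of $s'=0$. Over this neighborhood the phase $e^{2\pi is'/(8L)}$ is non-constant, so the strict triangle inequality applied to the defining integral of $\xi^{(l,\sigma)}(z)$ gives $\rho<8\varepsilon$ strictly, and an analogous argument (``rotating'' the integrand by $e^{-8\pi i\varepsilon}$ to force phases into a half-plane, cf.\ the analysis of the extreme points of the convex set in \ref{proof:thmaboutrelationbetweenboxdimensionandRokhlindimension-xi}) yields $|\tau|<4\varepsilon$ strictly. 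Consequently, the angular component of $\xi^{(l,\sigma)}(\Phi_s(y))$ lies strictly inside $(-(1/2-8\varepsilon),\,1/2-8\varepsilon)$ and its modulus strictly inside $(0,8\varepsilon)$, so $\xi^{(l,\sigma)}(\Phi_s(y))\in V^o$ (topological interior of $V$ in $\overline{\mathbb{D}}$). The set
\[
U:=\Phi_{(-(1/2-4\varepsilon)8L,\,(1/2-4\varepsilon)8L)}(W_z)\cap(\xi^{(l,\sigma)})^{-1}(V^o)
\]
is open in $Y$ (union of homeomorphic images of an open set over an open time interval, intersected with the continuous preimage of an open set), contains $\Phi_s(y)$, and is contained in $B^{(l,\sigma)}$. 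Hence $\Phi_s(y)\in(B^{(l,\sigma)})^o$, and since $s\in[-L,L]$ was arbitrary, $\Phi_{[-L,L]}(y)\subset(B^{(l,\sigma)})^o$.

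The main obstacle is the tightness of the arithmetic identity $4\varepsilon+3/8=1/2-8\varepsilon$: mere membership in $V$ is an on-the-boundary phenomenon for extremal choices of $\theta_0$, $s$, and $\tau$, so promoting membership to interior containment relies crucially on the strictness of the bounds $\rho<8\varepsilon$ and $|\tau|<4\varepsilon$, which in turn come from the phase variation of the complex exponential in the integral defining $\xi^{(l,\sigma)}$ combined with the fact that $\gamma^{(l,\sigma)}$ does not vanish near $s'=0$.
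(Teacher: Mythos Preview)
Your proof is correct and follows the same overall strategy as the paper: flow $y$ by some $t_0\in[-2L,2L]$ to a point $z$ in the open set $W_z=(K')^o\cap(x^{(l,\sigma)})^{-1}(D_1^o)$ (which is precisely the paper's set $U$), then invoke the exact equivariance \eqref{eq:thmaboutrelationbetweenboxdimensionandRokhlindimension-equivariant} of $\xi^{(l,\sigma)}$ together with the arithmetic identity $4\varepsilon+\tfrac{3}{8}=\tfrac{1}{2}-8\varepsilon$ (equivalently $(\tfrac12-12\varepsilon)8L=3L$).

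The only real difference is the passage to the interior. The paper observes that $\Phi_{[-3L,3L]}(W_z)$ is already open (as a union of homeomorphic images of the open set $W_z$) and is contained in $B^{(l,\sigma)}$; hence it lies in $(B^{(l,\sigma)})^o$, and $\Phi_{[-L,L]}(y)\subset\Phi_{[-t_0-L,\,-t_0+L]}(W_z)\subset\Phi_{[-3L,3L]}(W_z)$ finishes the argument. This avoids any need for $\rho<8\varepsilon$ strictly, since $V$ already permits $\rho=8\varepsilon$. Your route instead aims for $\xi^{(l,\sigma)}(\Phi_s(y))\in V^o$ pointwise, which forces the extra strict-triangle-inequality step for $\rho$. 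That step is valid, but unnecessary; and the strict angular bound $|\tau|<4\varepsilon$ you work to re-derive is in fact already recorded in \eqref{eq:thmaboutrelationbetweenboxdimensionandRokhlindimension-range} (note the open interval there), so you could have cited it directly.
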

 We postpone the proofs of these claims until after the proof of this theorem, and first show how to complete the proof using them.

 Let us show that $ \mathcal{U} = \left\{ \left( B^{(l, \sigma)} \right)^o \right\}_{ l \in \{0, \dots, d \}, \sigma \in \{+,-\} } $ is a collection of open subsets of $ Y $ that satisfies the conditions in Definition~\ref{def:tube-dimension} for $ \mathrm{dim}_\mathrm{tube} ( \Phi ) \le 2(d +1 ) -1 $ with regard to $ L $ and $ K $. By design, each $ \left( B^{(l, \sigma)} \right)^o $ is contained in a box, and the multiplicity is at most $ 2 (d +1) $, which is the cardinality of the collection. Since $ f (y) = 1 $ for all $y\in K$ by \ref{proof:thmaboutrelationbetweenboxdimensionandRokhlindimension-f} and 
  $$ \left\| \sum_{l= 0 }^{d} x^{(l)}  x^{(l)*} \cdot f - f \right\| \le \delta $$ 
 by \ref{proof:thmaboutrelationbetweenboxdimensionandRokhlindimension-x}\ref{proof:thmaboutrelationbetweenboxdimensionandRokhlindimension-x-b}, we get for all $y\in K$ that
  $$  \sum_{l= 0 }^{d} \left| x^{(l)} (y) \right| ^2 \ge 1 - \delta \stackrel{\ref{proof:thmaboutrelationbetweenboxdimensionandRokhlindimension-delta}}{\ge} \frac{ d + 1}{ d + 2} \; . $$ 
 Thus there is a number $ l = l(y) \in \{0, \dots, d\} $ such that 
  $$ \left| x^{(l)} (y) \right| ^2 \ge \frac{1}{ d + 2} . $$
 It follows that at least one of $ x^{(l, +)}(y) $ and $ x^{(l, -)}(y) $ is contained in 
  $$ \left( \left[-\frac{1}{4}, \frac{1}{4}\right] \times \left[ \frac{1}{\sqrt{d+2}} , 1  \right] \right) / \sim \; .$$
 Hence by Claim~\ref{claim2withintheproofaboutrelationbetweenboxdimandRokdimthatasetisabox}, we know that $ \Phi_{\left[ - L, L \right]} (y) $ is contained in $\left( B^{(l, +)} \right)^o $ or $\left( B^{(l, -)} \right)^o $. This shows $ \mathrm{dim}_\mathrm{tube} ( \Phi ) \le 2(d +1 ) -1 $. 
\end{proof}

\begin{proof}[Proof of Claim~\ref{claim1withintheproofaboutrelationbetweenboxdimandRokdimthatasetisabox}]
 Fix $l \in \{0, \ldots, d\}$ and $\sigma \in \{+, -\}$. We first show $B^{(l, \sigma)}$ is compact. To this end, for any  $ z \in \Delta^{(l, \sigma)}_1 \cap K'  $, and any 
 \begin{equation}\label{eq:thmaboutrelationbetweenboxdimensionandRokhlindimension-t-2}
  t \in \left[ - \left( \frac{1}{2} - 4\varepsilon \right) 8L , \left( \frac{1}{2} - 4\varepsilon \right) 8L \right] , 
 \end{equation}
 we have 
 \begin{align*}
  \xi^{(l, \sigma)} \left( \Phi_t (z) \right) \stackrel{\eqref{eq:thmaboutrelationbetweenboxdimensionandRokhlindimension-equivariant}}{=} &  \mathrm{exp}{  \left( - \frac{2\pi i t }{8L} \right) }  \cdot \xi^{(l, \sigma)} (z) \\
  \stackrel{\eqref{eq:thmaboutrelationbetweenboxdimensionandRokhlindimension-range}}{\in} &  \left( ( -4\varepsilon - \frac{t}{8L}, 4\varepsilon - \frac{t}{8L} )  \times \left( 0 , 8 \varepsilon \right] \right) / \sim \\
  \stackrel{\eqref{eq:thmaboutrelationbetweenboxdimensionandRokhlindimension-t-2}}{\subset} & \left( \left( - \frac{1}{2} , \frac{1}{2}  \right)  \times \left( 0 , 8 \varepsilon \right] \right) / \sim \; . 
 \end{align*}
 Hence the image of the compact set $\Phi_{\left[ - \left( \frac{1}{2} - 4\varepsilon \right) 8L , \left( \frac{1}{2} - 4\varepsilon \right) 8L \right]} ( \Delta^{(l, \sigma)}_1 \cap K' ) $ under $ \xi^{(l, \sigma)} $ does not include $0$, and thus its intersection with $V$ is the same as its intersection with $V \cup \{0\}$, which, by \ref{proof:thmaboutrelationbetweenboxdimensionandRokhlindimension-V}, is the compact set 
 \begin{align*}
  %& \xi^{(l, *)} \left( \Phi_{\left[ - \left( \frac{1}{2} - 4\varepsilon \right) 8L , \left( \frac{1}{2} - 4\varepsilon \right) 8L \right]} ( \Delta^{(l, *)}_1 \cap K' ) \right) \cap V \\
  %\stackrel{\ref{proof:thmaboutrelationbetweenboxdimensionandRokhlindimension-V}}{=} & \xi^{(l, *)} \left( \Phi_{\left[ - \left( \frac{1}{2} - 4\varepsilon \right) 8L , \left( \frac{1}{2} - 4\varepsilon \right) 8L \right]} ( \Delta^{(l, *)}_1 \cap K' ) \right) \cap 
  \left( \left[ - \left( \frac{1}{2} - 8\varepsilon \right)  ,  \frac{1}{2} - 8\varepsilon   \right]  \times \left[ 0 , 8 \varepsilon \right] \right) / \sim \; .
 \end{align*}
 It follows that this intersection and thus its preimage under $\xi^{(l, \sigma)}$ are also compact, and thus so is $B^{(l, \sigma)}$ by its definition in \ref{proof:thmaboutrelationbetweenboxdimensionandRokhlindimension-B}. 
 
 Now let 
 \begin{equation}\label{eq:thmaboutrelationbetweenboxdimensionandRokhlindimension-log}
  \log :  \left( \left( - \frac{1}{2} , \frac{1}{2}  \right)  \times \left( 0 , 1 \right] \right) / \sim \ \to \{ a + b i \in \mathbb{C} \ | \ a \le 0, \ b \in ( - \pi, \pi ) \} 
 \end{equation}
 be a continuous branch of the log function, and define a continuous function
 \begin{equation}\label{eq:thmaboutrelationbetweenboxdimensionandRokhlindimension-Lambda}
  \Lambda = \frac{1}{2\pi} \mathrm{Im} \log : \left( \left( - \frac{1}{2} , \frac{1}{2}  \right)  \times \left( 0 , 1 \right] \right) / \sim \ \to \left( - \frac{1}{2}, \frac{1}{2} \right)  , 
 \end{equation}
 where $ \mathrm{Im} \log $ denotes the imaginary coordinate of the log function. 
 
 %Fix $y \in B^{(l, *)}$. 
 Since $\xi^{(l, \sigma)} (B^{(l, \sigma)}) \stackrel{\ref{proof:thmaboutrelationbetweenboxdimensionandRokhlindimension-B}}{\subset} V \stackrel{\ref{proof:thmaboutrelationbetweenboxdimensionandRokhlindimension-V}}{\subset} \left( \left( - \frac{1}{2} , \frac{1}{2}  \right)  \times \left( 0 , 1 \right] \right) / \sim $, we may define functions $a^{(l, \sigma)}_\pm \colon B^{(l, \sigma)} \to \C$ by the formulas
 \begin{align}
  a^{(l, \sigma)}_-(y) = & \left( \Lambda \left(  \xi^{(l, \sigma)} (y) \right)  - \left( \frac{1}{2} - 8\varepsilon \right) \right) \cdot 8L \;, \label{eq:thmaboutrelationbetweenboxdimensionandRokhlindimension-aminus} \\
  a^{(l, \sigma)}_+(y) = & \left( \Lambda \left(  \xi^{(l, \sigma)} (y) \right)  + \left( \frac{1}{2} - 8\varepsilon \right) \right) \cdot 8L \label{eq:thmaboutrelationbetweenboxdimensionandRokhlindimension-aplus} \;.
 \end{align}
 Fix $y \in B^{(l, \sigma)}$. An immediate consequence of the above definition is that 
  \begin{equation}\label{eq:thmaboutrelationbetweenboxdimensionandRokhlindimension-diff}
   a^{(l, \sigma)}_+(y) - a^{(l, \sigma)}_-(y) = (1 - 16 \varepsilon) 8L = l_{B^{(l, \sigma)} }  \; .
  \end{equation}
 %for any $y \in B^{(l, *)}$. 
  
 By \ref{proof:thmaboutrelationbetweenboxdimensionandRokhlindimension-B}, there are $z \in \Delta^{(l, \sigma)}_1 \cap K' $ and 
 \begin{equation}\label{eq:thmaboutrelationbetweenboxdimensionandRokhlindimension-t-3}
  t \in \left[ - \left( \frac{1}{2} - 4\varepsilon \right) 8L , \left( \frac{1}{2} - 4\varepsilon \right) 8L \right]
 \end{equation}
 such that $y = \Phi_t (z)$. It follows from \eqref{eq:thmaboutrelationbetweenboxdimensionandRokhlindimension-range} and \eqref{eq:thmaboutrelationbetweenboxdimensionandRokhlindimension-Lambda} that 
  \begin{equation}\label{eq:thmaboutrelationbetweenboxdimensionandRokhlindimension-Lambda-range}
   \Lambda \left(  \xi^{(l, \sigma)} (z) \right) \in ( - 4 \varepsilon, 4 \varepsilon )
  \end{equation}
 and from \eqref{eq:thmaboutrelationbetweenboxdimensionandRokhlindimension-equivariant} and \eqref{eq:thmaboutrelationbetweenboxdimensionandRokhlindimension-Lambda} that
  \begin{equation}\label{eq:thmaboutrelationbetweenboxdimensionandRokhlindimension-Lambda-yz}
   \Lambda \left(  \xi^{(l, \sigma)} (y) \right) = \Lambda \left(  \xi^{(l, \sigma)} (z) \right) -  \frac{t}{8L}  \; .
  \end{equation}
 Thus we have 
 \[
 \stepcounter{equation}\tag{\theequation}\label{eq:thmaboutrelationbetweenboxdimensionandRokhlindimension-aminus-lowerbound}
   \def\arraystretch{2}
 \begin{array}{rcl}
  a^{(l, \sigma)}_-(y) &
 \stackrel{\eqref{eq:thmaboutrelationbetweenboxdimensionandRokhlindimension-aminus}}{ =} & \displaystyle \left( \Lambda \left(  \xi^{(l, \sigma)} (y) \right)  - \left( \frac{1}{2} - 8\varepsilon \right) \right) \cdot 8L \\
&  \stackrel{\eqref{eq:thmaboutrelationbetweenboxdimensionandRokhlindimension-Lambda-yz}}{ =} &\displaystyle \left( \Lambda \left(  \xi^{(l, \sigma)} (z) \right) - \frac{t}{8L}  - \left( \frac{1}{2} - 8\varepsilon \right) \right) \cdot 8L \\
 & \stackrel{\eqref{eq:thmaboutrelationbetweenboxdimensionandRokhlindimension-Lambda-range}}{ >} & \displaystyle - 4 \varepsilon \cdot 8L + \left( - \frac{t}{8L}  - \left( \frac{1}{2} - 8\varepsilon \right) \right) \cdot 8L  \\
  %= & \Bigg( - t - \left( \frac{1}{2} - 4\varepsilon \right) \cdot 8L , - t - \left( \frac{1}{2} - 12\varepsilon \right) \cdot 8L  \Bigg) \\
&  \stackrel{\eqref{eq:thmaboutrelationbetweenboxdimensionandRokhlindimension-t-3}}{ =} & \displaystyle - t - \left( \frac{1}{2} - 4\varepsilon \right) \cdot 8L \; .
 \end{array}
 \]
 On the other hand, since $\xi^{(l, \sigma)} (y) \in V$ by \ref{proof:thmaboutrelationbetweenboxdimensionandRokhlindimension-B}, we have $\Lambda \left(  \xi^{(l, \sigma)} (y) \right) \in \left[ - \left( \frac{1}{2} - 8\varepsilon \right)  ,  \frac{1}{2} - 8\varepsilon   \right]$ and thus
 \begin{equation}\label{eq:thmaboutrelationbetweenboxdimensionandRokhlindimension-aminus-upperbound}
  a^{(l, \sigma)}_-(y) \stackrel{\eqref{eq:thmaboutrelationbetweenboxdimensionandRokhlindimension-aminus}}{ =} \left( \Lambda \left(  \xi^{(l, \sigma)} (y) \right)  - \left( \frac{1}{2} - 8\varepsilon \right) \right) \cdot 8L \, \leq   0 \; . \\
 \end{equation}
 Similarly we have 
 \begin{equation}\label{eq:thmaboutrelationbetweenboxdimensionandRokhlindimension-aplus-range}
  0\leq a^{(l, \sigma)}_+(y) < - t + \left( \frac{1}{2} - 4\varepsilon \right)  \cdot 8L \; .
 \end{equation}
 
 Now for any $ s \in \Big[ - t - \left( \frac{1}{2} - 4\varepsilon \right)  \cdot 8L , - t + \left( \frac{1}{2} - 4\varepsilon \right)  \cdot 8L \Big] $, we have 
 \begin{equation}\label{eq:thmaboutrelationbetweenboxdimensionandRokhlindimension-t-4}
  (s + t) \in \Big[ - \left( \frac{1}{2} - 4\varepsilon \right)  \cdot 8L ,  \left( \frac{1}{2} - 4\varepsilon \right)  \cdot 8L \Big]
 \end{equation}
 and thus
 \[
 \stepcounter{equation}\tag{\theequation}\label{eq:thmaboutrelationbetweenboxdimensionandRokhlindimension-equivariant-s}
  \def\arraystretch{2}
 \begin{array}{rcl}
 \xi^{(l, \sigma)} \left( \Phi_s(y) \right) & = &\displaystyle \xi^{(l, \sigma)} \left( \Phi_{ s + t } (z) \right) \\
 & \stackrel{\eqref{eq:thmaboutrelationbetweenboxdimensionandRokhlindimension-equivariant}}{ =} &\displaystyle  \mathrm{exp}{  \left( - \frac{2\pi i (s+t) }{8L} \right) }  \cdot \xi^{(l, \sigma)} (z) \\
 & \stackrel{\eqref{eq:thmaboutrelationbetweenboxdimensionandRokhlindimension-equivariant}}{ =} & \displaystyle \mathrm{exp}{  \left( - \frac{2\pi i s }{8L} \right) }  \cdot \xi^{(l, \sigma)} (y) \; .
 \end{array}
 \]
 In particular, we have
 
 %Equivalently, this is saying that   $$ \left| \xi^{(l, *)} \left( \Phi_s(y) \right) \right| =  \left| \xi^{(l, *)} \left( y \right) \right|  $$ 
 \begin{align}
  \Lambda \left( \xi^{(l, \sigma)} \left( \Phi_s(y) \right) \right) & \stackrel{\eqref{eq:thmaboutrelationbetweenboxdimensionandRokhlindimension-equivariant-s}}{ =}  \displaystyle \Lambda \left( \xi^{(l, \sigma)} \left( z \right) \right) - \frac{ s + t }{8L} \label{eq:thmaboutrelationbetweenboxdimensionandRokhlindimension-Lambda-syz} \\
&  \stackrel{\eqref{eq:thmaboutrelationbetweenboxdimensionandRokhlindimension-equivariant-s}}{ =}  \displaystyle \Lambda \left( \xi^{(l, \sigma)} \left( y \right) \right) - \frac{ s }{8L} \\
&  \stackrel{\eqref{eq:thmaboutrelationbetweenboxdimensionandRokhlindimension-aminus}}{=}\hspace{1mm}  \displaystyle \frac{ a^{(l, \sigma)}_-(y) - s }{8L} + \left( \frac{1}{2} - 8\varepsilon \right)  \label{eq:thmaboutrelationbetweenboxdimensionandRokhlindimension-Lambda-upperbound} \\
&  \stackrel{\eqref{eq:thmaboutrelationbetweenboxdimensionandRokhlindimension-aplus}}{ =}  \displaystyle \frac{ a^{(l, \sigma)}_+(y) - s }{8L} - \left( \frac{1}{2} - 8\varepsilon \right)  \; . \label{eq:thmaboutrelationbetweenboxdimensionandRokhlindimension-Lambda-lowerbound} 
 \end{align}
  We conclude that if 
  \begin{equation}\label{eq:thmaboutrelationbetweenboxdimensionandRokhlindimension-s-1}
   s \in  \left[ a^{(l, \sigma)}_-(y), a^{(l, \sigma)}_+(y)  \right] \; ,
  \end{equation}
  then 
  \begin{align*}
   \Phi_s(y) = \Phi_{ s + t } (z)  \stackrel{\eqref{eq:thmaboutrelationbetweenboxdimensionandRokhlindimension-t-4}}{ \in} \Phi_{\left[ - \left( \frac{1}{2} - 4\varepsilon \right) 8L , \left( \frac{1}{2} - 4\varepsilon \right) 8L \right]} ( \Delta^{(l, \sigma)}_1 \cap K' ) \; ,
  \end{align*}
  and combining
  \[
  \def\arraystretch{2}
  \begin{array}{cl}
  \multicolumn{2}{l}{ \displaystyle \Lambda \left( \xi^{(l, \sigma)} \left( \Phi_s(y) \right) \right)} \\
   %\stackrel{\eqref{eq:thmaboutrelationbetweenboxdimensionandRokhlindimension-equivariant-s}}{ =} & \Lambda \left( \xi^{(l, *)} \left( y \right) \right) - \frac{ s }{8L}  \\
   \stackrel{\eqref{eq:thmaboutrelationbetweenboxdimensionandRokhlindimension-Lambda-upperbound}, \eqref{eq:thmaboutrelationbetweenboxdimensionandRokhlindimension-Lambda-lowerbound}}{ \in} &\displaystyle \left[ \frac{ a^{(l, \sigma)}_+(y) - s }{8L} - \left( \frac{1}{2} - 8\varepsilon \right) , \frac{ a^{(l, \sigma)}_-(y) - s }{8L} + \left( \frac{1}{2} - 8\varepsilon \right) \right] \\
   \stackrel{\eqref{eq:thmaboutrelationbetweenboxdimensionandRokhlindimension-s-1}}{\subset} &\displaystyle \left[ - \left( \frac{1}{2} - 8\varepsilon \right) , \left( \frac{1}{2} - 8\varepsilon \right)  \right]
  \end{array}
  \]
   %  \begin{align*}   & \Lambda \left( \xi^{(l, *)} \left( \Phi_s(y) \right) \right) \\   \stackrel{\eqref{eq:thmaboutrelationbetweenboxdimensionandRokhlindimension-equivariant-s}}{ =} & \Lambda \left( \xi^{(l, *)} \left( y \right) \right) - \frac{ s }{8L}  \\   \stackrel{\eqref{eq:thmaboutrelationbetweenboxdimensionandRokhlindimension-aminus}, \eqref{eq:thmaboutrelationbetweenboxdimensionandRokhlindimension-aplus}}{ \in} & \left[ \frac{ a^{(l, *)}_+(y) - s }{8L} - \left( \frac{1}{2} - 8\varepsilon \right) 8L, \frac{ a^{(l, *)}_-(y) - s }{8L} + \left( \frac{1}{2} - 8\varepsilon \right) 8L \right] \\   \stackrel{\eqref{eq:thmaboutrelationbetweenboxdimensionandRokhlindimension-s-1}}{ \subset} & \left[ - \left( \frac{1}{2} - 8\varepsilon \right) 8L, \left( \frac{1}{2} - 8\varepsilon \right) 8L \right]  \end{align*}
 with 
 \begin{align*}
  \left| \xi^{(l, \sigma)} \left( \Phi_s(y) \right) \right| \stackrel{\eqref{eq:thmaboutrelationbetweenboxdimensionandRokhlindimension-equivariant-s}}{ =}  \left| \xi^{(l, \sigma)} \left( y \right) \right| \in (0, 8\varepsilon] \; ,
 \end{align*}
 we also see that $\xi^{(l, \sigma)} \left( \Phi_s(y) \right) \in V$. Thus $\Phi_s(y) \in  B^{(l, \sigma)}$.
 
 On the other hand, if 
 \begin{equation}\label{eq:thmaboutrelationbetweenboxdimensionandRokhlindimension-s-2a}
  s \in  \left[- t - \left( \frac{1}{2} - 4\varepsilon \right)  \cdot 8L , a^{(l, \sigma)}_-(y) \right)  
 \end{equation}
 (note that this is nonempty), then
 \[
 \def\arraystretch{2.2}
 \begin{array}{cl}
   \multicolumn{2}{l} { \displaystyle \Lambda \left( \xi^{(l, \sigma)} \left( \Phi_s(y) \right) \right)} \\
   \stackrel{\eqref{eq:thmaboutrelationbetweenboxdimensionandRokhlindimension-Lambda-upperbound}}{ =} &\displaystyle  \frac{ a^{(l, \sigma)}_-(y) - s }{8L} + \left( \frac{1}{2} - 8\varepsilon \right)  \\
   \stackrel{\eqref{eq:thmaboutrelationbetweenboxdimensionandRokhlindimension-s-2a}}{ \in} &\displaystyle  \left( \left( \frac{1}{2} - 8\varepsilon \right) , \frac{ a^{(l, \sigma)}_-(y) + t + \left( \frac{1}{2} - 4\varepsilon \right)  \cdot 8L  }{8L} + \left( \frac{1}{2} - 8\varepsilon \right)  \right] \\
   \stackrel{ \eqref{eq:thmaboutrelationbetweenboxdimensionandRokhlindimension-aminus}}{ =} &\displaystyle  \left( \left( \frac{1}{2} - 8\varepsilon \right) ,  \Lambda \left(  \xi^{(l, \sigma)} (y) \right) + \frac{ t  }{8L} + \left( \frac{1}{2} - 4\varepsilon \right)  \right] \\
   \stackrel{\eqref{eq:thmaboutrelationbetweenboxdimensionandRokhlindimension-Lambda-yz}}{ =} &\displaystyle  \left( \left( \frac{1}{2} - 8\varepsilon \right) ,  \Lambda \left(  \xi^{(l, \sigma)} (z) \right) + \left( \frac{1}{2} - 4\varepsilon \right)  \right] \\
   \stackrel{ \eqref{eq:thmaboutrelationbetweenboxdimensionandRokhlindimension-Lambda-range}}{\subset} & \displaystyle  \Bigg(\left( \frac{1}{2} - 8\varepsilon \right) , \frac{1}{2} \Bigg) \; .
  \end{array}
  \]
  Similarly, if 
  \begin{equation}\label{eq:thmaboutrelationbetweenboxdimensionandRokhlindimension-s-2b}
   s \in  \left(a^{(l, \sigma)}_+(y), - t + \left( \frac{1}{2} - 4\varepsilon \right)  \cdot 8L \right] 
  \end{equation}
  (this is also nonempty), then the same argument as above, yet with \eqref{eq:thmaboutrelationbetweenboxdimensionandRokhlindimension-Lambda-upperbound} replaced by \eqref{eq:thmaboutrelationbetweenboxdimensionandRokhlindimension-Lambda-lowerbound} and \eqref{eq:thmaboutrelationbetweenboxdimensionandRokhlindimension-aminus} replaced by \eqref{eq:thmaboutrelationbetweenboxdimensionandRokhlindimension-aplus}, shows that
  \begin{equation}
   \Lambda \left( \xi^{(l, \sigma)} \left( \Phi_s(y) \right) \right) \in  \Bigg( - \frac{1}{2} , - \left( \frac{1}{2} - 8\varepsilon \right) \Bigg) \; . 
  \end{equation}
  
 As a consequence, whenever
 \begin{equation*}
   s \in  \left[- t - \left( \frac{1}{2} - 4\varepsilon \right)  \cdot 8L , a^{(l, \sigma)}_-(y) \right) 
    \cup \left(a^{(l, \sigma)}_+(y), - t + \left( \frac{1}{2} - 4\varepsilon \right)  \cdot 8L \right] \; , 
  \end{equation*}
 we have $\xi^{(l, \sigma)} \left( \Phi_s(y) \right) \not\in V$ by \ref{proof:thmaboutrelationbetweenboxdimensionandRokhlindimension-V} and thus $\Phi_s(y) \not\in  B^{(l, \sigma)}$ by \ref{proof:thmaboutrelationbetweenboxdimensionandRokhlindimension-B}.

 %Thus $\Phi_s(y) \in  B^{(l, *)}$ for all $ s \in  [a^{(l, *)}_-(y), a^{(l, *)}_+(y)  ] $, and $\Phi_s(y) \not\in \ B^{(l, *)}$ for all  $$s \in  \left[- t - \left( \frac{1}{2} - 4\varepsilon \right)  \cdot 8L , a_-(y) \right)  \cup \left(a^{(l, *)}_+(y), - t + \left( \frac{1}{2} - 4\varepsilon \right)  \cdot 8L \right] . $$
% \begin{align*}
%  \Phi_s(y) \in & B^{(l, *)} & \text{ for\ } s \in & [a_-(y), a^{(l, *)}_+(y)  ]  ;\\
%  \Phi_s(y) \not\in & B^{(l, *)} & \text{ for\ } s \in & \left[- t - \left( \frac{1}{2} - 4\varepsilon %\right)  \cdot 8L , a_-(y) \right) \\
%   & && \cup \left(a^{(l, *)}_+(y), - t + \left( \frac{1}{2} - 4\varepsilon \right)  \cdot 8L \right].
% \end{align*}
 Therefore we have verified the conditions in Definition~\ref{definitionofboxes} for $ B^{(l, \sigma)} $. 
\end{proof}
 
\begin{proof}[Proof of Claim~\ref{claim2withintheproofaboutrelationbetweenboxdimandRokdimthatasetisabox}]
 Given $y \in K$, $l \in \{0, \ldots, d\}$ and $\sigma \in \{+,-\}$ satisfying 
 \begin{equation*}
  x^{(l, \sigma)}(y) \in \left( \left[-\frac{1}{4}, \frac{1}{4}\right] \times \left[ \frac{1}{\sqrt{d+2}} , 1  \right] \right) / \sim \; , 
 \end{equation*}
 we set $ t = - 8L \cdot \Lambda( x^{(l, \sigma)}(y) ) \in \left[ - 2L, 2L \right] $ and observe that this choice of $t$ guarantees
 \begin{equation}\label{eq:thmaboutrelationbetweenboxdimensionandRokhlindimension-yD0}
  \mathrm{exp}{  \left( \frac{2\pi i t }{8L} \right) } \cdot x^{(l, \sigma)} (y ) \in D_0 \; . 
 \end{equation}
 Set $ z = \Phi_{ -t} (y) $. Since $ f(y) = 1 $ by \ref{proof:thmaboutrelationbetweenboxdimensionandRokhlindimension-f}, we have, by \ref{proof:thmaboutrelationbetweenboxdimensionandRokhlindimension-x}\ref{proof:thmaboutrelationbetweenboxdimensionandRokhlindimension-x-a}, 
  $$ \left| x^{(l, \sigma)} (z ) - \mathrm{exp}{  \left( \frac{2\pi i t }{8L} \right) } \cdot x^{(l, \sigma)} (y ) \right| \le \delta $$
 and thus
 \begin{align*}
  x^{(l, \sigma)} (z ) \in &  N_\delta \left( \mathrm{exp}{  \left( \frac{2\pi i t }{8L} \right) } \cdot x^{(l, \sigma)} (y ) \right)  \stackrel{\eqref{eq:thmaboutrelationbetweenboxdimensionandRokhlindimension-yD0}}{\subset}  N_\delta \left( D_0 \right) \stackrel{\ref{proof:thmaboutrelationbetweenboxdimensionandRokhlindimension-delta}}{\subset} D_1^o .
 \end{align*}
 We also have $ z \in \Phi_{-t} (K) \stackrel{\ref{proof:thmaboutrelationbetweenboxdimensionandRokhlindimension-K}}{\subset} (K')^o $. Hence $z$ is included in the open set 
  \begin{equation}\label{eq:thmaboutrelationbetweenboxdimensionandRokhlindimension-U}
   U = (K')^o \cap \left( x^{(l, \sigma)} \right) ^{-1} (D_1^o) \; ,
  \end{equation}
 which itself is contained in $ \Delta^{(l, \sigma)}_1 \cap K' $ by \ref{proof:thmaboutrelationbetweenboxdimensionandRokhlindimension-Delta}. This latter fact has two immediate consequences:
 \begin{equation}\label{eq:thmaboutrelationbetweenboxdimensionandRokhlindimension-UK}
  \Phi_{\left[ - \left( \frac{1}{2} - 12\varepsilon \right) 8L , \left( \frac{1}{2} - 12\varepsilon \right) 8L \right]} ( U ) \subset \Phi_{\left[ - \left( \frac{1}{2} - 4\varepsilon \right) 8L , \left( \frac{1}{2} - 4\varepsilon \right) 8L \right]} ( \Delta^{(l, \sigma)}_1 \cap K' ) 
 \end{equation}
 and
  $$ \xi^{(l, \sigma)} (U) \stackrel{\eqref{eq:thmaboutrelationbetweenboxdimensionandRokhlindimension-range}}{\subset} \left( ( -4\varepsilon, 4\varepsilon )  \times \left( 0 , 8 \varepsilon \right] \right) / \sim \; , $$
 which, in turn, leads to
 \begin{equation}
  \stepcounter{equation}\tag{\theequation}\label{eq:thmaboutrelationbetweenboxdimensionandRokhlindimension-U-range}
  \def\arraystretch{2.2}
 \begin{array}{cl}
  \multicolumn{2}{l} {\displaystyle\xi^{(l, \sigma)} \left( \Phi_{\left[ - \left( \frac{1}{2} - 12\varepsilon \right) 8L , \left( \frac{1}{2} - 12\varepsilon \right) 8L \right]} ( U ) \right)} \\
  \stackrel{\eqref{eq:thmaboutrelationbetweenboxdimensionandRokhlindimension-equivariant}}{ \subset} & \displaystyle \left( \Bigg( ( -4\varepsilon, 4\varepsilon ) +  \left[ - \left( \frac{1}{2} - 12\varepsilon \right) , \left( \frac{1}{2} - 12\varepsilon \right) \right] \Bigg) \times \left( 0 , 8 \varepsilon \right] \right) / \sim \\
  = & \displaystyle   \left(  \left( - \left( \frac{1}{2} - 8\varepsilon \right) , \left( \frac{1}{2} - 8\varepsilon \right) \right) \times \left( 0 , 8 \varepsilon \right] \right) / \sim \\
  \stackrel{\ref{proof:thmaboutrelationbetweenboxdimensionandRokhlindimension-V}}{ \subset} & V \; .
 \end{array}
 \end{equation}
 By the definition of $B^{(l, \sigma)}$ in \ref{proof:thmaboutrelationbetweenboxdimensionandRokhlindimension-B}, we infer from \eqref{eq:thmaboutrelationbetweenboxdimensionandRokhlindimension-U-range} and \eqref{eq:thmaboutrelationbetweenboxdimensionandRokhlindimension-U-range} that
 \begin{equation}\label{eq:thmaboutrelationbetweenboxdimensionandRokhlindimension-UB}
  \Phi_{\left[ - \left( \frac{1}{2} - 12\varepsilon \right) 8L , \left( \frac{1}{2} - 12\varepsilon \right) 8L \right]} ( U ) \subset B^{(l, \sigma)} \; .
 \end{equation}
 Observe that by our choice in \ref{proof:thmaboutrelationbetweenboxdimensionandRokhlindimension-varepsilon} that $\varepsilon = \frac{1}{96}$, we have the identity
 \begin{equation}\label{eq:thmaboutrelationbetweenboxdimensionandRokhlindimension-varepsilon-intervals}
  \left[ - 2L, 2L \right] + \left[ - L, L \right] = \left[ - 3L, 3L \right] = \left[ - \left( \frac{1}{2} - 12\varepsilon \right) 8L , \left( \frac{1}{2} - 12\varepsilon \right) 8L \right] \; .
 \end{equation}
 Now since $ t \in \left[ - 2L, 2L \right] $ and $ y = \Phi_t (z) \in \Phi_t (U) $, we have 
  $$ \Phi_{\left[ - L, L \right]} (y) \subset \Phi_{\left[ t - L, t + L \right]} (U) \stackrel{\eqref{eq:thmaboutrelationbetweenboxdimensionandRokhlindimension-varepsilon-intervals}}{\subset} \Phi_{\left[ - \left( \frac{1}{2} - 12\varepsilon \right) 8L , \left( \frac{1}{2} - 12\varepsilon \right) 8L \right]} ( U ) \stackrel{\eqref{eq:thmaboutrelationbetweenboxdimensionandRokhlindimension-UB}}{\subset} B^{(l, \sigma)}   $$
 and thus $ \Phi_{\left[ - L, L \right]} (y) \subset \left( B^{(l, \sigma)} \right)^o   $.
\end{proof}

\begin{rmk}
In the proof for the second inequality, we have proved something stronger than a bound on the tube dimension. Namely we can find a cover for an arbitrary compact subset in $ Y $ and an arbitrarily large Lebesgue number along flows, so that this cover consists of $ 2 (\mathrm{dim}_\mathrm{Rok}(\alpha) + 1 ) $ boxes of the same length, which can be as short as $ \frac{28}{9} $ times the required Lebesgue number. In analogy with the asymptotic Assouad-Nagata dimension, also known as asymptotic dimension of linear type or asymptotic dimension with the Higson property (\cite{DranishnikovSmith2007asymptotic,  Higson1999Counterexamples}), we may define the \emph{tube dimension of linear type} of the flow space $(Y, \mathbb{R}, \Phi)$ as the infimum of natural numbers $d$ such that there is a linear function $\lambda \colon [0,\infty) \to [0,\infty)$ such that for any $L > 0$ and compact set $K \subset Y$, we can find a finite collection $\mathcal{U}$ of open subsets of $Y$ satisfying the conditions in Definition~\ref{def:tube-dimension} plus the extra assumption that the lengths of the boxes $B_U$ are less than $\lambda(L)$ for all $U \in \mathcal{U}$. Then, when we combine the first inequality of the theorem with the above stronger statement extracted from the second inequality, we see that the tube dimension of linear type is controlled by $2 \mathrm{dim}_\mathrm{tube}(\Phi) + 1 $.
\end{rmk}

\section{Some consequences}

\begin{cor}\label{cor:top-flow-Rokhlin-estimate}
 Let $Y$ be a locally compact and metrizable space and $\Phi$ a flow on $Y$. Suppose that $Y$ has finite covering dimension and $\Phi$ is free. Let $\alpha$ be the flow on ${C}_{0}(Y)$ induced by $\Phi$. Then 
\[
\dimrokone(\alpha)\leq 5 \cdot\mathrm{dim}^{\!+1}(Y)   \; .
\] 
\end{cor}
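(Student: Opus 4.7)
The plan is to simply chain together two results established earlier in the paper, so the argument will be very short. First, by Corollary~\ref{cor:estimate-tube-dim}, the freeness of $\Phi$ together with the finite covering dimension of $Y$ yields the estimate $\mathrm{dim}_\mathrm{tube}^{\!+1}(\Phi) \leq 5\cdot\mathrm{dim}^{\!+1}(Y)$ on the tube dimension of the topological flow. This is where the topological input (Theorem~\ref{thm:KRcoverbyboxes} of Kasprowski and R\"uping, the Bartels--L\"uck--Reich machinery) enters, but all the heavy lifting has already been done.

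Next, I would invoke the left-hand inequality of Theorem~\ref{thmaboutrelationbetweenboxdimensionandRokhlindimension}, which gives $\dimrokone(\alpha) \leq \mathrm{dim}_\mathrm{tube}^{\!+1}(\Phi)$ for the flow $\alpha$ on $C_0(Y)$ induced by $\Phi$. Combining the two inequalities gives exactly
\[
\dimrokone(\alpha) \leq \mathrm{dim}_\mathrm{tube}^{\!+1}(\Phi) \leq 5\cdot\mathrm{dim}^{\!+1}(Y),
\]
as required. There is no main obstacle here, since everything needed has been proven; the corollary is essentially a bookkeeping statement that packages the purely topological tube dimension bound together with its translation into the $C^*$-algebraic Rokhlin dimension. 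In particular, no separate argument is needed to connect topological freeness of $\Phi$ to a $C^*$-algebraic hypothesis on $\alpha$: freeness is already built into the definition of tube dimension (see Remark~\ref{rmk:tube-dim-basics}) and is exactly what Theorem~\ref{thm:KRcoverbyboxes} requires.
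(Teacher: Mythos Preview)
Your proof is correct and matches the paper's own argument exactly: the paper simply says the result is a direct consequence of Theorem~\ref{thmaboutrelationbetweenboxdimensionandRokhlindimension} and Corollary~\ref{cor:estimate-tube-dim}, which is precisely the chain of inequalities you wrote down.
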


\begin{proof}
 This is a direct consequence of Theorem~\ref{thmaboutrelationbetweenboxdimensionandRokhlindimension} and Corollary~\ref{cor:estimate-tube-dim}.
\end{proof}
 
\begin{cor}\label{cor:top-flow-dimnuc-estimate}
 Let $Y$ be a locally compact and metrizable space and $\Phi$ a flow on $Y$. Suppose that $Y$ has finite covering dimension and $\Phi$ is free. Then the nuclear dimension of $ {C}_{0}(Y) \rtimes {\mathbb{R}^{}} $ is finite, and in fact 
\[
\dimnucone(C_0(Y)\rtimes\mathbb{R})\leq 10 \cdot  \left( \mathrm{dim}^{\!+1}(Y)  \right) ^2   \; .
\] 
\end{cor}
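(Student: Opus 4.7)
The plan is to combine the three principal ingredients that have already been established in the excerpt, so that the corollary follows as an essentially immediate composition. First I would invoke Corollary~\ref{cor:top-flow-Rokhlin-estimate} to obtain the bound
\[
\dimrokone(\alpha) \leq 5 \cdot \mathrm{dim}^{\!+1}(Y)
\]
for the flow $\alpha$ on $C_0(Y)$ induced by $\Phi$. In particular, $\alpha$ has finite Rokhlin dimension, which is the hypothesis needed for the crossed product nuclear dimension estimate.

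Next I would apply Theorem~\ref{Thm:dimnuc-bound}, which asserts
\[
\dimnucone(C_0(Y) \rtimes_\alpha \R) \leq 2 \cdot \dimrokone(\alpha) \cdot \dimnucone(C_0(Y)).
\]
To use this, I need control of $\dimnuc(C_0(Y))$. Since $Y$ is a locally compact metrizable space of finite covering dimension, the well-known identity $\dimnuc(C_0(Y)) = \dim(Y)$ (see \cite{winter-zacharias}) gives $\dimnucone(C_0(Y)) = \mathrm{dim}^{\!+1}(Y)$.

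Finally, substituting both estimates into the inequality from Theorem~\ref{Thm:dimnuc-bound} yields
\[
\dimnucone(C_0(Y) \rtimes \R) \leq 2 \cdot \bigl(5 \cdot \mathrm{dim}^{\!+1}(Y)\bigr) \cdot \mathrm{dim}^{\!+1}(Y) = 10 \cdot \bigl( \mathrm{dim}^{\!+1}(Y) \bigr)^2,
\]
which is precisely the bound claimed. There is no real obstacle here, as all the hard work has been done already: the topological input (finite tube dimension of free flows on finite-dimensional spaces, via the Bartels--L\"uck--Reich/Kasprowski--R\"uping long thin covers) was absorbed into Corollary~\ref{cor:estimate-tube-dim}, the passage from tube dimension to Rokhlin dimension was handled in Theorem~\ref{thmaboutrelationbetweenboxdimensionandRokhlindimension}, and the $C^*$-algebraic machinery bounding nuclear dimension of crossed products in terms of Rokhlin dimension is Theorem~\ref{Thm:dimnuc-bound}. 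The only mild subtlety worth flagging is that the estimate is obtained in the non-unital setting, so one should make sure Theorem~\ref{Thm:dimnuc-bound} is indeed being applied to a separable (not necessarily unital) $C^*$-algebra, which $C_0(Y)$ certainly is since $Y$ is locally compact metrizable.
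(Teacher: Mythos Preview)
Your proposal is correct and matches the paper's own proof, which simply states that the corollary is a direct consequence of Corollary~\ref{cor:top-flow-Rokhlin-estimate} and Theorem~\ref{Thm:dimnuc-bound}. You have merely made explicit the intermediate step $\dimnucone(C_0(Y)) = \dim^{\!+1}(Y)$, which the paper leaves implicit.
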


\begin{proof}
 This is a direct consequence of Corollary~\ref{cor:top-flow-Rokhlin-estimate} and Theorem~\ref{Thm:dimnuc-bound}.
\end{proof}

\begin{cor}\label{cor:classification}
 Let $Y$ be a locally compact and metrizable space and $\Phi$ a flow on $Y$. Suppose that $Y$ has finite covering dimension and $\Phi$ is free and minimal. Then the crossed product $ {C}_{0}(Y) \rtimes {\mathbb{R}} $ is classifiable in the sense of Elliott, provided that it contains a nonzero projection. 
 
 %The latter happens in particular when $Y$ is a compact manifold with $H^{1}(Y;\mathbb{Q}) \neq 0$ and $\Phi$ is smooth and uniquely ergodic. In this case 
 
When $Y$ is a compact manifold and $\Phi$ is smooth and uniquely ergodic the classifying invariant consists of the topological $K$-groups $(K^{0}(Y),K^{1}(Y))$ with order given by the Ruelle-Sullivan map  $K^{1}(Y) \to \mathbb{R}$ induced by the unique invariant probability measure.
\end{cor}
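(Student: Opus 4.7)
The plan is to verify that the crossed product $A = C_0(Y) \rtimes \mathbb{R}$ satisfies the hypotheses of a modern Elliott-type classification theorem, reduce to the unital case via the given nonzero projection, and then describe the invariant explicitly in the uniquely ergodic manifold case using Connes' Thom isomorphism and the Ruelle-Sullivan pairing.

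First I would collect the abstract structural properties of $A$. Separability is automatic since $Y$ is locally compact and metrizable (and, in the presence of a minimal flow with a nonzero projection in the crossed product, may be taken to be second countable). Freeness of $\Phi$ together with Corollary~\ref{cor:top-flow-Rokhlin-estimate} yield finite Rokhlin dimension of the induced flow $\alpha$; Corollary~\ref{cor:top-flow-dimnuc-estimate} then gives that $A$ has finite nuclear dimension, and in particular is nuclear. Minimality of $\Phi$ is precisely the statement that $C_0(Y)$ has no nontrivial $\alpha$-invariant ideals, so Proposition~\ref{prop:full-connes-spectrum} together with the corollary that follows it (via Kishimoto's characterization of simplicity in terms of the strong Connes spectrum) shows that $A$ is simple. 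Finally, the UCT for $A$ follows from the Fack-Skandalis KK-equivalence between $A$ and the suspension $SC_0(Y) = C_0(Y) \otimes C_0(\mathbb{R})$ (the KK-theoretic formulation of Connes' Thom isomorphism), together with the fact that the commutative $C^*$-algebra $C_0(Y)$ belongs to the bootstrap class.

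If $p \in A$ is a nonzero projection, then $pAp$ is separable, unital, simple, nuclear, has finite nuclear dimension, and satisfies the UCT, as Morita equivalence preserves all of these properties. By the Elliott-Gong-Lin-Niu classification theorem (as consolidated in \cite{GLN, EGLN:arXiv, TWW, Win:AJM}), such $C^*$-algebras are classified by their Elliott invariant, and this classification transfers to $A$ itself through the Morita equivalence $A \sim_{ME} pAp$.

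For the description of the invariant when $M = Y$ is a compact manifold and $\Phi$ is smooth and uniquely ergodic: the Connes-Thom isomorphism identifies $K_0(A) \cong K^1(M)$ and $K_1(A) \cong K^0(M)$. Lemma~\ref{Lemma:unbounded-trace}, applied to the unital $C^*$-algebra $C(M)$, shows that each invariant probability measure yields a densely defined lower semicontinuous trace on $A$; unique ergodicity gives uniqueness (up to scaling) of such a trace. The pairing of this trace with $K_0(A) \cong K^1(M)$ is precisely the Ruelle-Sullivan map, by \cite[Theorem~2]{connes}, so the Elliott invariant reduces, up to the natural order induced by this pairing, to the pair of topological $K$-groups $(K^0(M), K^1(M))$ together with the Ruelle-Sullivan functional on $K^1(M)$. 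The principal obstacle is merely the bookkeeping required to reduce the nonunital classification problem to the unital one via the hypothesized projection; all the deeper ingredients---finite nuclear dimension, simplicity, the UCT, and the Connes-Thom description of the $K$-theory---have either been established earlier in the paper or are by now classical.
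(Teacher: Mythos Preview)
Your proposal is correct and follows essentially the same route as the paper: verify simplicity, finite nuclear dimension, and the UCT for the crossed product, reduce to the unital case via the given projection, apply the Elliott--Gong--Lin--Niu classification, and read off the invariant via Connes' Thom isomorphism and the Ruelle--Sullivan pairing. The only cosmetic differences are that the paper cites \cite{GooRos:EH} directly for simplicity (rather than the internal Connes-spectrum argument you use) and makes the roles of stability (Corollary~\ref{cor:stability}) and quasidiagonality of traces (\cite{TWW}) explicit in the reduction step.
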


\begin{proof}
A free and minimal flow yields a simple crossed product $C^{*}$-algebra by \cite[Corollary~3.3]{GooRos:EH}. $C_{0}(Y) \rtimes \mathbb{R}$ satisfies the Universal Coefficient Theorem of Rosenberg and Schochet by \cite{connes}; see also \cite[Theorem~19.3.6]{Bla:k-theory}. By Theorem~\ref{thm:stability}, ${C}_{0}(Y) \rtimes {\mathbb{R}} $ is stable. By the main result of \cite{TWW} every trace on $C_{0}(Y) \rtimes \mathbb{R}$ is quasidiagonal, so if the crossed product is the stabilization of a unital $C^{*}$-algebra the main result of \cite{EGLN:arXiv} applies and yields classification. The remaining statements follow from Connes' Thom isomorphism and Corollary~2 of \cite{connes} (and its proof); see also \cite{KelPut} for a more detailed account of the Ruelle-Sullivan map associated to an invariant measure. 
\end{proof}

\begin{Rmks}
In the preceding corollary we need existence of a projection since the classification program is not yet developed well enough for the stably projectionless case.

At least in the smooth situation, non-vanishing of the first cohomology is a necessary requirement: 
For a free and minimal smooth flow on a compact manifold, it follows from \cite[Corollary~2]{connes} that the crossed product will be stably projectionless whenever the first cohomology is trivial. But even when the latter is nonzero, it is still not clear whether there is a projection in the crossed product. %; this is the case precisely if there is a $K_{0}$-element which has strictly positive Ruelle-Sullivan current for every invariant probability measure. 

In the uniquely ergodic case one has such a nonzero projection provided the Ruelle-Sullivan current associated to the unique invariant measure yields a nontrivial element of $H^{1}(Y;\mathbb{R}) \neq 0$, since then there is an element of $K_0({C}_{0}(Y) \rtimes {\mathbb{R}})$ which has strictly positive trace. One can employ the comparison and $\mathcal{Z}$-stability results of \cite{Rob:projectionless} and \cite{Tik:projectionless} to conclude that the crossed product itself (and not just a matrix algebra over its unitzation) contains a nontrivial projection.

Let us also give a condition that produces explicitly a nontrivial projection in the crossed product as follows: 
\end{Rmks}

\begin{Rmk}
We call a subset $X \subset Y$ \emph{transversal} if for any $y \in X$, there is a box $B$ such that $y \in B^o$ and $X \cap B$ is equal to the central slice of $B$. (For example, this is the case if $Y$ is a smooth manifold, $\Phi$ is generated by a vector field $V$ over $Y$, and $X$ is a codimension-$1$ submanifold of $Y$ such that for any $y \in X$, the vector $V_y \in T_y Y$ lies outside of the subspace $T_y X$.) 

We claim that any compact transversal subset $X \subset Y$ gives rise to a nontrivial projection in ${C}_{0}(Y) \rtimes {\mathbb{R}}$. By the compactness of $X$, there is $r>0$ such that for any $y \in X$ and any $0\neq t \in [-3r, 3r]$, we have $\Phi_t(y) \not\in X$. From the definition of transversality we see that $X \subset \left( \Phi_{(-r,r)}(X) \right)^o$, which enables us to find a nonnegative continous function $f \in C_0(Y)$ supported within $\Phi_{(-r,r)}(X)$ such that $f(X) = \{1\}$. These assumptions on $f$ allow us to define a cut-off function $g \in C_0(Y)_+$ by 
\[
 g(y) = \begin{cases}
         \left( \frac{ f(y) } { \int_{-2r}^{2r} f(\Phi_t(y)) \, dt } \right)^{\frac{1}{2}} &\mid y \in \Phi_{[-r,r]}(X) \\
         0 \, , &\mid y \not\in \Phi_{[-r,r]}(X) .
        \end{cases}
\]
Observe that $g$ is also supported within $\Phi_{(-r,r)}(X)$. For any $y \in Y$, our choice of $r$ implies that the points $y$ and $\Phi_{2r}(y)$ cannot both be in $\Phi_{(-r,r)}(X)$; thus $ {g} \cdot  \alpha_{2r}(g) = {g} \cdot  \alpha_{-2r}(g) = 0$. This allows us to define an element $p \in C_c(\mathbb{R}, C_0(Y) ) \subset C_0(Y) \rtimes \mathbb{R}$ by
 \[
  p(t) = \begin{cases}
          {g} \cdot  \alpha_t(g) &\mid t \in [-2r, 2r] \\
          0  &\mid t \not\in [-2r, 2r] .
         \end{cases}
 \]
 A simple computation shows $p^* = p$ and $p* p = p$, i.e., $p$ is a (nontrivial) projection in $C_0(Y) \rtimes \mathbb{R}$. Therefore we have proved our claim.
 
The existence of a nonzero projection can be seen more easily if the flow on $Y$ factors onto the rotation system on a circle (this happens for example  if $Y$ is compact and the flow has a nontrivial maximal equicontinuous factor), because then $C(\mathbb{T}) \rtimes \mathbb{R} \cong C(\mathbb{T}) \otimes \mathcal{K}$ embeds in $C(Y) \rtimes \mathbb{R}$, so the latter crossed product contains a nonzero projection. Of course, this situation is still a special case of the above, as the preimage of any point on the circle is a compact transversal set of $Y$. %If there is a continuous eigenfunction that can be taken to be a unitary (which is automatic if the action is minimal), the existence of a projection can be seen more easily. 

\end{Rmk}

\begin{eg} \label{ex:tilings}
 An interesting construction of free flows comes from the theory of aperiodic tilings, the study of non-periodic ways to partition the Euclidean space into bounded pieces, which has provided mathematical models for aperiodic media in solid state physics. For an introduction to the subject, we refer to reader to \cite{Sadun2008Topology}. Below we are going to describe, in the special case of $1$-dimensional tilings, a pivotal construction in the theory called the \emph{continuous hull}. For this, we follow a $C^*$-algebraic approach found in \cite[Section 2.3]{Kellendonk13}. By a \emph{tiling of the real line} we mean a partition of the real line into intervals with uniform upper and lower bounds on their lengths. Equivalently, we may also define a tiling of the real line by considering a discrete subset $\Lambda$ in $\mathbb{R}$ \textemdash~ intended to be the midpoints of the intervals \textemdash~ satisfying that there are $r, R > 0$ such that for each point $x \in \mathbb{R}$, there is at most one point in $\Lambda \cap B_r(x)$ and at least one point in $\Lambda \cap B_R(x)$. (One may as well take the boundary points of the intervals. The resulting discrete set will be mutually locally derivable from the one constructed from the midpoints, which means that the two discrete sets amount to essentially the same thing as far as constructions in the theory of aperiodic tilings are concerned; cf.\ \cite[Section 1.3]{Sadun2008Topology}.) A tiling $\Lambda$ is said to be a \emph{perfect tiling} if all of the following conditions hold:
 \begin{enumerate}
  \item $\Lambda$ is \emph{aperiodic}, if there is no $p > 0$ such that $\Lambda + p = \Lambda$;
  \item $\Lambda$ has \emph{finite local complexity}, if for any $r > 0$, the set 
  \[
   \left\{ (\Lambda - x) \cap B_r(0) \ \big| \ x \in \Lambda \, \right\} \;,
  \]
  called the collection of \emph{$r$-patches}, is finite;
  \item $\Lambda$ is \emph{repetitive}, if for any $r > 0$,  there is $R > 0$, such that for any $x \in \mathbb{R}$, there is $y \in \mathbb{R}$ with $|x - y| \leq R$ and $\Lambda \cap B_r(0) = (\Lambda - y) \cap B_r(0) $, that is, each $r$-patch repeats inside $\Lambda$ with bounded gaps. 
 \end{enumerate}
 Given a perfect tiling $\Lambda$ of the real line, we call a bounded continuous complex function $f$ on $\mathbb{R}$ \emph{pattern equivariant} for $\Lambda$ if for any $\varepsilon > 0$, there is $r > 0$ such that for any $x , y \in \mathbb{R}$ satisfying
 \[
  (\Lambda - x) \cap B_r(0) = (\Lambda - y) \cap B_r(0) \; ,
 \]
 we have $|f(x) - f(y)| < \varepsilon$. It follows from the repetitivity of the tiling that such a function must be uniformly continuous. The set of all pattern equivariant functions for $\Lambda$ forms a unital $C^*$-subalgebra of the $C^*$-algebra $C_\mathrm{u}(\mathbb{R})$ of all bounded uniformly continuous complex functions on $\mathbb{R}$. The Gelfand spectrum of this subalgebra is called the \emph{continuous hull} associated to the tiling $\Lambda$ and will be denoted by $\Omega_\Lambda$. It is easy to see that if $f \in C_\mathrm{u}(\mathbb{R})$ is {pattern equivariant}, then so is every translation of $f$. Hence $C(\Omega_\Lambda)$ is an invariant $C^*$-subalgebra of $C_\mathrm{u}(\mathbb{R})$ under the (continuous) action of $\mathbb{R}$ by translation, and this induces a flow on $\Omega_\Lambda$. The associated crossed product $C(\Omega_\Lambda) \rtimes \mathbb{R}$, called the \emph{noncommutative Brillouin zone}, plays an important role in the study of analytic and geometric properties of the tiling; cf.\ \cite{Bellissard2006}.  
 
 It is well-known that for a perfect tiling $\Lambda$ of the real line, the flow $\mathbb{R} \times \Omega_\Lambda \to \Omega_\Lambda$ is free and minimal (cf.\ \cite[Section 2]{KellendonkPutnam2000Tilings}), while $\Omega_\Lambda$ is the inverse limit of $1$-dimensional spaces called \emph{G\"{a}hler approximants} (cf.\ \cite[Section 2.4]{Sadun2008Topology}) and thus has covering dimension equal to $1$. Applying Corollaries~\ref{cor:estimate-tube-dim}, \ref{cor:top-flow-Rokhlin-estimate} and \ref{cor:top-flow-dimnuc-estimate}, we see that this flow has finite tube dimension, the induced $C^*$-flow $\mathbb{R} \to \mathrm{Aut} (C(\Omega_\Lambda) )$ has finite Rokhlin dimension, and the associated crossed product $C(\Omega_\Lambda) \rtimes \mathbb{R}$ has finite nuclear dimension. 
\end{eg} 
 
\begin{Rmk} 
Let us also explain a direct argument why a crossed product $C(\Omega_\Lambda) \rtimes \mathbb{R}$ from Example \ref{ex:tilings} always contains a nontrivial projection. Indeed, fix $r > 0$ such that there is at most one point in $\Lambda \cap B_{3r}(x)$ for each point $x \in \mathbb{R}$. Fix a continous function $f \in C_0(\mathbb{R})$ supported in $B_{r}(0)$ such that $\int_{\mathbb{R}} |f(x)|^2 dx = 1$. Define $g \in C_\mathrm{u}(\mathbb{R})$ by
 \[
  g (x) = \sum_{y \in \Lambda } f(x - y) \; ,
 \]
 for every $x \in \mathbb{R}$. For each $x$, observe that at most one summand is nonzero because of the support condition above. It is also clear from the definition that $g$ is pattern equivariant. Moreover, note that for every $x \in \mathbb{R}$, one has
 \[
  \left( \overline{g} \cdot  \alpha_{2r}(g) \right) (x) = \sum_{y \in \Lambda } \sum_{y' \in \Lambda } \overline{ f(x - y) } f(x-2r - y') = 0
 \]
 because our assumptions prevent $x$ and $x - 2r$ from being within distance $r$ to $\Lambda$ at the same time. Thus we have $\overline{g} \cdot  \alpha_{2r}(g) = \overline{g} \cdot  \alpha_{-2r}(g) = 0$. This allows us to define an element $p \in C_c(\mathbb{R}, C(\Omega_\Lambda) ) \subset C(\Omega_\Lambda) \rtimes \mathbb{R}$ by
\[
  p(t) = \begin{cases}
          \overline{g} \cdot  \alpha_t(g)  &\mid t \in [-2r, 2r] \\
          0  &\mid t \not\in [-2r, 2r]. 
         \end{cases} 
\]
 A simple computation shows $p^* = p$ and $p* p = p$, i.e., $p$ is a (nontrivial) projection in $C(\Omega_\Lambda) \rtimes \mathbb{R}$. 
 
 Consequently, it follows that $C(\Omega_\Lambda) \rtimes \mathbb{R}$ is classifiable in the sense of Elliott. When the topological flow $\mathbb{R} \curvearrowright \Omega_\Lambda$ is uniquely ergodic, or, equivalently, when $\Lambda$ has uniform cluster frequencies in the sense of \cite{LeeMoodySolomyak2002Pure} (see also \cite[Section 3.3]{FrankSadun2014Fusion}), the classifying invariant consists of the topological $K$-groups $K^{0}(\Omega_\Lambda)$ and $K^{1}(\Omega_\Lambda)$, which are direct limits of the often easily computable topological $K$-groups of the G\"{a}hler approximants, together with the order of the latter group given by the Ruelle-Sullivan map $K^{1}(\Omega_\Lambda) \to \mathbb{R}$ induced by the unique invariant probability measure.
 \qed
\end{Rmk}

\bibliographystyle{alpha}
\bibliography{Rokhlin-flows-dimnuc-bib}
\end{document}